\title[]{On Virasoro-type reductions and inverse Hamiltonian reductions for $\W$-algebras and $\mathcal{W}_\infty$-algebras}
\author[Justine Fasquel]{Justine Fasquel}
\address[J.F.]{School of Mathematics and Statistics, University of Melbourne, Parkville, Australia, 3010}
\email{justine.fasquel@unimelb.edu.au}
\author[Vladimir Kovalchuk]{Vladimir Kovalchuk}
\address[V.K.]{Department of Mathematics, University of Denver}
\email{vladimir.kovalchuk@du.edu}
\author[Shigenori Nakatsuka]{Shigenori Nakatsuka}
\address[S.N.]{Department Mathematik, FAU Erlangen–Nürnberg, Cauerstraße 11, 91058, Erlangen, Germany}
\email{shigenori.nakatsuka@fau.de}
\definecolor{rouge}{rgb}{0.85,0.1,.4}
\definecolor{bleu}{rgb}{0.1,0.2,0.9}
\definecolor{violet}{rgb}{0.7,0,0.8}
\tikzset{>=latex} \usetikzlibrary{backgrounds} \usetikzlibrary{shapes.geometric}
\newtheorem{definition}{Definition}[section]
\newtheorem{proposition}[definition]{Proposition}
\newtheorem{theorem}[definition]{Theorem}
\newtheorem{ConjLetter}{Conjecture}
\newtheorem{ThmLetter}{Theorem}
\newtheorem{corollary}[definition]{Corollary}
\newtheorem{conjecture}[definition]{Conjecture}
\theoremstyle{remark}
\newtheorem{remark}[definition]{Remark}
\newcommand{\mf}{\mathfrak}
\numberwithin{equation}{section}
\newcommand{\Z}{\mathbb{Z}}     
\newcommand{\Q}{\mathbb{Q}}     
\newcommand{\C}{\mathbb{C}}     
\newcommand{\R}{\mathbb{R}}      
\newcommand{\D}{\mathcal{D}}
\newcommand{\W}{\mathcal{W}}    
\newcommand{\V}{\mathcal{V}}    
\newcommand{\HH}{\mathrm{H}}    
\newcommand{\OO}{\mathbb{O}}    
\newcommand{\Nil}{\mathcal{N}}
\newcommand{\tW}{\texorpdfstring{$\W$}{W}}
\newcommand{\tA}{\texorpdfstring{$A$}{A}}
\newcommand{\Lie}{\mathrm{Lie}}
\newcommand{\ff}[1]{\beta\gamma^{\{#1\}}}
\newcommand{\bg}{\beta\gamma}
\newcommand{\affWak}[1]{\mathbb{W}^{\kk}_{{#1}}}
\newcommand{\fockIO}[1]{\mathrm{e}^{-\frac{1}{\kk+\mathrm{h}^\vee}\alpha_{#1}}}
\newcommand{\dz}{\mathrm{d}z}
\newcommand{\oo}[1]{\OO_{[#1]}}
\newcommand{\ad}{\mathrm{ad}}
\newcommand{\rs}[2]{{\scalebox{#1}{$#2$}}}
\newcommand{\chfermion}{\mathcal{F}_\mathrm{ch}}
\newcommand{\Aut}{\operatorname{Aut}}
\newcommand{\End}{\operatorname{End}}
\renewcommand{\ker}{\operatorname{Ker}}
\newcommand{\im}{\operatorname{Im}}
\renewcommand{\End}{\mathrm{End}}
\newcommand{\kk}{\mathsf{k}}   
\newcommand{\ee}{\mathsf{e}}  
\newcommand{\wun}{\mathbbm{1}}   
\newcommand{\dd}{\mathrm{d}}
\newcommand{\hv}{\mathsf{h}^\vee}   
\newcommand{\tr}{\mathrm{tr}}
\newcommand{\g}{\mathfrak{g}}   
\newcommand{\h}{\mathfrak{h}}   
\newcommand{\n}{\mathfrak{n}}
\newcommand{\sll}{\mathfrak{sl}}    
\newcommand{\spp}{\mathfrak{sp}}    
\newcommand{\so}{\mathfrak{so}}    
\newcommand{\Mat}{\mathrm{Mat}}
\newcommand{\E}{\mathrm{E}}
\newcommand{\e}[1]{\mathrm{e}^{#1}}
\newcommand{\I}{\mathrm{I}}
\newcommand{\Wsp}[0]{\mathcal{W}^{\mathfrak{sp}}_{\infty}}
\newcommand{\Lev}{L^{\mathrm{ev}}}
\newcommand{\gWev}[1]{W^{\mathrm{ev}}_{4,#1}}
\newcommand{\heis}{\pi_\mathfrak{h}^{\kk+\hv}}
\newcommand{\Fock}[1]{\pi_{\mathfrak{h},{#1}}^{\kk+\hv}}
\newcommand{\indic}[1]{\delta_{#1}}     
\renewcommand{\epsilon}{\varepsilon}
\newcommand{\longstick}[2]{\left. {#1} \right|_{#2}}
\DeclareMathOperator{\rk}{rk}
\newcommand{\intNOz}[1]{\int Y({#1},z) \ \dd z}  
\newcommand{\scr}[2]{\intNOz{{#2}\ee^{-\frac{1}{\kk+\hv}\alpha_{#1}}}}  
\newcommand{\KL}{\mathrm{KL}}
\newcommand{\weyl}{\mathbb{V}}
\renewcommand{\mod}{\operatorname{-Mod}}
\newcommand{\fd}[1]{d_{\,\ydiagram{#1}\,}}      
\newcommand{\fHdeg}[2]{\mathrm{H}^{#2}_{\,\ydiagram{#1}\,}}
\newcolumntype{M}[1]{>{\centering\arraybackslash}m{#1}}
\newcommand{\numtableaux}[1]{
\ytableausetup{boxsize=1.15em,aligntableaux=bottom}
\begin{ytableau}
    #1
\end{ytableau}
\ytableausetup{boxsize=0.4em}}
\newcommand\doi[2]{\href{http://dx.doi.org/#1}{#2}}
\begin{document}
\begin{abstract}
In this article, the Virasoro-type reduction and the corresponding inverse reductions are established for $\W$-algebras associated with classical Lie type and nilpotent orbits of height two.
Moreover, these results are lifted to the universal objects by analyzing the Virasoro-type reduction of the vertex algebra $\Wsp$.
\end{abstract}

\maketitle

\section{Introduction}

Let $\g$ be a simple Lie algebra over the complex numbers $\C$. 
For $\kk \in \C$, one associates the affine vertex algebra $\V^\kk(\g)$, whose representation theory captures the smooth representations over the affine Lie algebra $\widehat{\g}$ at level $\kk$.
The $\W$-algebra $\W^\kk(\g,\OO)$ at level $\kk$ is defined \cite{FF90, KRW03} by applying to $\V^\kk(\g)$ a quantum Hamiltonian reduction corresponding to the nilpotent orbit $\OO\subset\g$.
Historically, $\W$-algebras are central players in conformal field theories in physics \cite{KW04}.
More recently, they have also appeared as important objects in several areas of mathematics, 
in particular in the study of the Whittaker models over $\widehat{\g}$ \cite{Raskin20}.

Although each $\W$-algebra $\W^\kk(\g,\OO)$ is associated with a nilpotent orbit $\OO$,
it has been noticed recently that they are indeed related in a way reflecting the closure relations of the nilpotent orbits with respect to the Zariski topology \cite{CFLN, FFFN, GJ25}. 
Precisely,
given two nilpotent orbits $\OO_1, \OO_2$ satisfying the closure relation ${\OO}_1\subset \overline{\OO}_2$, the existence of a (generalized) quantum Hamiltonian reduction $\HH_{\OO_2 \uparrow {\OO}_1}$ so that 
\begin{align}\label{statement of reduction by stages}
    \HH^n_{\OO_2 \uparrow {\OO}_1}\left(\W^\kk(\g,\OO_1) \right)\simeq \delta_{n,0} \W^\kk(\g,\OO_2)
\end{align}
holds is expected.
This can be regarded as a vertex-algebraic counterpart of similar known results in Whittaker models for the finite-dimensional Lie algebras (i.e. finite $\W$-algebras) \cite{GJ23, Morgan14} or for the reductive algebraic groups over local fields \cite{GGS17}. 

The finite $\W$-algebras $\mathscr{U}(\g,\OO)$ \cite{Kostant78, Pre02} are associative algebras obtained from the enveloping algebras $\mathscr{U}(\g)$ by the quantum Hamiltonian reductions associated with $\OO$. 
They are also obtained as Zhu's algebras of $\W^\kk(\g,\OO)$ \cite{Ara07, DK06}.  
In this finite setting, partial reductions of the form \eqref{statement of reduction by stages} are better understood.
Indeed, Losev's decomposition theorem \cite{Losev10} asserts that the $\hbar$-adic version of $\mathscr{U}(\g)$, say $\mathscr{U}_\hbar(\g)$, is isomorphic to the tensor product of $\mathscr{U}_\hbar(\g,\OO)$ and a Weyl algebra $\mathcal{A}_{{\OO}\, \hbar}$ after appropriate completions;
so it is natural to expect a similar statement for $\mathscr{U}_\hbar(\g,\OO_1)$ with the tensor product of $\mathscr{U}_\hbar(\g,\OO_2)$ and some Weyl algebra, namely,
\begin{align}\label{statement of finite reduction by stages}
    \mathscr{U}_\hbar(\g,\OO_1)^{\wedge} \simeq \mathscr{U}_\hbar(\g,\OO_2)^{\wedge}\  \widehat{\otimes}_{\C[\![\hbar]\!]} \ \mathcal{A}_{\OO_2 \downarrow {\OO}_1\, \hbar}{}^{\wedge}.
\end{align}
Taking the quantum Hamiltonian reduction, this leads to a finite analogue of \eqref{statement of reduction by stages}.

Conversely, $\W^\kk(\g,\OO_1)$ should be recovered from $\W^\kk(\g,\OO_2)$ by tensoring with a certain vertex-algebraic version of Weyl algebras, namely the algebras of chiral differential operators \cite{BD04, MSV99}. This is realized as inverse Hamiltonian reductions initiated in \cite{Ad19, Sem94}. 
They have recently started attracting intensive attention for the search for the braided tensor category of weight modules over $\W$-algebras e.g.\ \cite{Ad19, AKR23, ACK23, FRR, FR22} and for vertex algebras arising from divisors in Calabi--Yau threefolds \cite{Butson23}.
The inverse Hamiltonian reductions are embeddings of vertex algebras of the form
\begin{align}\label{statement of iHR}
    \W^\kk(\g,\OO_1) \hookrightarrow  \W^\kk(\g,\OO_2)\otimes \mathscr{D}^{\mathrm{ch}}_X
\end{align}
for some affine space $X$. Usually, the space $X$ takes the form $\mathbb{A}^n\times (\mathbb{A}^{\times})^m$ and, accordingly, $\mathscr{D}^{\mathrm{ch}}_X$ is a tensor product of free field algebras, e.g. $\beta\gamma$-systems $\beta\gamma$ and half-lattice vertex algebras $\Pi$ (obtained as a localization of $\beta\gamma$).

In the present paper, we consider cases where the reduction \eqref{statement of reduction by stages} and its inverse \eqref{statement of iHR} are similar respectively to the reduction of $\V^\kk(\sll_{2})$ resulting into the Virasoro vertex algebra $\W^\kk(\sll_{2},\OO_{[2]})$ and the historical example of inverse Hamiltonian reduction \cite{Sem94}.
The $\W$-algebras which admit such a reduction contain a generator $G^+$ which satisfies the operator product expansion (OPE) 
\begin{align*}
    G^+(z)G^+(w)\sim 0.
\end{align*}
This means that $G^+$ can be regarded as an analog of the upper nilpotent element in $\sll_2$, corresponding to the case $\V^\kk(\sll_2)$.
Then one can apply the Virasoro-type quantum Hamiltonian reduction $\fHdeg{2}{}$. 
For classical Lie types, such cases appear prominently for $\W$-algebras $\W^k(\g,\OO)$ with $\OO$ corresponding to partitions of ``height two". Since nilpotent orbits in classical Lie algebras are parametrized by certain partitions, ``height two" refers to the length of the partition.
In type $A$ we consider the following family of $\W$-algebras
\begin{align}\label{rectangular of height two type A}
    &\W^\kk(\sll_{N+M},\OO_{[N,M]}), \quad N\geq M,
\end{align}
and in types $BCD$
\begin{align}\label{rectangular of height two}
\begin{split}
    \W^\kk(\so_{2N+1},\OO_{[N^2,1]}),\quad \W^\kk(\spp_{2N},\OO_{[N^2]}),\quad \W^\kk(\so_{2N},\OO_{[N^2]}).
\end{split}
\end{align}
The resulting vertex algebra $\fHdeg{2}{0}\left( \W^k(\g,\OO) \right)$ turns out to be the $\W$-algebra associated with the smallest nilpotent orbit $\widehat{\OO}$ whose boundary contains $\OO$; see Table \ref{intro: Nilpotent orbits} below for the explicit form.
Our first result is the following.
\begin{ThmLetter} \label{intro: theorem A}
Let $\g$, $\OO$ and $\widehat{\OO}$ be as in Table \ref{intro: Nilpotent orbits} and $\kk$ a generic level.
\begin{enumerate}[wide, labelindent=0pt]
\item There is an isomorphism of vertex algebras
\begin{align*}
    \fHdeg{2}{0}(\W^\kk(\g,\OO)) \simeq \W^\kk(\g,\widehat{\OO}).
\end{align*}
\item There is an embedding of vertex algebras
\begin{align*}
    \W^\kk(\g,\OO) \hookrightarrow \Pi \otimes \W^\kk(\g,\widehat{\OO}).
\end{align*}
\end{enumerate}
\end{ThmLetter}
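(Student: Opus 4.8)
The plan is to deduce both statements from an explicit presentation of $\W^\kk(\g,\OO)$, for each of the four families in Table~\ref{intro: Nilpotent orbits}, that runs parallel to the model case in which $\W^\kk(\sll_2,\OO_{[1^2]})=V^\kk(\sll_2)$ reduces to $\W^\kk(\sll_2,\OO_{[2]})=\Vir$. First I would fix a good grading for $(\g,f_\OO)$ adapted to the two rows of the partition and extract from the associated Drinfeld--Sokolov complex a strong generating set of $\W^\kk(\g,\OO)$: the currents of the affine vertex subalgebra attached to the reductive centraliser $\g^\natural$, a conformal vector $L$, a distinguished Heisenberg field $J$, and two fields $G^\pm$ of opposite $J$-charge. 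The data I would then record are the OPEs $G^+(z)G^+(w)\sim 0$ (and, by the $\pm$-symmetry, $G^-(z)G^-(w)\sim 0$), the fact that $G^\pm$ are $L$-primary, and the leading term of $G^+(z)G^-(w)$ — a normally ordered differential polynomial in $J$, $L$ and the $\g^\natural$-currents. The target orbit $\widehat\OO$ is obtained from $\OO$ by moving one box from the short row to the long row, so $\W^\kk(\g,\widehat\OO)$ has the same shape, with its own pair $\widehat G^\pm$, unless the short row has length one, in which case it is the principal $\W$-algebra. For types $B$, $C$, $D$ this bookkeeping (the exact form of $\g^\natural$, the conformal weights, and the representations carrying $G^\pm$) is heavier than in type $A$; this is also why one restricts to height two, where a single pair $G^\pm$ realises a rank-one reduction.

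For part (1), the Virasoro-type reduction $\fHdeg{2}{\bullet}$ is the cohomology of the complex $\fBRST{2}=\W^\kk(\g,\OO)\otimes\fermion$, where $\fermion$ is a $bc$-ghost pair of the parity of $G^+$ and the differential is $\fd{2}=\int \no{(G^++\wun)\,c}\,\dz$; that $\fd{2}^2=0$ is precisely the relation $G^+(z)G^+(w)\sim 0$. The heart of the matter is to prove $\fHdeg{2}{n}(\W^\kk(\g,\OO))\simeq\delta_{n,0}\,\W^\kk(\g,\widehat\OO)$. I would do this by placing on $\fBRST{2}$ the charge filtration for which $\gr\fd{2}=\int\no{G^+c}\,\dz$ is an abelian (Koszul-type) differential: a Künneth computation identifies the first page of the resulting spectral sequence with the vertex subalgebra of $\W^\kk(\g,\OO)$ remaining after one removes $G^+$, the Heisenberg direction $J$ pairing with it, and the ghosts, and one recognises this — through its strong generators, central charge and the residual affine level — as a free-field realisation of $\W^\kk(\g,\widehat\OO)$; the ``new'' higher-weight generator of $\W^\kk(\g,\widehat\OO)$ arises from the quadratic part of the $G^+G^-$ OPE. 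Degeneration at the first page, and the vanishing of $\fHdeg{2}{n}$ for $n\neq 0$, then follow from a conformal-weight/character count valid at generic $\kk$, where both $\W$-algebras are freely generated with matching graded dimensions, leaving no room for higher differentials or spurious cohomology. Equivalently, one can argue by reduction by stages: choosing the good grading and $\sll_2$-triple so that, up to conjugacy, $f_{\widehat\OO}=f_\OO$ plus a root vector along the $G^+$-direction, the composite of $\fd{2}$ with the Drinfeld--Sokolov differential defining $\W^\kk(\g,\OO)$ is, after conjugation, the Drinfeld--Sokolov differential for $\W^\kk(\g,\widehat\OO)$, which gives the isomorphism directly.

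For part (2), I would construct the embedding by inverting this reduction through the half-lattice vertex algebra $\hlat$ — the localisation of a $\bg$-system, strongly generated by fields $\e{\pm a}$ and a Heisenberg field, with $\e{a}(z)\e{a}(w)\sim 0$. The key point is that the Virasoro-type reduction relative to $\e{a}$ of $\hlat$ alone has cohomology $\C$ concentrated in degree $0$, so $\fHdeg{2}{0}\bigl(\hlat\otimes\W^\kk(\g,\widehat\OO)\bigr)\simeq\W^\kk(\g,\widehat\OO)$ with the $\hlat$-factor serving as an acyclic ghost sector. I would then write down the candidate map by sending $G^+\mapsto\e{a}$, the Heisenberg $J$ to a combination of $\partial a$ with its conjugate, $L$ to the sum of the $\hlat$-Virasoro, the $\W^\kk(\g,\widehat\OO)$-Virasoro and a correction term, and $G^-\mapsto \e{-a}\otimes P+(\text{lower-charge terms})$ for a normally ordered differential polynomial $P$ in the generators of $\W^\kk(\g,\widehat\OO)$ (built from $\widehat G^-$ and the new higher field); the precise formulas are forced by matching the OPEs recorded above, and verifying that they close is a finite check on the strong generators. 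Injectivity is then automatic: applying $\fHdeg{2}{0}$ to the $\hlat$-factor of $\hlat\otimes\W^\kk(\g,\widehat\OO)$ turns the composite $\W^\kk(\g,\OO)\hookrightarrow\hlat\otimes\W^\kk(\g,\widehat\OO)\twoheadrightarrow\W^\kk(\g,\widehat\OO)$ into the isomorphism of part (1), so the embedding has trivial kernel; alternatively, at generic $\kk$ the source is simple and the image is nonzero.

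I expect the main obstacle to be the cohomology computation in part (1): not the abelian leading differential, but the control of the spectral sequence — proving degeneration and, equivalently, that the surviving algebra is \emph{exactly} $\W^\kk(\g,\widehat\OO)$ with no extra strong generators — which is where genericity of $\kk$ and the precise shape of the $G^+G^-$ OPE are indispensable. A close second is carrying this out uniformly in types $B$, $C$, $D$, where $\g^\natural$, the ghost parity, and the representation content of $G^\pm$ all differ: each family needs its own parallel verification, and one must confirm that the residual presentation obtained really is the standard presentation of $\W^\kk(\g,\widehat\OO)$ in that type.
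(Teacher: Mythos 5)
Your overall skeleton --- a Koszul-type differential killing one $\beta\gamma$/Heisenberg direction, cohomology concentrated in degree zero, and an inverse reduction through $\Pi$ using the acyclicity of the $\Pi$-factor --- is structurally consistent with what actually happens, and your injectivity argument for part (2) (compose with $\fHdeg{2}{0}$ to recover the isomorphism of part (1)) is sound. But your route is genuinely different from the paper's, and the step you yourself flag as ``the heart of the matter'' contains a real gap. The paper does not work with the abstract OPE presentation of $\W^\kk(\g,\OO)$ at all: it uses Genra's Wakimoto realization, which at generic $\kk$ characterizes $\W^\kk(\g,\OO)$ as $\bigcap_i\ker S_i^\OO$ inside an explicit free-field algebra $\bg^\bigstar\otimes\Phi(\g_{1/2})\otimes\heis$, with $G^+$ realized as a sum of $\beta$'s. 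After a change of variables the differential becomes $\int Y((\gamma+\wun)\varphi^*,z)\,\dd z$ on a single $\beta\gamma$-pair, the cohomology of the Wakimoto modules is a one-line Koszul computation, and --- this is the decisive step --- the induced screening operators $[S_i^\OO]$ are matched term by term with the screening operators attached to an explicit pair $(f_c,\Gamma_c)$, which is then shown by an explicit group element to be conjugate to a standard good pair for $\widehat{\OO}$. Your proposal replaces this with ``one recognises the first page through its strong generators, central charge and the residual affine level as a free-field realisation of $\W^\kk(\g,\widehat{\OO})$.'' Generating type plus central charge does not determine a vertex algebra, and for general $N$ the full OPE data of $\W^\kk(\g,\oo{N^2})$ (generators up to weight $N$) is not available in closed form, so there is no finite check that completes the identification. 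Without a characterization theorem of Genra's type, this step does not close.

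Two further points. Your fallback for part (1) --- that the composite of $\fd{2}$ with the Drinfeld--Sokolov differential for $\OO$ is, after conjugation, the Drinfeld--Sokolov differential for $\widehat{\OO}$ --- is precisely the reduction-by-stages statement \eqref{statement of reduction by stages}, which the paper treats as an open expectation rather than a tool: $G^+$ is only a cohomology class, and lifting it to a cochain-level comparison of differentials is exactly the unsolved difficulty, so this alternative is circular. Similarly, for part (2) your plan to set $G^-\mapsto \e{-a}\otimes P+\cdots$ and verify closure ``on the strong generators'' is feasible only at fixed small rank (as in the $\sll_4$ case of \cite{FFFN}); uniformly in $N$ one again needs the screening-operator picture, where the embedding is produced by localizing one $\beta\gamma$-system into $\Pi$ and applying an explicit automorphism of $\bg^{\bigstar_c}\otimes\Pi\otimes\heis$ that identifies the localized screenings with those of $\W^\kk(\g,\widehat{\OO})$, rather than by hand on generators.
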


The proof uses the Wakimoto realization of $\W$-algebras \cite{Gen20} at generic levels, which characterizes them inside free field algebras. 
Such a realization is generalized to a certain class of modules over $\W$-algebras (Corollary \ref{Free field realization of W-modules}). 
More precisely, 
we consider the $\W^\kk(\g,\OO)$-modules obtained from $\V^\kk(\g)$-modules in the Kazhdan--Lusztig category $\KL^\kk(\g)$ by applying the quantum Hamiltonian reductions $\HH_\OO$. Our second result is the following.
\begin{ThmLetter}\label{intro: thm B}
For $\kk$ a generic level, there is an isomorphism of $\W^\kk(\g,\widehat{\OO})$-modules
\begin{align*}
    \fHdeg{2}{0}(\HH_\OO(M)) \simeq \HH_{\widehat{\OO}}(M).
\end{align*}
where $M$ is a $\V^\kk(\g)$-module in $\KL^\kk(\g)$. Thus, the functors
\begin{align*}
    \fHdeg{2}{0}\circ\HH_\OO, \HH_{\widehat{\OO}}\colon \KL^\kk(\g) \rightarrow \W^\kk(\g,\widehat{\OO})\mod
\end{align*}
are naturally isomorphic.
\end{ThmLetter}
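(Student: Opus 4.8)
The plan is to work entirely inside free field algebras. By the free field realization of $\W$-modules (Corollary~\ref{Free field realization of W-modules}), for $M\in\KL^\kk(\g)$ the module $\HH_\OO(M)$ is realized, compatibly with the Wakimoto embedding $\W^\kk(\g,\OO)\hookrightarrow\mathcal{F}_\OO$ into a free field algebra $\mathcal{F}_\OO$, as the joint kernel of the screening operators of $\W^\kk(\g,\OO)$ acting on an explicit free field module $\widetilde{M}_\OO$ over $\mathcal{F}_\OO$; the analogous statement holds for $\HH_{\widehat{\OO}}(M)$ over the smaller free field algebra $\mathcal{F}_{\widehat{\OO}}$, and both realizations are functorial in $M$. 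The structural input behind Theorem~\ref{intro: theorem A}(1) is that $\mathcal{F}_\OO$ splits off a $\beta\gamma$-pair (which, localized, gives the $\Pi$-factor of Theorem~\ref{intro: theorem A}(2)) from $\mathcal{F}_{\widehat{\OO}}$, that $G^+$ is built from this $\beta\gamma$-pair, and that the screening operators of $\W^\kk(\g,\OO)$ act trivially on the $\beta\gamma$-factor and there restrict to those of $\W^\kk(\g,\widehat{\OO})$. Thus $\fHdeg{2}{}$ is the $\sll_2$-type reduction removing the $\beta\gamma$-pair, and the task is to run it on the modules.

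Concretely, I would show that on the free field modules occurring — namely $\widetilde{M}_\OO$ and the Fock-type targets of the screening operators — the functor $\fHdeg{2}{}$ is concentrated in degree $0$ and sends $\widetilde{M}_\OO$ to the $\mathcal{F}_{\widehat{\OO}}$-module $\widetilde{M}_{\widehat{\OO}}$; for generic $\kk$ and $M\in\KL^\kk(\g)$ the $\beta\gamma$-module part is exactly the one with one-dimensional $\fHdeg{2}{}$-cohomology, the module analog of $\fHdeg{2}{0}(\Pi)=\C$. Since $\fHdeg{2}{0}$ is then exact on this class of modules and the screenings involve only the $\mathcal{F}_{\widehat{\OO}}$-factor, $\fHdeg{2}{0}$ commutes with taking the joint kernel of the screenings; hence $\fHdeg{2}{0}(\HH_\OO(M))$ is the joint kernel of the $\widehat{\OO}$-screenings on $\widetilde{M}_{\widehat{\OO}}$, which by Corollary~\ref{Free field realization of W-modules} is exactly $\HH_{\widehat{\OO}}(M)$, while $\fHdeg{2}{n}(\HH_\OO(M))=0$ for $n\neq0$. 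Naturality follows since each ingredient — the two free field realizations, the splitting of $\mathcal{F}_\OO$, and the reduction of the $\beta\gamma$-module part — is functorial in $M$. If $\HH_\OO(M)$ only maps to, rather than sits inside, such a free field module, one replaces this by the double complex built from the Wakimoto screening complex and the $G^+$-BRST complex together with its two spectral sequences, using the generic-level vanishing of both the screening cohomology and the higher $\fHdeg{2}{}$-cohomology.

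The main obstacle will be the computation of $\fHdeg{2}{}$ on these free field modules: proving that the higher cohomology vanishes and identifying the degree-$0$ part precisely. This amounts to pinning down the $\beta\gamma$-module and the Heisenberg twist appearing in $\widetilde{M}_\OO$, which requires tracking through both Hamiltonian reductions the weight shifts dictated by the $\sll_2$-triples and Dynkin gradings recorded in Table~\ref{intro: Nilpotent orbits}, and checking that the two reductions yield matching data. It is exactly here that genericity of $\kk$ and the $\KL$-condition on $M$ — which keep the Fock modules in $\widetilde{M}_\OO$ generic enough for the elementary $\sll_2$-type reduction to be exact with one-dimensional degree-$0$ cohomology — enter in an essential way. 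A comparatively routine point is that the screening-cohomology realization of $\HH_\OO(M)$ holds at generic level at all, which should already be furnished by Corollary~\ref{Free field realization of W-modules}.
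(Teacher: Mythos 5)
Your proposal is correct and follows essentially the same route as the paper: realize $\HH_\OO(M)$ (for $M$ a Weyl module, to which the general case reduces by semisimplicity of $\KL^\kk(\g)$ at generic level) as the joint kernel of screening operators on a reduced Wakimoto module via Corollary \ref{Free field realization of W-modules}, check that $\fHdeg{2}{\bullet}$ is concentrated in degree $0$ on these free field modules, and identify the induced screenings with those of $\W^\kk(\g,\widehat{\OO})$ exactly as in the proofs of Theorems \ref{thm: A} and \ref{thm: BCD}. The "main obstacle" you flag is milder than you fear: the reduction removes a single $\beta\gamma$-pair without touching the Fock factor, so no extra Heisenberg weight shift needs to be tracked, and the exactness bookkeeping is handled by applying $\fHdeg{2}{0}$ to the two-sided Wakimoto resolution of the Weyl module, whose terms are all acyclic.
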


\begin{table}[h!]
	\centering
	\renewcommand{\arraystretch}{1.3}
	\begin{tabular}{|c||c|c|c|c|}
		\hline
		$(\g,\OO)$ & $(\sll_{N+M},\oo{N,M})$ & $(\so_{2N+1},\oo{N^2,1})$ & $(\spp_{2N},\oo{N^2})$ & $(\so_{2N},\oo{N^2})$ \\
		\hline
             $\widehat{\OO}$ & $\oo{N+1,M-1}{}^*$ & $\begin{array}{c} \oo{N+1,N-1,1}{}^*\\ \oo{N+2,N-2,1} \end{array}$ & $\begin{array}{c} \oo{N+2,N-2}\\ \oo{N+1,N-1}{}^* \end{array}$ & $\begin{array}{c} \oo{N+1,N-1}{}^*\\ \oo{N+2,N-2} \end{array}$  \\ \hline
	\end{tabular}
 \captionsetup{font=small }
	\caption{Nilpotent orbits $\widehat{\OO}$. The upper and lower lows correspond to the cases $N$ being even and odd, respectively.}
    \label{intro: Nilpotent orbits}
\end{table}

The result on the inverse Hamiltonian reductions in Theorem \ref{intro: theorem A} holds for all levels based on a {continuity} argument deforming the levels (see, for example \cite{FFFN,FN, Fehily24}), which also implies the homomorphisms
\begin{align}\label{eq:reduciton}
    \fHdeg{2}{0}(\W^\kk(\g,\OO)) \rightarrow \W^\kk(\g,\widehat{\OO})
\end{align}
for all levels.
This argument can be completed to prove that \eqref{eq:reduciton} is an isomorphism for all levels and that the cohomology vanishing ($\fHdeg{2}{\neq0}(\W^\kk(\g,\OO)) =0$) holds in the cases denoted by ${}^*$ in Table \ref{intro: Nilpotent orbits}. 
It is natural to expect an isomorphism for {all levels} and that the cohomology vanishing holds in the remaining cases, too.
This problem seems to be reduced to a problem in the finite/classical setting, namely, on group actions on the Slodowy slices, which is of independent interest. See recent works \cite{GJ23, GJ25} for this approach to other classes of nilpotent orbits.

The generating types of $\W$-algebras appearing in Table \ref{intro: Nilpotent orbits} suggest that these algebras arise as 1-parameter quotients of universal 2-parameter $\W_\infty$-algebras, and that reductions (\ref{eq:reduciton}) lift to these 2-parameter $\W_\infty$-algebras (see generating types (\ref{eq:gen types}) and (\ref{eq:gen types reduced}) below).
One such universal family has already appeared in the literature as $\Wsp$ \cite{CKL24} defined over a commutative ring $R(\supset \C[c,\kk])$. It has generating type $\W(1^3,2,3^3,4,\dots)$ and gives rise to 
$$\W^\kk(\mathfrak{sp}_{4n+2},\oo{2n+1,2n+1}),\quad \W^\kk(\mathfrak{so}_{4n},\oo{2n,2n})$$
as 1-parameter quotients. This implies a universal version of the statements in Theorem \ref{intro: theorem A}. 
Indeed, this is realized by analyzing the Virasoro-type reduction for $\Wsp$ (which we denote by $\Wsp(c,\kk)$ to stress the $(c,\kk)$-dependence in this paper) itself as follows. 

\begin{ThmLetter} \label{intro: theorem C}
\phantom{x}
\begin{enumerate}[wide, labelindent=0pt]
    \item The vertex algebra $\fHdeg{2}{0}(\Wsp(c,\kk))$ is a simple freely generated vertex algebra, free over $R$, of generating type $$\W(2^3,3,4^3,5,6^3,\dots).$$ 
    \item The $\W$-algebras $\W^\kk(\g,\OO)$ for the pairs
    $$(\g,\OO)=(\spp_{4n+2},\oo{2n,2n+2}),\quad  (\so_{4n},\oo{2n-1, 2n+1})$$ arise as 1-parameter quotients of $\fHdeg{2}{0}(\Wsp(c,\kk))$ over $\C(\kk)$, which makes the left diagram below commute.
    \item There is an embedding of vertex algebras over $\C(\kk)$
\begin{align*}
    \Wsp(c,\kk)_{\C(\kk)} \hookrightarrow \fHdeg{2}{0}(\Wsp(c,\kk))_{\C(\kk)}\otimes \Pi,
\end{align*}
which makes the right diagram below commute.
\end{enumerate}
 \begin{center}
\begin{tikzcd}
 \Wsp(c,\kk)_{\C(\kk)} \arrow[d, twoheadrightarrow] \arrow[r,dashrightarrow, "\fHdeg{2}{0}"] & \fHdeg{2}{0}(\Wsp(c,\kk))_{\C(\kk)} \arrow[d, twoheadrightarrow] &  \Wsp(c,\kk)_{\C(\kk)} \arrow[d, twoheadrightarrow] \arrow[r,hookrightarrow,] & \fHdeg{2}{0}(\Wsp(c,\kk))_{\C(\kk)}\otimes \Pi \arrow[d, twoheadrightarrow] \\
\W^\kk(\g,\OO)_{\C(\kk)} \arrow[r, dashrightarrow,  "\fHdeg{2}{0}"] & \W^\kk(\g,\widehat{\OO})_{\C(\kk)}  &\W^\kk(\g,\OO)_{\C(\kk)} \arrow[r,hookrightarrow] & \W^\kk(\g,\widehat{\OO})_{\C(\kk)} \otimes \Pi.
\end{tikzcd}
\end{center}
\end{ThmLetter}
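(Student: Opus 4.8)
The plan is to establish (1) by a direct computation of the Virasoro-type cohomology of $\Wsp(c,\kk)$, and then to deduce (2) and (3) from (1) together with Theorem~\ref{intro: theorem A} and the fact, recalled in the introduction, that the relevant $\W$-algebras are one-parameter quotients of $\Wsp(c,\kk)$. For part~(1), recall that $\Wsp(c,\kk)$ is freely generated over $R$ of type $\W(1^3,2,3^3,4,\dots)$ and contains the distinguished weight-one field $G^{+}$ with $G^{+}(z)G^{+}(w)\sim 0$, so that the complex $C^{\bullet}=\Wsp(c,\kk)\otimes\Lambda$ computing $\fHdeg{2}{\bullet}$ makes sense, $\Lambda$ being the $bc$-ghost vertex algebra for the single first-class constraint $G^{+}$ and the differential $d$ normalised so that $\fHdeg{2}{0}(\V^{\kk}(\sll_{2}))=\Vir$. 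I would run the Feigin--Frenkel-type spectral sequence of the filtration of $C^{\bullet}$ by the charge of the Heisenberg field $\theta$ pairing nontrivially with $G^{+}$: on the associated graded the differential factors through a Koszul-type piece built from $G^{+}$, its descendants and the ghosts (whose cohomology is $R$ concentrated in degree $0$, by a standard free-field computation), leaving the remaining free generators of $\Wsp(c,\kk)$ with an induced lower-order differential, for which an induction on conformal weight shows again that the cohomology is concentrated in degree $0$. Matching the $R$-graded Euler characteristic of $C^{\bullet}$ with the graded character of the free vertex algebra of type $\W(2^3,3,4^3,5,\dots)$ — the type obtained from that of $\Wsp(c,\kk)$ by the uniform shift $\Delta\mapsto\Delta+1$ coming from the new conformal vector $L^{\mathrm{new}}$, the DS-type modification of $L$ by a total derivative of a Heisenberg field — confirms the spectral sequence degenerates, that $\fHdeg{2}{\neq 0}(\Wsp(c,\kk))=0$, and that $\fHdeg{2}{0}(\Wsp(c,\kk))$ is freely generated, free over $R$, of the asserted type. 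Simplicity I would get from the one-parameter quotients of part~(2): these are the $\W^{\kk}(\g,\widehat{\OO})$, which are simple at generic $\kk$, so $\fHdeg{2}{0}(\Wsp(c,\kk))$ — being freely generated over a localisation of $\C[c,\kk]$ with simple generic one-parameter specialisations — is simple.

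For part~(2): by the introduction, at $c=c_{n}(\kk)$ there are surjections $\Wsp(c,\kk)_{\C(\kk)}\twoheadrightarrow\W^{\kk}(\g,\OO)_{\C(\kk)}$ for $(\g,\OO)=(\spp_{4n+2},\oo{2n+1,2n+1})$ and $(\so_{4n},\oo{2n,2n})$; since each such surjection carries $G^{+}$ to $G^{+}$, it induces a surjection of Virasoro-type BRST complexes and hence a vertex-algebra homomorphism $\fHdeg{2}{0}(\Wsp(c,\kk))_{\C(\kk)}\to\fHdeg{2}{0}(\W^{\kk}(\g,\OO))_{\C(\kk)}$. By Theorem~\ref{intro: theorem A}(1) the target is $\W^{\kk}(\g,\widehat{\OO})_{\C(\kk)}$ with $\widehat{\OO}$ the starred entry of Table~\ref{intro: Nilpotent orbits}, i.e.\ $\oo{2n,2n+2}$ for $\spp_{4n+2}$ and $\oo{2n-1,2n+1}$ for $\so_{4n}$; this is the left-hand square. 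The homomorphism is surjective because $\fHdeg{2}{0}(\Wsp(c,\kk))$ is strongly generated by the cohomology classes of the free generators of $\Wsp(c,\kk)$, which map to the classes of their images in $\W^{\kk}(\g,\OO)$, and these strongly generate $\W^{\kk}(\g,\widehat{\OO})=\fHdeg{2}{0}(\W^{\kk}(\g,\OO))$ since the images themselves strongly generate $\W^{\kk}(\g,\OO)$.

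For part~(3): I would build the embedding as the universal lift of the inverse Hamiltonian reductions of Theorem~\ref{intro: theorem A}(2), modelled on the $\sll_{2}$ inverse reduction $\V^{\kk}(\sll_{2})\hookrightarrow\Vir\otimes\Pi$. Over $\C(\kk)$, prescribe a candidate assignment on the free generators of $\Wsp(c,\kk)$ valued in $\fHdeg{2}{0}(\Wsp(c,\kk))_{\C(\kk)}\otimes\Pi$ — $G^{+}$ and the Heisenberg field $\theta$ going to rescalings/shifts of the lattice-type and Heisenberg generators of $\Pi$, and each remaining free generator $A$ going to its cohomology class tensored with the vacuum of $\Pi$ plus correction terms of lower conformal weight, fixed recursively by the requirement that the defining OPEs be respected. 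Since $\Wsp(c,\kk)$ is freely generated, it suffices to check those OPEs, whose structure constants are rational functions of $c$ over $\C(\kk)$, after evaluation at the $c_{n}(\kk)$ — where the assignment is exactly the inverse reduction $\W^{\kk}(\g,\OO)\hookrightarrow\W^{\kk}(\g,\widehat{\OO})\otimes\Pi$ of Theorem~\ref{intro: theorem A}(2), hence a homomorphism — and since $\{c_{n}(\kk)\}$ is Zariski-dense in $\mathbb{A}^{1}_{\C(\kk)}$, the identities hold identically, so the assignment defines a vertex-algebra homomorphism over $\C(\kk)$. It is injective because it is filtered for the $\Pi$-charge filtration and becomes, on the associated graded, the evident inclusion of free generators; and the right-hand square commutes because its vertical and horizontal maps are all compatible with evaluation at $c=c_{n}(\kk)$, where the square reduces to the one coming from Theorem~\ref{intro: theorem A}(1)--(2).

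The main obstacle is part~(1). Over a field and for a finitely generated vertex algebra the cohomology computation above is routine, but $\Wsp(c,\kk)$ is infinitely generated and lives over the nontrivial base ring $R$, so one must (i) set up the filtration so that the spectral sequence is finite within each fixed conformal weight, and — the delicate point — (ii) rule out $R$-torsion in the cohomology, which forces one to identify the $E_{1}$-page on the nose (via the Koszul factorisation of the graded differential and the free-field computation) rather than merely estimate it, and to compare $R$-graded characters rather than ordinary ones. A secondary, more bookkeeping difficulty is pinning down the normalisations — of $G^{+}$, of the ghost conformal weights, and of the shift $L\rightsquigarrow L^{\mathrm{new}}$ — so that the construction genuinely reproduces $\fHdeg{2}{0}(\V^{\kk}(\sll_{2}))=\Vir$ in the rank-one specialisation and yields the uniform weight shift claimed for the generating type.
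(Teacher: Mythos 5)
Your overall architecture (prove (1) directly, deduce (2) from the one-parameter quotient structure, build (3) as a universal lift of the inverse reductions) matches the paper's, and your treatment of (2) is essentially the paper's argument. But there are genuine gaps in (1) and (3). For (1), you correctly identify the $R$-torsion problem as the crux and then do not resolve it: a spectral-sequence-plus-Euler-characteristic argument over the ring $R$ only bounds the cohomology and cannot by itself exclude torsion, which is exactly why you would need "the $E_1$-page on the nose." The paper avoids the issue entirely by exhibiting explicit $\fd{2}$-closed corrections $\tilde E_{i-1}=E_i$, $\tilde H_i=H_i+\mathbf{h}_1E_i$, $\tilde F_{i+1}=F_i-\tfrac12\mathbf{h}_1H_i-\tfrac14(\mathbf{h}_1^2-2\partial\mathbf{h}_1)E_i$, which split the complex as a tensor product $C_0\otimes_R C^\bullet_{\sll_2}$ with $\fd{2}$ acting only on the $\sll_2$-BRST factor; K\"unneth then gives freeness over $R$ and the vanishing at once. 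Note also that your stated mechanism for the generating type — a uniform shift $\Delta\mapsto\Delta+1$ — is not what happens: the new conformal vector shifts a generator by $-\tfrac12$ of its $h_1$-eigenvalue, so $E_i,H_i,F_i$ go to weights $i-1,i,i+1$; the resulting multiset coincides with a uniform shift only by accident of the counting. More seriously, your simplicity argument is invalid: every graded vertex algebra has simple one-parameter quotients, so simplicity of the quotients $\W^\kk(\g,\widehat{\OO})$ says nothing about $\fHdeg{2}{0}(\Wsp(c,\kk))$ itself — a proper ideal of the universal object could simply be contained in the kernels of all the quotient maps. The paper instead transports a putative ideal back through the braided tensor equivalence $\fHdeg{2}{0}\colon\KL_\kk(\sll_2)\to\KL_\kk(\sll_2,\oo{2})$ and contradicts the known simplicity of $\Wsp(c,\kk)_{Q(R)}$.

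For (3), your construction presupposes the existence of a generator-by-generator assignment with recursively determined correction terms, rational in $(c,\kk)$; establishing that existence is the entire content of the statement, and the density-of-specializations check is circular as written, since "evaluation at $c=c_n(\kk)$" lands in the quotients $\W^\kk(\g,\widehat{\OO})\otimes\Pi$ rather than in specializations of the universal algebra, so vanishing there only shows membership in $\bigcap_n(I_n\otimes\Pi)$ (this can be salvaged in fixed conformal weight, but you would still need the assignment to exist globally first). The paper sidesteps all of this: it applies the quasi-inverse functor $M\mapsto(M\otimes\Pi)^{\sll_2}$ of the braided tensor equivalence to the vertex algebra object $\fHdeg{2}{0}(\Wsp(c,\kk))$, obtaining the embedding and its multiplicativity simultaneously from functoriality, with no explicit formulas required.
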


It is generally expected that the $\W$-algebras associated with classical Lie algebras are obtained by applying certain fundamental quantum Hamiltonian reductions \cite{CFLN,CKL24}. 
In particular, the $\W$-algebras appearing in Theorem \ref{intro: theorem C} are expected to be described as two successive reductions, for example,
\begin{align*}
    \W^\kk(\spp_{4n+2},\oo{2n,2n+2}) \simeq \HH_{\oo{2n}}^0\HH_{\oo{2n+2,1^{2n}}}^0 (\V^\kk(\spp_{4n+2})).
\end{align*}
This implies that the $\W$-algebras are obtained as conformal extensions of certain subalgebras of the form
\begin{align*}
    C^\kk_1(\g,\widehat{\OO})\otimes C^\kk_2(\g,\widehat{\OO})\hookrightarrow \W^\kk(\g,\widehat{\OO}),
\end{align*}
see \S \ref{sec: fundamental reduction} and \S \ref{sec: Wsp} for details. In our setting, these subalgebras are obtained as 1-parameter quotients of another universal object, called the even spin $\W_\infty$-algebra $\W^{\mathrm{ev}}_{\infty}(c,\lambda)$ \cite{CL21, KL19}. 
This suggests a corresponding statement for the universal objects summarized as the following commutative diagram of the form
\begin{center}
\begin{tikzcd}
\W^{\mathrm{ev}}_{\infty}(c_1,\lambda_1)\otimes \W^{\mathrm{ev}}_{\infty}(c_2,\lambda_2) \arrow[d, twoheadrightarrow] \arrow[r, hook]& \fHdeg{2}{0}(\Wsp(c,\kk)) \arrow[d, twoheadrightarrow] \\
C^\kk_1(\g,\widehat{\OO})\otimes C^\kk_2(\g,\widehat{\OO}) \arrow[r, hook]& \W^\kk(\g,\widehat{\OO}). 
\end{tikzcd}
\end{center}
This conjecture is addressed in \cite{CKL24}. 
Based on Theorem \ref{intro: theorem C} and explicit description of OPEs (presentation by generators and relations) in lower conformal weights, the desired commuting Virasoro and associated vectors of primary weight-four were obtained after appropriate base change $-\otimes_R \widehat{R}$. This provides a more precise statement of the conjecture as follows.
\begin{ConjLetter}[\cite{CKL24}]
The reduction $\fHdeg{2}{0}(\Wsp(c,\kk))$ is an extension of the tensor product of two even spin $\W_\infty$-algebras 
$$\W^{\mathrm{ev}}_{\infty}(c_1,\lambda_1)\underset{\widehat{R}}{\otimes}\W^{\mathrm{ev}}_{\infty}(c_2,\lambda_2) \hookrightarrow \fHdeg{2}{0}(\Wsp(c,\kk))_{\widehat{R}}$$
over $\widehat{R}$.
Moreover, the images of $\W^{\mathrm{ev}}_{\infty}(c_i,\lambda_i)$ $(i=1,2)$ are generated by the two commuting Virasoro vectors given by the formula \eqref{new commuting virasoro} with uniquely associated primary weight-4 vectors in $\fHdeg{2}{0}(\Wsp(c,\kk))_{\widehat{R}}$.
\end{ConjLetter}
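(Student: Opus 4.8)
The plan is to upgrade the finitely many explicit OPE computations in low conformal weight carried out in \cite{CKL24} to an all-orders statement, using two structural inputs: the freeness of $\fHdeg{2}{0}(\Wsp(c,\kk))$ over $R$ from Theorem~\ref{intro: theorem C}(1), and the classification of the even spin $\W_\infty$-algebra from \cite{CL21, KL19}, which singles out $\W^{\mathrm{ev}}_{\infty}(c,\lambda)$ as the universal $2$-parameter vertex algebra of generating type $\W(2,4,6,\dots)$ whose weight-four field is primary and strongly generates together with the conformal vector. The base change $-\otimes_R\widehat{R}$ will be used only to adjoin the algebraic functions (essentially square roots) required to solve the quadratic system produced in the first step.

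First I would pin down the two commuting Virasoro vectors. Expanding a general weight-two vector of $\fHdeg{2}{0}(\Wsp(c,\kk))_{\widehat{R}}$ in the three weight-two strong generators (derivatives and normally ordered squares are excluded by weight), one imposes that $L_1$ and $L_2$ are Virasoro fields, that $L_1+L_2$ is the canonical conformal vector of the reduction, and that $L_1(z)L_2(w)\sim 0$; this is a finite polynomial system over $R$ whose solution over $\widehat{R}$ I expect to be unique up to the exchange $L_1\leftrightarrow L_2$, with central charges $c_1,c_2$ rational in $(c,\kk)$ and $c_1+c_2=c$, realizing \eqref{new commuting virasoro}. Next, for each $i$ I would compute the space $P_i$ of weight-four vectors that are $L_i$-primary and annihilated by the singular part of the OPE with $L_j$ $(j\neq i)$; using the low-weight OPEs of \cite{CKL24} this is a finite linear-algebra problem, and showing $\dim_{\widehat{R}}P_i=1$ gives at once the existence and the claimed uniqueness of $W_i^{(4)}$. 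Then, again in low weight, I would verify that the pair $(L_1,W_1^{(4)})$ has vanishing singular OPE with $(L_2,W_2^{(4)})$, and promote this to commutativity of the subalgebras $A_1,A_2$ generated by these pairs by the standard induction on conformal weight via the Borcherds identity, by which triviality of the OPE between two collections of fields propagates to all their normally ordered products and iterated $\lambda$-brackets.

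It remains to identify $A_i$ with $\W^{\mathrm{ev}}_{\infty}(c_i,\lambda_i)$ and to assemble the embedding. Since $L_i$ is a Virasoro field and $W_i^{(4)}$ an $L_i$-primary of weight four commuting with $A_j$, its self-OPE is forced by \cite{CL21, KL19} to be that of $\W^{\mathrm{ev}}_{\infty}$ for a parameter $\lambda_i$ read off from a single structure constant, provided one knows that $A_i$ has generating type exactly $\W(2,4,6,\dots)$, i.e.\ that no further generator appears in odd weight $\geq 3$ or as an extra even-weight copy. I would extract this from the freeness of $\fHdeg{2}{0}(\Wsp(c,\kk))$ by a leading-term count in the associated graded algebra, showing that the normally ordered monomials in $L_i$ and $W_i^{(4)}$ stay linearly independent; the same freeness then forces the surjection $\W^{\mathrm{ev}}_{\infty}(c_i,\lambda_i)\twoheadrightarrow A_i$ to be injective, so $A_i\cong\W^{\mathrm{ev}}_{\infty}(c_i,\lambda_i)$. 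The commuting pair $A_1,A_2$ yields a homomorphism $\W^{\mathrm{ev}}_{\infty}(c_1,\lambda_1)\otimes_{\widehat{R}}\W^{\mathrm{ev}}_{\infty}(c_2,\lambda_2)\to\fHdeg{2}{0}(\Wsp(c,\kk))_{\widehat{R}}$, and comparing graded characters with that of the freely generated target — the two-per-even-weight generators of the source forming, after a triangular change, part of a free generating set of the target — shows this map is injective, giving the conformal embedding. The remaining generators of $\fHdeg{2}{0}(\Wsp(c,\kk))$, one in each weight $\geq 2$, should then organize into $\W^{\mathrm{ev}}_{\infty}(c_1,\lambda_1)\otimes\W^{\mathrm{ev}}_{\infty}(c_2,\lambda_2)$-submodules, which is the extension assertion.

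The main obstacle is the passage from the finitely many verified OPE relations to all conformal weights: controlling the generating type of the subalgebras $A_i$, and hence making the classification of $\W^{\mathrm{ev}}_{\infty}$ applicable, requires the full strength of the freeness statement together with a careful weight-by-weight character bookkeeping, and the sharpest form of the extension claim needs information on the module category of $\W^{\mathrm{ev}}_{\infty}$ beyond the structure theory used elsewhere in this paper. A conceivable shortcut is a deformation argument in $\kk$, or a degeneration to a free-field limit of $\Wsp(c,\kk)$, in which the tensor-product decomposition becomes transparent and can then be specialized back.
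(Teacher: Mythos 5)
First, a point of comparison: the statement you are proving is stated in the paper as Conjecture~A, attributed to \cite{CKL24}; the paper does not prove it. What the paper establishes (Theorem~\ref{intro: theorem C} together with the discussion at the end of \S\ref{sec: Wsp}) is only the supporting evidence: the generating type $\W(2^3,3,4^3,5,\dots)$ and freeness over $R$ of $\fHdeg{2}{0}(\Wsp(c,\kk))$, the explicit pair of commuting Virasoro vectors \eqref{new commuting virasoro} obtained after the base change to $\widehat{R}$, and the existence and uniqueness of the associated weight-$4$ primaries $\gWev{1},\gWev{2}$, all coming from OPE computations in low conformal weight. So there is no proof in the paper to measure yours against; a complete argument would be a new result.

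As a strategy your outline is the natural one, and its first stage (solving for $\Lev_1,\Lev_2$ and for the weight-$4$ primaries by finite linear and polynomial algebra over $\widehat{R}$) reproduces exactly what the paper does. The genuine gap is concentrated in the passage to all conformal weights, and the tools you invoke do not close it. (i) To make the universality of $\W^{\mathrm{ev}}_{\infty}$ from \cite{CL21,KL19} applicable you must show that the subalgebra $A_i$ generated by $(\Lev_i,\gWev{i})$ has type exactly $\W(2,4,6,\dots)$, i.e.\ that every OPE $\gWev{i}(z)\gWev{i}(w)$ and its descendants close on normally ordered polynomials in the even-weight fields recursively produced, with no new odd-weight or extra even-weight generator. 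A leading-term count in the associated graded algebra does not give this: the associated graded of $\fHdeg{2}{0}(\Wsp(c,\kk))$ is free on three generators in each even weight and one in each odd weight, and whether the monomials in $\Lev_i,\gWev{i}$ pick out two independent even-weight directions in \emph{every} weight depends on the nonvanishing of infinitely many structure constants that have not been computed --- this is precisely the content of the conjecture, not a consequence of freeness. (ii) Even the commutativity of $A_1$ with $A_2$ requires $\gWev{1}(z)\gWev{2}(w)\sim 0$, an OPE living in weights up to $8$, outside the verified range; once that input is granted the propagation to the generated subalgebras is indeed automatic, but the input is missing. (iii) Injectivity of $\W^{\mathrm{ev}}_{\infty}(c_1,\lambda_1)\otimes_{\widehat{R}}\W^{\mathrm{ev}}_{\infty}(c_2,\lambda_2)\to\fHdeg{2}{0}(\Wsp(c,\kk))_{\widehat{R}}$ does not follow from freeness of the target alone: the map could factor through a truncation (a $1$-parameter quotient) of one of the factors, and ruling this out requires checking that the curve $(c_i(c,\kk),\lambda_i(c,\kk))$ is not contained in any truncation curve of $\W^{\mathrm{ev}}_{\infty}$. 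Your suggested shortcut --- a free-field or large-level degeneration in which the tensor decomposition becomes visible, followed by deformation --- is the most promising route, but as written the proposal stops exactly where the conjecture begins.
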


Similarly, we expect that the $\W$-algebras $$\W^k(\mathfrak{sp}_{4n},\oo{2n,2n}),\quad 
\W^k(\mathfrak{so}_{4n+2},\oo{2n+1,2n+1})$$
arise as 1-parameter quotients of a 2-parameter algebra $\W(1,2^3,3,4^3,5,\dots)$, $\W^{\mathfrak{so}_2}_{\infty}$ say.
By analogy with Conjecture A, it is tempting to speculate that the reduction $\fHdeg{2}{0}(\W^{\mathfrak{so}_2}_{\infty})$ is an extension of two copies of two even-spin $\W_\infty$-algebras; furthermore, $\W^{\mathfrak{so}_2}_{\infty}$ itself is an extension of two even-spin $\W_\infty$-algebras, consistent with an outlook proposed in \cite{CKL24}.

Finally, let us make some comments on other possible outlooks.
First, distinguished $\W$-algebras are quite challenging to study with the tools developed so far. For instance, since they do not have Heisenberg fields, spectral flow twists cannot be used to classify the modules. 
Partial reductions in Theorem \ref{intro: theorem A} provide a new path to reach certain distinguished $\W$-algebras from non-distinguished ones. 
The reduction $\fHdeg{2}{}$ being relatively simple, it would be interesting to extend Theorem \ref{intro: thm B} to other categories of modules to better control the representation theory of distinguished $\W$-algebras.
On the other hand, as Theorem \ref{intro: theorem A} is suggested by Losev's decomposition theorem \cite{Losev10} and its relative version \eqref{statement of finite reduction by stages}, it is interesting to chiralize them.
We hope to come back to these points in our future works.

\subsection*{Organization of the paper} 
The paper is organized as follows. 
In \S\ref{sec: W-algebras}, we recall the set-up of the $\W$-algebras, and 
we show that $\W$-algebras are independent under the outer automorphisms coming from the Dynkin automorphisms in \S\ref{sec: Automorphisms of root systems}.
The reduction by stages for the $\W$-algebras of classical Lie types is stated in  \S\ref{sec: main results} where we also introduce the Virasoro-type reduction of $\W$-algebras and present the first half of the main results.
In \S\ref{sec: Wakimoto}, we recall the Wakimoto realization of the $\W$-algebras and derive the explicit forms for $\W$-algebras of our interest.
They are the main ingredient for the proof to the first half of the main results established in \S\ref{sec: proof of main results}.
In \S\ref{sec: iHR}, we show the second half of the main results on the inverse Hamiltonian reductions.
The generalization of partial and inverse reductions for the type $A$ $\W$-algebras of height two is achieved in a similar way as in \S\ref{sec: More on type A}.
We generalize the Virasoro-type reduction of $\W$-algebras for modules in \S\ref{sec: Virasoro reduction for modules} by establishing Wakimoto realizations of modules over $\W$-algebras.
In \S\ref{sec: Wsp}, we study the Virasoro-type reduction for the universal $\W_\infty$-algebra $\W^{\mathfrak{sp}}_{\infty}$ and establish Theorem \ref{intro: theorem C}.
\vspace{1em}

\paragraph{\textbf{Acknowledgements}} 
We thank Tomoyuki Arakawa and Andrew R. Linshaw for interesting communications and suggestions.
J.F. is supported by a University of Melbourne Establishment Grant. S.N. is supported by JSPS Kakenhi Grant number 21H04993.
This work was also supported by the Research Institute for Mathematical Sciences,
an International Joint Usage/Research Center located in Kyoto University.

\section{\tW-algebras} \label{sec: W-algebras}
We start reviewing and fixing basic notations of the simple Lie algebras, the nilpotent orbits, and the good gradings.
We recall the construction of the affine $\W$-algebras and establish isomorphisms between $\W$-algebras $\W^\kk(\g,\OO)$ when the nilpotent orbits $\OO$ are conjugated by certain automorphisms of $\g$. 

\subsection{Classical Lie algebras}
Let $\g$ be a finite-dimensional simple Lie algebra over the complex numbers $\C$, $\g=\n_+\oplus \h \oplus \n_-$ a triangular decomposition, and $\g=\h \oplus \oplus_{\alpha \in \Delta} \g_\alpha$ the root space decomposition with $\Delta$ the set of roots.
We decompose $\Delta=\Delta_{+}\sqcup \Delta_-$ into the sets of positive and negative roots, so that $\n_\pm=\oplus_{\alpha \in \Delta_\pm} \g_\alpha$, and denote by $\Pi=\{\alpha_1,\dots,\alpha_{\ell}\}\subset \Delta_+$ the set of simple roots.
Then, we fix a set of (non-zero) root vectors $e_\alpha$ ($\alpha \in \Delta$) and the coroots $\{h_1,\cdots, h_\ell\}\subset \h$ so that $C=(h_i(\alpha_j))_{i,j=1}^\ell$ is the Cartan matrix of $\g$.
As $\{e_\alpha ; \alpha \in \Delta\}$ and $\{h_i ; i=1,\dots,\ell\}$ form a basis of $\g$, we denote by $c_{\lambda,\mu}^\nu$ the structure constants;
for example $[h_i,e_\alpha]=c_{i,\alpha}^\alpha e_\alpha$ with $c_{i,\alpha}^\alpha=h_i(\alpha)$ and
$[e_\alpha,e_\beta]=\sum_{\gamma\in\Delta}c_{\alpha,\beta}^{\gamma}e_\gamma$.

Lie algebras of classical types have a standard realization as subalgebras of the space of $n\times n$-matrices $\Mat_n=\End{(\C^n)}$ endowed with the Lie bracket defined as the commutator of matrices, i.e. $[A,B]=AB-BA$, 
(see, e.g., \cite{Carter05}):
\begin{itemize}
    \item Type $A$: $\sll_N=\{X\in \Mat_N ; \tr(X)=0\}$,
    \item Type $B$: $\so_{2N+1}=\{X\in \Mat_{2N+1} ; {}^tXJ_B+J_B X=0\}$,
    \item Type $C$: $\spp_{2N}=\{X\in \Mat_{2N} ; {}^tXJ_C+J_C X=0\}$,
    \item Type $D$: $\so_{2N}=\{X\in \Mat_{2N} ; {}^tXJ_D+J_D X=0\}$,
\end{itemize}
where $\tr{(X)}$ is the trace of $X$, ${}^tX$ the matrix transpose of $X$, and 
\begin{align}
    J_B=\begin{pmatrix}1&&\\ &&I_N\\ &I_N&\end{pmatrix},\quad 
    J_C=\begin{pmatrix}&I_N\\-I_N&\end{pmatrix},\quad
    J_D=\begin{pmatrix}&I_N\\I_N&\end{pmatrix}. 
\end{align}
with $I_N$ the identity matrix in $\Mat_N$.

In type $A$, the root vectors are identified with the elementary matrices $\E_{i,j}$ $1\leq i,j\leq N,\, i\neq j$ in $\Mat_n$ and the coroots with the diagonal matrices $\E_{i,i}-\E_{i+1,i+1}$.
For types $BCD$, it is useful to take the index set of the basis of the natural representation to be 
\begin{itemize}
    \item Type $B$: $\{0,1,\dots,N,-1,\dots,-N\}$,
    \item Type $C$: $\{1,\dots,N,-1,\dots,-N\}$,
    \item Type $D$: $\{1,\dots,N,-1,\dots,-N\}$,
\end{itemize}
so that the basis of $\g$ is given by the matrices
\begin{align*}
    BD:\ \E^{o}_{i,j}=\E_{i,j}-\E_{-j,-i},\qquad
    C:\ \E^{s}_{i,j}=\begin{cases}
    \E_{i,j}-\E_{-j,-i} & (ij>0) \\
    \E_{i,j}+\E_{-j,-i} & (ij<0 \text{ and } |i|\neq |j|)\\
    \E_{i,j}  & (ij<0 \text{ and } |i|=|j|)
\end{cases}.
\end{align*}

\subsection{Nilpotent orbits and good gradings}\label{sec: Nilpotent orbits}

Let $\mathcal{N}\subset \g$ be the nilpotent cone, i.e. the set consisting of $\ad$-nilpotent elements in $\g$. It is closed under the adjoint action of the corresponding simple algebraic group $G$, which decomposes $\mathcal{N}$ into the conjugacy classes, called the nilpotent orbits.

For $\g=\sll_N$, the Jordan classification says that the nilpotent orbits are parametrized by the partitions $\lambda=[\lambda_1^{m_1},\dots,\lambda_s^{m_s}]$ (with $\lambda_1>\dots >\lambda_s>0$) of $N$. Setting $\OO_{\lambda}$ the nilpotent orbit corresponding to the partition $\lambda$,
we have 
$\mathcal{N}(\sll_N)=\bigsqcup_{\lambda\in \mathscr{P}_{N}} \OO_\lambda$ with $\mathscr{P}_{N}$ the set of partitions of $N$. 
For $\g$ of type $BCD$, the standard realization gives a similar description of the nilpotent orbits
\begin{align}
    \Nil(\so_{2n+1})=\bigsqcup_{\lambda\in \mathscr{P}^o_{2n+1}} \OO_\lambda,\quad 
    \Nil(\spp_{2n})=\bigsqcup_{\lambda\in \mathscr{P}^s_{2n}} \OO_\lambda,\quad 
    \Nil(\so_{2n})=\bigsqcup_{\lambda\in \mathscr{P}^o_{2n}} \OO_\lambda^\blacklozenge
\end{align}
    with 
\begin{align}
\mathscr{P}^o_{N}=\{\lambda\vdash N\mid \lambda_i\in 2\Z \Rightarrow m_i\in 2\Z\},\quad
\mathscr{P}^s_{N}=\{\lambda\vdash N\mid \lambda_i\notin 2\Z \Rightarrow m_i\in 2\Z\}.  
\end{align}
In type $D$, the symbol $\blacklozenge$ means that we have two disjoint orbits $\OO_\lambda^\mathrm{I}$ and $\OO_\lambda^\mathrm{II}$ in the case when $\lambda$ is very even, i.e., $\lambda_i\in 2\Z$ for all $i$.
See \cite{CM93} for more details.

Given a nilpotent element $f\in\g$, a $\frac{1}{2}\Z$-grading $\Gamma\colon  \g=\bigoplus_{j\in \frac{1}{2}\Z}\g_j$ is said \emph{good} for $f$ if it satisfies the following conditions 
\begin{align}
   f \in \g_{-1},\quad \ker \ad_f \cap \g_+=0,\quad \g_{-}\subset \im \ad_f
\end{align}
where we set $\g_{\pm}=\bigoplus_{j>0}\g_{\pm j}$.
The pair $(f,\Gamma)$ is called a \emph{good pair} \cite{KRW03}. 
Two good pairs $(f,\Gamma)$, $(f',\Gamma')$ are equivalent if they are conjugated under the adjoint $G$-action.
Given a (non-zero) nilpotent element, the good gradings up to equivalence are classified \cite{EK05} and expressed in terms of pyramids  \cite{BG05}. 

The pyramids are numbered Young diagrams with shiftings.
We explain concretely on an example. 
The Young tableau of the nilpotent orbit $\oo{3,2}$ in $\g=\sll_5$ is the diagram  
\begin{equation*}
        \begin{tikzpicture}[every node/.style={draw,regular polygon sides=4,minimum size=1cm,line width=0.04em},scale=0.5, transform shape]
        \node at (0,0)  {};
        \node at (1,0)  {};
        \node at (-1,0) {};
        \node at (0,1)  {};
        \node at (-1,1) {};
    \end{tikzpicture}
\end{equation*}
Here the boxes are of $1\times 1$ size, and we place the Young diagram in the $xy$-plane so that the center of the middle box of the bottom row sets at the origin. 
In type $A$, shiftings are allowed horizontally on upper rows with $\frac{1}{2}$ increments so that their boxes do not exceed the corners of the lower boxes. 
In the above case, all the shiftings are as follows:
\begin{equation*}
    \begin{tikzpicture}[every node/.style={draw,regular polygon sides=4,minimum size=1cm,line width=0.04em},scale=0.5, transform shape]
        \node at (0,0)  {};
        \node at (1,0)  {};
        \node at (-1,0) {};
        \node at (0,1)  {};
        \node at (-1,1) {};
    \end{tikzpicture}\qquad 
        \begin{tikzpicture}[every node/.style={draw,regular polygon sides=4,minimum size=1cm,line width=0.04em},scale=0.5, transform shape]
        \node at (0,0)  {};
        \node at (1,0)  {};
        \node at (-1,0) {};
        \node at (-0.5,1)  {};
        \node at (0.5,1) {};
    \end{tikzpicture}\qquad
        \begin{tikzpicture}[every node/.style={draw,regular polygon sides=4,minimum size=1cm,line width=0.04em},scale=0.5, transform shape]
        \node at (0,0)  {};
        \node at (1,0)  {};
        \node at (-1,0) {};
        \node at (0,1)  {};
        \node at (1,1) {};
    \end{tikzpicture}.
\end{equation*}
Then boxes are labelled from the right to the left by the indexes $\{1,2,3,4,5\}$ of the basis of the natural representation $\C^5$ of $\sll_5$, we obtain for instance the pyramids: 
\begin{equation*}
    \begin{tikzpicture}[every node/.style={draw,regular polygon sides=4,minimum size=1cm,line width=0.04em},scale=0.5, transform shape]
        \node at (1,0)  {1};
        \node at (0,0)  {2};
        \node at (-1,0) {4};
        \node at (0,1)  {3};
        \node at (-1,1) {5};
    \end{tikzpicture}\qquad 
        \begin{tikzpicture}[every node/.style={draw,regular polygon sides=4,minimum size=1cm,line width=0.04em},scale=0.5, transform shape]
        \node at (1,0)  {1};
        \node at (0,0)  {3};
        \node at (-1,0) {5};
        \node at (0.5,1) {2};
        \node at (-0.5,1) {4};
    \end{tikzpicture}\qquad
        \begin{tikzpicture}[every node/.style={draw,regular polygon sides=4,minimum size=1cm,line width=0.04em},scale=0.5, transform shape]
        \node at (1,0)  {1};
        \node at (0,0)  {3};
        \node at (-1,0) {5};
        \node at (1,1) {2};
        \node at (0,1)  {4};
    \end{tikzpicture}
\end{equation*}

From a pyramid, we read the nilpotent element $f$ given by
\begin{align}\label{association of nilpotent orbit}
    f=\sum \delta_{i\rightarrow j}\E_{i,j}
\end{align}
where $\delta_{i\rightarrow j}$ is $1$ if the boxes {\tiny $\numtableaux{i}$} and {\tiny $\numtableaux{j}$} are adjacent with {\tiny $\numtableaux{i}$} on the left of {\tiny $\numtableaux{j}$} and $0$ otherwise; 
thus the Jordan class of $f$ agrees with the one obtained from the original Young diagram. 

The grading is obtained by defining a $\frac{1}{2}\Z$-grading on the natural representation by reading the $x$-coordinates for the basis.
The gradings for the previous labelled pyramids are as follows:
\begin{align}\label{examples of weighted Dynkin diagram}
\begin{tikzpicture}
{\dynkin[root
radius=.1cm,labels={\alpha_1,\alpha_2,\alpha_3, \alpha_4},labels*={1,0,1,0},edge length=0.8cm]A{oooo}};		
\end{tikzpicture}\qquad 
\begin{tikzpicture}
{\dynkin[root
radius=.1cm,labels={\alpha_1,\alpha_2,\alpha_3, \alpha_4},labels*={1/2,1/2,1/2,1/2},edge length=0.8cm]A{oooo}};		
\end{tikzpicture}\qquad 
\begin{tikzpicture}
{\dynkin[root
radius=.1cm,labels={\alpha_1,\alpha_2,\alpha_3, \alpha_4},labels*={0,1,0,1},edge length=0.8cm]A{oooo}};		
\end{tikzpicture}
\end{align}
This is equivalent to take the grading defined by the semisimple element
\begin{equation}
    x=\sum_{i}\left(x_i\E_{i,i}\right)-\frac{1}{N}\left(\sum_{i}x_i\right)I_N
\end{equation}
where $x_i$ is the $x$-coordinate of the center of the box {\tiny $\numtableaux{i}$}.
Indeed, the grading can be expressed by the weights $\Gamma(\alpha)=x(\alpha_i)$ of simple roots summarized in the weighted Dynkin diagrams as \eqref{examples of weighted Dynkin diagram}. 

In the above examples, the good gradings are compatible with the triangular decomposition in the sense that $\g_\pm \subset \n_\pm$ holds. This is the consequence 
of labelling the pyramids with the smaller numbers to the right. 
In this paper, all the good pairs will be taken in this way.
We will use the decomposition of the root system $\Delta$ with respect to the good grading $\Gamma$:
\begin{align}
    \Delta_j=\{\alpha \in \Delta; \Gamma(\alpha)=j\},\quad \Delta_{>0}=\{\alpha \in \Delta_+; \Gamma(\alpha)>0\}. 
\end{align}

For type $BCD$, some additional complications appear.
In particular, the Young diagrams are modified and placed in the $xy$ plane so the pyramids are invariant under half rotation and the labelling is skew-symmetric.
For instance, in type $C$, we have the following pyramids for the orbits $\oo{3^2}$, $\oo{4^2}$ and $\oo{6,2}$ of $\spp_6$ and $\spp_8$:
\begin{equation*}
\begin{tikzpicture}[every node/.style={draw,regular polygon sides=4,minimum size=1cm,line width=0.04em},scale=0.58, transform shape]
        \node at (1,1)  { 1};
        \node at (0,1)  { 3};
        \node at (-1,1)  {-2};
        \node at (1,0)  { 2};
        \node at (0,0)  { -3};
        \node at (-1,0)  { -1};
    \end{tikzpicture}\qquad 
\begin{tikzpicture}[every node/.style={draw,regular polygon sides=4,minimum size=1cm,line width=0.04em},scale=0.58, transform shape]
        \node at (2,1) { 1};
        \node at (1,1) { 2};
        \node at (0,1) { 4};
        \node at (-1,1) { -3};
        \node at (1,0) { 3};
        \node at (0,0) { -4};
        \node at (-1,0) { -2};
        \node at (-2,0) { -1};
    \end{tikzpicture}\qquad 
    \begin{tikzpicture}[every node/.style={draw,regular polygon sides=4,minimum size=1cm,line width=0.04em},scale=0.58, transform shape]
        \node at (2.5,0) (4) { 1};
        \node at (1.5,0) (4) { 2};
        \node at (0.5,0) (4) { 4};
        \node at (-2.5,0) (4) { -1};
        \node at (-1.5,0) (4) { -2};
        \node at (-0.5,0) (4) { -4};
        \node at (0.5,1) (4) { 3};
        \node at (-0.5,1) (4) { $\times$};
        \node at (0.5,-1) (4) { $\times$};
        \node at (-0.5,-1) (4) { -3};
    \end{tikzpicture}
\end{equation*}
In general, if a partition $\lambda=[\lambda_1^{m_1},\dots,\lambda_s^{m_s}]$ has a (necessarily even) coefficient $\lambda_i$ ($i=2,\dots s$) that appears with an odd multiplicity $m_i$
then we replace the first row of boxes 
{\tiny $\numtableaux{{} & \rs{0.7}{\cdots} &  & {} &\rs{0.7}{\cdots}& {}}$} 
of length $\lambda_i$ by two rows 
{\tiny $\numtableaux{{} & \rs{0.7}{\cdots} &{} & \times &\rs{0.7}{\cdots} & \times }$} and 
{\tiny $\numtableaux{ \times & \rs{0.7}{\cdots} & \times & {} & \rs{0.7}{\cdots} & {}}$} 
that are placed skew-symmetrically.
The associated grading is defined as before and the nilpotent element $f$ is given by 
\begin{align}
    f=\sum \delta_{i\rightarrow j}\E_{i,j}+ \sum \delta_{\times\rightarrow i}\E_{-i,i}
\end{align}
where $\delta_{i\rightarrow j}$ is as before and $\delta_{\times\rightarrow i}$ is $1$ if the boxes {\tiny $\numtableaux{\times & i}$} 
appear in the pyramids 
--- then {\tiny $\numtableaux{\text{-}i & \times }$} also appear by skew-symmetry ---
and $0$ otherwise.
We refer to \cite{BG05} for the general detailed construction. 
In the following, the pyramids and the associated good pairs will be presented explicitly for the reader's convenience.

\subsection{\tW-algebras}
Given a simple Lie algebra $\g$, denote by
\begin{equation}
\widehat{\g}=\g[t^{\pm 1}] \oplus \ \C K
\end{equation}
the affine Kac--Moody algebra associated with $\g$. 
The universal affine vertex algebra $\V^\kk(\g)$ associated with $\g$ at level $\kk\in\C$ is the parabolic Verma $\widehat{\g}$-module
\begin{equation}
    \V^\kk(\g)=U(\widehat{\g})\otimes_{U(\g[t]\oplus\C K)}\C_\kk
\end{equation}
where $\C_\kk$ is the one-dimensional representation of $\g[t]\oplus\C K$ on which $\g[t]$ acts trivially and $K$ acts as the multiplication by the scalar $\kk$.
There is a unique vertex algebra structure on $\V^\kk(\g)$, which is generated by the fields
\begin{equation}
    u(z)=Y(u_{-1},z)=\sum_{n\in\Z}u_n z^{-n-1},\qquad u_n=x\otimes t^n,\, u\in\g,
\end{equation}
satisfying the OPEs
\begin{align}
    u(z)v(w)\sim \frac{[u,v](w)}{(z-w)}+\frac{\kk(u,v)}{(z-w)^2}.
\end{align}
where $(\cdot, \cdot)$ is the normalized invariant bilinear form on $\g$.

In general, we will denote by $Y(A,z)$ --- or briefly $A(z)$ ---
the field associated with an element $A$ in a vertex algebra, by $\wun$ the vacuum vector, and use the notation $AB$ instead of $:AB:$ to define the normally ordered product of $A$ and $B$ for simplicity. 

The $\W$-algebras are vertex algebras obtained as the quantum Hamiltonian reductions of the affine vertex algebras $\V^\kk(\g)$ \cite{FF90, KRW03}. 
Fix a good pair $(f, \Gamma)$ consisting of a (non-zero) nilpotent element $f$ and a $\frac{1}{2}\Z$-grading $\Gamma\colon \g=\bigoplus_j\g_j$ good with respect to $f$.
Then the homogeneous space $\g_{1/2}$ has a symplectic structure defined by the bilinear form $\langle u,v \rangle=(f,[u,v])$. 
We associate with it the neutral free fermion vertex algebra $\Phi(\g_{1/2})$ which is strongly and freely generated by the fields $\Phi_\alpha(z)=\sum \Phi_{\alpha,n}z^{-n-1}$ ($\alpha\in \Delta_{1/2}$) satisfying the OPEs
\begin{align}
 \Phi_\alpha(z)\Phi_\beta(w)\sim \frac{\langle e_\alpha, e_\beta \rangle}{(z-w)}.
\end{align}

Let $\bigwedge{}^{\bullet}_{\varphi,\varphi^*}$ denote the $bc$-system strongly and freely generated by the odd fields $\varphi(z),\varphi^*(z)$ satisfying the OPEs
\begin{align}
 \varphi(z)\varphi^*(w)\sim \frac{1}{(z-w)},\quad  \varphi(z)\varphi(w)\sim 0\sim \varphi^*(z)\varphi^*(w).
\end{align}
For each positively $\Gamma$-graded root $\alpha$ in $\Delta_{>0}$, let us denote by $\bigwedge{}^{\bullet}_{\varphi_\alpha,\varphi^*_\alpha}$ the copy of $\bigwedge{}^{\bullet}_{\varphi,\varphi^*}$ generated by the fields $\varphi_\alpha(z),\varphi^*_\alpha(z)$.

The BRST complex associated to the good pair $(f, \Gamma)$ is defined as
\begin{align}\label{BRST cohomology}
    C_f^\bullet(\V^\kk(\g))=\V^\kk(\g)\otimes \Phi(\g_{1/2}) \otimes \chfermion,\quad
    \chfermion:=\bigotimes_{\alpha \in \Delta_{>0}} \bigwedge{}^{\bullet}_{\varphi_\alpha,\varphi^*_\alpha}
\end{align}
and equipped with the differential 
\begin{align}\label{BRST differential}
    d=\int Y(Q,z)\ \dd z,\quad Q=Q_{\mathrm{st}}+Q_\Phi+Q_\chi
\end{align}
where
\begin{equation}
    \begin{aligned}
    &Q_{\mathrm{st}}=\sum_{\alpha\in\Delta_{>0}} e_\alpha \varphi_\alpha^*-\frac{1}{2}\sum_{\alpha,\beta,\gamma\in\Delta_{>0}} c_{\alpha,\beta}^\gamma \varphi_\alpha^*\varphi_\beta^* \varphi_\gamma,\\
    &Q_\chi=\sum_{\alpha\in\Delta_{>0}} (f,e_\alpha)\varphi_\alpha^*,\qquad Q_\Phi=\sum_{\alpha \in \Delta_{1/2}} \Phi_\alpha \varphi_\alpha^*.
\end{aligned}
\end{equation}
The $\W$-algebra $\W^\kk(\g,f)$ associated with $\g$ and the good pair $(\g,f)$ at level $\kk$ is defined as 
\begin{align}
    \W^\kk(\g,f)=\HH^0(C_f^\bullet(\V^\kk(\g)),d).
\end{align}
{Note that $\W^\kk(\g,f)$ is freely strongly generated by fields that correspond to a basis of the centralizer $\g^f$ homogeneous for the grading $\Gamma$ \cite{KW04}.}
However, $\W^\kk(\g,f)$ is independent as a vertex algebra of the choice of the representative $f$ in a given nilpotent orbit $\OO$ and the good grading $\Gamma$ \cite{AKM15} so that we will occasionally use the notation $\W^\kk(\g,\OO)$ in the following.
In addition, replacing $\V^\kk(\g)$ by a $\V^\kk(\g)$-module $M$ in \eqref{BRST cohomology}, we obtain the functor from the category of $\V^\kk(\g)$-modules to the category of $\W^\kk(\g,\OO)$-modules, 
\begin{align}
    \HH^0_\OO\colon \V^\kk(\g)\mod \rightarrow \W^\kk(\g,\OO)\mod.
\end{align}

For later use, let us rewrite the BRST complex \eqref{BRST cohomology} using the semi-infinite cohomology for the Lie algebra $\g_+(\!(t)\!)$. Consider the $\g_+(\!(t)\!)$-module
\begin{align}
    \Phi^\chi(\g):=U(\g_+(\!(t)\!))\otimes_{U(\mathscr{L})}\C_{\chi},
\end{align}
where $\mathscr{L}:=\g_{\frac{1}{2}}[\![t]\!]\ltimes \g_{\geq1}(\!(t)\!)$ is a Lie superalgebra acting on the one-dimensional representation $\C_{\chi}$ by
\begin{equation}
    u_n \mapsto \mathrm{Res}_{t=0} (f,u)t^n \mathrm{d}t=\delta_{n,-1}(f,u).
\end{equation}
As vector space, $\Phi^\chi(\g)\simeq U(\g_{\frac{1}{2}}[t^{-1}]t^{-1})$.
It has a structure of the vertex algebra given by the isomorphism
\begin{equation}
\Phi^\chi(\g)\xrightarrow{\sim} \Phi(\g_{1/2}),\quad e_{\alpha^1,n_1}\dots e_{\alpha^r,n_r}\mapsto \Phi_{\alpha^1,n_1}\dots \Phi_{\alpha^r,n_r},    
\end{equation}
{for $\alpha^i\in\Delta_{1/2}$, $n_i\in\Z$.}
Moreover, it is a $\g_+(\!(t)\!)$-module by construction {and the action} is given by
\begin{align}
   F^\chi\colon e_{\alpha}(z)\mapsto 
   \begin{cases}
       \Phi_\alpha(z) & \text{if }\Gamma(\alpha)=\tfrac{1}{2},\\
       (f,e_\alpha) & \text{if } \Gamma(\alpha)\geq 1.
   \end{cases}
\end{align}

Consider the tensor product representation $\V^\kk(\g)\otimes \Phi^\chi(\g)$ over $\g_+(\!(t)\!)$ and define the semi-infinite complex
\begin{align}\label{semi-infinite complex}
    C^{\frac{\infty}{2}+\bullet}(\g_+(\!(t)\!); {\V^\kk(\g)}\otimes \Phi^\chi(\g)):= ({\V^\kk(\g)}\otimes \Phi^\chi(\g))\otimes \chfermion
\end{align}
equipped with the differential $d_{\mathrm{st}}=\int Y(Q_{\mathrm{st}},z)$, see \eqref{BRST differential}.
Hence, \eqref{semi-infinite complex} agrees with the BRST complex $C_f^\bullet(\V^\kk(\g))$.
Replacing $\V^\kk(\g)$ by a $\V^\kk(\g)$-module $M$, we again obtain an isomorphism
\begin{align}\label{BRST is semi-infinite}
    \HH^\bullet_\OO(M)\simeq \HH^{\frac{\infty}{2}+\bullet}(\g_+(\!(t)\!);M\otimes \Phi^\chi(\g)).
\end{align}

\subsection{Automorphisms of root systems and isomorphisms of \tW-algebras} \label{sec: Automorphisms of root systems}
Recall that the automorphism group $\Aut(\Delta)$ of the root system $\Delta$ decomposes into the semidirect product
 \begin{align}\label{semidirect product of auto}
     \Aut(\Delta)=\Gamma\ltimes W
 \end{align}
of the Weyl group $W$ of $\g$ and the automorphism group $\Gamma$ of the associate Dynkin diagram \cite{Hum72}.
 Given an automorphism $\tau\in\Aut(\Delta)$, $(h_{\tau(\alpha_i)}(\tau(\alpha_j)))_{i,j=1}^\ell$ is the Cartan matrix $C$ of $\g$ by definition. Hence, the Chevalley--Serre presentation of $\g$ implies that the linear map
\begin{align}\label{Lie alg auto from Dynkin auto}
    \g\xrightarrow{\sim} \g,\quad e_\alpha\mapsto e_{\tau(\alpha)},\ h_{\alpha_i}\mapsto h_{\tau(\alpha_i)}
\end{align} 
is an isomorphism of Lie algebras, that we still denote by $\tau$.

\begin{theorem}\label{auto-independence of Walg}
For a nilpotent element $f\in\g$ and an automorphism $\tau\in \Aut(\Delta)$, there exists an isomorphism of vertex algebras
$$\W^\kk(\g,f)\simeq\W^\kk(\g,\tau (f) ).$$
\end{theorem}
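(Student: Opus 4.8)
The plan is to realize $\tau$ as an automorphism of the affine vertex algebra $\V^\kk(\g)$, transport the good pair for $f$ to one for $\tau(f)$ along $\tau$, check that $\tau$ carries the entire BRST datum attached to $(\g,f)$ to the one attached to $(\g,\tau(f))$, and then conclude using the independence of the $\W$-algebra on the choice of good grading. So the first step is to promote $\tau$ to the Lie algebra automorphism of $\g$ of \eqref{Lie alg auto from Dynkin auto}, $e_\alpha\mapsto e_{\tau(\alpha)}$, and to record two facts: it carries the chosen root vectors and structure constants equivariantly, $c^{\tau(\gamma)}_{\tau(\alpha),\tau(\beta)}=c^{\gamma}_{\alpha,\beta}$; and it preserves the normalized invariant bilinear form $(\cdot,\cdot)$. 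The latter follows from the decomposition $\Aut(\Delta)=\Gamma\ltimes W$: the Weyl component manifestly preserves the form, and a nontrivial Dynkin automorphism occurs only in the simply-laced types, where all roots have equal length, so any such automorphism permutes $\Delta$ isometrically. This invariance of $(\cdot,\cdot)$ is exactly what makes $u_{-1}\mapsto(\tau u)_{-1}$ extend to an isomorphism of vertex algebras $\tau\colon\V^\kk(\g)\xrightarrow{\sim}\V^\kk(\g)$.

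Next, given a good pair $(f,\Gamma)$, I would set ${}^{\tau}\Gamma$ to be the grading with $j$-th piece $\tau(\g_j)$; applying $\tau$ to the three defining conditions of a good grading shows that $(\tau(f),{}^{\tau}\Gamma)$ is good for $\tau(f)$, and on roots one has ${}^{\tau}\Gamma=\Gamma\circ\tau^{-1}$. Consequently $\tau$ restricts to a symplectic isomorphism $\g_{1/2}\xrightarrow{\sim}\g_{1/2}^{{}^\tau\Gamma}$ — the pairing $\langle u,v\rangle=(f,[u,v])$ is preserved by $\tau$-invariance of $(\cdot,\cdot)$ — and to a bijection $\Delta_{>0}\xrightarrow{\sim}\Delta_{>0}^{{}^\tau\Gamma}$, $\alpha\mapsto\tau(\alpha)$. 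Therefore $\tau$ extends canonically to an isomorphism of each of the three tensor factors in \eqref{BRST cohomology}: the affine factor, the neutral fermions $\Phi(\g_{1/2})$, and the charged fermions $\chfermion$ (via $\varphi_\alpha\mapsto\varphi_{\tau(\alpha)}$, $\varphi^*_\alpha\mapsto\varphi^*_{\tau(\alpha)}$, which preserves the ghost grading), hence to an isomorphism $C_f^\bullet(\V^\kk(\g))\xrightarrow{\sim}C_{\tau(f)}^\bullet(\V^\kk(\g))$ with the target built from $(\tau(f),{}^{\tau}\Gamma)$. A direct substitution then shows that $\tau(Q_{\mathrm{st}})$, $\tau(Q_\Phi)$, $\tau(Q_\chi)$ equal the corresponding terms of the target charge — for $Q_\chi$ and $Q_\Phi$ using $\tau$-invariance of $(\cdot,\cdot)$, for $Q_{\mathrm{st}}$ using the equivariance of the structure constants — so $\tau$ intertwines the differentials and induces an isomorphism on cohomology in every degree, in particular on $\HH^0$. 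Finally, by \cite{AKM15} the $\W$-algebra obtained from the good pair $(\tau(f),{}^{\tau}\Gamma)$ coincides with $\W^\kk(\g,\tau(f))$, which yields the statement.

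I do not expect a genuine obstacle here; the step needing care is bookkeeping. When $\tau$ has a nontrivial Weyl component one has $\tau(\Delta_+)\neq\Delta_+$, so ${}^{\tau}\Gamma$ need not be compatible with the fixed triangular decomposition (the condition $\g_\pm\subset\n_\pm$ may fail), and $(\tau(f),{}^{\tau}\Gamma)$ is a good pair of the general type rather than the triangular-compatible one used elsewhere in the paper; this is precisely why the appeal to \cite{AKM15} is made at the end. It is also worth noting that the statement has new content exactly for outer $\tau$, for instance the order-two diagram automorphism (or triality) in type $D_n$, where $\tau(f)$ may lie in a nilpotent orbit different from that of $f$; for $\tau$ lifting to an inner automorphism one simply recovers the known independence of $\W^\kk(\g,f)$ on the representative of $\OO$.
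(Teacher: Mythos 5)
Your proposal is correct and follows essentially the same route as the paper: promote $\tau$ to the Lie algebra automorphism $e_\alpha\mapsto e_{\tau(\alpha)}$, transport the whole BRST datum (affine factor, neutral fermions, charged fermions) along it, check via invariance of $(\cdot,\cdot)$ and equivariance of structure constants that the differentials correspond, and conclude on cohomology. The only cosmetic differences are that the paper fixes the Dynkin grading of an $\sll_2$-triple rather than transporting an arbitrary good grading, and it gets form-invariance directly from the fact that any Lie algebra automorphism preserves the Killing form, so your simply-laced detour is unnecessary though harmless; your closing remark on the appeal to \cite{AKM15} matches the paper's implicit use of that result.
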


\begin{proof}
The case $f=0$ is clear, so we assume $f\neq 0$ and take an $\sll_2$-triple $\{e,h=2x,f\}$. Then $\{\tau (e), \tau (h)=2 \tau (x),\tau (f)\}$ is an $\sll_2$-triple for $\tau (f)$. 
Let $\Gamma_x$ and $\Gamma_{\tau(x)}$ be the Dynkin grading with respect to $x$ and $\tau(x)$ respectively.
We compare the 
BRST complexes corresponding to the good pairs $(f,\Gamma_x)$, $(\tau(f),\Gamma_{\tau(x)})$ that define
$\W^\kk(\g,f), \W^\kk(\g,\tau (f) )$:
\begin{align}
    C_f^\bullet(\V^\kk(\g))=\V^\kk(\g)\otimes \Phi(\g_{1/2}) \otimes \chfermion,\quad C_{\tau (f)}^\bullet(\V^\kk(\g))=\V^\kk(\g)\otimes \Phi(\g_{1/2}^\tau) \otimes \chfermion^\tau. 
\end{align}
Here we denote by $\Phi(\g_{1/2}^\tau)$ the neutral free fermion vertex algebra for the good pair $(\tau (f), \Gamma_{\tau(x)})$ and similarly for $\chfermion^\tau$. 
Note that they are resepctively generated by $\Phi_{\tau(\alpha)}$ $(\alpha \in \Delta_{1/2})$ and $\varphi_{\tau(\alpha)}, \varphi^*_{\tau(\alpha)}$ $(\alpha \in \Delta_{>0})$.
It is straightforward to show that 
  \begin{align}\label{isom on BRST complexes}
  \begin{array}{ccccl}
       \Upsilon\colon& C_f^\bullet(\V^\kk(\g)) &\rightarrow & C_{\tau (f)}^\bullet(\V^\kk(\g)) &  \\
        &u  &\mapsto & \tau(u)& (u\in \g)\\
        &\Phi_\alpha & \mapsto & \Phi_{\tau (\alpha)}\ &(\alpha \in \Delta_{1/2})\\
       &\varphi_\alpha, \varphi^*_\alpha &\mapsto & \varphi_{\tau(\alpha)}, \varphi^*_{\tau(\alpha)} &(\alpha \in \Delta_{>0})
  \end{array}
  \end{align}
is an isomorphism of vertex algebras.
Indeed, the automorphism $\tau\in\Aut(\g)$ (see \eqref{Lie alg auto from Dynkin auto}) preserves the Killing form and thus the normalized invariant bilinear form $(\cdot, \cdot)$. 
For instance, for all $u,v\in\g$ and $\alpha,\beta\in\Delta_{1/2}$, we have
\begin{align*}
    \tau(u)(z) \tau(v)(w)&\sim \frac{[\tau(u),\tau(v)](w)}{(z-w)}+\frac{\kk(\tau(u),\tau(v))}{(z-w)^2}\\
    &= \frac{\tau([u,v])(w)}{(z-w)}+\frac{\kk(u,v)}{(z-w)^2}\sim \tau\left( u(z)v(w)\right),\\
    \Phi_{\tau(\alpha)}(z)\Phi_{\tau(\beta)}(w)&\sim \frac{\langle e_{\tau(\alpha)},e_{\tau(\beta)}\rangle}{(z-w)}\\
    &=\frac{(\tau(f),[e_{\tau(\alpha)},e_{\tau(\beta)}])}{(z-w)}=\frac{(f,[e_{\alpha},e_{\beta}])}{(z-w)}\sim \Phi_{\alpha}(z)\Phi_{\beta}(w).
\end{align*}

Now, it is clear that $\Upsilon$ identifies the differentials $d$ and $d_\tau$ given by the formula \eqref{BRST differential}.
Hence, $\Upsilon$ induces an isomorphism of their cohomologies, in particular their degree $0$ cohomologies that are the $\W$-algebras. This completes the proof.

\end{proof}

 \begin{remark}
     For non-critical levels, the isomorphism in Theorem \ref{auto-independence of Walg} also preserves the Kac--Roan--Wakimoto conformal vectors \cite{KRW03} and thus it is an isomorphism as vertex operator algebras.
 \end{remark}

The automorphisms in $\Aut (\g)$ fall into two cases: inner and outer automorphisms. 
The first case conjugates nilpotent elements inside the same orbits. 
The automorphisms coming from $W$ in \eqref{semidirect product of auto} correspond to this case, and Theorem \ref{auto-independence of Walg} reads a special case of the result in \cite{AKM15} that $\W$-algebras depend on nilpotent orbits rather than their representatives.
On the other hand, automorphisms coming from $\Gamma$ are outer automorphisms and give more isomorphisms when $\Gamma$ is non-trivial. 
Table \ref{table:out_gp} recalls the automorphism group $\Gamma$ for classical and exceptional Lie types.

\begin{table}[h!]
	\centering
	\renewcommand{\arraystretch}{1.3}
	\begin{tabular}{|c||c|M{1.2cm}|c| M{1.2cm}|c|}
		\hline
		Lie type & $A_n$ & $B_n, C_n$ & $D_n$ & $E_6$ & $E_{7,8}$, $F_4$, $G_2$ \\
		\hline
            $\Gamma$ & $\begin{array}{c} 1\ (n=1)\\ \Z_2\ (n>1) \end{array}$ & $1$ & $\begin{array}{c} \mf{S}_3\ (n=4)\\ \Z_2\ (n>4) \end{array}$ & $\Z_2$ & $1$ \\ \hline
	\end{tabular}
 \captionsetup{font=small }
	\caption{Automorphism groups $\Gamma$ of Dynkin diagrams.}\label{table:out_gp}
\end{table}

Recall that nilpotent orbits are parametrized in terms of weighted Dynkin diagrams \cite[\S3]{CM93}.
Concretely, one may pick up a representative $f$ in each orbit $\OO$ and complete it into an $\sll_2$-triple $(e,h=2x,f)$ --- in the zero-orbit case, we take it to be $(0,0,0)$ --- so that $x\in \h$ and $x(\alpha_i)\in \Z_{\geq0}$. 
By construction, the automorphisms in $\Gamma$ can permute the nilpotent orbits 
by changing the weights $x(\alpha_i)$ of their weighted Dynkin diagrams in an obvious way.

When $\g$ is of type $A$ or $E_6$, 
weighted Dynkin diagrams associated to nilpotent orbits are all invariant under the action of
$\Gamma\simeq \Z_2$ 
\cite[\S3.6, 8.4]{CM93}. 
However, when $\g$ is of type $D_n$ ($n\geq3$), the action of $\Gamma$ sometimes exchanges some nilpotent orbits. 
For $D_n$ ($n>3$), $\Gamma(D_n)\simeq \Z_2$ is generated by the translation $\tau$ which swaps the nodes $\alpha_{n-1}$ and $\alpha_n$ in Figure \ref{table:dynkinD}.
For $D_4$, $\Gamma(D_4)\simeq \mf{S}_3$ is generated by $\tau$ and the three-cycle element $\sigma$ which rotates $\alpha_1,\alpha_3,\alpha_4$ as in Figure \ref{table:dynkinD4}.
\begin{figure}[h]
	\begin{minipage}[l]{0.5\linewidth}
		\begin{center} 
			\begin{tikzpicture}
            {\dynkin[root
            radius=.1cm,labels={\alpha_1,\alpha_2,\alpha_{n-2}, \alpha_{n-1},\alpha_n},edge length=0.8cm]D{oo.ooo}};	
            \draw [<->]  (3,0.4) to [in=30, out=-30]  (3,-0.4);
            \end{tikzpicture}
           \captionsetup{font=small }
            \captionof{figure}{Generator $\tau$ for $\Gamma(D_n)$}\label{table:dynkinD}
		\end{center}
	\end{minipage}
	\begin{minipage}[c]{0.49\linewidth}
		\begin{center}
			\begin{tikzpicture}
            {\dynkin[root
            radius=.1cm,labels={\alpha_1,\alpha_2,\alpha_3, \alpha_4},edge length=0.8cm]D{oooo}};
            \draw [->]  (1.5,0.3) to [in=20, out=-20]  (1.5,-0.3);
            \draw [<-]  (0.1,-0.5) to [in=-150, out=-80]  (0.7,-0.8);
            \draw [->]  (0.1,0.5) to [in=170, out=80]  (0.7,0.8);
            \end{tikzpicture}
           \captionsetup{font= small}
            \captionof{figure}{Generator $\sigma$ of $\Gamma(D_4)$}\label{table:dynkinD4}
		\end{center}
	\end{minipage}
\end{figure}

By \cite[\S5.3]{CM93}, the element $\tau$ permutes the nilpotent orbits $\OO^{\mathrm{I}}_\lambda$ and $\OO^{\mathrm{II}}_\lambda$ for very even partitions $\lambda$ and $\sigma$ permutes cyclically the orbits $\{\OO^\mathrm{I}_{[2^4]}, \OO^\mathrm{II}_{[2^4]}, \OO_{[3,1^5]}\}$ and $\{\OO^\mathrm{I}_{[4^2]}, \OO^\mathrm{II}_{[4^2]}, \OO_{[5,1^3]} \}$, respectively.
Therefore, Theorem \ref{auto-independence of Walg} implies the following. 
\begin{corollary}\label{coincidences of W-alg by Dynkin auto}
\phantom{x}
\begin{enumerate}[wide, labelindent=0pt]
\item For type $D$, i.e. $\g=\so_{2N}$, and very even partitions $\lambda\in \mathcal{P}^o_{2N}$, we have 
    \begin{align*}
    \W^\kk(\so_{2N},\OO_{\lambda}^\mathrm{I})\simeq \W^\kk(\so_{2N},\OO_{\lambda}^{\mathrm{II}}).
    \end{align*}
\item In addition, for $\g=\so_{8}$, we have 
    \begin{align*}
    \W^\kk(\so_{8},\OO^\blacklozenge_{[2^4]})\simeq \W^\kk(\so_{8},\OO_{[3,1^5]}),\quad \W^\kk(\so_{8},\OO^{\blacklozenge}_{[4^2]})\simeq \W^\kk(\so_{8},\OO_{[5,1^3]})
    \end{align*}
    for $\blacklozenge= \mathrm{I}, \mathrm{II}$.
    \end{enumerate}
\end{corollary}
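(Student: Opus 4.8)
The plan is to obtain the corollary as an immediate consequence of Theorem~\ref{auto-independence of Walg}, once the action of the relevant Dynkin-diagram automorphisms on the nilpotent orbits is recorded.

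First I would fix the automorphisms. For $\g=\so_{2N}$ with $N>4$, the group $\Gamma=\Gamma(D_N)\cong\Z_2$ is generated by the element $\tau$ exchanging the fork nodes $\alpha_{N-1}\leftrightarrow\alpha_N$ of Figure~\ref{table:dynkinD}; for $\g=\so_8$, $\Gamma=\Gamma(D_4)\cong\mathfrak{S}_3$ is generated by $\tau$ together with the triality three-cycle $\sigma$ cycling $\alpha_1,\alpha_3,\alpha_4$ as in Figure~\ref{table:dynkinD4}. By \eqref{Lie alg auto from Dynkin auto} each of these lifts to a Lie algebra automorphism of $\g$, still denoted $\tau,\sigma$; these are the outer automorphisms of $\so_{2N}$ induced by conjugation by an element of the orthogonal group not lying in $\mathrm{SO}_{2N}$ (respectively, for $\so_8$, the triality automorphisms), so their effect on nilpotent orbits is the classical one.

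Second I would recall that effect, already quoted from \cite[\S5.3]{CM93} in the text preceding the statement: $\tau$ interchanges $\OO^{\mathrm I}_\lambda$ and $\OO^{\mathrm{II}}_\lambda$ for every very even partition $\lambda$, while $\sigma$ permutes cyclically $\{\OO^{\mathrm I}_{[2^4]},\OO^{\mathrm{II}}_{[2^4]},\OO_{[3,1^5]}\}$ and $\{\OO^{\mathrm I}_{[4^2]},\OO^{\mathrm{II}}_{[4^2]},\OO_{[5,1^3]}\}$. Concretely: for (1) pick a representative $f\in\OO^{\mathrm I}_\lambda$, so that $\tau(f)\in\OO^{\mathrm{II}}_\lambda$; for (2) pick $f\in\OO^{\blacklozenge}_{[2^4]}$ and the power $\sigma^{\pm1}$ for which $\sigma^{\pm1}(f)\in\OO_{[3,1^5]}$, and similarly $g\in\OO^{\blacklozenge}_{[4^2]}$ with $\sigma^{\pm1}(g)\in\OO_{[5,1^3]}$.

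Finally I would invoke Theorem~\ref{auto-independence of Walg}, which yields vertex algebra isomorphisms $\W^\kk(\so_{2N},f)\simeq\W^\kk(\so_{2N},\tau(f))$ in case (1) and $\W^\kk(\so_8,f)\simeq\W^\kk(\so_8,\sigma^{\pm1}(f))$ (and the analogue for $g$) in case (2). Because $\W^\kk(\g,-)$ depends only on the nilpotent orbit of its argument \cite{AKM15}, the two sides of each isomorphism are precisely the $\W$-algebras named in the statement, which completes the argument. There is no genuine obstacle here, the content being entirely in Theorem~\ref{auto-independence of Walg}; the one point to check with some care is that the combinatorial automorphism produced by the Chevalley--Serre recipe \eqref{Lie alg auto from Dynkin auto} agrees, up to an inner automorphism, with the geometric outer/triality automorphism whose action on orbits is the one tabulated in \cite{CM93}.
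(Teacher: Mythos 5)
Your proposal is correct and follows exactly the paper's route: the corollary is deduced by combining Theorem~\ref{auto-independence of Walg} with the action of $\tau$ and $\sigma$ on nilpotent orbits recorded from \cite[\S5.3]{CM93} in the paragraph immediately preceding the statement. Your extra remark about matching the Chevalley--Serre lift with the geometric outer/triality automorphism up to inner automorphisms is a reasonable point of care, but it is harmless here since $\W^\kk(\g,-)$ depends only on the orbit.
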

\noindent
For this reason, we write $\W^\kk(\so_{2N},\OO_{\lambda})$ instead of  $\W^\kk(\so_{2N},\OO_{\lambda}^{\mathrm{I}})\simeq \W^\kk(\so_{2N},\OO_{\lambda}^{\mathrm{II}})$.

To end this section, let us visualize the nilpotent orbits of $\so_{8}$ for convenience. 
The Hesse diagram of nilpotent orbits is given in Table \ref{table: Hesse so8}. In the table, two nilpotent orbits are connected if and only if the left one appears in the closure of the right one. 
Corollary \ref{coincidences of W-alg by Dynkin auto} \textit{(2)} means that the $\W$-algebras corresponding to nilpotent orbits in the same columns are all isomorphic. 
\begin{center}
\begin{table}[h!]
\begin{tikzpicture}[scale=.8]
  \node (a) at (0,0) {$\OO_{[1^8]}$};
  \node (b) at (2,0) {$\OO_{[2^2,1^4]}$};
  \node (c1) at (4,1) {$\OO_{[3,1^5]}$};
  \node (c2) at (4,0) {$\OO_{[2^4]}^{\mathrm{I}}$};
  \node (c3) at (4,-1) {$\OO_{[2^4]}^{\mathrm{II}}$};
  \node (d) at (6,0) {$\OO_{[3,2^2,1]}$};
  \node (e) at (8,0) {$\OO_{[3^2,1^2]}$};
  \node (f1) at (10,1) {$\OO_{[5,1^3]}$};
  \node (f2) at (10,0) {$\OO_{[4^2]}^{\mathrm{I}}$};
  \node (f3) at (10,-1) {$\OO_{[4^2]}^{\mathrm{II}}$};
  \node (g) at (12,0) {$\OO_{[5,3]}$};
  \node (h) at (14,0) {$\OO_{[7,1]}$};
  \draw (a) -- (b) -- (c1) -- (d) -- (c2) -- (b) -- (c3) -- (d);
  \draw (d) -- (e) -- (f1) -- (g) -- (f2) -- (e) -- (f3) -- (g) -- (h);
\end{tikzpicture}
\caption{Hesse diagram of the nilpotent orbits of $\so_8$.}\label{table: Hesse so8}
\end{table}
\end{center}

\section{Partial reductions}\label{sec: main results}
In this section, we present the main results of the paper on partial reductions of $\W$-algebras corresponding to certain partitions of height two.

\subsection{Fundamental reductions}\label{sec: fundamental reduction}
It has been recently conjectured (see \cite{CFLN} for instance) that
$\W$-algebras in type $A$, i.e. $\W^\kk(\sll_N,\OO)$, 
can be recovered by applying successively BRST reductions associated with hook-type partitions --- those of the form $[n,1^m]$. 
For example, there is an isomorphism of vertex algebras \cite{CFLN, FFFN}
\begin{align}
\W^\kk(\sll_4,\oo{2^2})\simeq \HH_{\oo{2}}^0\circ\HH_{\oo{2,1^2}}^0(\V^\kk(\sll_4)).
\end{align}
Here we apply the second reduction $\HH_{\oo{2}}^0$ to the affine vertex subalgebra $\V^{\kk^\sharp}(\sll_2)$ at level $\kk^\sharp=\kk+1$ sitting inside the $\W$-algebra $\W^\kk(\sll_4,\oo{2,1^2})=\HH_{\oo{2,1^2}}^0(\V^\kk(\sll_4))$.

In general, given a partition $\lambda=(\lambda_1\geq \lambda_2 \geq \cdots \geq \lambda_\ell)$,
there is, for all $i=1,\dots,\ell-1$, an affine vertex subalgebra 
\begin{align}
    \V^{\kk^{\sharp}_{i+1}}(\sll_{N_{i+1}})\subset \HH_{\OO_{\widehat{\lambda}_{i}}}^0\cdots \HH_{\OO_{\widehat{\lambda}_{1}}}^0 (\V^\kk(\sll_N)).
\end{align}
where the partitions $\widehat{\lambda}_i$ are 
\begin{equation}\label{eq:hookA}
    N_i=N-(\lambda_1+\lambda_2+\cdots+\lambda_{i}),\quad \widehat{\lambda}_i=[\lambda_i,1^{N_{i}}],\quad \kk^{\sharp}_{i+1}=\kk^\sharp_i+\lambda_i-1.
\end{equation}
The partitions $\widehat{\lambda}_i$ called hook-type \cite{CL22} because of the shape of their associated Young diagrams.
The following conjecture was proposed and checked in small ranks in \cite{CFLN}.
\begin{conjecture}[\cite{CFLN}]
    There is an isomorphism of vertex algebras
    \begin{align*}
       \W^\kk(\sll_N,\OO_{\lambda})\simeq \HH_{\OO_{\widehat{\lambda}_{\ell}}}^0\cdots \HH_{\OO_{\widehat{\lambda}_{1}}}^0 (\V^\kk(\sll_N)).
    \end{align*}
\end{conjecture}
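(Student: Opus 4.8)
The plan is to establish the isomorphism at a generic level $\kk$ first, and then to propagate it to all non-critical levels by a deformation argument in $\kk$, as is done for Theorem~\ref{intro: theorem A}. At generic $\kk$ I would argue by induction on the number of parts $\ell$. Write $\mu=\widehat{\lambda}_1=\oo{\lambda_1,1^{N_1}}$ and $\lambda'=[\lambda_2,\dots,\lambda_\ell]$; then $\widehat{\lambda}_2,\dots,\widehat{\lambda}_\ell$ are precisely the hook-type partitions attached to $\lambda'$ viewed as a partition of $N_1$ (with starting level $\kk^\sharp_2$), so that
\[
\HH^0_{\OO_{\widehat{\lambda}_\ell}}\!\cdots\HH^0_{\OO_{\widehat{\lambda}_1}}\bigl(\V^\kk(\sll_N)\bigr)=\HH^0_{\OO_{\widehat{\lambda}_\ell}}\!\cdots\HH^0_{\OO_{\widehat{\lambda}_2}}\bigl(\W^\kk(\sll_N,\OO_\mu)\bigr),
\]
the iterated reduction on the right being taken along the chain of affine subalgebras of \eqref{eq:hookA}. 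Granting the conjecture for $\lambda'$ together with an analogue of Theorem~\ref{intro: thm B} for hook-type reductions --- i.e.\ the exactness of quantum Hamiltonian reduction on $\KL^{\kk^\sharp_2}(\sll_{N_1})$ and its compatibility with the decomposition of $\W^\kk(\sll_N,\OO_\mu)$ as a module over $\V^{\kk^\sharp_2}(\sll_{N_1})$ --- the right-hand side is identified with the \emph{single} reduction $\HH^0_{\OO_{\lambda'}}(\W^\kk(\sll_N,\OO_\mu))$ of the subalgebra $\V^{\kk^\sharp_2}(\sll_{N_1})$. It then suffices to prove the one-step statement
\[
\HH^0_{\OO_{\lambda'}}\bigl(\W^\kk(\sll_N,\OO_\mu)\bigr)\simeq\W^\kk(\sll_N,\OO_\lambda),\qquad \HH^{\neq0}_{\OO_{\lambda'}}\bigl(\W^\kk(\sll_N,\OO_\mu)\bigr)=0.
\]

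For the one-step statement I would choose a good grading for $f_\lambda$ adapted to the hook splitting: a grading in which the chain of matrix units realizing the first row of the $\lambda$-pyramid is separated, degree-wise, from the rest, so that $f_\lambda=f_\mu+f_{\lambda'}$ with $f_{\lambda'}$ supported in the $\sll_{N_1}$-block and the set $\Delta_{>0}$ for the $\lambda$-grading decomposing compatibly with the two reductions. Using the semi-infinite cohomology description \eqref{BRST is semi-infinite}, the composite $\HH^\bullet_{\OO_{\lambda'}}\circ\HH^\bullet_{\OO_\mu}(\V^\kk(\sll_N))$ is then computed by a bicomplex whose total differential is $d_{f_\mu}+d_{f_{\lambda'}}$ up to terms lowering one of the two ghost charges; filtering by the $\mu$-charge gives a spectral sequence with first page $\HH^\bullet_{\OO_\mu}$, and one has to check that it degenerates and identifies the total cohomology with $\HH^\bullet_{f_\lambda}(\V^\kk(\sll_N))=\indic{\bullet,0}\,\W^\kk(\sll_N,\OO_\lambda)$. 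The two inputs are the vanishing $\HH^{\neq0}_{f_\lambda}(\V^\kk(\sll_N))=0$ for the single $f_\lambda$-reduction and the intermediate vanishing $\HH^{\neq0}_{\OO_{\lambda'}}(\W^\kk(\sll_N,\OO_\mu))=0$.

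The cleanest way I see to secure the intermediate vanishing and the degree-zero identification at the same time is through the Wakimoto realizations recalled in \S\ref{sec: Wakimoto}. One realizes $\W^\kk(\sll_N,\OO_\mu)$ inside a free-field algebra $\fermion_\mu$ in which $\V^{\kk^\sharp_2}(\sll_{N_1})$ appears as the standard Wakimoto $\sll_{N_1}$, and uses that applying $\HH^\bullet_{f_{\lambda'}}$ to such a Wakimoto-realized affine vertex algebra produces the Wakimoto realization of $\W^{\kk^\sharp_2}(\sll_{N_1},\OO_{\lambda'})$, concentrated in cohomological degree $0$. This yields at once the vanishing $\HH^{\neq0}_{\OO_{\lambda'}}(\W^\kk(\sll_N,\OO_\mu))=0$ and a free-field realization of $\HH^0_{\OO_{\lambda'}}(\W^\kk(\sll_N,\OO_\mu))$ inside a larger free-field algebra $\fermion_\lambda$; it then remains to match this with the Wakimoto realization of $\W^\kk(\sll_N,\OO_\lambda)$ inside the same $\fermion_\lambda$, identifying both as the joint kernel of the corresponding screening operators and using that the screenings attached to the $\lambda$-pyramid are precisely the union of those attached to the $\mu$-step and those attached to the $\lambda'$-step. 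Once the generic-level isomorphism is in place, the deformation/continuity argument in $\kk$ used for Theorem~\ref{intro: theorem A} (cf.~\cite{FFFN,FN,Fehily24}) upgrades it to all non-critical levels: the homomorphism $\HH^0_{\OO_{\lambda'}}(\W^\kk(\sll_N,\OO_\mu))\to\W^\kk(\sll_N,\OO_\lambda)$ exists for every $\kk$, both sides are strongly freely generated with the same, level-independent, graded character, and the higher cohomology vanishes generically, so semicontinuity forces an isomorphism for all non-critical $\kk$.

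I expect the main obstacle to lie in the last matching step together with the combinatorics of good gradings: proving that the good-grading and parabolic data attached to $f_\lambda$ decompose \emph{at the level of screening operators} into the hook datum and the $\sll_{N_1}$-datum, and controlling these reductions uniformly in $\lambda$. A geometric alternative would be to chiralize the reduction-by-stages statement for the Slodowy slices $\mathcal{S}_{f_\lambda}$ versus $\mathcal{S}_{f_\mu}$, in the spirit of \cite{GJ23,GJ25}; but this moves the difficulty to lifting the relevant group action to the loop level and to flatness, which looks comparably hard.
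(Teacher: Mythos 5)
The statement you are addressing is a \emph{conjecture} in this paper (attributed to \cite{CFLN} and only ``checked in small ranks'' there); the paper offers no proof of it, so there is nothing to compare your argument against except the special cases the authors actually establish. Judged on its own terms, your text is a strategy outline rather than a proof: every genuinely hard step is deferred. The induction reduces the conjecture to the one-step statement $\HH^0_{\OO_{\lambda'}}(\W^\kk(\sll_N,\OO_\mu))\simeq\W^\kk(\sll_N,\OO_\lambda)$ together with the vanishing $\HH^{\neq0}_{\OO_{\lambda'}}(\W^\kk(\sll_N,\OO_\mu))=0$, and these two claims \emph{are} the content of the conjecture --- they are exactly what is open. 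Your proposed route to them (bicomplex/spectral sequence, then Wakimoto realizations and matching of screening operators) is the natural one, and it is the route the paper takes for its Virasoro-type cases, but there the matching is done by explicit computation: a specific pyramid is chosen, an explicit automorphism of the $\beta\gamma$-system is applied to put the differential in the form $\int Y((\gamma+\wun)\varphi^*,z)\,\dd z$, the induced screening coefficients are computed by hand, and the resulting data is shown to correspond to a good pair for $f_{\widehat{\OO}}$ only after conjugation by an explicitly constructed group element. Your assertion that ``the screenings attached to the $\lambda$-pyramid are precisely the union of those attached to the $\mu$-step and those attached to the $\lambda'$-step'' is precisely the combinatorial identity that would need to be proved uniformly in $\lambda$, and you give no argument for it; likewise the claim that $\V^{\kk^\sharp_2}(\sll_{N_1})\subset\W^\kk(\sll_N,\OO_\mu)\subset\fermion_\mu$ restricts to the \emph{standard} Wakimoto realization of $\sll_{N_1}$ (so that the second reduction again lands in the Wakimoto framework) is assumed, not checked.

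Two further cautions. First, the intermediate vanishing $\HH^{\neq0}_{\OO_{\lambda'}}(\W^\kk(\sll_N,\OO_\mu))=0$ does not follow from Arakawa's vanishing theorem, which applies to the reduction of $\V^\kk(\g)$, not to the reduction of a $\W$-algebra along an affine subalgebra at a shifted level; this is an independent open problem in general. Second, the final continuity step is more delicate than you suggest: the paper itself notes that even for the much smaller Virasoro-type reduction the deformation argument fails in half the type-$BCD$ cases because the exactness of the reduction functor on the relevant category is unavailable, so for a general hook-type reduction one cannot simply invoke it. In short, you have correctly identified the skeleton of a possible proof and correctly located the obstacles, but the proposal does not close any of them; the statement remains a conjecture.
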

As explained in detail in \emph{loc. cit.}, this statement is fundamentally important because it reveals a hidden structure of $\W$-algebras. Under certain restrictions on the levels, the isomorphism is expected to be preserved when considering simple quotients which
would lead in particular to a new proof of the rationality of the exceptional $\W$-algebras \cite{AvE19,  McR21}.

A similar conjecture can be formulated for the other classical Lie types using the analogue hook-type partitions \cite{CKL24, CL21}:
\begin{equation}
    [n^\star,1^{m}]
\end{equation}
with
\begin{equation}
    BD:\ \star=\begin{cases}
    1 & n \text{ odd}, \\
    2 & n \text{ even},
\end{cases} \qquad 
    C:\ \star=\begin{cases}
    2 & n \text{ odd}, \\
    1 & n \text{ even}.
\end{cases}
\end{equation}
We refer to the partitions of the form $[n^2,1^m]$ as \emph{thick} hook-types, which appear in order to respect the parity constraints on the partitions classifying nilpotent orbits in types $BCD$.
Collectively, the thick hook-type partitions in type $C$, and the hook-type partitions \cite{CL21}, are believed to form the building blocks of all $\W$-algebras in types $BCD$ \cite{CKL24}. 

Specifically, consider a partition for the nilpotent orbits for $\so_{N}$ or $\spp_{N}$,
\begin{equation}
    \lambda=(\lambda_1^{m_1} \geq \lambda_2^{m_2} \geq \cdots \geq \lambda_\ell^{m_\ell})
\end{equation}
where $m_i =1,2$ if $\lambda_i\equiv 0,1 $ for type $C$ and $\lambda_i\equiv 1,0 $ for type $BD$, respectively.
Generalizing the notations of \eqref{eq:hookA}, define  
\begin{equation}
    N_i=N-(m_1\lambda_1+m_2\lambda_2+\cdots+m_i\lambda_{i}),\quad \widehat{\lambda}_i^{m_i}=[\lambda_i^{m_i},1^{N_{i}}],
\end{equation}
we find inductively an affine vertex subalgebra 
\begin{equation}
  \begin{aligned}
    &\V^{\kk^{\sharp}_{i+1}}(\so_{N_i})\subset \HH_{\OO_{\widehat{\lambda}_{i}^{m_i}}}^0\cdots \HH_{\OO_{\widehat{\lambda}_{1}^{m_1}}}^0 (\V^\kk(\so_N)),\\
    &\V^{\kk^{\sharp}_{i+1}}(\spp_{N_i})\subset \HH_{\OO_{\widehat{\lambda}_{i}^{m_i}}}^0\cdots \HH_{\OO_{\widehat{\lambda}_{1}^{m_1}}}^0 (\V^\kk(\spp_N)).
\end{aligned}  
\end{equation}
We formulate a generalized version of a conjecture first mentioned in \cite{CKL24}.
\begin{conjecture}[\cite{CKL24}]\label{iterated conjecture for type BCD}
 For $\g=\so_{N}, \spp_{N}$, there is an isomorphism of vertex algebras
    \begin{align*}
       \W^\kk(\g,\OO_{\lambda})\simeq \HH_{\OO_{\widehat{\lambda}_{\ell}^{m_\ell}}}^0\cdots \HH_{\OO_{\widehat{\lambda}_{1}^{m_1}}}^0 (\V^\kk(\g)).
    \end{align*}
\end{conjecture}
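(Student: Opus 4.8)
I would argue by induction on the number $\ell$ of distinct parts of $\lambda$, reducing the statement to a single ``reduction by stages'' step for one (thick) hook at a time, and then invoke the Wakimoto realization of \cite{Gen20} at generic level together with the continuity/flatness argument of \cite{FFFN,FN,Fehily24} to descend to all noncritical levels. First I would record the interpolating partitions $\mu_j:=(\lambda_1^{m_1}\geq\cdots\geq\lambda_j^{m_j}\geq 1^{N_j})$, so that $\mu_0=[1^N]$ and $\mu_\ell=\lambda$; each $\mu_j$ still lies in $\mathscr{P}^o_N$, resp.\ $\mathscr{P}^s_N$ (for type $C$ because $N_j$ is then even, for types $B,D$ because odd parts carry no multiplicity constraint). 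The conjecture becomes the assertion $\W_\ell\simeq\W^\kk(\g,\OO_\lambda)$ for the tower $\W_0=\V^\kk(\g)$, $\W_j:=\HH^0_{\OO_{\widehat{\lambda}_j^{m_j}}}(\W_{j-1})$, together with the bookkeeping that the isomorphism $\W_j\simeq\W^\kk(\g,\OO_{\mu_j})$ matches the distinguished affine subalgebra $\V^{\kk^\sharp_{j+1}}(\g_{N_j})$ on both sides (writing $\g_m$ for $\so_m$, resp.\ $\spp_m$, according to the type of $\g$). The base case is immediate, so the whole content is the inductive step: with $\g_{N_{j-1}}\subset\g$ the classical subalgebra supported on the $[1^{N_{j-1}}]$-block of the pyramid of $\mu_{j-1}$, applying the reduction $\HH^\bullet_{\OO_{\widehat{\lambda}_j^{m_j}}}$ along $\V^{\kk^\sharp_j}(\g_{N_{j-1}})\subset\W^\kk(\g,\OO_{\mu_{j-1}})$ should produce $\delta_{\bullet,0}\,\W^\kk(\g,\OO_{\mu_j})$, including the level shift $\kk^\sharp_{j+1}=\kk^\sharp_j+\lambda_j-1$ of \eqref{eq:hookA}.

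For this single-step claim I would run two complementary arguments. Homologically: choose a good grading for $f_{\mu_j}$ adapted to the pyramid obtained from that of $\mu_{j-1}$ by deforming its $\g_{N_{j-1}}$-block from the $[1^{N_{j-1}}]$-shape to the $[\lambda_j^{m_j},1^{N_j}]$-shape, so that $f_{\mu_j}=f_{\mu_{j-1}}+f'$ with $f'\in\g_{N_{j-1}}$; this exhibits the BRST complex $C^\bullet_{f_{\mu_j}}(\V^\kk(\g))$ as a filtered complex whose associated graded computes first $\HH^\bullet_{\OO_{\mu_{j-1}}}$ and then the $[\lambda_j^{m_j},1^{N_j}]$-reduction of the result. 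Since by induction $\W^\kk(\g,\OO_{\mu_{j-1}})$ is freely generated, with generators labelled by $\g^{f_{\mu_{j-1}}}$ and the affine subalgebra $\V^{\kk^\sharp_j}(\g_{N_{j-1}})$ in an explicit position, the resulting spectral sequence can be read off \emph{provided the higher cohomology is controlled}. Free-field-theoretically: at generic $\kk$, express $\W^\kk(\g,\OO_\lambda)$ via \cite{Gen20} as the joint kernel of a family of screening operators inside an explicit free field algebra, compute the analogous realization of $\W_\ell$ by composing the Wakimoto realizations of the successive hook reductions---each introducing only $\beta\gamma$-systems and Heisenberg factors and one new screening per part---and check that the two free field algebras and the two screening families coincide. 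This is precisely the mechanism used for the height-two cases in \S\ref{sec: proof of main results}, and I would expect it to globalize once the bookkeeping of pyramids and screenings is organized inductively, with the homological route supplying the conceptual picture and the passage to special levels.

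The main obstacle, already anticipated in the introduction, is the higher-cohomology vanishing $\HH^{\neq0}=0$ at each stage together with the graded-character count needed to rigidify the comparison map $\HH^0_{\OO_{\widehat{\lambda}_j^{m_j}}}(\W_{j-1})\to\W^\kk(\g,\OO_{\mu_j})$ into an isomorphism at non-generic levels; this appears to reduce to a classical transversality statement for the nilpotent cone of $\g_{N_{j-1}}$ inside the Slodowy slice attached to $\OO_{\mu_{j-1}}$, which is not treated here (cf.\ \cite{GJ23,GJ25}). A secondary difficulty, specific to types $BCD$, is the uniform combinatorics of good gradings: one must choose them compatibly along the whole tower, which means navigating the row-splitting rule for even parts of odd multiplicity and the very-even subtleties of \S\ref{sec: Nilpotent orbits} (the latter harmless at the level of $\W$-algebras by Corollary \ref{coincidences of W-alg by Dynkin auto}, but still requiring a choice of representative). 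Neither point is conceptually new, but the first is where the genuine work---most likely of a finite/geometric nature---would lie.
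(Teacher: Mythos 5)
The statement you are addressing is stated in the paper as a \emph{conjecture} (attributed to \cite{CKL24}); the paper offers no proof of it, and indeed only establishes special instances of the surrounding circle of ideas (the Virasoro-type reductions for the height-two orbits of Theorems \ref{thm: A} and \ref{thm: BCD}, proved via explicit Wakimoto realizations and case-by-case identification of screening operators). So there is no proof in the paper against which to match your argument, and your proposal does not close the gap either: it is a strategy outline whose central step --- that a single (thick) hook reduction applied along the affine subalgebra $\V^{\kk^\sharp_j}(\g_{N_{j-1}})\subset\W^\kk(\g,\OO_{\mu_{j-1}})$ produces $\delta_{\bullet,0}\,\W^\kk(\g,\OO_{\mu_j})$ --- is precisely the content of the conjecture, restated rather than proved. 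The filtered-complex/spectral-sequence route requires degeneration and higher-cohomology vanishing at each stage, which you correctly flag as open and which the paper itself defers to finite/geometric arguments in the spirit of \cite{GJ23, GJ25}. The free-field route is also not automatic in the generality you need: in the paper, matching the reduced screening operators with those of the target $\W$-algebra is done by exhibiting an explicit good pair and a grading-preserving conjugation $\mathrm{Ad}_g$ for each family, and there is no uniform argument showing that the composition of Wakimoto realizations of successive hook reductions reproduces the screening family of $\W^\kk(\g,\OO_\lambda)$ for an arbitrary partition.

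To be constructive: your reduction of the conjecture to a single-stage statement via the interpolating partitions $\mu_j$, and your two-pronged (homological plus free-field) attack, are consistent with how the paper handles its height-two cases and with the reduction-by-stages philosophy of \eqref{statement of reduction by stages}; the parity bookkeeping for the $\mu_j$ is also correct. But as written the proposal establishes nothing beyond what is already conjectured, because every load-bearing assertion (the inductive step, the vanishing $\HH^{\neq 0}=0$, the identification of screenings for general hooks, and the descent from generic to arbitrary levels) is either assumed or explicitly left open. If you want to make partial progress along these lines, the realistic first target is the single-stage statement for one thick hook on top of one ordinary hook at generic level, where the screening comparison can actually be carried out --- this is essentially what \S\ref{sec: Wsp} does in the rank-stable setting via $\Wsp(c,\kk)$.
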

\noindent
Note that in type $D$, when $N=4n$,
the reduction $\HH_{\OO_{[(2n)^2]}}^0$ is not ambiguous because 
$\W$-algebras corresponding to the nilpotent orbits $\OO_{[(2n)^2]}^{\mathrm{I}}$ and $\OO_{[(2n)^2]}^{\mathrm{II}}$ are isomorphic (Corollary \ref{coincidences of W-alg by Dynkin auto}).

\subsection{Virasoro-type reductions}
More partial reductions have been obtained in \cite{FFFN} by generalizing the previous construction.

Let $\OO$ be a nilpotent orbit of $\g$ and $f\in\g$ a representative of $\OO$ that satisfies
\begin{equation}
    \dim (\g^f\cap \h)=1,\quad \dim (\g^f\cap \n_+)=1.
\end{equation}
In addition, assume the non-zero semisimple element in $\g^f\cap \h$ acts on $\g^f\cap \n_+$ non-trivially.
Then the $\W$-algebra $\W^k(\g,\OO)$ contains a Heisenberg field $H$ corresponding to a non-zero vector of $\g^f\cap \h$ and a generator denoted $G^+$ corresponding to the vector generating $\g^f\cap \n_+$.
The field $G^+$ has conformal weight one if $\W^\kk(\g,\OO)$ has a vertex subalgebra isomorphic to $\V^{\kk^\sharp}(\sll_2)$ --- then we return to the situation described in \S\ref{sec: fundamental reduction}.
However, $G^+$ might also have a conformal weight greater than one. For instance, 
if $\W^\kk(\g,\OO)=\W(1,2^3,3,4^3\dots)$, $G^+$ has conformal weight two.

If the strong generator $G^+$ satisfies the OPE
\begin{align}
    G^+(z)G^+(w)\sim 0,
\end{align}
then it can be identified to the positive root vector in $\sll_2$.
Mimicking the construction of the principal $\W$-algebra of type $\sll_2$, a.k.a. the Virasoro vertex algebra, we introduce the BRST complex 
\begin{align}
    C_{\ydiagram{2}}^\bullet(\W^\kk(\g,\OO))=\W^\kk(\g,\OO)
    \otimes \bigwedge{}^{\bullet}_{\varphi,\varphi^*},
\end{align}
equipped with the differential 
\begin{align}\label{Virasoro reduction diffential}
    \fd{2}=\int Y((G^++1)\varphi^*,z)\,\mathrm{d}z.
\end{align}
We call it the \emph{Virasoro-type} reduction and denote its cohomology by 
\begin{align}
    \fHdeg{2}{\bullet}(\W^\kk(\g,\OO)).
\end{align}

Somehow, Virasoro type reduction is the ``smallest'' possible reduction that can be performed. 
It is expected to relate $\W$-algebras corresponding to adjacent nilpotent orbits.
\begin{conjecture}\label{conj:virasoro-type reductions}
    Assume $\W^\kk(\g,\OO)$ satisfies the previous conditions, then
    \begin{align*}
        \fHdeg{2}{0}(\W^\kk(\g,\OO))\simeq \W^\kk(\g,\widehat{\OO})
    \end{align*}
    where $\widehat{\OO}$ is a nilpotent orbit adjacent to $\OO$ in the Hesse diagram, that is \emph{a} {smallest nilpotent orbit which contains $\OO$ in the boundary}.
\end{conjecture}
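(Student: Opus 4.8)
The plan is to establish the isomorphism first at generic level $\kk$ by realizing both sides inside a common free-field algebra, and then to propagate it to all levels by a flatness/continuity argument. At generic $\kk$ one has the Wakimoto realization of $\W^\kk(\g,\OO)$ as a subalgebra of a tensor product of a Heisenberg vertex algebra with $\beta\gamma$-systems indexed by the positively graded roots for the good grading attached to $\OO$. Because the distinguished generator $G^+$ satisfies $G^+(z)G^+(w)\sim 0$, it behaves exactly like the raising operator of $\sll_2$ inside its Wakimoto module: one can arrange the realization so that $G^+$ involves a single $\beta\gamma$-pair linearly. The differential $\fd{2}=\int Y((G^++1)\varphi^*,z)\,\mathrm{d}z$ together with that $\beta\gamma$-pair and the $bc$-pair $(\varphi,\varphi^*)$ then forms an acyclic Koszul-type subcomplex of $\fBRST{2}(\W^\kk(\g,\OO))$ — this is precisely the mechanism by which $\V^{\kk^\sharp}(\sll_2)$ reduces to the Virasoro vertex algebra — so the cohomology is concentrated in degree $0$ and is a free-field algebra. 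One then matches it, generator by generator, with the Wakimoto realization of $\W^\kk(\g,\widehat{\OO})$: the point is that the centralizers $\g^f$ and $\g^{\widehat f}$ differ, after the relevant reshuffling of grading data, exactly by the root directions that have been absorbed into the Koszul complex, so the resulting free strong generators of the two sides are in bijection and satisfy matching OPEs.

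For the passage from generic to arbitrary level, one uses that $\kk\mapsto\W^\kk(\g,\OO)$ and $\kk\mapsto\fBRST{2}(\W^\kk(\g,\OO))$ are flat families over $\C[\kk]$ away from the critical level, with $\fd{2}$ depending polynomially on $\kk$, and that the continuity/deformation technique produces the homomorphism $\fHdeg{2}{0}(\W^\kk(\g,\OO))\to\W^\kk(\g,\widehat{\OO})$ of \eqref{eq:reduciton} for all levels. Combined with the generic isomorphism and an identification of graded characters on both sides, this forces the map to be an isomorphism whenever the cohomology does not jump, which by the free-field computation can be arranged in the cases marked ${}^*$ in Table~\ref{intro: Nilpotent orbits}, together with the vanishing $\fHdeg{2}{\neq 0}=0$ there.

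The main obstacle has two faces. First, in the general height-two, classical-type situation one must produce the right free-field presentation of $G^+$ — and verify it is the "peelable" one making the Koszul reduction go through — which is where the combinatorics of pyramids and the explicit structure of $\g^f$ for the orbits of Table~\ref{intro: Nilpotent orbits} genuinely enter, and where the case-by-case labour lives. Second, and more seriously, controlling the isomorphism and the higher-cohomology vanishing at the remaining \emph{non}-generic levels is not formal; as indicated in the introduction this reduces to a classical statement about the action of the relevant reductive group on Slodowy slices — essentially a relative, contracting-$\C^\times$-equivariant version of Losev's decomposition theorem \cite{Losev10}, asserting that a transverse slice to $\widehat{\OO}$ inside the Slodowy slice to $\OO$ is equivariantly a symplectic affine space — compare \eqref{statement of finite reduction by stages} and the approach of \cite{GJ23,GJ25}. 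I would isolate that classical statement and attack it first, since the vertex-algebraic deformation machinery above then upgrades it to the sought isomorphism and vanishing at all levels.
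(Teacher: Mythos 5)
The statement you are proving is stated in the paper as a \emph{conjecture}; the paper itself only establishes it for the explicit families of Theorems \ref{thm: A}, \ref{thm: BCD} and \ref{thm:A_general}, and your outline reproduces exactly the strategy used there. Specifically: at generic $\kk$ the paper takes Gennady's Wakimoto realization $\W^\kk(\g,\OO)\simeq\bigcap_i\ker S_i^\OO\subset\bg^\bigstar\otimes\Phi(\g_{1/2})\otimes\heis$, applies an explicit automorphism of the free-field algebra turning $G^+$ into a single generator $\gamma$ of one $\beta\gamma$-pair (your ``peelable'' presentation), observes that $\fd{2}$ together with that pair and $(\varphi,\varphi^*)$ is acyclic except in degree $0$, and then — this is the one place where your description drifts from what is actually done — identifies the \emph{induced screening operators} $[S_i^\OO]$ on the reduced free-field algebra with the screening operators of a Wakimoto realization of $\W^\kk(\g,\widehat{\OO})$ for a good pair $(f_c,\Gamma_c)$ conjugate, by an explicitly exhibited grading-preserving group element, to a standard one. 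There is no generator-by-generator matching of OPEs; the whole point of the method is that matching the finitely many screenings (read off from pyramid combinatorics) suffices, and producing the conjugating element $g\in G$ is where the case-by-case labour actually sits. Your continuity argument for all levels is also the paper's (exactness of $\fHdeg{2}{\bullet}$ on $\widehat{\mathcal{O}}^\kk_{\sll_2}$ plus the integral form of the Wakimoto realization), and you correctly flag that it only applies when the relevant module lies in a category where that exactness is available, which is why the paper obtains all-level statements only in the cases marked ${}^*$.

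As a proof of the conjecture in the generality stated, however, your proposal has the same gap the paper acknowledges: nothing in the outline guarantees, for an arbitrary $(\g,\OO)$ satisfying the hypotheses, that the reduced screening operators can be recognized as those of $\W^\kk(\g,\widehat{\OO})$ for the \emph{correct} adjacent orbit $\widehat{\OO}$, nor even that a suitable good pair exists after the reduction; this must be verified orbit by orbit, and your reduction of the non-generic-level problem to an equivariant statement about Slodowy slices is (as you note) a programme rather than an argument. So the proposal is a faithful reconstruction of the paper's method for the proven families, but it does not close the conjecture, and should not be presented as doing so.
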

Particular cases of the conjecture have been established previously for small ranks $\W$-algebras in type $A$ \cite{CFLN,FFFN,MR97}.
In the following, we prove the conjecture for families of examples in classical types.

\begin{remark}
    The construction can be generalized to apply partial reductions that mimic BRST reductions of affine vertex algebras of higher ranks --- 
    for instance, an example of \emph{Bershadsky--Polyakov-type} reduction is given in \cite{FFFN}.
\end{remark}

\subsubsection{Type $A$}\label{sec: main result in type A}
Consider the $\W$-algebra $\W^\kk(\sll_{2N},\OO_{[N^2]})$. 
The partition $[N^2]$ corresponds to 
the rectangular nilpotent orbit of height two.
Following \S\ref{sec: Nilpotent orbits}, one can associate the pyramid in Figure \ref{fig:pyramidA}
\begin{figure}[h]
		\begin{center}
			\begin{tikzpicture}[every node/.style={draw,regular polygon sides=4,minimum size=0.8cm,line width=0.04em},scale=0.8]
        \node at (2,1) (4) {\tiny 1};
        \node at (2,0) (4) {\tiny 2};
        \node at (1,1) (4) {\tiny 3};
        \node at (1,0) (4) {\tiny 4};
        \node at (0,1) (4) {\footnotesize $\dots$};
        \node at (0,0) (4) {\footnotesize $\dots$};
        \node at (-1,1) (4) {\tiny 2N-3};
        \node at (-1,0) (4) {\tiny 2N-2};
        \node at (-2,1) (4) {\tiny 2N-1};
        \node at (-2,0) (4) {\tiny 2N};
            \end{tikzpicture}
           \captionsetup{font=small}
            \captionof{figure}{Pyramid for $[N^2]$ in type $A$}\label{fig:pyramidA}
		\end{center}
\end{figure}
that gives the nilpotent element
\begin{equation}\label{nilpotentArec}
    f_{N^2}=\sum_{i=1}^{2N-2}\E_{i+2,i}
\end{equation}
as representative of the orbit and the following good {even} grading\footnote{A good $\frac{1}{2}\Z$-grading on $\g$ is said \emph{even} if $\g_j=0$ for $j\in\frac{1}{2}+\Z$.} 
\begin{align}\label{gradingArec}
\begin{tikzpicture}
{\dynkin[root
radius=.1cm,labels={\alpha_1,\alpha_2,\alpha_3, \alpha_4,{}, \alpha_{2N-2},\alpha_{2N-1}},labels*={0,1,0,1,0,1,0},edge length=0.8cm]A{oooo.ooo}};
\node at (-1,0) (4) {$\Gamma_{N^2}\colon$};
\end{tikzpicture}.
\end{align}
Then the $\W$-algebra associated to $\OO_{[N^2]}$ has strong generating type 
\begin{align}
    \W^\kk(\sll_{2N},\OO_{[N^2]})=\W(1^3,2^4,\dots,N^4).
\end{align}
The three weight-1 generators, which we denote by $G^+, J, G^-$, correspond to the vectors
\begin{align}
    e=\sum_{\begin{subarray}c
        1\leq i\leq N\\
        i \text{ odd}
    \end{subarray}}\E_{i,i+1},\quad 
    h=\sum_{\begin{subarray}c
        1\leq i\leq N\\
        i \text{ odd}
    \end{subarray}}\E_{i,i}-\E_{i+1,i+1},\quad 
    f=\sum_{\begin{subarray}c
        1\leq i\leq N\\
        i \text{ odd}
    \end{subarray}}\E_{i+1,i}
\end{align}
in the centralizer $\sll_{2N}^{f_{N^2}}$ and generate an affine subalgebra $\V^{\kk^\sharp}(\sll_2)$ with $\kk^\sharp=N(\kk+2N-2)$. 
The BRST reduction which reduces $\V^{\kk^\sharp}(\sll_2)$ to the Virasoro vertex algebra 
\begin{align}
    \W^{\kk^\sharp}(\sll_2,\OO_{[2]})=\fHdeg{2}{0}(\V^{\kk^\sharp}(\sll_2))
\end{align}
extends to the whole $\W$-algebra
\begin{align}
\fHdeg{2}{0}(\W^\kk(\sll_{2N},\OO_{[N^2]})):=\HH^0\left(\W^\kk(\sll_{2N},\OO_{[N^2]})\otimes \bigwedge{}^{\bullet}_{\varphi,\varphi^*},\fd{2}  \right)
\end{align}
with differential defined in \eqref{Virasoro reduction diffential}.
In \S\ref{sec: proof of main results}, it is proven that this vertex algebra identifies with another $\W$-algebra of type $A$.
\begin{theorem}\label{thm: A}
    For all level $\kk\in\C$, there exists an isomorphism of vertex algebras
    \begin{align*}
\fHdeg{2}{n}(\W^\kk(\sll_{2N},\OO_{[N^2]}))\simeq \indic{n=0}\W^\kk(\sll_{2N},\OO_{[N+1,N-1]}).
\end{align*}
\end{theorem}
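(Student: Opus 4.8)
The plan is to prove the isomorphism first for generic $\kk$, using the Wakimoto realization of $\W$-algebras, and then to propagate it to all levels by a deformation argument; the vanishing $\fHdeg{2}{n}=0$ for $n\neq 0$ comes out of the same analysis. As a first step I would fix the target precisely: take the representative $f'=f_{N^2}+f$ of $\OO_{[N+1,N-1]}$, where $\{e,h,f\}\subset\sll_{2N}^{f_{N^2}}$ is the $\sll_2$-triple generating the subalgebra $\V^{\kk^\sharp}(\sll_2)$ (so $f$ interchanges the two length-$N$ Jordan blocks of $f_{N^2}$, and a short computation shows $f'$ has Jordan type $[N+1,N-1]$), together with the good grading $\Gamma'$ for $f'$ obtained by adding to $\Gamma_{N^2}$ the grading by the semisimple element of this $\sll_2$-triple. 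With this choice the BRST complex $C_{f'}^\bullet(\V^\kk(\sll_{2N}))$ carries a filtration exhibiting $\HH^\bullet_{\OO_{[N+1,N-1]}}(\V^\kk(\sll_{2N}))$ as the abutment of a spectral sequence whose first page is $\fHdeg{2}{\bullet}(\W^\kk(\sll_{2N},\OO_{[N^2]}))$ — using the concentration in cohomological degree $0$ of $\HH^\bullet_{\OO_{[N^2]}}$. This explains why the statement is expected and reduces it to a degeneration claim, which I would establish through the free-field computation below rather than through the spectral sequence itself.

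For generic $\kk$ I would invoke Genra's Wakimoto realization \cite{Gen20}: $\W^\kk(\sll_{2N},\OO_{[N^2]})$ embeds into a free-field vertex algebra $\fermion$ — a tensor product of a Heisenberg vertex algebra, $\bg$-systems and a half-lattice vertex algebra — realized as the joint kernel $\bigcap_i\ker S_i$ of a collection of screening operators. In this realization the weight-one subalgebra $\V^{\kk^\sharp}(\sll_2)$ is realized by the Wakimoto free fields of $\sll_2$, so that $G^+$ is the $\beta$-field of a $\bg$-pair inside $\fermion$; hence $\fHdeg{2}{\bullet}$ applied to the ambient $\fermion$, with differential $\fd{2}=\int Y((G^++1)\varphi^*,z)\,\dd z$, is the Drinfeld--Sokolov reduction of this free-field $\sll_2$. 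Its cohomology is concentrated in degree $0$ and equals a smaller free-field algebra $\fermion'$ obtained from $\fermion$ by cancelling the $\bg$-pair against the $bc$-pair $\bigwedge{}^\bullet_{\varphi,\varphi^*}$. One then checks that the screening operators pass to the cohomology: exactly one of them, say $S_{i_0}$, is absorbed by the reduction (it carries the $\sll_2$-data), while the remaining ones induce on $\fermion'$ precisely the screening operators whose joint kernel is $\W^\kk(\sll_{2N},\OO_{[N+1,N-1]})$, again by \cite{Gen20}. Running the spectral sequence of the double complex formed by $\fermion\otimes\bigwedge{}^\bullet_{\varphi,\varphi^*}$ against a Wakimoto (screening) resolution of $\W^\kk(\sll_{2N},\OO_{[N^2]})$ — equivalently, commuting $\fd{2}$ past the screening characterization — then yields $\fHdeg{2}{0}(\W^\kk(\sll_{2N},\OO_{[N^2]}))\simeq\W^\kk(\sll_{2N},\OO_{[N+1,N-1]})$ and $\fHdeg{2}{n}=0$ for $n\neq 0$, for generic $\kk$.

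To reach all levels I would argue by continuity (cf.\ \cite{FFFN, FN, Fehily24}). The strong generators of $\W^\kk(\sll_{2N},\OO_{[N^2]})$ and their OPE coefficients depend polynomially on $\kk$, and $G^+(z)G^+(w)\sim 0$ holds identically, so $\fHdeg{2}{0}$ makes sense over $\C[\kk]$ and produces a vertex algebra whose graded character is independent of $\kk$ and, by an Euler-characteristic computation, equals that of the freely generated vertex algebra $\W^\kk(\sll_{2N},\OO_{[N+1,N-1]})$. The reduction homomorphism $\fHdeg{2}{0}(\W^\kk(\sll_{2N},\OO_{[N^2]}))\to\W^\kk(\sll_{2N},\OO_{[N+1,N-1]})$ can be constructed for every $\kk$, is an isomorphism on a Zariski-dense set of levels by the previous paragraph, and relates two flat $\C[\kk]$-families with equal graded characters; hence it is an isomorphism for all $\kk$, and the same comparison forces $\fHdeg{2}{n}(\W^\kk(\sll_{2N},\OO_{[N^2]}))=0$ for $n\neq 0$ at every level.

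The hard part is the middle step: realizing $G^+$ concretely enough inside the Wakimoto algebra to carry out its Drinfeld--Sokolov reduction, and, above all, tracking how each screening current transforms under this reduction, so as to verify that exactly one screening is consumed and the remaining ones reproduce the screening operators of $\W^\kk(\sll_{2N},\OO_{[N+1,N-1]})$ on $\fermion'$. This bookkeeping is where the combinatorics specific to the partitions $[N^2]$ and $[N+1,N-1]$ genuinely enters and does not reduce to a formal citation.
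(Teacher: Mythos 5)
Your overall strategy coincides with the paper's: realize $\W^\kk(\sll_{2N},\OO_{[N^2]})$ by Genra's Wakimoto embedding as $\bigcap_i\ker S_i^\OO\subset\bg^{\bigstar}\otimes\heis$, perform the Virasoro-type reduction on the ambient free fields, track the screenings, and extend to all levels by continuity. (Two small slips: no half-lattice vertex algebra enters this realization, and the spectral-sequence picture in your first paragraph is essentially the conjectural reduction by stages, so it is fine only as motivation.) However, there are two genuine gaps. First, your prediction for the screening bookkeeping is wrong, and this is precisely the step you defer as ``the hard part.'' In the realization one has $G^+=\beta_1+\beta_3+\dots+\beta_{2N-1}$, and after an explicit change of $\bg$-coordinates $G^+$ becomes a single field $\gamma_{2N}$ of a new pair, so that $\fd{2}$ cancels $\ff{2N}$ against $\bigwedge^\bullet_{\varphi,\varphi^*}$. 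No screening operator is ``absorbed'': all $2N-1$ of them survive (as they must, since $\W^\kk(\sll_{2N},\oo{N+1,N-1})$ is itself cut out by one screening per simple root), and exactly one of them has its coefficient $-\gamma_{2N-2}+\gamma_{2N}$ evaluated to $-(\wun+\gamma_{2N-2})$ because $\gamma_{2N}\equiv-1$ in cohomology. The appearance of this constant term is the whole mechanism by which the grading value at one node jumps from $0$ to $1$ and the nilpotent grows; recognizing the resulting collection as the Wakimoto screenings of $\W^\kk(\sll_{2N},\oo{N+1,N-1})$ still requires exhibiting a good pair for $[N+1,N-1]$ realizing that grading (via a generalized pyramid and an explicit conjugation). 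If one screening really disappeared, the joint kernel of the remaining $2N-2$ would be strictly larger than any $\W$-algebra of $\sll_{2N}$.

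Second, the passage to all levels does not close as written. The Euler characteristic of the complex only controls $\sum_n(-1)^n\operatorname{ch}\fHdeg{2}{n}$, so you cannot read off the character of $\fHdeg{2}{0}$, nor conclude $\fHdeg{2}{\neq0}=0$, without an independent vanishing argument; and a homomorphism of flat $\C[\kk]$-families with equal finite-dimensional graded characters that is an isomorphism generically can still degenerate at special fibers (its graded determinants are polynomials in $\kk$ that may vanish). The paper's route supplies the missing ingredient: both $\W^\kk(\g,\OO)$ and the ambient $\bg^{\bigstar}\otimes\Fock{0}$ lie in $\widehat{\mathcal{O}}^\kk_{\sll_2}$ for the affine $\sll_2$ generated by $G^\pm,J$, and Arakawa's exactness of $\fHdeg{2}{\bullet}$ on this category gives the cohomology vanishing and a short exact sequence at every level; the isomorphism for all $\kk$ then follows because both sides are realized as the same continuous family of subalgebras of $\bg^{\bigstar_c}\otimes\heis$ characterized by identical screening operators at generic levels. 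You should replace the Euler-characteristic step by this exactness argument (or an equivalent one).
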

\noindent
The case $N=2$ was proven by the first and third authors, with Z. Fehily and E. Fursman, in \cite{FFFN}.

\subsubsection{Type $BCD$}
Analogous statements for the other classical Lie types (i.e. type $BCD$) can be found by considering the $\W$-algebras
\begin{align}\label{list of W-algebras}
    \W^\kk(\so_{2N+1},\OO_{[N^2,1]}),\quad \W^\kk(\spp_{2N},\OO_{[N^2]}),\quad \W^\kk(\so_{2N},\OO_{[N^2]}).
\end{align}

Here the existence of an affine subalgebra $\V^{\kk^\sharp}(\sll_2)$ depends on the parity of $N$.
Indeed, the strong generating types of these $\W$-algebras with respect to the grading given by pyramids on Figures \ref{fig:pyramidBeven}--\ref{fig:pyramidDodd}
are
\begin{equation}\label{eq:gen types}
\begin{split}
    &\W^\kk(\so_{2N+1},\OO_{[N^2,1]})\\
    &\hspace{1cm}=
    \begin{cases}
    \W(1^3,2,3^3,4,\dots,(N-1)^3,N,(\tfrac{N+1}{2})^2) & (N\text{ even})\\
    \W(1,2^3,3,4^3,\dots,(N-2),(N-1)^3,N,(\tfrac{N+1}{2})^2) & (N\text{ odd}),
    \end{cases}\\
    &\W^\kk(\spp_{2N},\OO_{[N^2]})\\
    &\hspace{1cm}=
    \begin{cases}
    \W(1,2^3,3,4^3,\dots,(N-1),N^3) & (N\text{ even})\\
    \W(1^3,2,3^3,4,\dots,(N-2)^3,(N-1),N^3) & (N\text{ odd}),
    \end{cases}\\
    &\W^\kk(\so_{2N},\OO_{[N^2]})\\
    &\hspace{1cm}=
    \begin{cases}
    \W(1^3,2,3^3,4,\dots,(N-1)^3,N) & (N\text{ even})\\
    \W(1,2^3,3,4^3,\dots,(N-2),(N-1)^3,N) & (N\text{ odd}).
    \end{cases}
\end{split}
\end{equation}

Recall that a nilpotent element $f \in \g$ (or orbits $\OO$) is said \emph{distinguished} if $\g^f$ 
does not contain non-zero semisimple elements. 
In type $BCD$, $\OO_{\lambda}$ is distinguished if and only if the partition $\lambda$ has no repeated parts \cite{CM93}. 
Therefore, the orbits in \eqref{list of W-algebras} are not distinguished, and $\g^f\cap \h\neq0$, which implies $\W^\kk(\g,\OO)$ contains a Heisenberg vertex subalgebra.
It will be shown in \S\ref{sec: proof of main results} that the Heisenberg vertex subalgebra has rank one and that $\W^\kk(\g,\OO)$ also have a strong generator $G^+$ satisfying the framework of Conjecture \ref{conj:virasoro-type reductions}.
Hence, we can apply a Virasoro-type reduction.
The main result for type $BCD$ is the identification of $\fHdeg{2}{\bullet}(\W^\kk(\g,\OO))$ with another $\W$-algebra.
\begin{theorem}\label{thm: BCD}
    Let $\W^k(\g,\OO)$ be one of the $\W$-algebras listed in \eqref{list of W-algebras} and $\widehat{\OO}$ be the smallest nilpotent orbit containing $\OO$ in its boundary.
    For $\kk$ a generic level, there exists an isomorphism of vertex algebras
    \begin{align*}
        \fHdeg{2}{0}\left(\W^\kk(\g,\OO)\right)\simeq \W^\kk(\g,\widehat{\OO}).
    \end{align*}
    The explicit nilpotent orbit $\widehat{\OO}$ in the theorem is given in Table \ref{table: nilpotent orbits} below.
\end{theorem}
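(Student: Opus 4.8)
The plan is to carry out the Virasoro-type reduction \emph{inside a free field realization}, using the Wakimoto-type (screening) presentations of the relevant $\W$-algebras that are established in \S\ref{sec: Wakimoto}. For each of the three families in \eqref{list of W-algebras} one has, at generic $\kk$, an isomorphism of $\W^\kk(\g,\OO)$ onto the joint kernel $\bigcap_i \ker S_i$ of a finite family of screening operators acting on a free field vertex algebra $F_\OO$ --- a tensor product of a rank-one Heisenberg vertex algebra with several copies of the $\bg$-system and, as needed, of the half-lattice algebra $\Pi$. The first step is to read off from this presentation that the distinguished generator $G^+$ of $\W^\kk(\g,\OO)$ (and the Heisenberg field $H$) are realized by explicit free fields; in particular $G^+$ is, up to corrections of lower order in one of the $\bg$-factors, the corresponding $\beta$-field, so that it is ``free'' and is annihilated by every $S_i$. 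Since $\W^\kk(\g,\OO)\otimes\bigwedge{}^{\bullet}_{\varphi,\varphi^*}$ is then a subalgebra of $F_\OO\otimes\bigwedge{}^{\bullet}_{\varphi,\varphi^*}$ containing $(G^++1)\varphi^*$, the differential $\fd{2}$ preserves it, and because each $S_i$ is a derivation annihilating $(G^++1)\varphi^*$ it commutes with $\fd{2}$; thus the screenings together with $\fd{2}$ organize $F_\OO\otimes\bigwedge{}^{\bullet}_{\varphi,\varphi^*}$ into a bicomplex.

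The second step is the explicit free field computation $\fHdeg{2}{\bullet}\bigl(F_\OO\bigr)\simeq\delta_{\bullet,0}\,F_{\widehat{\OO}}$. Writing $F_\OO\simeq F_{\widehat{\OO}}\otimes\bg$ with the last $\bg$-factor the one carrying $G^+$, an invertible change of free field generators absorbing the corrections --- together with a compensating shift of the Heisenberg momentum --- conjugates $\fd{2}$ to a differential acting only on $\bg\otimes\bigwedge{}^{\bullet}_{\varphi,\varphi^*}$; its cohomology is concentrated in degree zero and equal to $\C$, by the same computation that reduces $\V^{\kk^\sharp}(\sll_2)$ to the Virasoro vertex algebra, leaving $F_{\widehat{\OO}}$. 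The combinatorial content is the comparison of the good gradings (pyramids) of $\OO$ and $\widehat{\OO}$ recorded in Table \ref{table: nilpotent orbits}: passing from $\OO$ to $\widehat{\OO}$ amounts to ``moving one box'', which on the free field side removes exactly one $\bg$-pair and, crucially, carries the screening family $\{S_i\}$ attached to $\OO$ --- after the substitution $\beta\rightsquigarrow -1$ in those screenings that involve it --- onto the screening family defining the Wakimoto realization of $\W^\kk(\g,\widehat{\OO})$.

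The third step transfers this from $F_\OO$ to $\W^\kk(\g,\OO)$ via the bicomplex. Filtering by the screening degree and taking $\fd{2}$-cohomology first gives $\fHdeg{2}{\bullet}\bigl(F_\OO\bigr)=\delta_{\bullet,0}F_{\widehat{\OO}}$ on the first page; filtering the other way gives $\fHdeg{2}{\bullet}\bigl(\W^\kk(\g,\OO)\bigr)$, using that at generic level the screening operators realize $\W^\kk(\g,\OO)$ with vanishing higher cohomology --- a fact preserved by tensoring with the $bc$-system. Comparing the two spectral sequences identifies $\fHdeg{2}{\bullet}\bigl(\W^\kk(\g,\OO)\bigr)$ with the screening cohomology of $F_{\widehat{\OO}}$, which at generic level is $\W^\kk(\g,\widehat{\OO})$ placed in degree zero. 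This yields the asserted isomorphism, together with the vanishing of the positive-degree cohomology; that it respects conformal structures follows by tracking the Kac--Roan--Wakimoto vector through the reduction.

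The step I expect to be the main obstacle is the explicit bookkeeping in the second step: producing the change of generators that decouples the $G^+$-factor --- delicate in the weight-two case $\W(1,2^3,3,4^3,\dots)$, where $G^+$ is not a single $\beta$-field but a normally ordered expression --- and checking that the screenings attached to the simple roots adjacent to the moved box are carried \emph{exactly} onto the screenings for $\widehat{\OO}$. The type-$BCD$ features compound this: the $\spp$ versus $\so$ folding and the parity of $N$ alter both the generating type in \eqref{eq:gen types} and the target orbit $\widehat{\OO}$ in Table \ref{table: nilpotent orbits}, so each of the three families --- and the two parities within each --- must be treated with its own pyramid. The remaining ingredients, namely the commutation of $\fd{2}$ with the screenings, the homological transfer, and the conformal vector, are formal once these explicit realizations are in place.
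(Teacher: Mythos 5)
Your proposal follows essentially the same route as the paper: realize $\W^\kk(\g,\OO)$ by Wakimoto screenings, change free-field generators so that $G^+$ becomes a single coordinate of one $\bg$-pair (absorbing the $\tfrac12\Phi_1^2$-type corrections), compute that $\fHdeg{2}{\bullet}$ kills exactly that pair in degree zero, match the induced screenings with those of a (generalized) pyramid for $\widehat{\OO}$, and transfer through the acyclic Wakimoto resolution. The only points you gloss over — that the field set to $-1$ is the new $\gamma$-coordinate equal to $G^+$ rather than a $\beta$, and that the formally obtained pair $(f_c,\Gamma_c)$ must be conjugated to an honest good pair for $\widehat{\OO}$ by an explicit grading-preserving group element — fall within the bookkeeping you correctly identify as the main labor.
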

\begin{table}[h!]
	\centering
	\renewcommand{\arraystretch}{1.3}
	\begin{tabular}{|c||c|c|c|}
		\hline
		$(\g,\OO)$ & $(\so_{2N+1},\OO_{[N^2,1]})$ & $(\spp_{2N},\OO_{[N^2]})$ & $(\so_{2N},\OO_{[N^2]})$ \\
		\hline
             $\widehat{\OO}$ & $\begin{array}{c} \OO_{[N+1,N-1,1]}\\ \OO_{[N+2,N-2,1]} \end{array}$ & $\begin{array}{c} \OO_{[N+2,N-2]}\\ \OO_{[N+1,N-1]} \end{array}$ & $\begin{array}{c} \OO_{[N+1,N-1]}\\ \OO_{[N+2,N-2]} \end{array}$  \\ \hline
	\end{tabular}
 \captionsetup{font=small }
	\caption{Nilpotent orbits $\widehat{\OO}$. The first row corresponds to the cases $N$ even and the second to the cases $N$ odd.}\label{table: nilpotent orbits}
\end{table}

\begin{remark}
    When $\g=\so_{2N},\so_{2N+1}$ with $N$ even or $\g=\spp_{2N}$ with $N$ odd, the isomorphism in Theorem \ref{thm: BCD} holds for all levels and we have the cohomology vanishing $\fHdeg{2}{\neq0}\left(\W^\kk(\g,\OO)\right)=0$ by using the same continuity argument as in the proof of Theorem \ref{thm: A}.
    Although this is also expected in the remaining cases, the argument fails as we cannot directly use the exactness of the functor $\fHdeg{2}{\bullet}:\widehat{\mathcal{O}}^\kk_{\sll_2}\to\W^\kk(\sll_2)\mod$ \cite{Ara05}.
\end{remark}

\begin{remark} In Table \ref{table: nilpotent orbits}, the corresponding $\W$-algebras have the following strong generating type:
    \begin{equation}\label{eq:gen types reduced}
    \begin{split}
    &\W^\kk(\so_{2N+1},\OO_{[N+1,N-1,1]})=\W(2^3,3,4^3,5,\dots,(N-2)^3,N-1,\tfrac{N}{2},\tfrac{N+2}{2}),\\
    &\W^\kk(\so_{2N+1},\OO_{[N+2,N-2,1]})=\W\left( \begin{array}{ll}2^2,3,4^3,5,6^3,\dots,(N-4),(N-3)^3,\\ \hspace{1cm} N-2,(N-1)^2,N,N+1,\tfrac{N-1}{2},\tfrac{N+3}{2}\end{array} \right),\\
    &\W^\kk(\spp_{2N},\OO_{[N+2,N-2]})=\W( 2^2,3,4^3,5,6^3,\dots,N-3,(N-2)^3,N-1,N^2,N+2),\\
    &\W^\kk(\spp_{2N},\OO_{[N+1,N-1]})=\W( 2^3,3,4^3,5,\dots,N^3,N+1,N+2),\\
    &\W^\kk(\so_{2N},\OO_{[N+1,N-1]})=\W( 2^3,3,4^3,5,\dots,(N-2)^3,N-1,N^2),\\
    &\W^\kk(\so_{2N},\OO_{[N+2,N-2]})=\W( 2^2,3,4^3,5,\dots,N-2,(N-1)^2,N,N+1).
    \end{split}
\end{equation}
In particular, except $\OO_{[2+1,2-1,1]}$ and $\OO_{[3+2,3-2,1]}$ in type $B$, all the nilpotent orbits are distinguished.
\end{remark}

\section{Wakimoto realizations of \tW-algebras}\label{sec: Wakimoto}
In this section, we recall the Wakimoto realization of $\W$-algebras at generic levels $\kk$ (e.g.\ $\kk\in \C \backslash \Q$) \cite{Gen20}, which will be used in the proof of the main results.

\subsection{General form}\label{sec: Wakimoto general}
Let $G$ be a simple algebraic group (of adjoint type) whose Lie algebra is $\Lie(G)=\g$ and $N_+\subset G$ be the unipotent subgroup with $\Lie(N_+)=\n_+$.
Using the exponential map $\exp:\n_+\overset{\sim}{\longrightarrow}N_+$, one may identify the coordinate rings $\C[N_+]\simeq \C[z_\alpha\mid \alpha\in \Delta_+]$ where $z_\alpha$ is the coordinate of the root vector $e_\alpha\in\n_+$ ($\alpha \in \Delta_+$).
The left multiplication of $N_+$ on itself induces an anti-homomorphism $\rho:\n_+\to\D(N_+)$ where $\D(N_+)$ is the ring of differential operators on $N_+$. 
Then, one has 
\begin{equation}\label{eq:rho}
    \rho(e_{\alpha_i})=\sum_{\alpha\in \Delta_+}P_i^{\alpha}(z)\partial_{z_\alpha} 
\end{equation}
for some polynomials $P_i^{\alpha}(z)$ in $\C[N_+]$.

The affine vertex algebra $\V^\kk(\g)$ admits a free fields realization 
\begin{align}\label{Wakimoto realization for affine}
   \Psi\colon \V^\kk(\g)\hookrightarrow \bg^{\Delta_+}\otimes \heis.
\end{align}
using the Heisenberg vertex algebra $\heis$ associated with the Cartan subalgebra $\h$ at level $\kk+\hv$.
This embedding is known as the \emph{Wakimoto realization} (see e.g.\ \cite{Fre07}) 
In addition, it gives rise to a family of $\V^\kk(\g)$-modules  $\affWak{\lambda}:=\bg^{\Delta_+}\otimes \Fock{\lambda}$ ($\lambda\in \h^*$) obtained by restriction, called the \emph{Wakimoto representations}. Here $\Fock{\lambda}$ denotes the Fock module over $\heis$ of highest weight $\lambda$.

The affine vertex algebra $\V^\kk(\g)$ is isomorphic to the image of $\Psi$ which is equal to the intersection of the kernels of the screening operators 
\begin{align}\label{affine screenings}
    S_i=\int S_i(z)\ \dd z \colon \affWak{0} \rightarrow \affWak{-\alpha_i},\quad S_i(z)= Y(P_i\fockIO{i},z),\quad(1\leq i\leq l,\, l=\rk\g).
\end{align}
Here $P_i$ is the element in $\bg^{\Delta_+}$ obtained from \eqref{eq:rho} by replacing $z_\alpha$ with $\gamma_\alpha$ and $\partial_{z_\alpha}$ with $\beta_\alpha$ (and taking normally-ordered products when necessary).

Moreover, the embedding \eqref{Wakimoto realization for affine} is completed into an exact sequence
\begin{align}\label{Wakimoto resolution of affine}
    0\rightarrow  \V^\kk(\g) \overset{\Psi}{\longrightarrow} \affWak{0}\overset{\bigoplus S_i}{\longrightarrow} \bigoplus_{i=1,\ldots,l} \affWak{-\alpha_i}\rightarrow \mathrm{C}_2 \rightarrow \cdots \rightarrow \mathrm{C}_{n_\circ} \rightarrow 0
\end{align}
where
\begin{align}
    \mathrm{C}_i=\bigoplus_{\begin{subarray}c w\in W\\ \ell(w)=i\end{subarray}} \affWak{w\circ 0},
\end{align}
see for instance \cite[Proposition 4.2]{ACL19}. 
Here $W$ is the Weyl group of $\g$ and $\ell\colon W\rightarrow \Z_+$ the length function with ${n_\circ}$ the length of the longest element, and $\circ\colon W\times \h^*\rightarrow \h^*$ the dot action given by $w\circ\lambda=w(\lambda+\rho)-\rho$ with $\rho$ the Weyl vector.
The Wakimoto realizations of the $\W$-algebras are obtained by applying $\HH_{\OO}^0$ to \eqref{Wakimoto resolution of affine}.
Since the $\V^\kk(\g)$-modules $\affWak{\lambda}$ are $\HH_{\OO}$-acyclic (see e.g. \cite[Proposition 4.5]{Gen20}), we obtain the complex 
\begin{align}\label{Wakimoto resolution of W algebras}
    0\rightarrow  \W^\kk(\g,\OO) \rightarrow \HH_{\OO}^0(\affWak{0})\overset{\bigoplus S_i^\OO}{\longrightarrow} \bigoplus_{i=1,\ldots,l} \HH_{\OO}^0(\affWak{-\alpha_i})\rightarrow \cdots \rightarrow \HH_{\OO}^0(\mathrm{C}_N) \rightarrow 0,
\end{align}
which is exact by the cohomology vanishing $\HH_{\OO}^{\neq0}(\V^\kk(\g))=0$ \cite{Ara15a}.
The $\W^\kk(\g,\OO)$-modules $\HH_{\OO}^0(\affWak{\lambda})$ and the induced screening operators $S_i^\OO$ are explicitly obtained by using appropriate coordinates systems on $N_+$.

Setting $\g_0^+=\n_+\cap \g_0$, the subalgebra $\n_+$ decomposes as the semidirect product $\n_+= \g_0^+ \ltimes \g_+$. Correspondingly we have the group semidirect product $N_+= G_0^+ \ltimes G_+$.
Then by \cite[Proposition 4.5, Theorem 4.8]{Gen20}, we have
\begin{align}\label{Wakimoto for Walg}
    \HH_{\OO}^0(\affWak{\lambda})\simeq \affWak{\OO,\lambda}:=\bg^\bigstar \otimes \Phi(\g_{1/2}) \otimes \Fock{\lambda},\quad  \bigstar=\Delta^+_0=\{\alpha\in \Delta_+ ; \Gamma(\alpha)=0\}
\end{align}
and the screening operators $S_i^\OO$ are expressed as
\begin{equation}\label{screeinings for Walg}
    S_i^\OO=\int S_i^\OO(z)\ \dd z,\quad S_i^\OO(z):=Y(P_i^\OO\fockIO{i},z)
\end{equation}
where 
\begin{align}
    P_i^\OO=\begin{cases}
         \displaystyle{\sum_{\alpha \in {\Delta^+_{0}}}P_i^{\alpha}\beta_\alpha}& \text{if }\Gamma(\alpha_i)=0, \\
         \displaystyle{\sum_{\alpha \in {\Delta_{1/2}}}P_i^{\alpha}\Phi_\alpha}& \text{if }\Gamma(\alpha_i)=\frac{1}{2},\\
         \displaystyle{\sum_{\alpha \in \Delta_{+}}(f,e_\alpha)P_i^{\alpha}}& \text{if }\Gamma(\alpha_i)=1.
    \end{cases}
\end{align}
Roughly, they are obtained from $S_i$ by evaluating $\beta_\alpha$ as $\beta_\alpha$, $\Phi_\alpha$ and  $(f,e_\alpha)$ depending on the grading $\Gamma(\alpha)=0,1/2,1$.
Then, the Wakimoto realization of the $\W$-algebras is as follows.
\begin{theorem}[\cite{Gen20}]\label{thm: Wakimoto general form}
At generic levels $\kk$, there is an isomorphism of vertex algebras 
\begin{align*}
    \W^\kk(\g,\OO)\simeq \bigcap_{i=1,\dots,l} \ker  S_i^\OO \subset \bg^\bigstar\otimes \Phi(\g_{1/2})\otimes \heis.
\end{align*}
\end{theorem}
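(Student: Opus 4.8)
The statement is a formal consequence of the material just recalled, so the plan is mostly to assemble it. By \eqref{Wakimoto for Walg}, $\HH^0_\OO(\affWak{0})\simeq \bg^{\bigstar}\otimes\Phi(\g_{1/2})\otimes\heis$ and $\HH^0_\OO(\affWak{-\alpha_i})\simeq \bg^{\bigstar}\otimes\Phi(\g_{1/2})\otimes\Fock{-\alpha_i}$, while the complex \eqref{Wakimoto resolution of W algebras} — obtained by applying the exact functor $C_f^\bullet(-)$ to the Wakimoto resolution \eqref{Wakimoto resolution of affine} and using that $\HH^{\neq0}_\OO$ annihilates every Wakimoto module together with $\HH^{\neq0}_\OO(\V^\kk(\g))=0$ \cite{Ara15a} — is exact. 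Reading off exactness at its first two terms gives
\begin{align*}
\W^\kk(\g,\OO)=\HH^0_\OO(\V^\kk(\g))&=\ker\Bigl(\HH^0_\OO(\affWak{0})\xrightarrow{\ \bigoplus_i S_i^\OO\ }\bigoplus_{i=1}^{l}\HH^0_\OO(\affWak{-\alpha_i})\Bigr)\\
&=\bigcap_{i=1}^{l}\ker S_i^\OO\ \subset\ \bg^{\bigstar}\otimes\Phi(\g_{1/2})\otimes\heis,
\end{align*}
with $S_i^\OO$ as in \eqref{screeinings for Walg}. The content worth spelling out is therefore why the inputs \eqref{Wakimoto for Walg} and \eqref{screeinings for Walg} hold.

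For the spaces, I would pass to the semi-infinite realization \eqref{BRST is semi-infinite} and run the standard spectral sequence obtained by grading the $\beta\gamma$- and $bc$-generators of the complex by the good grading $\Gamma$. On the associated graded the differential \eqref{BRST differential} splits into Koszul-type pieces indexed by $\Delta_{>0}$: the pairs $\beta_\alpha,\gamma_\alpha$ with $\Gamma(\alpha)\ge1$ cancel against $\varphi_\alpha,\varphi^*_\alpha$ once the harmless shift $\beta_\alpha\mapsto\beta_\alpha+(f,e_\alpha)$ coming from $Q_\chi$ is absorbed; the pairs with $\Gamma(\alpha)=\tfrac12$ cancel against $\varphi_\alpha,\varphi^*_\alpha$ together with the neutral fermions $\Phi_\alpha$ via $Q_{\mathrm{st}}+Q_\Phi$, leaving only $\Phi(\g_{1/2})$; and the part indexed by $\Delta^+_0$ together with the Heisenberg factor $\heis$ (resp.\ $\Fock{\lambda}$ in the module case) is untouched. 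This should force degeneration at the second page and give $\HH^0_\OO(\affWak{\lambda})\simeq\bg^{\bigstar}\otimes\Phi(\g_{1/2})\otimes\Fock{\lambda}$ with $\HH^{\neq0}_\OO(\affWak{\lambda})=0$. For the induced maps, I would track the effect of this contracting homotopy on the affine screening current $Y(P_i\fockIO{i},z)$ of \eqref{affine screenings}: the Fock vertex operator $\fockIO{i}$ is unaffected, while in $P_i\in\bg^{\Delta_+}$ each $\beta_\alpha$ gets replaced, according to whether $\Gamma(\alpha)$ is $0$, $\tfrac12$, or $\ge1$, by $\beta_\alpha$, by $\Phi_\alpha$, or by the scalar $(f,e_\alpha)$ — precisely the recipe yielding $P_i^\OO$ and hence $S_i^\OO$.

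The main obstacle is exactly this cohomology computation and the accompanying bookkeeping of the screenings; it is carried out in detail by Genra \cite{Gen20}, and I would ultimately invoke \cite[Prop.~4.5, Thm.~4.8]{Gen20} rather than redo it. Two further points deserve care. First, the $\HH_\OO$-acyclicity of $\affWak{\lambda}$ must be established independently — e.g.\ via a $\Z_{\ge0}$-filtration on $\affWak{\lambda}$ whose associated graded is built from modules for which the vanishing is already known — since it is what makes the spectral sequence above collapse and what drives the exactness of \eqref{Wakimoto resolution of W algebras}. Second, genericity of $\kk$ is essential: it is what makes \eqref{Wakimoto realization for affine} an embedding whose image is $\bigcap_i\ker S_i$, and, more to the point here, what makes \eqref{Wakimoto resolution of affine} an actual resolution — at special levels this BGG-type complex can fail to be exact, and then \eqref{Wakimoto resolution of W algebras} would no longer identify $\W^\kk(\g,\OO)$ with the full intersection of screening kernels.
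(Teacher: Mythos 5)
Your proposal is correct and follows essentially the same route as the paper, which likewise deduces the statement by applying $\HH^0_\OO$ to the Wakimoto resolution \eqref{Wakimoto resolution of affine}, invoking the $\HH_\OO$-acyclicity of the Wakimoto modules and the vanishing $\HH^{\neq0}_\OO(\V^\kk(\g))=0$ to get the exact complex \eqref{Wakimoto resolution of W algebras}, and then reading off the kernel of $\bigoplus_i S_i^\OO$ using the identifications \eqref{Wakimoto for Walg}--\eqref{screeinings for Walg} from \cite[Prop.~4.5, Thm.~4.8]{Gen20}. Your closing caveats about acyclicity and genericity match the paper's own remarks (including that at non-generic levels the isomorphism relaxes to an embedding).
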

Note that the Wakimoto realization still exists for non-generic levels but the isomorphism in Theorem \ref{thm: Wakimoto general form} relaxed into an embedding.

It is obvious from the previous construction that 
different choices of good pairs $(f,\Gamma)$ (and pyramids) provide a variety of explicit realizations.
In the following, we fix convenient realization to prove the main results.

\subsection{Explicit forms}
We give explicit forms of Theorem \ref{thm: Wakimoto general form} for the $\W$-algebras appearing in Theorems \ref{thm: A} and \ref{thm: BCD}. 
Although the realizations are obtained straightforwardly, we sketch the proof in type $A$ for readers that would like to become more familiar with the general computations of screening operators. 

\subsubsection{Type A}
For the $\W$-algebra $\W^\kk(\sll_{2N},\OO_{[N^2]})$, we have the following Wakimoto realization by using the pyramid in Figure \ref{fig:pyramidA}.
\begin{proposition}\label{Wakimoto realization: type A}
For $(\g,\OO)=(\sll_{2N},\OO_{[N^2]})$ and $\kk$ a generic level, there is an isomorphism of vertex algebras
\begin{align*}
    \W^\kk(\g,\OO)\simeq \bigcap_{i=1}^{2N-1} \ker  S_i^\OO \subset \bg^\bigstar \otimes \heis
\end{align*}
with
\begin{align*}
P_{2i+1}^\OO=\beta_{2i+1},\quad 
P_{2i}^\OO= -\gamma_{2i-1}+\gamma_{2i+1},\qquad \bigstar=\{1,3,\dots,2N-1 \}.
\end{align*}    
\end{proposition}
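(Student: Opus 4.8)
The plan is to apply Theorem \ref{thm: Wakimoto general form} directly, so everything reduces to computing the polynomials $P_i^\OO$ explicitly for the good pair $(f_{N^2},\Gamma_{N^2})$ read off from the pyramid in Figure \ref{fig:pyramidA}. First I would set up the combinatorics of the grading: with the labelling of the pyramid by $\{1,\dots,2N\}$ as in Figure \ref{fig:pyramidA}, the $x$-coordinate of box $i$ is $\lfloor (2N-i)/2\rfloor$ (up to an overall normalization and the trace correction), so a root $\alpha=\epsilon_i-\epsilon_j$ ($i<j$) has $\Gamma(\alpha)=\tfrac12(x_i-x_j)$, which is $0$ exactly when $j=i+1$ with $i$ odd, never equal to $\tfrac12$ (the grading is even, as noted in \eqref{gradingArec}), equal to $1$ when $j=i+2$, and $\geq 1$ otherwise. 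In particular $\Delta_0^+=\{\epsilon_{2i-1}-\epsilon_{2i}\}$, which gives $\bigstar=\{1,3,\dots,2N-1\}$ after identifying these roots with the odd labels, and $\Phi(\g_{1/2})$ is trivial — this already explains the shape of the target $\bg^\bigstar\otimes\heis$. The only place $f$ pairs nontrivially is on the root vectors $e_{\epsilon_{i+2}-\epsilon_i}$ (weight $-1$), i.e.\ $(f_{N^2},e_\alpha)\neq 0$ precisely for $\alpha=\epsilon_i-\epsilon_{i+2}$, by \eqref{nilpotentArec}.

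Next I would compute the $\rho(e_{\alpha_i})$ of \eqref{eq:rho} in the coordinates $z_\alpha$ on $N_+$. For $\sll_{2N}$ this is the standard computation for the nilradical of the Borel: $\rho(e_{\alpha_i})$ is a first-order differential operator whose leading term is $\partial_{z_{\alpha_i}}$ plus correction terms $z_\beta\,\partial_{z_{\beta+\alpha_i}}$ coming from the commutators $[e_{\alpha_i},e_\beta]$. The resulting $P_i\in\bg^{\Delta_+}$ from \eqref{affine screenings} then has the schematic form $\beta_{\alpha_i}+\sum_\beta \gamma_\beta\beta_{\beta+\alpha_i}$. Now one specializes per the grading rule after \eqref{screeinings for Walg}: for $\alpha_i$ with $\Gamma(\alpha_i)=1$ (the even-index simple roots $\alpha_{2i}$) one keeps only the terms $(f,e_\alpha)P_i^\alpha$; by the previous paragraph the surviving contributions are exactly those involving the weight-$(-1)$ roots $\epsilon_{2i-1}-\epsilon_{2i+1}$ and $\epsilon_{2i+1}-\epsilon_{2i+3}$-type pairings, and after the dust settles one gets $P_{2i}^\OO=-\gamma_{2i-1}+\gamma_{2i+1}$ (the two signs being the two ways $\alpha_{2i}$ completes a weight-$1$ root to a weight... i.e.\ the relevant structure constants $c_{\alpha,\beta}^\gamma=\pm1$). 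For $\alpha_i$ with $\Gamma(\alpha_i)=0$ (the odd-index simple roots $\alpha_{2i+1}$, which themselves lie in $\Delta_0^+$) one keeps only terms $P_i^\alpha\beta_\alpha$ with $\alpha\in\Delta_0^+$; the only such $\alpha$ is $\alpha_{2i+1}$ itself, giving $P_{2i+1}^\OO=\beta_{2i+1}$.

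Finally, feeding these $P_i^\OO$ and $\bigstar=\{1,3,\dots,2N-1\}$ into Theorem \ref{thm: Wakimoto general form} yields the statement. I expect the main obstacle to be purely bookkeeping rather than conceptual: getting the coordinate system on $N_+$ and the induced coordinates on $G_0^+\ltimes G_+$ consistent enough that the specialization map $\beta_\alpha\mapsto\beta_\alpha,\ (f,e_\alpha)$ produces exactly $-\gamma_{2i-1}+\gamma_{2i+1}$ with the correct signs and no stray quadratic-in-$\gamma$ terms — one has to check that all the potential correction terms in $\rho(e_{\alpha_{2i}})$ either involve roots of weight $\geq 2$ under $\Gamma$ (hence pair to $0$ with $f$) or cancel. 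Because the grading is even and $f$ is a single "two-step" shift, the genuinely surviving terms are few, so this is tractable; this is why the paper says the realizations are "obtained straightforwardly" and only sketches type $A$. For the $BCD$ cases one would run the same recipe with the skew-symmetric pyramids of Figures \ref{fig:pyramidBeven}--\ref{fig:pyramidDodd}, the only new feature being the possible appearance of a $\Phi(\g_{1/2})$ factor and of the $\delta_{\times\to i}$ contributions to $f$.
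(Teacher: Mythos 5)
Your plan is correct and follows essentially the same route as the paper: read off the good pair from the pyramid of Figure \ref{fig:pyramidA}, observe $\Delta_0^+=\{\alpha_1,\alpha_3,\dots,\alpha_{2N-1}\}$ and $\g_{1/2}=0$, compute $\rho(e_{\alpha_i})$ in the coordinates adapted to $N_+=G_0^+\ltimes G_+$, and specialize the $P_i^\alpha$ according to $\Gamma(\alpha_i)=0$ or $1$ before invoking Theorem \ref{thm: Wakimoto general form}. The only blemish is the normalization $\Gamma(\epsilon_i-\epsilon_j)=\tfrac12(x_i-x_j)$ (it should be $x_i-x_j$), but your stated weight assignments and the resulting $P_i^\OO$ are the correct ones.
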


\proof
The pyramid in Figure \ref{fig:pyramidA} gives the good pair $(f_{\OO},\Gamma_{\OO})$ 
defined in \eqref{nilpotentArec} and \eqref{gradingArec}:
\begin{align}\label{nilpotentgradingA}
   f_{\OO}=\sum_{i=1}^{2N-2}\E_{i+2,i},\qquad \Gamma_{\OO}(\alpha_{2i+1})=0,\quad \Gamma_{\OO}(\alpha_{2i})=1.
\end{align}
In particular, $\Gamma_{\OO}(\alpha)=0$ for $\alpha \in \Delta_+$ if and only if $\alpha$ is one of $\alpha_1,\alpha_3,\dots, \alpha_{2N-1}$. Hence $\bigstar=\{1,3,\dots,2N-1 \}$.
In order to derive $S_i^\OO$, we fix a coordinate system on $N_+$ identifying the coordinates $(z_\alpha; \alpha \in \Delta_+)$ on $\n_+$ of the root vectors $e_\alpha$ through the exponential maps $\e{\bullet}$ and the multiplication
\begin{align*}
    \n_+= \g_0^+ \ltimes \g_+ \xrightarrow[(\e{\bullet},\e{\bullet})]{\sim} G_0^+ \ltimes G_+ =N_+.
\end{align*}
By definition of $\rho(e_{\alpha_i})$, the $P_i^\alpha(z)$'s are given by the formula 
\begin{align*}
    \e{\epsilon e_{\alpha_i}}\e{\sum_0 z_\alpha e_\alpha}\e{\sum_+ z_\alpha e_\alpha}=\e{\sum_0 z_\alpha(\epsilon) e_\alpha}\e{\sum_+ z_\alpha(\epsilon) e_\alpha}
\end{align*}
with $z_\alpha(\epsilon)=z_\alpha + \epsilon P_i^\alpha(z)$
by introducing the dual number $\epsilon$ satisfying $\epsilon^2=0$.
Here the sums $\sum_0$ and $\sum_+$ run over $\alpha\in\Delta_0^+$ and $\alpha\in\Delta_{>0}$, respectively.

For $e_{\alpha_i}=\E_{i,i+1}$ with odd $i$, we have $\Gamma_\OO(e_{\alpha_i})=0$ and 
\begin{align*}
     \e{\epsilon e_{\alpha_i}}\e{\sum_0 z_\alpha e_\alpha}\e{\sum_+ z_\alpha e_\alpha}
     =\e{\sum_0 (z_\alpha+\delta_{\alpha,\alpha_i}\epsilon) e_\alpha}\e{\sum_+ z_\alpha e_\alpha}.
\end{align*}
It follows $\rho(e_{\alpha_i})=\partial_{\alpha_i}$ and thus $P_i^\OO=\beta_i$.

For $e_{\alpha_i}=\E_{i,i+1}$ with even $i$, we have $\Gamma_\OO(e_{\alpha_i})=1$ and 
\begin{align*}
     &\e{\epsilon e_{\alpha_i}}\e{\sum_0 z_\alpha e_\alpha}\e{\sum_+ z_\alpha e_\alpha}\\
     &=\e{\sum_0 z_\alpha e_\alpha} (\e{-\sum_0 z_\alpha e_\alpha}\e{\epsilon e_{\alpha_i}}\e{\sum_0 z_\alpha e_\alpha}) \e{\sum_+ z_\alpha e_\alpha}\\
     &=\e{\sum_0 z_\alpha e_\alpha} \e{\epsilon(e_{\alpha_{i}}-z_{\alpha_{i-1}}e_{\alpha_{i-1}+\alpha_i}+z_{\alpha_{i+1}}e_{\alpha_{i}+\alpha_{i+1}})} \e{\sum_+ z_\alpha e_\alpha}.
\end{align*}
where $e_{\alpha_{i-1}+\alpha_i}=\E_{i-1,i+1}$ and $e_{\alpha_{i}+\alpha_{i+1}}=\E_{i,i+2}$.
It follows that
\begin{align*}
\rho(e_{\alpha_i})=\partial_{z_{\alpha_i}}-z_{\alpha_{i-1}}\partial_{z_{\alpha_{i-1}+\alpha_i}}+z_{\alpha_{i+1}}\partial_{z_{\alpha_i+\alpha_{i+1}}}+\sum_{\Gamma(\alpha)>1} P_i^\alpha(z) \partial_{z_{\alpha}}.
\end{align*}
But $(f_\OO,e_\alpha)\neq0$ if and only if $\alpha=\alpha_j+\alpha_{j+1}$ ($1\leq j\leq 2N-2$) and $\Gamma(\alpha_j+\alpha_{j+1})=1$, thus $P_i^\OO=-\gamma_{i-1}+ \gamma_{i+1}$. 
\endproof

\subsubsection{Type B}
\begin{figure}[h]
	\begin{minipage}[l]{0.5\linewidth}
		\begin{center} 
			\begin{tikzpicture}[every node/.style={draw,regular polygon sides=4,minimum size=1cm,line width=0.04em},scale=0.58, transform shape]
       \node at (0,0) (4) { 0};
        \node at (0.5,1) (4) { N-1};
        \node at (1.5,1) (4) {\footnotesize $\dots$};
        \node at (2.5,1) (4) { 5};
        \node at (3.5,1) (4) { 3};
        \node at (4.5,1) (4) { 1};
        \node at (-0.5,1) (4) { -N};
        \node at (-1.5,1) (4) {\footnotesize $\dots$};
        \node at (-2.5,1) (4) { -6};
        \node at (-3.5,1) (4) { -4};
        \node at (-4.5,1) (4) { -2};
        \node at (0.5,-1) (4) { N};
        \node at (1.5,-1) (4) {\footnotesize $\dots$};
        \node at (2.5,-1) (4) { 6};
        \node at (3.5,-1) (4) { 4};
        \node at (4.5,-1) (4) { 2};
        \node at (-0.5,-1) (4) { 1-N};
        \node at (-1.5,-1) (4) {\footnotesize $\dots$ };
        \node at (-2.5,-1) (4) { -5};
        \node at (-3.5,-1) (4) { -3};
        \node at (-4.5,-1) (4) { -1};
    \end{tikzpicture}
            \captionsetup{font=small}
            \captionof{figure}{Pyramid for $[N^2,1]$, $N=2n$, in type $B$}\label{fig:pyramidBeven}
		\end{center}
	\end{minipage}
	\begin{minipage}[c]{0.49\linewidth}
		\begin{center}
			\begin{tikzpicture}[every node/.style={draw,regular polygon sides=4,minimum size=1cm,line width=0.04em},scale=0.58, transform shape]
        \node at (0,0) (4) { 0};
        \node at (0.5,1) (4) { N-1};
        \node at (1.5,1) (4) {\footnotesize $\dots$};
        \node at (2.5,1) (4) { 4};
        \node at (3.5,1) (4) { 2};
        \node at (4.5,1) (4) { 1};
        \node at (-0.5,1) (4) { -N};
        \node at (-1.5,1) (4) {\footnotesize $\dots$};
        \node at (-2.5,1) (4) { -5};
        \node at (-3.5,1) (4) { -3};
        \node at (-4.5,1) (4) {$\times $};
        \node at (0.5,-1) (4) { N};
        \node at (1.5,-1) (4) {\footnotesize $\dots$};
        \node at (2.5,-1) (4) { 5};
        \node at (3.5,-1) (4) { 3};
        \node at (4.5,-1) (4) { $\times$};
        \node at (-0.5,-1) (4) { 1-N};
        \node at (-1.5,-1) (4) {\footnotesize $\dots$ };
        \node at (-2.5,-1) (4) { -4};
        \node at (-3.5,-1) (4) { -2};
        \node at (-4.5,-1) (4) { -1};
    \end{tikzpicture}
            \captionsetup{font=small}
            \captionof{figure}{Pyramid for $[N^2,1]$, $N=2n+1$, in type $B$}\label{fig:pyramidBodd}
		\end{center}
	\end{minipage}
\end{figure}
For the $\W$-algebra $\W^\kk(\so_{2N+1},\OO_{[N^2,1]})$, we use the pyramids in Figures \ref{fig:pyramidBeven}-\ref{fig:pyramidBodd} depending on the parity of $N$.
They give respectively the nilpotent elements and good gradings
\begin{equation}\label{nilpotentgradingB}
\begin{aligned}
    &f_\OO=\E^o_{-N+1,N}+\sum_{i=1}^{N-2}\E^o_{i+2,i},\qquad &&f_\OO=\E^o_{2,1}+\E^o_{-N+1,N}+\sum_{i=2}^{N-2}\E^o_{i+2,i}.\\
    &\begin{tikzpicture}{\dynkin[root
radius=.1cm,labels={\alpha_1,\alpha_2,\alpha_{N-3}, \alpha_{N-2},\alpha_{N-1},\alpha_N},labels*={0,1,0,1,0,\tfrac{1}{2}},edge length=0.8cm]B{oo.oooo}};		
\end{tikzpicture}\qquad
&&\begin{tikzpicture}
{\dynkin[root
radius=.1cm,labels={\alpha_1,\alpha_2,\alpha_{N-2}, \alpha_{N-1},\alpha_N},labels*={1,0,1,0,\tfrac{1}{2}},edge length=0.8cm]B{oo.ooo}};		
\end{tikzpicture}
\end{aligned}
\end{equation}
Let $\Phi_{1/2}$ be a copy of the $\bg$-system generated by $\Phi_1, \Phi_2$ corresponding to $\beta, \gamma$, respectively. 
\begin{proposition}\label{Wakimoto realization: type B}
For $(\g,\OO)=(\so_{2N+1},\OO_{[N^2,1]})$ and $\kk$ a generic level, there is an isomorphism of vertex algebras
\begin{align*}
    \W^\kk(\g,\OO)\simeq \bigcap_{i=1}^{N} \ker  S_i^\OO \subset \bg^\bigstar \otimes \Phi_{1/2}  \otimes \heis
\end{align*}
with \\
\begin{enumerate}[label=(\roman*)]
    \item if $N=2n$,
\begin{align*}
P_{2i+1}^\OO=\beta_{2i+1},\quad 
P_{2i}^\OO=\begin{cases} -\gamma_{2i-1}+\gamma_{2i+1} &(i<n) \\ \Phi_{2}-\gamma_{N-1} \Phi_{1} &(i=n) \end{cases},\qquad \bigstar=\{1,3,\dots,N-1 \},
\end{align*}
    \item if $N=2n+1$,
\begin{align*}
P_{2i+1}^\OO=\begin{cases} \wun  & (i=0)\\ -\gamma_{2i}+\gamma_{2i+2} &(0<i<n) \\ \Phi_{2}-\gamma_{2n} \Phi_{1} &(i=n) \end{cases},\quad 
P_{2i}^\OO=\beta_{2i},\qquad \bigstar=\{2,4,\dots,N-1 \}.
\end{align*}
\end{enumerate}
\end{proposition}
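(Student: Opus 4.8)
The plan is to carry out, for $\g=\so_{2N+1}$, the same screening-operator computation that is sketched for type $A$ in Proposition~\ref{Wakimoto realization: type A}, now with the good pairs $(f_\OO,\Gamma_\OO)$ read off from the pyramids in Figures~\ref{fig:pyramidBeven}--\ref{fig:pyramidBodd} and recorded in \eqref{nilpotentgradingB}. By Theorem~\ref{thm: Wakimoto general form} it is enough to compute the polynomials $P_i^\OO$ in the screening currents \eqref{screeinings for Walg}; these come from the affine Wakimoto vector fields $\rho(e_{\alpha_i})=\sum_\alpha P_i^\alpha(z)\,\partial_{z_\alpha}$ by the substitution rule recalled just before Theorem~\ref{thm: Wakimoto general form}: replace the pair $\partial_{z_\alpha},z_\alpha$ by the corresponding $\beta\gamma$-fields $\beta_\alpha,\gamma_\alpha$, by $\Phi_\alpha$, or by the scalar $(f_\OO,e_\alpha)$, according as $\Gamma_\OO(\alpha)$ is $0$, $\tfrac12$, or positive. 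So the first step is to fix the coordinate system on $N_+$ adapted to the factorization $\n_+=\g_0^+\ltimes\g_+\xrightarrow{\sim}G_0^+\ltimes G_+=N_+$ compatible with $\Gamma_\OO$, and to expand $\e{\epsilon e_{\alpha_i}}\,\e{\sum_0 z_\alpha e_\alpha}\,\e{\sum_+ z_\alpha e_\alpha}$ modulo $\epsilon^2$ for each simple root, exactly as in the type $A$ argument. The structure constants that enter are only those among three consecutive simple root spaces together with those coupling the short root $\alpha_N$ to its neighbour, and all of them are read off from the skew-symmetric basis $\E^o_{i,j}=\E_{i,j}-\E_{-j,-i}$ of $\so_{2N+1}$.

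Next I would run through the three grading cases, which away from the short end of the diagram are essentially identical to type $A$. If $\Gamma_\OO(\alpha_i)=0$, the conjugation stays inside $G_0^+$ (which for these pyramids is abelian), so $\rho(e_{\alpha_i})=\partial_{z_{\alpha_i}}$ and $P_i^\OO=\beta_i$; since the simple roots of $\Gamma_\OO$-degree $0$ are $\alpha_1,\alpha_3,\dots$ for $N$ even and $\alpha_2,\alpha_4,\dots$ for $N$ odd, this gives the asserted $\bigstar=\{1,3,\dots,N-1\}$, resp.\ $\{2,4,\dots,N-1\}$. If $\Gamma_\OO(\alpha_i)=1$ and $\alpha_i$ is an interior node, the conjugation produces $e_{\alpha_i}-z_{\alpha_{i-1}}e_{\alpha_{i-1}+\alpha_i}+z_{\alpha_{i+1}}e_{\alpha_i+\alpha_{i+1}}$ together with terms supported on roots of $\Gamma_\OO$-degree $\ge\tfrac32$; the latter are killed by $(f_\OO,-)$ since $f_\OO\in\g_{-1}$ pairs trivially with $\g_{\ge 3/2}$, so $P_i^\OO=-\gamma_{i-1}+\gamma_{i+1}$ exactly as in type $A$, while the boundary node $\alpha_1$ occurring when $N$ is odd has no left neighbour and gives a nonzero scalar, normalized to $\wun$. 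The one genuinely new point is the short root $\alpha_N$, with $\Gamma_\OO(\alpha_N)=\tfrac12$: here $\g_{1/2}$ is two-dimensional, spanned by $e_{\alpha_N}$ and $e_{\alpha_{N-1}+\alpha_N}$, the conjugation of $e_{\alpha_N}$ through the adjacent degree-zero generator $e_{\alpha_{N-1}}$ contributes the $\gamma_{N-1}$, and evaluating the degree-$\tfrac12$ terms on $\g_{1/2}$ as the fields of $\Phi(\g_{1/2})$ gives $P_N^\OO=\Phi_2-\gamma_{N-1}\Phi_1$; along the way one checks that the symplectic form $\langle u,v\rangle=(f_\OO,[u,v])$ on $\g_{1/2}$ is nondegenerate, so that $\Phi(\g_{1/2})$ is a single $\bg$-system, which the statement denotes $\Phi_{1/2}$ with generators $\Phi_1,\Phi_2$.

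The step I expect to require the most care is the finite bookkeeping near the short end of the Dynkin diagram: because $\E^o_{i,j}=\E_{i,j}-\E_{-j,-i}$, conjugations in $\so_{2N+1}$ generate more terms and more delicate signs than in $\sll_N$, so one must verify carefully (i) that, in the pyramid-adapted coordinates, the expansion of $\rho(e_{\alpha_i})$ for a degree-one interior node really truncates to the three displayed terms modulo $\g_{\ge 3/2}$ (here the abelianness of $\g_0^+$ noted above is what rules out quadratic corrections), and (ii) that the short-root screening comes out precisely in the normalization $\Phi_2-\gamma_{N-1}\Phi_1$. These checks are slightly different for $N$ even and $N$ odd, since the degree-one labels sit on the even-indexed versus the odd-indexed simple roots; once they are done, the two parity cases assemble into the two displayed lists of $P_i^\OO$, and the statement follows from Theorem~\ref{thm: Wakimoto general form}.
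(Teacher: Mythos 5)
Your proposal follows essentially the same route as the paper's proof: identify $\g_{1/2}$ as the two--dimensional symplectic space spanned by $\E^o_{0,-N+1}$ and $\E^o_{0,-N}$ (your $e_{\alpha_{N-1}+\alpha_N}$ and $e_{\alpha_N}$), check nondegeneracy of $\langle\cdot,\cdot\rangle$ so that $\Phi(\g_{1/2})\simeq\Phi_{1/2}$, and then compute the $P_i^\OO$ node by node exactly as in Proposition~\ref{Wakimoto realization: type A}, the only new feature being the short root. The paper does precisely this and declares the $P_i^\OO$ computation ``parallel to type $A$''; your version supplies the omitted details, and the conclusions all match.

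One justification in your middle paragraph is off, although the conclusion survives. For a degree-one interior node, the conjugation $\mathrm{Ad}\bigl(\e{-\sum_0 z_\alpha e_\alpha}\bigr)e_{\alpha_i}$ contains the quadratic correction proportional to $z_{\alpha_{i-1}}z_{\alpha_{i+1}}\,e_{\alpha_{i-1}+\alpha_i+\alpha_{i+1}}$, whose $\Gamma_{\OO}$-degree is $0+1+0=1$, not $\geq\tfrac32$; and the abelianness of $\g_0^+$ does not rule it out (it only says $e_{\alpha_{i-1}}$ and $e_{\alpha_{i+1}}$ commute with each other, not that their iterated brackets with $e_{\alpha_i}$ vanish). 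The term is harmless for a different reason, which the paper makes explicit in the proof of Proposition~\ref{Wakimoto_typeAmore}: one checks against the explicit $f_{\OO}$ of \eqref{nilpotentgradingB} that $(f_{\OO},e_\alpha)\neq 0$ only for $\alpha$ of the form $\alpha_j+\alpha_{j+1}$, for $\alpha_{N-1}+2\alpha_N$, and for $\alpha_1$ when $N$ is odd, and $\alpha_{i-1}+\alpha_i+\alpha_{i+1}$ is none of these. With that correction your argument goes through and reproduces both parity cases of the statement.
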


\proof
Regardless the parity of $N$, we have
\begin{equation}
    \g_{1/2}=\C \E^o_{0,-N+1} \oplus \C \E^o_{0,-N}.
\end{equation}
Hence, $\Phi(\g_{1/2})$ is generated by the corresponding fields $\Phi_1,\Phi_2$.
Since 
\begin{align}
    (f_\OO,[\E^o_{0,-N+1}, \E^o_{0,-N}])=1,
\end{align}
we have $\Phi_1(z)\Phi_2(w)\sim \wun/(z-w)$ and thus $\Phi(\g_{1/2})\simeq \Phi_{1/2}$.
Hence, we have
\begin{align*}
    \W^\kk(\g,\OO)\simeq \bigcap_{i=1}^{N} \ker  S_i^\OO \subset \bg^\bigstar \otimes \Phi_{1/2}  \otimes \heis.
\end{align*}
The computations of $\bigstar$ and the $P_i$'s are parallel to the proof of Proposition \ref{Wakimoto realization: type A}, which we omit.
\endproof

\subsubsection{Type C}
\begin{figure}[h]
	\begin{minipage}[l]{0.5\linewidth}
		\begin{center} 
			\begin{tikzpicture}[every node/.style={draw,regular polygon sides=4,minimum size=1cm,line width=0.04em},scale=0.58, transform shape]
        \node at (0,0) (4) { -N};
        \node at (4,0) (4) { 3};
        \node at (3,0) (4) { 5};
        \node at (2,0) (4) { $\dots$};
        \node at (1,0) (4) { N-1};
        \node at (-5,0) (4) { -1};
        \node at (-4,0) (4) { -2};
        \node at (-3,0) (4) { -4};
        \node at (-2,0) (4) { $\dots$};
        \node at (-1,0) (4) { 2-N};
        \node at (0,1) (4) { N};
        \node at (5,1) (4) { 1};
        \node at (4,1) (4) { 2};
        \node at (3,1) (4) { 4};
        \node at (2,1) (4) { $\dots$};
        \node at (1,1) (4) { N-2};
        \node at (-4,1) (4) { -3};
        \node at (-3,1) (4) { -5};
        \node at (-2,1) (4) { $\dots$};
        \node at (-1,1) (4) {  1-N};
    \end{tikzpicture}
            \captionsetup{font=small}
            \captionof{figure}{Pyramid for $[N^2]$, $N=2n$, in type $C$}\label{fig:pyramidCeven}
		\end{center}
	\end{minipage}
	\begin{minipage}[c]{0.49\linewidth}
		\begin{center}
			\begin{tikzpicture}[every node/.style={draw,regular polygon sides=4,minimum size=1cm,line width=0.04em},scale=0.58, transform shape]
        \node at (4,1) (4) { 1};
        \node at (4,0) (4) { 2};
        \node at (3,1) (4) { 3};
        \node at (3,0) (4) { 4};
        \node at (2,1) (4) { $\dots$};
        \node at (2,0) (4) { $\dots$};
        \node at (1,1) (4) { N-2};
        \node at (1,0) (4) { N-1};
        \node at (0,1) (4) { N};
        \node at (0,0) (4) { -N};
        \node at (-1,1) (4) { 1-N};
        \node at (-1,0) (4) { 2-N};
        \node at (-2,1) (4) { $\dots$};
        \node at (-2,0) (4) { $\dots$};
        \node at (-3,1) (4) { -4};
        \node at (-3,0) (4) { -3};
        \node at (-4,1) (4) { -2};
        \node at (-4,0) (4) { -1};
    \end{tikzpicture}
            \captionsetup{font=small}
            \captionof{figure}{Pyramid for $[N^2]$, $N=2n+1$, in type $C$}\label{fig:pyramidCodd}
		\end{center}
	\end{minipage}
\end{figure}
For the $\W$-algebra $\W^\kk(\spp_{2N},\OO_{[N^2]})$, we use the pyramids in Figures \ref{fig:pyramidCeven}-\ref{fig:pyramidCodd} again depending on the parity of $N$.
They associate respectively the nilpotent elements and good gradings
\begin{equation}\label{nilpotentgradingC}
    \begin{aligned}
        &f_\OO=\E^s_{2,1}+\E^s_{-N,N+1}+\sum_{i=2}^{N-2}\E^s_{i+2,i},\qquad &&f_\OO=\E^s_{-N+1,N}+\sum_{i=1}^{N-2}\E^s_{i+2,i}.\\
        &\begin{tikzpicture}
{\dynkin[root
radius=.1cm,labels={\alpha_1,\alpha_2,\alpha_3, \alpha_4,\alpha_{N-1},\alpha_N},labels*={1,0,1,0,1,0},edge length=0.8cm]C{oooo.oo}};		
\end{tikzpicture}\qquad 
&&\begin{tikzpicture}
{\dynkin[root
radius=.1cm,labels={\alpha_1,\alpha_2,\alpha_3, \alpha_4,{}, \alpha_{N-1},\alpha_N},labels*={0,1,0,1,0,1,0},edge length=0.8cm]C{oooo.ooo}};		
\end{tikzpicture}
    \end{aligned}
\end{equation}

\begin{proposition}\label{Wakimoto realization: type C}
For $(\g,\OO)=(\spp_{2N},\OO_{[N^2]})$ and $\kk$ a generic level, there is an isomorphism of vertex algberas
\begin{align*}
    \W^\kk(\g,\OO)\simeq \bigcap_{i=1}^{N} \ker  S_i^\OO \subset \bg^\bigstar  \otimes \heis
\end{align*}
with
\begin{enumerate}[label=(\roman*)]
\item if $N=2n$
\begin{align*}
P_{2i+1}^\OO=\begin{cases} \wun  & (i=0)\\ -\gamma_{2i}+\gamma_{2i+2} &(i >0) \end{cases},\quad P_{2i}^\OO=\beta_{2i},\qquad \bigstar=\{2,4,\dots,N \},
\end{align*}
\item if $N=2n+1$
\begin{align*}
P_{2i+1}^\OO=\beta_{2i+1},\quad P_{2i}^\OO=-\gamma_{2i-1}+\gamma_{2i+1},\qquad \bigstar=\{1,3,\dots,N \}.
\end{align*}
\end{enumerate}
\end{proposition}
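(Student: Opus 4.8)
The plan is to mimic the proof of Proposition~\ref{Wakimoto realization: type A}, taking advantage of one simplification special to this family: the good gradings $\Gamma_\OO$ attached to the pyramids in Figures~\ref{fig:pyramidCeven}--\ref{fig:pyramidCodd} are \emph{even}. Indeed the weighted Dynkin diagrams displayed in \eqref{nilpotentgradingC} take only integer values, so $\g_{1/2}=0$ and the free fermion factor $\Phi(\g_{1/2})$ appearing in \eqref{Wakimoto for Walg} is trivial. By Theorem~\ref{thm: Wakimoto general form} it then remains only to identify $\bigstar=\Delta^+_0$ and the polynomials $P_i^\OO$ entering the screenings $S_i^\OO$ inside $\bg^\bigstar\otimes\heis$.

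First I would pin down $\bigstar$. Reading the $x$-coordinates off the two pyramids gives $\Gamma_\OO(\alpha_i)=0$ exactly for $i$ even when $N=2n$ and for $i$ odd when $N=2n+1$; in both parities the long simple root $\alpha_N$ has degree $0$ while $\alpha_{N-1}$ has degree $1$. Since a positive root is of $\Gamma_\OO$-degree $0$ iff it is a sum of degree-$0$ simple roots, and since the degree-$0$ simple roots are pairwise non-adjacent in the $C_N$ Dynkin diagram in both cases --- they are alternating nodes, and $\alpha_{N-1},\alpha_N$ are never simultaneously of degree $0$, so the double bond is never spanned --- the subalgebra $\g_0^+=\n_+\cap\g_0$ is abelian and $\Delta^+_0=\{\alpha_j\mid j\in\bigstar\}$ with $\bigstar$ as claimed.

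The core of the argument is then the computation of $\rho(e_{\alpha_i})=\sum_\alpha P_i^\alpha(z)\,\partial_{z_\alpha}$ from the dual-number identity
\[
\e{\epsilon e_{\alpha_i}}\,\e{\textstyle\sum_{0}z_\alpha e_\alpha}\,\e{\textstyle\sum_{+}z_\alpha e_\alpha}
=\e{\textstyle\sum_{0}z_\alpha(\epsilon)\,e_\alpha}\,\e{\textstyle\sum_{+}z_\alpha(\epsilon)\,e_\alpha},\qquad z_\alpha(\epsilon)=z_\alpha+\epsilon\,P_i^\alpha(z),
\]
followed by the substitution rule of Theorem~\ref{thm: Wakimoto general form}: one replaces $\partial_{z_\alpha}$ by $\beta_\alpha$ when $\Gamma_\OO(\alpha)=0$ and by $(f_\OO,e_\alpha)$ when $\Gamma_\OO(\alpha)=1$, there being no degree-$\tfrac12$ roots. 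For $i\in\bigstar$ the abelianness of $\g_0^+$ makes $\mathrm{Ad}(\e{-\sum_0 z_\alpha e_\alpha})e_{\alpha_i}=e_{\alpha_i}$, so $\rho(e_{\alpha_i})=\partial_{z_{\alpha_i}}$ and $P_i^\OO=\beta_i$. For $i\notin\bigstar$, so $\Gamma_\OO(\alpha_i)=1$, expanding $\mathrm{Ad}(\e{-\sum_0 z_\alpha e_\alpha})e_{\alpha_i}$ to first order and conjugating past $\e{\sum_+\cdots}$ produces, besides $\partial_{z_{\alpha_i}}$, only the two terms proportional to $z_{\alpha_{i-1}}\partial_{z_{\alpha_{i-1}+\alpha_i}}$ and $z_{\alpha_{i+1}}\partial_{z_{\alpha_i+\alpha_{i+1}}}$ --- exactly as in type $A$ --- together with terms supported on roots of degree $\geq 2$, and pairing against $f_\OO$ annihilates everything of degree $\neq1$. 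Tracking which of $e_{\alpha_i},\,e_{\alpha_{i-1}+\alpha_i},\,e_{\alpha_i+\alpha_{i+1}}$ actually pair non-trivially with the blocks of $f_\OO$ in \eqref{nilpotentgradingC} --- the vertical term $\E^{s}_{2,1}$, the ``jump-by-two'' terms $\E^{s}_{j+2,j}$, and the remaining long-root term --- and relabelling according to the parity of $N$ then yields $P_i^\OO=-\gamma_{i-1}+\gamma_{i+1}$ in the generic case and the exceptional value $P_1^\OO=\wun$ at the right end of the $N=2n$ staircase pyramid, where $e_{\alpha_1}$ pairs with $\E^{s}_{2,1}$ while $e_{\alpha_1+\alpha_2}$ does not pair with $f_\OO$ at all.

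The step I expect to be the genuine obstacle is the degree-one computation at the long end of the diagram: checking that the long-root block of $f_\OO$ contributes to $P_{N-1}^\OO$ exactly the term $\gamma_N$ (the $\gamma$-field attached to the long root $\alpha_N\in\bigstar$), with the correct sign and normalization. This forces one to keep careful track of the symplectic signs in the structure constants of the $\E^{s}_{i,j}$-basis --- in particular the different normalization of the long-root vectors $\E^{s}_{i,-i}$ --- and of a fixed normalization of the root vectors $e_\alpha$ throughout. It is a finite and elementary, if slightly tedious, verification; away from the long root and from the staircase endpoint the computation is formally identical to the one already carried out for Proposition~\ref{Wakimoto realization: type A}.
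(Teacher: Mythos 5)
Your proposal is correct and follows exactly the route the paper intends: the paper states this proposition without a separate proof, relying on the same pyramid/dual-number computation of $\rho(e_{\alpha_i})$ carried out for Proposition \ref{Wakimoto realization: type A}, which is precisely what you reconstruct (including the observation that the grading is even so no neutral fermions appear, the identification of $\bigstar=\Delta_0^+$ from the alternating degree-zero nodes, and the case analysis at the staircase endpoint giving $P_1^\OO=\wun$ and at the long root giving the $\gamma_N$ term). The only care needed, as you correctly flag, is the normalization of the long-root vectors $\E^s_{i,-i}$ when pairing against the long-root block of $f_\OO$.
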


\subsubsection{Type D}
\begin{figure}[h]
	\begin{minipage}[l]{0.5\linewidth}
		\begin{center} 
			\begin{tikzpicture}[every node/.style={draw,regular polygon sides=4,minimum size=1cm,line width=0.04em},scale=0.58, transform shape]
        \node at (3.5,0) (4) { 2};
        \node at (2.5,0) (4) { 4};
        \node at (1.5,0) (4) { $\dots$};
        \node at (0.5,0) (4) { N};
        \node at (-3.5,0) (4) { -1};
        \node at (-2.5,0) (4) { -3};
        \node at (-1.5,0) (4) { $\dots$};
        \node at (-0.5,0) (4) { 1-N};
        \node at (3.5,1) (4) { 1};
        \node at (2.5,1) (4) { 3};
        \node at (1.5,1) (4) { $\dots$};
        \node at (0.5,1) (4) { N-1};
        \node at (-3.5,1) (4) { -2};
        \node at (-2.5,1) (4) { -4};
        \node at (-1.5,1) (4) { $\dots$};
        \node at (-0.5,1) (4) { -N};
    \end{tikzpicture}
            \captionsetup{font=small}
            \captionof{figure}{Pyramid for $[N^2]$, $N=2n$, in type $D$}\label{fig:pyramidDeven}
		\end{center}
	\end{minipage}
	\begin{minipage}[c]{0.49\linewidth}
		\begin{center}
			\begin{tikzpicture}[every node/.style={draw,regular polygon sides=4,minimum size=1cm,line width=0.04em},scale=0.58, transform shape]
        \node at (3.5,0) (4) { 3};
        \node at (2.5,0) (4) { 5};
        \node at (1.5,0) (4) { $\dots$};
        \node at (0.5,0) (4) { N};
        \node at (-4.5,0) (4) { -1};
        \node at (-3.5,0) (4) { -2};
        \node at (-2.5,0) (4) { -4};
        \node at (-1.5,0) (4) { $\dots$};
        \node at (-0.5,0) (4) { 1-N};
        \node at (4.5,1) (4) { 1};
        \node at (3.5,1) (4) { 2};
        \node at (2.5,1) (4) { 4};
        \node at (1.5,1) (4) { $\dots$};
        \node at (0.5,1) (4) { N-1};
        \node at (-3.5,1) (4) { -3};
        \node at (-2.5,1) (4) { -5};
        \node at (-1.5,1) (4) { $\dots$};
        \node at (-0.5,1) (4) { -N};
    \end{tikzpicture}
            \captionsetup{font=small}
            \captionof{figure}{Pyramid for $[N^2]$, $N=2n+1$, in type $D$}\label{fig:pyramidDodd}
		\end{center}
	\end{minipage}
\end{figure}
For the $\W$-algebra $\W^\kk(\so_{2N},\OO_{[N^2]})$, we use the pyramids in Figures \ref{fig:pyramidDeven}-\ref{fig:pyramidDodd} depending on the parity of $N$.
They associate the nilpotent elements and good gradings
\begin{equation}\label{nilpotentgradingD}
\begin{aligned}
    &f_\OO=\sum_{i=1}^{N-2}\E^o_{i+2,i}+\E^o_{-N+1,N},\qquad &&f_\OO=\E^o_{2,1}+\E^o_{-N,N-1}+\sum_{i=1}^{N-2}\E^o_{i+2,i}.\\
    &\begin{tikzpicture}
{\dynkin[root
radius=.1cm,labels={\alpha_1,\alpha_2,\alpha_3, \alpha_4,{},{},\alpha_{N-1},\alpha_N},labels*={0,1,0,1,0,1,0,1},edge length=0.8cm]D{oooo.oooo}};		
\end{tikzpicture}\qquad 
&&\begin{tikzpicture}
{\dynkin[root
radius=.1cm,labels={\alpha_1,\alpha_2,\alpha_3,{},{},\alpha_{N-1},\alpha_N},labels*={1,0,1,0,1,0,1},edge length=0.8cm]D{ooo.oooo}};		
\end{tikzpicture}
\end{aligned}
\end{equation}

\begin{proposition}\label{Wakimoto realization: type D}
Assume first that $\kk$ is a generic level.
For $(\g,\OO)=(\so_{2N},\OO_{[N^2]})$ and $\kk$ a generic level, there is an isomorphism 
\begin{align*}
    \W^\kk(\g,\OO)\simeq \bigcap_{i=1}^{N} \ker  S_i^\OO \subset \bg^\bigstar  \otimes \heis
\end{align*}
with \\
\textup{(i)} $N=2n$
\begin{align*}
P_{2i+1}^\OO=\beta_{2i+1},\quad P_{2i}^\OO=\begin{cases} -\gamma_{2i-1}+\gamma_{2i+1}  & (i<n)\\ \wun &(i=n) \end{cases} ,\qquad \bigstar=\{1,3,\dots,N-1 \},
\end{align*}
\textup{(ii)} $N=2n+1$
\begin{align*}
P_{2i+1}^\OO=\begin{cases} \wun & (i=0,n-1)\\ -\gamma_{2i}+\gamma_{2i+2} &(i\neq 0,n-1) \end{cases},\quad P_{2i}^\OO=\beta_{2i} ,\qquad \bigstar=\{2,4,\dots,N-1 \}.
\end{align*}
\end{proposition}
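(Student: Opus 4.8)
The plan is to run the argument of the proof of Proposition~\ref{Wakimoto realization: type A} for the good pairs $(f_\OO,\Gamma_\OO)$ associated with the pyramids in Figures~\ref{fig:pyramidDeven}--\ref{fig:pyramidDodd}. First I would note that both weighted Dynkin diagrams in \eqref{nilpotentgradingD} carry only integral labels, so $\Gamma_\OO$ is an even grading and $\g_{1/2}=0$; the neutral free fermion factor $\Phi(\g_{1/2})$ is therefore trivial and Theorem~\ref{thm: Wakimoto general form} reads $\W^\kk(\g,\OO)\simeq\bigcap_{i=1}^{N}\ker S_i^\OO\subset\bg^\bigstar\otimes\heis$. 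Next I would identify $\bigstar=\Delta^+_0$: by \eqref{nilpotentgradingD} the degree-zero simple roots are $\alpha_1,\alpha_3,\dots,\alpha_{N-1}$ for $N=2n$ and $\alpha_2,\alpha_4,\dots,\alpha_{N-1}$ for $N=2n+1$, and in the Dynkin diagram of $D_N$ --- where the two fork nodes $\alpha_{N-1},\alpha_N$ are joined to $\alpha_{N-2}$ but never to each other --- these simple roots are pairwise non-adjacent. Hence the sub-root-system $\Delta_0$ they span is a direct sum of copies of $A_1$, so $\Delta^+_0$ consists of exactly these simple roots, which gives $\bigstar$ as stated and makes $\g^+_0=\n_+\cap\g_0$ abelian.

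The second step is to compute the polynomials $P_i^\OO$ via the recipe \eqref{screeinings for Walg}. For a degree-zero simple root $\alpha_i$, abelianness of $\g^+_0$ gives $\e{\epsilon e_{\alpha_i}}\e{\sum_0 z_\alpha e_\alpha}=\e{\sum_0(z_\alpha+\delta_{\alpha,\alpha_i}\epsilon)e_\alpha}$, whence $\rho(e_{\alpha_i})=\partial_{z_{\alpha_i}}$ and $P_i^\OO=\beta_{\alpha_i}$. For a degree-one simple root $\alpha_i$ one has $P_i^\OO=\sum_{\alpha\in\Delta_+}(f_\OO,e_\alpha)P_i^\alpha$, and, exactly as in type $A$, conjugating $\e{\epsilon e_{\alpha_i}}$ through $\e{\sum_0 z_\alpha e_\alpha}$ expresses $\rho(e_{\alpha_i})$ as $\partial_{z_{\alpha_i}}$ plus terms $z_{\alpha'}\partial_{z_{\alpha_i+\alpha'}}$ for the degree-zero simple roots $\alpha'$ adjacent to $\alpha_i$ (and derivatives along roots of degree $>1$). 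Pairing against $f_\OO$ from \eqref{nilpotentgradingD} kills all but the roots dual to a matrix unit appearing in $f_\OO$, and this collapses $P_i^\OO$ to $-\gamma_{\alpha_{i-1}}+\gamma_{\alpha_{i+1}}$ --- in our indexing $-\gamma_{2i-1}+\gamma_{2i+1}$ or $-\gamma_{2i}+\gamma_{2i+2}$, according to the parity of $N$ --- at the generic ``chain'' positions, and to the constant $\wun$ at the few exceptional nodes listed in (i) and (ii), which sit next to the fork of the Dynkin diagram and (when $N$ is odd) next to the initial jog $\E^o_{2,1}$ of $f_\OO$. Intersecting the kernels of the resulting screenings $S_i^\OO=\int Y(P_i^\OO\fockIO{i},z)\,\dz$ then yields the two realisations; for non-generic $\kk$ this is an embedding rather than an isomorphism, as remarked after Theorem~\ref{thm: Wakimoto general form}.

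The routine parts are line-for-line the same as in Proposition~\ref{Wakimoto realization: type A}, and the one place where genuine care is needed --- the main obstacle --- is the fork of $D_N$. There one must verify that the two fork nodes are indeed non-adjacent (so that $\Delta_0$ really is a product of $A_1$'s), and, more delicately, keep track of the signs intrinsic to the $\E^o_{i,j}$ realisation in order to decide which of the roots $\alpha_{N-2}+\alpha_{N-1}$, $\alpha_{N-2}+\alpha_N$ and their extensions into $\g_{\geq2}$ pair non-trivially with the fork summand $\E^o_{-N+1,N}$ (resp.\ $\E^o_{-N,N-1}$) of $f_\OO$. This bookkeeping is what produces the $\wun$ entries in the tables and explains why $\bigstar$ reaches $\alpha_{N-1}$ but stops short of the second fork node $\alpha_N$; once it is carried out, the two parity cases follow exactly as above, just as for the type $B$ and $C$ statements in Propositions~\ref{Wakimoto realization: type B} and \ref{Wakimoto realization: type C}.
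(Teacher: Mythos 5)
Your proposal is correct and follows exactly the route the paper intends: the paper states Proposition \ref{Wakimoto realization: type D} without any proof (already for type $B$ it says the computations are ``parallel to the proof of Proposition \ref{Wakimoto realization: type A}, which we omit'', and for types $C$ and $D$ no proof is given at all), and your argument is precisely that omitted type-$A$-style computation --- the grading is even so the fermionic factor is absent, $\g_0^+$ is abelian so $P_i^\OO=\beta_i$ at degree-zero nodes, and the conjugation-plus-pairing bookkeeping at degree-one nodes produces the $-\gamma_{i-1}+\gamma_{i+1}$ chain terms and the constant $\wun$ at the exceptional nodes. Your identification of the fork of $D_N$ (non-adjacency of $\alpha_{N-1},\alpha_N$ and the pairing of $\alpha_{N-2}+\alpha_{N-1}$, $\alpha_{N-2}+\alpha_N$ and their extensions against the fork summand of $f_\OO$) as the one delicate point is exactly right; carrying out that bookkeeping is the entire content the paper suppresses.
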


\section{Proof of Theorem \ref{thm: A} and \ref{thm: BCD}}\label{sec: proof of main results}
In this section, we prove Theorems \ref{thm: A} and \ref{thm: BCD} using the Wakimoto realizations given in \S\ref{sec: Wakimoto}.
We detail the proof of Theorem \ref{thm: A} and only sketch the proof of Theorem \ref{thm: BCD} as it is very similar.
{In all cases, we start assuming $\kk$ to be a generic level.}

\subsection{Type A} 
Setting $(\g,\OO)=(\sll_{2N},\OO_{[N^2]})$, we take the pyramid in Figure \ref{fig:pyramidA}
which associates the nilpotent element $f_\OO$ and the grading $\Gamma_\OO$ defined in \eqref{nilpotentgradingA}.
By Proposition \ref{Wakimoto realization: type A}, we have the Wakimoto realization 
\begin{equation}
        \W^\kk(\g,\OO)\simeq\bigcap_{i=1}^{2N-1}\ker  S_i^\OO\subset \bg^\bigstar\otimes \heis,\quad S_i^\OO=\scr{i}{P_i^\OO}
    \end{equation}
where 
\begin{align}\label{screeningsA}
P_{2i+1}^\OO=\beta_{2i+1},\quad 
P_{2i}^\OO= -\gamma_{2i-1}+\gamma_{2i+1},\qquad \bigstar=\{1,3,\dots,2N-1 \}.
\end{align}

Then it is straightforward to show that
the vector 
\begin{equation}
    \E_{1,2}+\E_{3,4}+\cdots +\E_{2N-2,2N}
\end{equation}
in the centralizer $\g^{f}$ 
gives rise to a strong generator $G^+$
realized as 
\begin{align}\label{eq:GptypeA}
    G^+=\beta_1+\beta_3+\dots+\beta_{2N-1}.
\end{align}
Note that we check the OPE $G^+(z)G^+(w)\sim0$ once again from this formula.

In order to realize $\fHdeg{2}{0}\left(\W^\kk(\g,\OO)\right)$, we apply the functor $\fHdeg{2}{0}$ to the exact sequence \eqref{Wakimoto resolution of W algebras} and consider the cohomologies of the Wakimoto representations
\begin{align}
    \fHdeg{2}{\bullet}(\HH_{\OO}^0(\affWak{\lambda}))\simeq \fHdeg{2}{\bullet}(\bg^\bigstar\otimes \Fock{\lambda}).
\end{align}
To perform the reduction on $\bg^\bigstar\otimes \Fock{\lambda}$, we gauge the free field realization \eqref{eq:GptypeA}. 
In order to do so, it is convenient, to first use
the isomorphism of vertex algebras
    \begin{equation}\label{isom_BG_A}
    \begin{gathered}
        \ff{2,4,\dots,2N}\xrightarrow{\sim}\ff{1,3,\dots,2N-1}\\
        \beta_{2i}\mapsto \begin{cases}
        -\gamma_{2i-1}+\gamma_{2i+1}\ &(i<N)\\
        -\gamma_{2N-1},\ &(i=N)
    \end{cases},\qquad
    \gamma_{2i}\mapsto\beta_1+\beta_3+\dots+\beta_{2i-1}.
    \end{gathered}
    \end{equation}
It transforms the differential $\fd{2}$ from \eqref{Virasoro reduction diffential} into 
\begin{align*}
    \fd{2}=\int Y(\gamma_{2N}+\wun)\varphi^*,z)\,\mathrm{d} z.
\end{align*}
and the coefficients \eqref{screeningsA} of the screening operators $S_i^\OO$  into 
\begin{align}\label{new screening in type A}
P_{2i+1}^\OO=\begin{cases}
    \gamma_{2} & (i=0)\\
    -\gamma_{2i}+\gamma_{2i+2} &(i>0)
 \end{cases},\qquad 
P_{2i}^\OO=\beta_{2i}.
\end{align}

Computing the cohomology explicitly, one finds that 
\begin{align*}
  \fHdeg{2}{p}(\bg^\bigstar\otimes \Fock{\lambda})
  &\simeq (\beta\gamma^{\bigstar_{c}}\otimes \Fock{\lambda})\otimes \fHdeg{2}{p}(\ff{2N})\\
  &\simeq \delta_{p,0}(\beta\gamma^{\bigstar_{c}}\otimes \Fock{\lambda})
\end{align*}
where $\bigstar_{c}=\{2,4,\dots,2N-2\}$.
In particular, we have the cohomology vanishing $ \fHdeg{2}{\neq 0}(\HH_{\OO}^0(\affWak{\lambda}))=0$ and thus the exact sequence \eqref{Wakimoto resolution of W algebras} induces a complex 
\begin{equation}
    \begin{aligned}
    0\rightarrow  \fHdeg{2}{0}(\W^\kk(\g,\OO))
    &\rightarrow \fHdeg{2}{0}(\HH_{\OO}^0(\affWak{0}))\\ &\overset{\bigoplus [S_i^\OO]}{\longrightarrow} \bigoplus_{i=1,\ldots,l} \fHdeg{2}{0}(\HH_{\OO}^0(\affWak{-\alpha_i}))
    \rightarrow \cdots \rightarrow \fHdeg{2}{0}(\HH_{\OO}^0(\mathrm{C}_N)) \rightarrow 0,
\end{aligned}
\end{equation}
whose cohomology computes $\fHdeg{2}{p}(\W^\kk(\g,\OO))$. Hence, $\fHdeg{2}{0}\left(\W^\kk(\g,\OO)\right)$ is realized by the reduction of the Wakimoto realization
\begin{align}\label{reduced Wakimoto for type A}
    \fHdeg{2}{0}(\W^\kk(\g,\OO))\simeq \bigcap_{i=1}^{2N-1}\ker [S_i^\OO]\subset \fHdeg{2}{0}(\bg^\bigstar\otimes \heis),\qquad
    [S_i^\OO]=\scr{i}{[P_i^\OO]}
\end{align}
where $[P_i^\OO]$ is the cohomology class of $P_i^\OO$ in \eqref{new screening in type A}, that is
\begin{align}\label{new screenings type A}
    [P_{2i+1}^\OO]=\begin{cases}
        \gamma_2 & (i=0)\\
        \gamma_{2i+2}-\gamma_{2i} & (0<i<N-1)\\
        -(\wun+\gamma_{2N-2}) & (i=N-1)
    \end{cases},
    \qquad 
    [P_{2i}^\OO]=\beta_{2i}.
\end{align}

The set of screening operators defined by \eqref{new screenings type A} are \emph{formally} realized as screening operators in Theorem \ref{thm: Wakimoto general form} by setting the nilpotent element $f_c$ and the grading $\Gamma_c$ to be
\begin{align}
    \begin{tikzpicture}
{\dynkin[root
radius=.1cm,labels={\alpha_1,\alpha_2,\alpha_3, \alpha_4,{}, \alpha_{2N-2},\alpha_{2N-1}},labels*={1,0,1,0,1,0,1},edge length=0.8cm]A{oooo.ooo}};
\node at (-4,0) (4) {$f_c=\E_{2N,2N-1}+\displaystyle{\sum_{i=1}^{2N-2}}\E_{i+2,i},$};
\end{tikzpicture}.
\end{align}
Note that we do \emph{not} claim the grading $\Gamma_c$ is good for $f_c$ at this moment.
\begin{figure}[h]
	\begin{minipage}[l]{0.5\linewidth}
		\begin{center} 
	\begin{tikzpicture}[every node/.style={draw,regular polygon sides=4,minimum size=0.8cm,line width=0.04em},scale=0.8]
        \node at (0,1) {$\dots$};
        \node at (1,1) {\tiny 4};
        \node at (2,1) {\tiny 2};
        \node at (-1,1) {\tiny 2N-4};
        \node at (-2,1) {\tiny 2N-2};
        \node at (0,0) {$\dots$};
        \node at (1,0) {\tiny 5};
        \node at (2,0) {\tiny 3};
        \node at (3,0) {\tiny 1};
        \node at (-1,0) {\tiny 2N-3};
        \node at (-2,0) {\tiny 2N-1};
        \node at (-3,0.5) {\tiny 2N};
    \end{tikzpicture}
            \captionsetup{font=small}
            \captionof{figure}{Generalized pyramid for $[N+1,N-1]$ in type $A$}\label{fig:generalizedpyramidA}
		\end{center}
	\end{minipage}
	\begin{minipage}[c]{0.49\linewidth}
		\begin{center}
			    \begin{tikzpicture}[every node/.style={draw,regular polygon sides=4,minimum size=0.8cm,line width=0.04em},scale=0.8]
        \node at (0,1) {$\dots$};
        \node at (1,1) {\tiny 4};
        \node at (2,1) {\tiny 2};
        \node at (-1,1) {\tiny 2N-4};
        \node at (-2,1) {\tiny 2N-2};
        \node at (0,0) {$\dots$};
        \node at (1,0) {\tiny 5};
        \node at (2,0) {\tiny 3};
        \node at (3,0) {\tiny 1};
        \node at (-1,0) {\tiny 2N-3};
        \node at (-2,0) {\tiny 2N-1};
        \node at (-3,0) {\tiny 2N};
    \end{tikzpicture}
            \captionsetup{font=small}
            \captionof{figure}{Pyramid for $[N+1,N-1]$ in type $A$}\label{new pyramid in type A}
		\end{center}
	\end{minipage}
\end{figure}
However, $f_c\in \OO_{[N+1,N-1]}$ as desired and the pair $(f_c,\Gamma_c)$ is realized by taking the (generalized) pyramid in Figure \ref{fig:generalizedpyramidA}
from which we read the grading by the $x$-axis as usual and the nilpotent element $f_c$ as 
$f_c=\sum \delta_{i\rightarrow j}\E_{i,j}$
where $\delta_{i\rightarrow j}$ is $1$ if the boxes {\tiny $\numtableaux{i}$} and {\tiny $\numtableaux{j}$} are adjacent and $0$ otherwise. 
The pyramid in Figure \ref{new pyramid in type A} is the 
closest honest pyramid in type $A$. It assigns the nilpotent element
\begin{align*}
f_{\widehat{\OO}}=\E_{2N,2N-1}+\sum_{i=1}^{2N-3}\E_{i+2,i} \in \OO_{[N+1,N-1]}
\end{align*}
and the same grading $\Gamma_c$. 
The nilpotent elements $f_c$ and $f_{\widehat{\OO}}$ are conjugate with each other $\mathrm{Ad}_g(f_c)=f_{\widehat{\OO}}$ by the grading-preserving adjoint action $\mathrm{Ad}_g\in\mathrm{SL}_{2N}$ with
    \begin{equation}
        g=
        \begin{pmatrix}
           1&  & & & \\
           & A & & & \\
           &  & \ddots& & \\
           &  & & A& \\
           &  & & & 1
        \end{pmatrix},\quad 
        A= \begin{pmatrix} 1 &0\\ 1 &1 \end{pmatrix}.
    \end{equation}
Therefore, the realization \eqref{reduced Wakimoto for type A} of $\fHdeg{2}{0}\left(\W^\kk(\g,\OO)\right)$ is equivalent to the Wakimoto realization of $\W^\kk(\g,\OO_{[N+1,N-1]})$ associated with the pyramid in Figure \eqref{new pyramid in type A}. Hence, for generic $\kk$,
\begin{align}\label{iso type A}
    \fHdeg{2}{0}\left(\W^\kk(\g,\OO)\right)\simeq\W^\kk(\g,\OO_{[N+1,N-1]}). 
\end{align}

We now generalize the result to all levels $\kk$. The free field realization of the $\W$-algebra $\W^\kk(\g,\OO)$ can be completed into a short exact sequence
\begin{equation}
    0\to\W^\kk(\g,\OO)\to \beta\gamma^{\bigstar}\otimes \Fock{\lambda}\to A_\kk\to0
\end{equation}
where $A_\kk$ identifies with the quotient $\beta\gamma^{\bigstar}\otimes \Fock{\lambda}/\W^\kk(\g,\OO)$.
Both $\W^\kk(\g,\OO)$ and $\beta\gamma^{\bigstar}\otimes \Fock{\lambda}$ are $\V^{\kk^\natural}(\sll_2)$-modules in the category $\widehat{\mathcal{O}}^\kk_{\sll_2}$ and so is $A_\kk$.
The functor $\fHdeg{2}{\bullet}:\widehat{\mathcal{O}}^\kk_{\sll_2}\to\W^\kk(\sll_2)\mod$ is exact \cite{Ara05} and thus we obtain the cohomology vanishing $\fHdeg{2}{\neq0}\left(\W^\kk(\g,\OO)\right)=0$ for all levels as well as the short exact sequence
\begin{equation}
    0\to\fHdeg{2}{0}\left(\W^\kk(\g,\OO)\right)\to \beta\gamma^{\bigstar_c}\otimes \Fock{\lambda} \to \fHdeg{2}{0}\left(A_\kk\right)\to0.
\end{equation}

The embeddings $\{\fHdeg{2}{0}\left(\W^\kk(\g,\OO)\right)\subset \beta\gamma^{\bigstar_c}\otimes \Fock{\lambda}\}_{\kk\in\C}$ form a continuous family characterized by the kernel of screening operators \eqref{reduced Wakimoto for type A} at generic levels.
On the other hand, the Wakimoto realization gives the continuous family
$\{\W^\kk(\g,\OO_{[N+1,N-1]})\subset\beta\gamma^{\bigstar_{c}}\otimes \Fock{\lambda}\}_{\kk\in\C}$ characterized by the same screenings at generic levels.
Hence, the isomorphism \eqref{iso type A} holds for all levels by continuity and we have the cohomology vanishing
\begin{equation}
    \fHdeg{2}{n}\left(\W^\kk(\g,\OO)\right)\simeq\indic{n=0}\W^\kk(\g,\OO_{[N+1,N-1]}). 
\end{equation}
This completes the proof.

\subsection{Type B} We prove Theorem \ref{thm: BCD} for type $B$, depending on the parity of $N$.
\subsubsection{Even case}
By setting $N=2n$ and considering the pyramid in Figure \ref{fig:pyramidBeven}
we have the Wakimoto realization of $\W^\kk(\g,\OO)$ for $(\g,\OO)=(\so_{2N+1},\OO_{[N^2,1]})$ given in Proposition \ref{Wakimoto realization: type B}.

Consider the nilpotent element to be as in \eqref{nilpotentgradingB}, 
$f_\OO=\E^o_{-N+1,N}+\sum_{i=1}^{N-2}\E^o_{i+2,i}$.
The intersection of its centralizer $\g^{f_\OO}$ with $\n_+$ is generated by the vector
\begin{equation}
    \E^o_{1,2}+\E^o_{3,4}+\dots + \E^o_{N-1,N}
\end{equation}
and the corresponding strong generator $G^+$ realized as 
\begin{align}
    G^+=\beta_1+\beta_3+\dots+\beta_{N-1}+\frac{1}{2}\Phi_1^2.
\end{align}

To proceed with the Virasoro-type reduction $\fHdeg{2}{0}$ on the Wakimoto realization of $\W^\kk(\g,\OO)$, we apply the isomorphism of vertex algebras
\begin{equation}
\begin{gathered}
     \ff{0,2,4,\dots,N}\xrightarrow{\sim}\bg^\bigstar  \otimes \Phi_{1/2},\\
     \beta_{2i}\mapsto \begin{cases} -\gamma_{1} & (i=0) \\ -\gamma_{2i-1}+\gamma_{2i+1} & (0<i<n)\\ \gamma_{2n-1}\Phi_1-\Phi_2 & (i=n) \end{cases},\quad
     \gamma_{2i}\mapsto \begin{cases}
        \beta_{2i+1}+\beta_{2i+3}+\dots+\beta_{2n-1}+\frac{1}{2}\Phi_1^2 & (i<n) \\ 
        \Phi_1 & (i=n).
    \end{cases}
\end{gathered}
\end{equation}
It transforms the differential $\fd{2}$ from \eqref{Virasoro reduction diffential} into 
\begin{align}
    \fd{2}=\int Y(\gamma_{0}+\wun)\varphi^*,z)\,\dz
\end{align}
and the coefficients of the screening operators $S_i^\OO$ appearing in Proposition \ref{Wakimoto realization: type B} into 
\begin{align}
P_{2i}^\OO=\beta_{2i},
    \qquad 
    P_{2i+1}^\OO=\begin{cases}
        (\gamma_{2i}-\gamma_{2i+2}) & (i<n-1)\\
        \gamma_{2n-2}-\tfrac{1}{2}\gamma_{2n}^2 & (i=n-1).
    \end{cases}
\end{align}
It follows that 
\begin{align*}
  \fHdeg{2}{p}(\bg^\bigstar\otimes \Phi_{1/2} \otimes \heis)
  &\simeq (\beta\gamma^{\bigstar_{c}}\otimes \heis)\otimes \fHdeg{2}{p}(\bg^{\{0\}})\\
  &\simeq \delta_{p,0}(\beta\gamma^{\bigstar_{c}}\otimes \heis)
\end{align*}
where $\bigstar_{c}=\{2,4,\dots,N\}$ and as in type $A$,  
we obtain 
\begin{align}
    \fHdeg{2}{0}(\W^\kk(\g,\OO))\simeq \bigcap_{i=1}^{N}\ker [S^\OO_i]\subset \bg^{\bigstar_c}\otimes \heis,\quad [S^\OO_i]=\scr{i}{[P_i]}.
\end{align}
where
\begin{align}
    [P_{2i}^\OO]=\beta_{2i},
    \qquad 
    [P_{2i+1}^\OO]=\begin{cases}
        -(\wun+\gamma_{2}) & (i=0)\\
        (\gamma_{2i}-\gamma_{2i+2}) & (0<i<n-1)\\
        \gamma_{2n-2}-\tfrac{1}{2}\gamma_{2n}^2 & (i=n-1).
    \end{cases}
\end{align}

Up to a rescaling automorphism of the $\beta\gamma$-systems, one finds that this set of screening operators appear in Theorem \ref{thm: Wakimoto general form} by formally taking the nilpotent element and the grading to be
\begin{align}
\begin{tikzpicture}
{\dynkin[root
radius=.1cm,labels={\alpha_1,\alpha_2,\alpha_3, {}, \alpha_{N-1},\alpha_N},labels*={1,0,1,0,1,0},edge length=0.8cm]B{oooo.oo}};
\node at (-4,0) (4) {$f_{c}=\E^o_{2,1}+2\E^o_{-N,N-1}+\displaystyle{\sum_{i=1}^{N-2}}\E^o_{i+2,i}$};
\end{tikzpicture}.
\end{align}
We check that $f_c\in \OO_{[N+1,N-1,1]}$ and that they form a good pair as desired since they are conjugate to the standard one associated with the honest pyramid in Figure \ref{new pyramid in type B even}.
\begin{figure}[h]
	\begin{minipage}[l]{0.5\linewidth}
		\begin{center} 
	\begin{tikzpicture}[every node/.style={draw,regular polygon sides=4,minimum size=1cm,line width=0.04em},scale=0.58, transform shape]
        \node at (0,0) (4) { 0};
        \node at (5,0) (4) { 1};
        \node at (4,0) (4) { 2};
        \node at (3,0) (4) { 4};
        \node at (2,0) (4) { $\dots$};
        \node at (1,0) (4) { N-2};
        \node at (-5,0) (4) { -1};
        \node at (-4,0) (4) { -2};
        \node at (-3,0) (4) { -4};
        \node at (-2,0) (4) { $\dots$};
        \node at (-1,0) (4) { 2-N};
        \node at (0,1) (4) { N};
        \node at (4,1) (4) { 3};
        \node at (3,1) (4) { 5};
        \node at (2,1) (4) { $\dots$};
        \node at (1,1) (4) { N-1};
        \node at (-4,1) (4) { $\times$};
        \node at (-3,1) (4) { $\times$};
        \node at (-2,1) (4) { $\dots$};
        \node at (-1,1) (4) { $\times$};
        \node at (0,-1) (4) { -N};
        \node at (4,-1) (4) { $\times$};
        \node at (3,-1) (4) { $\times$};
        \node at (2,-1) (4) { $\dots$};
        \node at (1,-1) (4) { $\times$};
        \node at (-4,-1) (4) { -3};
        \node at (-3,-1) (4) { -5};
        \node at (-2,-1) (4) { $\dots$};
        \node at (-1,-1) (4) { 1-N};
    \end{tikzpicture}
\captionsetup{font=small }
         \captionof{figure}{Pyramid for $[N+1,N-1,1]$, $N=2n$, in type $B$}\label{new pyramid in type B even}
		\end{center}
	\end{minipage}
	\begin{minipage}[c]{0.49\linewidth}
		\begin{center}
	\begin{tikzpicture}[every node/.style={draw,regular polygon sides=4,minimum size=1cm,line width=0.04em},scale=0.58, transform shape]
        \node at (0,0) (4) { 0};
        \node at (6,0) (4) { 1};
        \node at (5,0) (4) { 2};
        \node at (4,0) (4) { 3};
        \node at (3,0) (4) { 5};
        \node at (2,0) (4) { $\dots$};
        \node at (1,0) (4) { N-2};
        \node at (-6,0) (4) { -1};
        \node at (-5,0) (4) { -2};
        \node at (-4,0) (4) {-3};
        \node at (-3,0) (4) { -5};
        \node at (-2,0) (4) { $\dots$};
        \node at (-1,0) (4) { 2-N};
        \node at (0,1) (4) { N};
        \node at (4,1) (4) { 4};
        \node at (3,1) (4) { 6};
        \node at (2,1) (4) { $\dots$};
        \node at (1,1) (4) { N-1};
        \node at (-4,1) (4) { $\times$};
        \node at (-3,1) (4) { $\times$};
        \node at (-2,1) (4) { $\dots$};
        \node at (-1,1) (4) { $\times$};
        \node at (0,-1) (4) { -N};
        \node at (4,-1) (4) { $\times$};
        \node at (3,-1) (4) { $\times$};
        \node at (2,-1) (4) { $\dots$};
        \node at (1,-1) (4) { $\times$};
        \node at (-4,-1) (4) { -4};
        \node at (-3,-1) (4) { -6};
        \node at (-2,-1) (4) { $\dots$};
        \node at (-1,-1) (4) { 1-N};
    \end{tikzpicture}
\captionsetup{font=small}
         \captionof{figure}{Pyramid for $[N+2,N-2,1]$, $N=2n+1$, in type $B$}\label{new pyramid in type B odd}
        \end{center}
	\end{minipage}
\end{figure}

Indeed the pyramid in Figure \ref{new pyramid in type B even} gives the nilpotent element
\begin{align}
    f_{\widehat{\OO}}=\E^o_{2,1}+\E^o_{3,2}+\E^o_{N,N-1}+\E^o_{-N+1,N}+\sum_{i=3}^{N-3}\E^s_{i+2,i}
\end{align}
and the same grading.
Moreover $f_c$ and $f_{\widehat{\OO}}$ are conjugate through the grading-preserving adjoint action $\mathrm{Ad}_g\in \mathrm{SO}_{2N+1}$ with 
    \begin{equation}
        g=\left(\begin{array}{c|ccccc|cccccc}
        0&&&&&i&&&&&&\tfrac{-i}{2} \\ \hline
        &2i&&&&&&&&&& \\
        &&A&&&&&&&&& \\
        &&&\ddots&&&&&&&& \\
        &&&&A&&&&&&& \\
        \tfrac{-1}{\sqrt{2}}&&&&&\tfrac{-i}{\sqrt{2}}&&&&&&\tfrac{-i}{2\sqrt{2}} \\ \hline
        &&&&&&\tfrac{-i}{2}&&&&& \\
        &&&&&&&B&&&& \\
        &&&&&&&&&\ddots&& \\
        &&&&&&&&&&B& \\
        \tfrac{-1}{\sqrt{2}}&&&&&\tfrac{i}{\sqrt{2}}&&&&&&\tfrac{i}{2\sqrt{2}}
        \end{array}\right)
    \end{equation}
    where
    \begin{align}\label{small matrices}
         A=\left(\begin{array}{cc} i&i \\ \tfrac{-i}{\sqrt{2}} & \tfrac{i}{\sqrt{2}}\end{array}\right),\quad
          B=\left(\begin{array}{cc} \tfrac{-i}{2}& \tfrac{-i}{2} \\ \tfrac{i}{\sqrt{2}} & \tfrac{-i}{\sqrt{2}}\end{array}\right).
    \end{align}
This completes the proof.

\subsubsection{Odd case}
By setting $N=2n+1$ and considering the pyramid in Figure \ref{fig:pyramidBodd}
we have the Wakimoto realization of $\W^\kk(\g,\OO)$ for $(\g,\OO)=(\so_{2N+1},\OO_{[N^2,1]})$ again given in Proposition \ref{Wakimoto realization: type B}.

We pick up the nilpotent element to be 
$f_\OO=\E^o_{2,1}+\E^o_{-N+1,N}+\sum_{i=2}^{N-2}\E^o_{i+2,i}$,
as in \eqref{nilpotentgradingB}.
Then a non-zero vector in $\g^{f_\OO}\cap\n_+$ is
\begin{equation}
   \E^o_{2,3}+\E^o_{4,5}+\dots + \E^o_{N-1,N}, 
\end{equation}
and the corresponding strong generator $G^+$
realized as 
\begin{align}
    G^+=\beta_{2}+\beta_4+\dots+\beta_{N-1}+\frac{1}{2}\Phi_1^2.
\end{align}

We first apply the isomorphism of vertex algebras
\begin{equation}
\begin{gathered}
    \ff{1,3,\dots,N}\xrightarrow{\sim}\bg^\bigstar  \otimes \Phi_{1/2}\\
    \beta_{2i+1}\mapsto \begin{cases} -\gamma_{2} & (i=0) \\ -\gamma_{2i}+\gamma_{2i+2} & (0<i<n)\\ -\gamma_{2n}\Phi_1+\Phi_2 & (i=n) \end{cases},\quad
    \gamma_{2i+1}=\begin{cases}
        \beta_{2i+2}+\beta_{2i+4}+\dots+\beta_{2n}+\frac{1}{2}\Phi_1^2 & (i<n) \\ 
        \Phi_1 & (i=n), 
    \end{cases}
\end{gathered}    
\end{equation}
which transforms the differential $\fd{2}$ from \eqref{Virasoro reduction diffential} into 
\begin{align*}
   \fd{2}=\int Y(\gamma_{1}+\wun)\varphi^*,z)\,\dz
\end{align*}
and the coefficients of the screening operators $S_i^\OO$ in Proposition \ref{Wakimoto realization: type B} into 
\begin{align}
P_{2i+1}^\OO=\begin{cases}
        \wun & (i=0)\\
        \beta_{2i+1} & (i>0)
    \end{cases},
    \qquad 
    P_{2i}^\OO=\begin{cases}
        \gamma_{2i-1}-\gamma_{2i+1} & (i<n)\\
        \gamma_{2n-1}-\frac{1}{2}\gamma_{2n+1}^2 & (i=n).
    \end{cases}
\end{align}
Hence the Virasoro-type reduction $\fHdeg{2}{0}$ applied to the Wakimoto realization of $\W^\kk(\g,\OO)$ satisfies 
\begin{align*}
  \fHdeg{2}{p}(\bg^\bigstar\otimes \Phi_{1/2} \otimes \heis)
  &\simeq (\beta\gamma^{\bigstar_{c}}\otimes \heis)\otimes \fHdeg{2}{p}(\bg^{\{1\}})\\
  &\simeq \delta_{p,0}(\beta\gamma^{\bigstar_{c}}\otimes \heis)
\end{align*}
where $\bigstar_{c}=\{3,5,\dots,N-1\}$ and we obtain 
\begin{align}
    \fHdeg{2}{0}(\W^\kk(\g,\OO))\simeq \bigcap_{i=1}^{N}\ker [S^\OO_i]\subset \bg^{\bigstar_c}\otimes \heis,\quad [S^\OO_i]=\scr{i}{[P_i^\OO]}.
\end{align}
where
\begin{align}
    [P_{2i+1}^\OO]=\begin{cases}
        \wun & (i=0)\\
        \beta_{2i+1} & (i>0)
    \end{cases},
    \qquad 
    [P_{2i}^\OO]=\begin{cases}
        -(\wun+\gamma_{3}) & (i=1)\\
        \gamma_{2i-1}-\gamma_{2i+1} & (0<i<n)\\
        \gamma_{2n-1}-\frac{1}{2}\gamma_{2n+1}^2 & (i=n).
    \end{cases}
\end{align}

Up to a rescaling automorphism of the $\beta\gamma$-systems, this set of screening operators appears in Theorem \ref{thm: Wakimoto general form} by formally setting the nilpotent element and the grading to be
\begin{align}
\begin{tikzpicture}
{\dynkin[root
radius=.1cm,labels={\alpha_1,\alpha_2,\alpha_3, \alpha_4,{}, \alpha_{N-1},\alpha_N},labels*={1,1,0,1,0,1,0},edge length=0.8cm]B{ooooo.oo}};		
\node at (-4,0) (4) {$f_{c}=\E^o_{2,1}+\E^o_{3,2}+2\E^o_{-N,N-1}+\displaystyle{\sum_{i=1}^{N-2}}\E^o_{i+2,i}$};
\end{tikzpicture}.
\end{align}
Then we check that $f_c\in \OO_{[N+2,N-2,1]}$ and they form a good pair as desired since they are conjugate to the standard one associated with the pyramid in Figure \ref{new pyramid in type B odd}
which assigns the nilpotent element
\begin{align}
    f_{\widehat{\OO}}=\E^o_{2,1}+\E^o_{3,2}+\E^o_{N^2-1}+\E^o_{-N+1,N}+\sum_{i=3}^{N-3}\E^o_{i+2,i}
\end{align}
and the same grading, 
through the grading-preserving adjoint action $\mathrm{Ad}_g\in\mathrm{SO}_{2N+1}$ with 
\begin{equation}
     g=\left(\begin{array}{c|cccccc|ccccccc}
        0&&&&&&i&&&&&&&\tfrac{-i}{2} \\ \hline
        &2i&&&&&&&&&&&& \\
        &&2i&&&&&&&&&&& \\
        &&&A&&&&&&&&&& \\
        &&&&\ddots&&&&&&&&& \\
        &&&&&A&&&&&&&& \\
        \tfrac{-1}{\sqrt{2}}&&&&&&\tfrac{-i}{\sqrt{2}}&&&&&&&\tfrac{-i}{2\sqrt{2}} \\ \hline
        &&&&&&&\tfrac{-i}{2}&&&&&& \\
        &&&&&&&&\tfrac{-i}{2}&&&&& \\
        &&&&&&&&&B&&&& \\
        &&&&&&&&&&\ddots&&& \\
        &&&&&&&&&&&&B& \\
        \tfrac{-1}{\sqrt{2}}&&&&&&\tfrac{i}{\sqrt{2}}&&&&&&&\tfrac{i}{2\sqrt{2}}
        \end{array}\right)
\end{equation}
where $A,B$ are the matrices given in \eqref{small matrices}. 
This completes the proof.

\subsection{Type C} 
We prove Theorem \ref{thm: BCD} for type $C$, depending on the parity of $N$.
\subsubsection{Even case}
By setting $N=2n$ and considering the pyramid in Figure \ref{fig:pyramidCeven},
we have the Wakimoto realization of $\W^\kk(\g,\OO)$ for $(\g,\OO)=(\spp_{2N},\OO_{[N^2]})$ given in Proposition \ref{Wakimoto realization: type C}.

We pick the nilpotent element to be 
$f_\OO=\E^s_{2,1}+\E^s_{-N,N+1}+\sum_{i=2}^{N-2}\E^s_{i+2,i}$,
as in \eqref{nilpotentgradingC}.
Then the intersection $\g^{f_\OO}\cap\n_+$ is generated by the vector
\begin{equation}
    \E^s_{2,3}+\E^s_{4,5}+\dots + \E^s_{N-2,N-2},
\end{equation}
whose corresponding strong generator $G^+\in\W^\kk(\g,\OO)$ realized as 
\begin{align*}
    G^+=\beta_{2}+\beta_4+\dots+\beta_N
\end{align*}
in the Wakimoto realization.

We apply the isomorphism of vertex algebras
\begin{equation}
\begin{gathered}
    \beta\gamma^{\{3,5,\dots,N+1\}}\xrightarrow{\simeq}\beta\gamma^{\{2,4,\dots,N\}}\\
    \beta_{2i+1}\mapsto 
\begin{cases}
-\gamma_{2i}+\gamma_{2i+2} & (i<n)\\
-\gamma_{N} & (i=n),
\end{cases}
\quad \gamma_{2i+1}\mapsto \beta_2+\dots +\beta_{2i}.
\end{gathered}
\end{equation}
which transforms the differential $\fd{2}$ from \eqref{Virasoro reduction diffential} into 
\begin{align*}
    \fd{2}=\int Y(\gamma_{N+1}+\wun)\varphi^*,z)\dz
\end{align*}
and the coefficients of the screening operators $S_i^\OO$ appearing in Propositionn \ref{Wakimoto realization: type C} into 
\begin{align}
P_{2i+1}^\OO=\begin{cases} \wun  & (i=0)\\ \beta_{2i+1} &(i >0) \end{cases},\quad P_{2i}^\OO=\begin{cases} \gamma_3 & (i=1)\\  -\gamma_{2i-1}+\gamma_{2i+1} & (i>1) \end{cases}.
\end{align}

It follows that 
\begin{align}
  \fHdeg{2}{p}(\bg^\bigstar\otimes \heis)
  &\simeq (\beta\gamma^{\bigstar_{c}}\otimes \heis)\otimes \fHdeg{2}{p}(\ff{N+1})\\
  &\simeq \delta_{p,0}(\bg^{\bigstar_{c}}\otimes \heis)
\end{align}
where $\bigstar_{c}=\{3,5,\dots,N-1\}$ and therefore
\begin{align}
    \fHdeg{2}{0}(\W^\kk(\g,\OO))\simeq \bigcap_{i=1}^{N}\ker [S^\OO_i]\subset \bg^{\bigstar_c}\otimes \heis,\quad [S^\OO_i]=\scr{i}{[P_i^\OO]}.
\end{align}
where
\begin{align}\label{new coefficients screenings C even}
    [P_{2i+1}^\OO]=\begin{cases}
        \wun & (i=0)\\
        \beta_{2i+1} & (i>0),
    \end{cases}
    \qquad 
    [P_{2i}^\OO]=\begin{cases}
        (\gamma_{2i+3}-\gamma_{2i+1}) & (i<n)\\
        -(\wun+\gamma_{N-1}) & (i=n).
    \end{cases}
\end{align}

The set of screening operators with coefficicents are given by \eqref{new coefficients screenings C even} appears in Theorem \ref{thm: Wakimoto general form} by formally setting the nilpotent element and the grading to be
\begin{align}
\begin{tikzpicture}
{\dynkin[root
radius=.1cm,labels={\alpha_1,\alpha_2,\alpha_3, \alpha_4,\alpha_{N-1},\alpha_N},labels*={1,1,0,1,0,1},edge length=0.8cm]C{oooo.oo}};		
\node at (-4,0) (4) {$f_{c}=\E^s_{2,1}+\E^s_{-N,N}+\E^s_{-N,N-1}+\displaystyle{\sum_{i=2}^{N-2}}\E^s_{i+2,i}$};
\end{tikzpicture}.   
\end{align}
\begin{figure}[h]
	\begin{minipage}[l]{0.5\linewidth}
		\begin{center} 
	\begin{tikzpicture}[every node/.style={draw,regular polygon sides=4,minimum size=1cm,line width=0.04em},scale=0.58, transform shape]
        \node at (5.5,0) (4) { 1};
        \node at (4.5,0) (4) { 2};
        \node at (3.5,0) (4) { 4};
        \node at (2.5,0) (4) { 6};
        \node at (1.5,0) (4) { $\dots$};
        \node at (0.5,0) (4) { N};
        \node at (-5.5,0) (4) { -1};
        \node at (-4.5,0) (4) { -2};
        \node at (-3.5,0) (4) { -4};
        \node at (-2.5,0) (4) { -6};
        \node at (-1.5,0) (4) { $\dots$};
        \node at (-0.5,0) (4) { -N};
        \node at (3.5,1) (4) { 3};
        \node at (2.5,1) (4) { 5};
        \node at (1.5,1) (4) { $\dots$};
        \node at (0.5,1) (4) { N-1};
        \node at (-3.5,1) (4) { $\times$};
        \node at (-2.5,1) (4) { $\times$};
        \node at (-1.5,1) (4) { $\dots$};
        \node at (-0.5,1) (4) { $\times$};
        \node at (3.5,-1) (4) { $\times$};
        \node at (2.5,-1) (4) {$\times$};
        \node at (1.5,-1) (4) { $\dots$};
        \node at (0.5,-1) (4) { $\times$};
        \node at (-3.5,-1) (4) { -3};
        \node at (-2.5,-1) (4) { -5};
        \node at (-1.5,-1) (4) { $\dots$};
        \node at (-0.5,-1) (4) { 1-N};
    \end{tikzpicture}
\captionsetup{font=small}
         \captionof{figure}{Pyramid for $[N+2,N-2]$, $N=2n$, in type $C$}\label{new pyramid in type C even}
		\end{center}
	\end{minipage}
	\begin{minipage}[c]{0.49\linewidth}
		\begin{center}
  \begin{tikzpicture}[every node/.style={draw,regular polygon sides=4,minimum size=1cm,line width=0.04em},scale=0.58, transform shape]
        \node at (4.5,0) (4) { 1};
        \node at (3.5,0) (4) { 3};
        \node at (2.5,0) (4) { 5};
        \node at (1.5,0) (4) { $\dots$};
        \node at (0.5,0) (4) { N};
        \node at (-4.5,0) (4) { -1};
        \node at (-3.5,0) (4) { -3};
        \node at (-2.5,0) (4) { -5};
        \node at (-1.5,0) (4) { $\dots$};
        \node at (-0.5,0) (4) { -N};
        \node at (3.5,1) (4) { 2};
        \node at (2.5,1) (4) { 4};
        \node at (1.5,1) (4) { $\dots$};
        \node at (0.5,1) (4) { N-1};
        \node at (-3.5,1) (4) { $\times$};
        \node at (-2.5,1) (4) { $\times$};
        \node at (-1.5,1) (4) { $\dots$};
        \node at (-0.5,1) (4) { $\times$};
        \node at (3.5,-1) (4) { $\times$};
        \node at (2.5,-1) (4) { $\times$};
        \node at (1.5,-1) (4) { $\dots$};
        \node at (0.5,-1) (4) { $\times$};
        \node at (-3.5,-1) (4) { -2};
        \node at (-2.5,-1) (4) { -4};
        \node at (-1.5,-1) (4) { $\dots$};
        \node at (-0.5,-1) (4) { 1-N};
    \end{tikzpicture}
\captionsetup{font=small }
         \captionof{figure}{Pyramid for $[N+1,N-1]$, $N=2n+1$, in type $C$}\label{new pyramid in type C odd}
        \end{center}
	\end{minipage}
\end{figure}
Then we check that $f_c\in \OO_{[N+2,N-2]}$ and they form a good pair since they are conjugate to a standard good pair associated with the pyramid in Figure \ref{new pyramid in type C even} which assigns the nilpotent element
\begin{align}
f_{\widehat{\OO}}=\E^s_{2,1}+\E^s_{-N,N}+\E^s_{-N+1,N-1}+\sum_{i=2}^{N-2}\E^s_{i+2,i}
\end{align} and the same grading
through the grading-preserving adjoint action $\mathrm{Ad}_g\in \mathrm{Sp}_{2N}$ with 
    \begin{equation}
        g=\left(\begin{array}{c|c}
        \begin{matrix}
           1&  & & & & &\\
             &1& & & & &\\
             &  &-i& & & &\\
             &  &1 &1& & &\\
             &  & &  &\ddots& &\\
             &  & & & & -i&\\
             &  & & & &  1&1
        \end{matrix}&\\
        \hline
        &\begin{matrix}
            1&  & & & & &\\
             &1& & & & &\\
             &  &i&-i & & &\\
             &  & &1& & &\\
             &  & &  &\ddots& &\\
             &  & & & &i&-i\\
             &  & & & &  &1
        \end{matrix}\end{array}\right).
    \end{equation}
This completes the proof.

\subsubsection{Odd case}
By setting $N=2n+1$ and considering the pyramid in Figure \ref{fig:pyramidCodd}
we have the Wakimoto realization of $\W^\kk(\g,\OO)$ for $(\g,\OO)=(\spp_{2N},\OO_{[N^2]})$ given in Proposition \ref{Wakimoto realization: type C}
Since we pick the nilpotent element to be 
$f_\OO=\E^s_{-N+1,N}+\sum_{i=1}^{N-2}\E^s_{i+2,i}$, as in \eqref{nilpotentgradingC}
the strong generator $G^+$, which corresponds to the centralizing element 
\begin{equation}
    \E^s_{1,2}+\E^s_{3,4}+\dots + \E^s_{N-2,N-1},
\end{equation}
is realized as 
\begin{align}
    G^+=\beta_{1}+\beta_3+\dots+\beta_N.
\end{align}

To proceed with the Virasoro-type reduction $\fHdeg{2}{0}$ to this realization, we apply the isomorphism of vertex algebras
\begin{equation}
    \begin{gathered}
        \ff{2,4,\dots,N+1}\xrightarrow{\sim}\ff{1,3,\dots,N}\\
        \beta_{2i}\mapsto \begin{cases} -\gamma_{2i-1}+\gamma_{2i+1} & (i<n)\\ -\gamma_{N} & (i=n) \end{cases},\qquad
    \gamma_{2i}=\beta_1+\beta_3+\dots,\beta_{2i-1}.
    \end{gathered}
\end{equation}
It transforms the differential $\fd{2}$ from \eqref{Virasoro reduction diffential} into 
\begin{align}
 \fd{2}=\int Y(\gamma_{N+1}+\wun)\varphi^*,z)\,\dz
\end{align}
and the coefficients of the screening operators $S_i^\OO$ in Proposition \ref{Wakimoto realization: type C} into 
\begin{align}
P_{2i+1}^\OO=\begin{cases}
    \gamma_{2} & (i=0)\\
    -\gamma_{2i}+\gamma_{2i+2} &(i>0)
 \end{cases},\qquad 
P_{2i}^\OO=\beta_{2i}.
\end{align}

It follows that 
\begin{align*}
  \fHdeg{2}{p}(\bg^\bigstar\otimes \heis)
  &\simeq (\beta\gamma^{\bigstar_{c}}\otimes \heis)\otimes \fHdeg{2}{p}(\ff{N+1})\\
  &\simeq \delta_{p,0}(\bg^{\bigstar_{c}}\otimes \heis)
\end{align*}
where $\bigstar_{c}=\{3,5,\dots,N-1\}$ and  we obtain 
\begin{align}\label{new screenings C odd}
    \fHdeg{2}{0}(\W^\kk(\g,\OO))\simeq \bigcap_{i=1}^{N}\ker [S^\OO_i]\subset \bg^{\bigstar_c}\otimes \heis,\quad [S^\OO_i]=\scr{i}{[P_i^\OO]}.
\end{align}
where
\begin{align}
    [P_{2i+1}^\OO]=\begin{cases}
        \gamma_2 & (i=0)\\
        -\gamma_{2i}+\gamma_{2i+2} & (0<i<n)\\
        -(\wun+\gamma_{2n}) & (i=n)
    \end{cases}
    \qquad 
    [P_{2i}^\OO]=\beta_{2i}.
\end{align}

By Theorem \ref{thm: Wakimoto general form}, the realization \eqref{new coefficients screenings C even} coincides with the Wakimoto realization of the $\W$-algebra $\W^\kk(\g,\OO_{[N+1,N-1]})$ setting the nilpotent element and the grading to be
\begin{align}
\begin{tikzpicture}
{\dynkin[root
radius=.1cm,labels={\alpha_1,\alpha_2,\alpha_3, \alpha_4,{}, \alpha_{N-1},\alpha_N},labels*={1,0,1,0,1,0,1},edge length=0.8cm]C{oooo.ooo}};
\node at (-4,0) (4) {$f_{c}=\E^s_{-N,N}+\E^s_{-N,N-1}+\displaystyle{\sum_{i=2}^{N-2}}\E^s_{i+2,i}$};
\end{tikzpicture}.
\end{align}
They form a good pair since they are associated to the pyramid in Figure \ref{new pyramid in type C odd}.
This completes the proof.

\subsection{Type D} 
Finally, we prove Theorem \ref{thm: BCD} for type $D$, depending on the parity of $N$.
\subsubsection{Even case}
By setting $N=2n$ and considering the pyramid in Figure \ref{fig:pyramidDeven}
we obtain the Wakimoto realization of $\W^\kk(\g,\OO)$ for $(\g,\OO)=(\so_{2N},\OO_{[N^2]})$
given in Proposition \ref{Wakimoto realization: type D}.

Picking the nilpotent element to be 
$f_\OO=\E^o_{-N+1,N}+\sum_{i=1}^{N-2}\E^o_{i+2,i}$, as in \eqref{nilpotentgradingD},
the strong generator $G^+$, which corresponds to the vector
\begin{equation}
   \E^o_{1,2}+\E^o_{3,4}+\dots + \E^o_{N-1,N}, 
\end{equation}
in $\g^{f_\OO}\cap\n_+$,
is realized as 
\begin{align}
   G^+=\beta_{1}+\beta_3+\dots+\beta_{N-1}.
\end{align}

We apply the isomorphism of vertex algebras
\begin{equation}
    \begin{gathered}
        \ff{2,4,\dots,N} \xrightarrow{\sim} \ff{1,3,\dots,N-1}\\
        \beta_{2i}\mapsto 
\begin{cases}
-\gamma_{2i-1}+\gamma_{2i+1} & (i<n)\\
-\gamma_{N-1} & (i=n),
\end{cases}
\qquad
\gamma_{2i}\mapsto \beta_1+\beta_3+\dots+\beta_{2i-1},
    \end{gathered}
\end{equation}
before proceeding to the Virasoro-type reduction $\fHdeg{2}{0}$.
It transforms the differential $\fd{2}$ from \eqref{Virasoro reduction diffential} into 
\begin{align*}
 \fd{2}=\int Y(\gamma_{N}+\wun)\varphi^*,z)\,\dz
\end{align*}
and the coefficients of the screening operators $S_i^\OO$ in Proposition \ref{Wakimoto realization: type D} into 
\begin{align}
    P_{2i-1}^\OO=\begin{cases}
        \gamma_2 & (i=1)\\
        \gamma_{2i+2}-\gamma_{2i} & (i>1)
    \end{cases},
    \qquad 
    P_{2i}^\OO=\begin{cases}
        \beta_{2i} & (i<n)\\
        \wun & (i=n).
    \end{cases}
\end{align}

It follows that 
\begin{align*}
  \fHdeg{2}{p}(\bg^\bigstar\otimes \heis)
  &\simeq (\beta\gamma^{\bigstar_{c}}\otimes \heis)\otimes \fHdeg{2}{p}(\bg^{N+1})\\
  &\simeq \delta_{p,0}(\beta\gamma^{\bigstar_{c}}\otimes \heis)
\end{align*}
where $\bigstar_{c}=\{2,4,\dots,N-2\}$ and  
\begin{align}
    \fHdeg{2}{0}(\W^\kk(\g,\OO))\simeq \bigcap_{i=1}^{N}\ker [S^\OO_i]\subset \bg^{\bigstar_c}\otimes \heis,\quad [S^\OO_i]=\scr{i}{[P_i^\OO]}.
\end{align}
where
\begin{align}\label{new coefficients screenings D even}
    [P_{2i-1}^\OO]=\begin{cases}
        \gamma_2 & (i=1)\\
        \gamma_{2i+2}-\gamma_{2i} & (1<i<n)\\
        -(\wun+\gamma_{N-2}) & (i=n)\\
    \end{cases},
    \qquad 
    [P_{2i}^\OO]=\begin{cases}
        \beta_{2i} & (i<n)\\
        \wun & (i=n).
    \end{cases}
\end{align}
\begin{figure}[h]
	\begin{minipage}[l]{0.49\linewidth}
		\begin{center} 
	\begin{tikzpicture}[every node/.style={draw,regular polygon sides=4,minimum size=1cm,line width=0.04em},scale=0.58, transform shape]
        \node at (4,1) (4) { 1};
        \node at (4,0) (4) {$\times$};
        \node at (3,1) (4) { 3};
        \node at (3,0) (4) { 2};
        \node at (2,1) (4) { $\dots$};
        \node at (2,0) (4) { $\dots$};
        \node at (1,1) (4) { N-1};
        \node at (1,0) (4) { N-2};
        \node at (0,1) (4) { -N};
        \node at (0,0) (4) { N};
        \node at (-1,1) (4) { 2-N};
        \node at (-1,0) (4) { 1-N};
        \node at (-2,1) (4) { $\dots$};
        \node at (-2,0) (4) { $\dots$};
        \node at (-3,1) (4) { -2};
        \node at (-3,0) (4) { -3};
        \node at (-4,1) (4) {$\times$};
        \node at (-4,0) (4) { -1};
    \end{tikzpicture}
\captionsetup{font=small}
         \captionof{figure}{Pyramid for $[N+1,N-1]$, $N=2n$, in type $D$}\label{new pyramid in type D even}
		\end{center}
	\end{minipage}
	\begin{minipage}[c]{0.50\linewidth}
		\begin{center}
  \begin{tikzpicture}[every node/.style={draw,regular polygon sides=4,minimum size=1cm,line width=0.04em},scale=0.58, transform shape]
        \node at (6,1) (4) { 1};
        \node at (6,0) (4) {$\times$};
        \node at (5,1) (4) { 2};
        \node at (5,0) (4) {$\times$};
        \node at (4,1) (4) { 4};
        \node at (4,0) (4) { 3};
        \node at (3,1) (4) { 6};
        \node at (3,0) (4) { 5};
        \node at (2,1) (4) { $\dots$};
        \node at (2,0) (4) { $\dots$};
        \node at (1,1) (4) { N-1};
        \node at (1,0) (4) { N-2};
        \node at (0,1) (4) { -N};
        \node at (0,0) (4) { N};
        \node at (-1,1) (4) { 2-N};
        \node at (-1,0) (4) { 1-N};
        \node at (-2,1) (4) { $\dots$};
        \node at (-2,0) (4) { $\dots$};
        \node at (-3,1) (4) { -5};
        \node at (-3,0) (4) { -6};
        \node at (-4,1) (4) { -3};
        \node at (-4,0) (4) { -4};
        \node at (-5,1) (4) {$\times$ };
        \node at (-5,0) (4) { -2};
        \node at (-6,1) (4) {$\times$};
        \node at (-6,0) (4) { -1};
    \end{tikzpicture}
\captionsetup{font=small}
         \captionof{figure}{Pyramid for $[N+2,N-2]$, $N=2n+1$, in type $D$}\label{new pyramid in type D odd}
        \end{center}
	\end{minipage}
\end{figure}
By Theorem \ref{thm: Wakimoto general form}, the set of screening operators with coefficients \eqref{new coefficients screenings D even} coincides with the Wakimoto realization of $\W^\kk(\g,\OO_{[N+1,N-1]})$ by choosing the good pair 
\begin{align}
\begin{tikzpicture}
{\dynkin[root
radius=.1cm,labels={\alpha_1,\alpha_2,\alpha_3, \alpha_4,{},{},\alpha_{N-1},\alpha_N},labels*={1,0,1,0,1,0,1,1},edge length=0.8cm]D{oooo.oooo}};	
\node at (-4,0) (4) {$f_{c}=\E^o_{N,N-1}+\E^o_{-N+1,N}+\displaystyle{\sum_{i=2}^{N-2}}\E^o_{i+2,i}$};
\end{tikzpicture}
\end{align}
that are associated with the pyramid in Figure \ref{new pyramid in type D even}.
This completes the proof.

\subsubsection{Odd case}
By setting $N=2n+1$ and considering the pyramid in Figure \ref{fig:pyramidDodd}
we have the Wakimoto realization of $\W^\kk(\g,\OO)$ for $(\g,\OO)=(\so_{2N},\OO_{[N^2]})$ given in Proposition \ref{Wakimoto realization: type D}.

Picking the nilpotent element $f_\OO=\E^o_{2,1}+\E^o_{-N,N-1}+\sum_{i=1}^{N-2}\E^o_{i+2,i}$, as in \eqref{nilpotentgradingD},
the intersection of the centralizer $\g^{f_\OO}$ with $\n_+$ is generated by the vector
\begin{equation}
    \E^o_{2,4}+\E^o_{4,6}+\dots + \E^o_{N-1,N}.
\end{equation}
The corresponding strong generator $G^+$ realized as 
\begin{align*}
  G^+=\beta_{2}+\beta_4+\dots+\beta_{N-1}.
\end{align*}

To proceed with the Virasoro-type reduction $\fHdeg{2}{0}$ to this realization, we apply the isomorphism of vertex algebras
\begin{equation}
    \begin{gathered}
        \ff{3,5,\dots,N} \xrightarrow{\sim} \ff{2,4,\dots,N-1}\\
        \beta_{2i+1}\mapsto 
\begin{cases}
-\gamma_{2i}+\gamma_{2i+2} & (i<n)\\
-\gamma_{N-1} & (i=n),
\end{cases}
\qquad
\gamma_{2i+1}\mapsto \beta_2+\beta_4+\dots+\beta_{2i}.
    \end{gathered}
\end{equation}
It transforms the differential $\fd{2}$ from \eqref{Virasoro reduction diffential} into 
\begin{align*}
 \fd{2}=\int Y(\gamma_{N}+\wun)\varphi^*,z)\,\dz
\end{align*}
and the coefficients of the screening operators $S_i^\OO$ in Proposition \ref{Wakimoto realization: type D} into 
\begin{align}
    P_{2i+1}^\OO=\begin{cases}
        \wun & (i=1,n)\\
        \beta_{2i+1}& (1<i<n)
    \end{cases},
    \qquad 
    P_{2i}^\OO=\begin{cases}
        \gamma_{3} & (i=1)\\
        -\gamma_{2i-1}+\gamma_{2i+1} & (i>n).
    \end{cases}
\end{align}

It follows that 
\begin{align*}
  \fHdeg{2}{p}(\bg^\bigstar\otimes \heis)
  &\simeq (\beta\gamma^{\bigstar_{c}}\otimes \heis)\otimes \fHdeg{2}{p}(\ff{N})\\
  &\simeq \delta_{p,0}(\beta\gamma^{\bigstar_{c}}\otimes \heis)
\end{align*}
where $\bigstar_{c}=\{3,5,\dots,N-2\}$ and  
\begin{align}
    [P_{2i+1}^\OO]=\begin{cases}
        \wun & (i=1,n)\\
        \beta_{2i+1}& (1<i<n)
    \end{cases},
    \qquad 
    [P_{2i}^\OO]=\begin{cases}
        \gamma_{3} & (i=1)\\
        -\gamma_{2i-1}+\gamma_{2i+1} & (1<i<n)\\
        -(\wun+\gamma_{2n-1}) &(i=n).
    \end{cases}
\end{align}

As before, by Theorem \ref{thm: Wakimoto general form}, this set of screening operators coincides with the Wakimoto realitation of the $\W$-algebra $\W^\kk(\g,\OO_{[N+2,N-2]})$ corresponding to the good pair
\begin{align}
\begin{tikzpicture}
{\dynkin[root
radius=.1cm,labels={\alpha_1,\alpha_2,\alpha_3, \alpha_4,{},{},{},{},\alpha_N},labels*={1,1,0,1,0,1,0,1,1},edge length=0.8cm]D{ooooo.oooo}};		
\node at (-5,0) (4) {$f_{c}=\E^o_{2,1}+\E^o_{N,N-1}+\E^o_{-N+1,N}+\displaystyle{\sum_{i=2}^{N-2}}\E^o_{i+2,i}$,};
\end{tikzpicture}
\end{align}
associated with the pyramid in Figure \ref{new pyramid in type D odd}.
This completes the proof.

\section{Inverse Hamiltonian reductions} \label{sec: iHR}
Inverse Hamiltonian reductions are the embeddings between $\W$-algebras up to certain free field algebras. They have recently attracted a lot of interest for their applications to the representation theory (see among many others \cite{Ad19,ACG23,AKR21,FFFN,Fehily23,Fehily24}).
The guiding example is the embedding between $\V^\kk(\sll_2)$ and $\W^\kk(\sll_2,\oo{2})$ \cite{Sem94}:
\begin{align}\label{iHR for sl2}
    \V^\kk(\sll_2) \hookrightarrow \W^\kk(\sll_2,\oo{2}) \otimes \Pi.
\end{align}
Here $\Pi$ is the half-lattice vertex algebra realized as the subalgebra
\begin{align}
    \Pi:=\bigoplus_{n\in\Z} \pi^{x,y}_{n(x+y)}\subset V_{\Z x\oplus \Z y}
\end{align}
of the lattice vertex superalgebra $V_{\Z x\oplus \Z y}$ associated with the Laurentian lattice $\Z x\oplus \Z y$ equipped with the bilinear form $(\cdot,\cdot)$ satisfying
\begin{align}
    (x,x)=1,\quad  (y,y)=-1,\quad  (x,y)=0.  
\end{align}
The vertex algebra $\Pi$ localizes the $\beta\gamma$-system \cite{FMS86}:
\begin{align}\label{FMS bosonization}
    \beta\gamma \hookrightarrow \Pi,\quad \beta\mapsto \ee^{(x+y)},\ \gamma\mapsto -x\ee^{-(x+y)}
\end{align}
and the $\beta\gamma$-system realizes as the kernel of the screening operator $S_{\mathrm{FMS}}\colon \Pi\rightarrow V_{\Z x\oplus \Z y}$
\begin{align}\label{FMS screening}
\beta\gamma \simeq \ker S_{\mathrm{FMS}}, \qquad S_{\mathrm{FMS}}:=\int Y(e^x,z)\,\dz.
\end{align}

The embedding \eqref{iHR for sl2} can be obtained by comparing the screening operators of the Wakimoto realization on both sides:
\begin{align}
    \V^\kk(\sll_2)\simeq \ker S^{\oo{1^2}}\subset \beta\gamma \otimes \heis,\qquad  \W^\kk(\sll_2,\oo{2})\simeq \ker S^{\oo{2}}\subset \heis
\end{align}
where 
\begin{align}
    S^{\oo{1^2}}=\scr{1}{\beta},\quad  S^{\oo{2}}=\scr{1}{}.
\end{align}
Localizing $\bg$ into $\Pi$, we find a suitable automorphism on $\Pi\otimes \heis$ which identifies these two screening operators and thus we obtain the commutative diagram (see for instance \cite{Fehily24})
\begin{equation}
    \begin{tikzcd}[column sep = huge]
			\V^\kk(\sll_2)
			\arrow[d, hook] \arrow[r, hook]&
			\Pi \otimes \W^\kk(\sll_2,\oo{2})\arrow[d, hook]
			\\
			\Pi\otimes \heis \arrow[r, "\sim"]&
			\Pi\otimes \heis.
		\end{tikzcd}
\end{equation}
 
This trick is generalized to the Virasoro-type reductions in \S\ref{sec: main results}. 
\begin{theorem}\label{thm:IHR}
For all level $\kk\in\C$,
\begin{enumerate}[wide, labelindent=0pt]
    \item there exists an embedding of vertex algebras 
\begin{align*}
    \W^\kk(\sll_{2N},\oo{N^2})\hookrightarrow \Pi\otimes \W^\kk(\sll_{2N},\oo{N+1,N-1}).
\end{align*}
    \item for type $BCD$, there exists an embedding of vertex algebras 
\begin{align*}
    \W^\kk(\g,\OO)\hookrightarrow \Pi\otimes \W^\kk(\g,\widehat{\OO})
\end{align*}
where $(\g,\OO)$ and $\widehat{\OO}$ are like in Table \ref{table: nilpotent orbits}.
\end{enumerate}
\end{theorem}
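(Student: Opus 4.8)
The plan is to upgrade the Virasoro-type reductions of Theorems~\ref{thm: A} and~\ref{thm: BCD} to embeddings of vertex algebras by carrying out, in place of the reduction functor $\fHdeg{2}{0}$, the Friedan--Martinec--Shenker localization \eqref{FMS bosonization}, exactly as the prototype \eqref{iHR for sl2} is obtained from $\W^\kk(\sll_2,\oo{2})=\fHdeg{2}{0}(\V^\kk(\sll_2))$. I would first establish the embeddings at a generic level $\kk$ and then propagate them to every $\kk\in\C$ by the continuity argument used in the proof of Theorem~\ref{thm: A} (cf.\ \cite{FFFN,FN,Fehily24}): both source and target will appear as fibres of continuous families of freely generated vertex algebras, and the embedding will be realized through screening operators whose densities depend rationally on $\kk$ with no poles, so that an inclusion valid on the Zariski-dense locus of generic levels is valid everywhere; injectivity at special levels is automatic, since the composite is built from the Wakimoto realization, which is an embedding at every level by Theorem~\ref{thm: Wakimoto general form}.

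So fix a generic $\kk$ and one of the pairs $(\g,\OO),\widehat\OO$ from Table~\ref{table: nilpotent orbits}, or $(\sll_{2N},\oo{N^2}),\oo{N+1,N-1}$. Recall from \S\ref{sec: proof of main results} that, after the isomorphism of $\beta\gamma$-systems used there, the Wakimoto realization takes the shape
\[
    \W^\kk(\g,\OO)\simeq\bigcap_i\ker S_i^\OO\ \subset\ \bg^{\bigstar_c}\otimes\bg_{*}\otimes\heis,
\]
with $\bg_{*}$ a single distinguished $\beta\gamma$-pair such that $\beta_{*}$ occurs in no screening density $P_i^\OO$ while $\gamma_{*}$ occurs linearly in exactly one, $S_{*}^\OO=\scr{*}{\epsilon\gamma_{*}+r}$ with $\epsilon=\pm1$ and $r\in\bg^{\bigstar_c}$. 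The reduction $\fHdeg{2}{0}$ deletes the pair $\bg_{*}$ and substitutes $\gamma_{*}\mapsto-\wun$ (the class of $\gamma_{*}$ imposed by $\fd{2}$), producing the reduced screenings $[S_i^\OO]$ and the realization $\W^\kk(\g,\widehat\OO)\simeq\bigcap_i\ker[S_i^\OO]\subset\bg^{\bigstar_c}\otimes\heis$ of \S\ref{sec: proof of main results}. For the inverse reduction I would instead keep $\bg_{*}$ and localize it, $\bg_{*}\hookrightarrow\Pi$ via \eqref{FMS bosonization} (composed, if necessary, with a flip of $\bg_{*}$) chosen so that $\gamma_{*}$ becomes a grouplike generator $\ee^{\pm(x+y)}$ of $\Pi$; this yields an embedding $\W^\kk(\g,\OO)\hookrightarrow\bg^{\bigstar_c}\otimes\Pi\otimes\heis$, and, since each $[S_i^\OO]$ acts trivially on the $\Pi$-factor, the target $\Pi\otimes\W^\kk(\g,\widehat\OO)$ is cut out of $\bg^{\bigstar_c}\otimes\Pi\otimes\heis$ by the very same operators $[S_i^\OO]$.

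What then remains, and what I expect to be the technical heart, is to exhibit a vertex-algebra automorphism $\sigma$ of $\bg^{\bigstar_c}\otimes\Pi\otimes\heis$, equal to the identity on $\bg^{\bigstar_c}$, with $\sigma\!\left(\bigcap_i\ker S_i^\OO\right)\subseteq\bigcap_i\ker[S_i^\OO]$; composing then gives $\W^\kk(\g,\OO)\hookrightarrow\Pi\otimes\W^\kk(\g,\widehat\OO)$. Following the guiding case \cite{Sem94}, $\sigma$ is built from the Siegel transvection of $\Pi\otimes\heis$ along the isotropic vector $x+y$ of $\Pi$ in the $\alpha_{*}$-direction: since $x+y$ is isotropic and orthogonal to $\h^{*}$, the linear map fixing $x+y$ and sending $\alpha_{*}\mapsto\alpha_{*}+(\kk+\hv)(x+y)$ extends to an isometry of $\mathrm{span}(x,y,\alpha_{*})$, and because it fixes $x+y$ it lifts to an automorphism of the $\Z(x+y)$-extension defining $\Pi\subset V_{\Z x\oplus\Z y}$. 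Under $\sigma$ the localized density $(\epsilon\,\ee^{x+y}+r)\,\ee^{-\alpha_{*}/(\kk+\hv)}$ of $S_{*}^\OO$ is carried to $\epsilon\,\ee^{-\alpha_{*}/(\kk+\hv)}$ (matching the $\gamma_{*}\mapsto-\wun$ contribution) plus the summand $r\,\ee^{-\alpha_{*}/(\kk+\hv)-(x+y)}$, while the screenings $S_i^\OO$ whose root $\alpha_i$ is adjacent to $\alpha_{*}$ get dressed by invertible factors $\ee^{\pm\frac12(x+y)}$; one then checks, using that these factors are central and invertible in $\Pi$ and that the FMS screening is also at our disposal, that the common kernel of all the $\sigma S_i^\OO\sigma^{-1}$ still lies inside $\bigcap_i\ker[S_i^\OO]$.

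The main obstacle is precisely this last verification, to be done case by case for type $A$ and for types $B,C,D$ in each parity of $N$: one must pin down $\sigma$ explicitly from the screening data of \S\ref{sec: proof of main results}, and along the way control two nuisances — the localized $\gamma_{*}$ carries an extra free boson, so $\sigma$ must genuinely rotate the Heisenberg lattice of $\Pi$ rather than merely shift Fock momenta, which forces a check that the two-cocycle of $V_{\Z x\oplus\Z y}$ and the induced central terms are respected (they are, precisely because $x+y$ is isotropic), and the $\ee^{\pm\frac12(x+y)}$-dressings of the adjacent screenings must be shown harmless for the kernels. Once this is carried out over $\C(\kk)$, with the Wakimoto-type presentation of $\Wsp(c,\kk)$ from \S\ref{sec: Wsp} in place of the one above, the same construction should simultaneously furnish the right-hand commuting square of Theorem~\ref{intro: theorem C}.
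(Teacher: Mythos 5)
Your overall strategy coincides with the paper's: realize $\W^\kk(\g,\OO)$ by the transformed screening operators of \S\ref{sec: proof of main results}, localize the one distinguished $\beta\gamma$-pair into $\Pi$ via \eqref{FMS bosonization}, twist by an automorphism built from the isotropic boson $x+y$ so that the localized screenings are identified with the reduced screenings $[S_i^\OO]$, and pass from generic $\kk$ to all levels by the continuity argument. However, there is a genuine gap precisely at the step you defer as ``the technical heart'': the automorphism $\sigma$ cannot have the form you prescribe. You insist that $\sigma$ be the identity on $\bg^{\bigstar_c}$ and act only on $\Pi\otimes\heis$ by a transvection in the single direction $\alpha_*$. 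But the screening densities form a chain, $P_{2i+1}^\OO=-\gamma_{2i}+\gamma_{2i+2}$ in type $A$, and the last one mixes $\gamma_{2N-2}\in\bg^{\bigstar_c}$ with the localized $\gamma_{2N}=-\ee^{(x+y)}$; to carry $-\gamma_{2N-2}+\gamma_{2N}$ to the reduced density $-(\wun+\gamma_{2N-2})$ one must strip a factor $\ee^{(x+y)}$ from the \emph{entire} expression, which forces $\gamma_{2N-2}\mapsto\gamma_{2N-2}\,\ee^{-(x+y)}$, and this propagates down the whole chain. The automorphism that actually works sends $\beta_{2i}\mapsto\beta_{2i}\,\ee^{(x+y)}$ and $\gamma_{2i}\mapsto\gamma_{2i}\,\ee^{-(x+y)}$ for every $i$, and twists every Cartan field by $h_a\mapsto h_a+(-1)^a(\kk+\hv)(x+y)$ with alternating signs (plus a correction of $x,y$ by $\sum_j\beta_{2j}\gamma_{2j}$ and fundamental weights), so that each density is carried \emph{exactly} to $[P_i^\OO]\,\ee^{-\alpha_i/(\kk+\hv)}$ with no residual exponential.

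Your fallback --- that the adjacent screenings merely acquire ``invertible factors $\ee^{\pm\frac12(x+y)}$'' which are ``central and invertible in $\Pi$'' and hence harmless for the kernels --- is not a valid argument. First, $\ee^{\pm\frac12(x+y)}$ does not lie in $\Pi=\bigoplus_{n\in\Z}\pi^{x,y}_{n(x+y)}$, which contains only integral multiples of $x+y$. Second, even for integral dressings, multiplying a screening density by $\ee^{n(x+y)}$ with $n\neq0$ changes the kernel of the associated zero mode: for instance $\int Y(\wun,z)\,\dz=0$ annihilates everything, whereas $\int Y(\ee^{n(x+y)},z)\,\dz$ sends $x$ to $-n\,\ee^{n(x+y)}\neq0$. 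The exact cancellation of all such factors is the entire content of the automorphism, and it is achieved only by letting $\sigma$ act nontrivially on $\bg^{\bigstar_c}$ and on all of $\heis$ as above. Once $\sigma$ is corrected in this way, the remainder of your argument (identification of $\bigcap_i\ker[S_i^\OO]$ with $\Pi\otimes\W^\kk(\g,\widehat{\OO})$, and the continuity/integral-form argument at special levels) goes through as in the paper.
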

\proof
For \textit{(1)}, assume first that $\kk$ is generic and start with the realization $\W^\kk(\g,\OO)=\W^\kk(\sll_{2N},\oo{N^2})$ obtained in the proof of Theorem \ref{thm: A} which uses \eqref{new screening in type A}:
\begin{align}
    \W^\kk(\g,\OO)\simeq\bigcap_{i=1}^{2N-1}\ker  S_i^\OO\subset \bg^\bigstar\otimes \heis,\quad S_i^\OO=\scr{i}{P_i^\OO}
\end{align}
where 
\begin{align}
P_{2i+1}^\OO=\begin{cases}
    \gamma_{2} & (i=0)\\
    -\gamma_{2i}+\gamma_{2i+2} &(i>0)
 \end{cases},\qquad 
P_{2i}^\OO=\beta_{2i},\quad \bigstar=\{2,4,\dots,2N\}.
\end{align}
We localize $\ff{2N}$ into $\Pi$ using the embedding
\begin{align}
    \beta_{2N}\mapsto -x \ee^{-(x+y)},\quad \gamma_{2N}\mapsto -\ee^{(x+y)}.
\end{align}
Applying the automorphism of $\bg^{\bigstar_c} \otimes \Pi \otimes \heis$ ($\bigstar_c=\bigstar\backslash \{2N\}$) given by
\begin{equation}
\begin{aligned}
    &\beta_{2i}\mapsto \beta_{2i}\,\ee^{(x+y)},
    \quad \gamma_{2i}\mapsto \gamma_{2i}\,\ee^{-(x+y)},\\
    &x\mapsto x{+\sum_{j=1}^{N-1}\beta_{2j}\,\gamma_{2j}}-\Lambda-\frac{1}{4N}(x+y),
    \quad y\mapsto y{-\sum_{j=1}^{N-1}\beta_{2j}\,\gamma_{2j}}+\Lambda+\frac{1}{4N}(x+y),\\
    &h_{a}\mapsto h_{a}+{(-1)^a}(\kk+\hv)(x+y), 
\end{aligned}
\end{equation}
where $\Lambda=\sum (-1)^a\varpi_a$ is the alternating sum of all the fundamental weights $\varpi_a$,
it follows that
\begin{align}
    \W^\kk(\g,\OO)\subset \bigcap_{i=1}^{2N-1}\ker  [S_i^\OO]\subset \bg^{\bigstar_c}\otimes \Pi \otimes \heis
\end{align}
where $[S_i^\OO]$ are the screening operators in \eqref{reduced Wakimoto for type A} acting on the component $\bg^{\bigstar_c}\otimes \heis$.
As $[S_i^\OO]$ characterize $\W^\kk(\sll_{2N},\oo{N+1,N-1})\subset \bg^{\bigstar_c}\otimes \heis$, we obtain 
\begin{align}\label{eq:embedding IHR type A}
    \W^\kk(\g,\OO)\subset \Pi \otimes \W^\kk(\sll_{2N},\oo{N+1,N-1}).
\end{align}

We use the same continuity argument as in \cite{FFFN} to extend the result to all level. The Wakimoto realization of $\W$-algebras admits an integral form \cite{Gen20} replacing the level $\kk$ to the polynomial ring $\C[\mathbf{k}]$ and the specializations $\mathbf{k}\mapsto \kk \in \C$ recover the usual realization at each level.
In particular, the embeddings $\{\W^\kk(\sll_{2N},\oo{N+1,N-1})\subset \bg^{\bigstar_c}\otimes \heis\}_{\kk\in\C}$ and $\{\W^\kk(\sll_{2N},\oo{N^2})\subset \bg^{\bigstar_c}\otimes \Pi \otimes \heis\}_{\kk\in\C}$
form continuous families characterized by the kernel of screening operators $[S_i^\OO]$ at generic levels.
Since the embedding \eqref{eq:embedding IHR type A} is obtained by the characterization by the same screening operators for generic levels, it extends to all levels by continuity.
This completes the proof.
The proof of \textit{(2)} is similar, we omit it.
\endproof

\section{More Virasoro-type reductions on type \tA} \label{sec: More on type A}
The $\W$-algebra $\W^\kk(\sll_{2N},\OO_{[N+1,N-1]})$ obtained as the Virasoro-type reduction of $\W^\kk(\sll_{2N},\OO_{[N^2]})$ again satisfies the framework of Conjecture \ref{conj:virasoro-type reductions}. Hence, one can apply $\fHdeg{2}{0}$ again and adapt the proof of Theorem \ref{thm: A} to show that $\fHdeg{2}{0}(\W^\kk(\sll_{2N},\OO_{[N+1,N-1]}))\simeq \W^\kk(\sll_{2N},\OO_{[N+2,N-2]})$.
In fact, the proof of Theorem \ref{thm: A} can be generalized broadly to the following family.
\begin{theorem}\label{thm:A_general}
\phantom{x}
\begin{enumerate}[wide,labelindent=0pt]
\item For generic levels $\kk$, there is an isomorphism of vertex algebras 
\begin{align*}
    \fHdeg{2}{0}(\W^\kk(\sll_{N+M},\oo{N,M})) \simeq \W^\kk(\sll_{N+M},\oo{N+1,M-1}).
\end{align*}
\item On the other hand, for all $\kk\in\C$, we have an embedding of vertex algebras 
\begin{align*}
    \W^\kk(\sll_{N+M},\oo{N,M}) \hookrightarrow \Pi \otimes \W^\kk(\sll_{N+M},\oo{N+1,M-1}).
\end{align*}
\end{enumerate}
\end{theorem}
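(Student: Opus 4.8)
The plan is to run, for the pair $(\sll_{N+M},\oo{N,M})$ with $N\geq M$, the same argument that proves Theorem~\ref{thm: A}, using the Wakimoto realization of \S\ref{sec: Wakimoto} together with the inverse‑reduction trick from the proof of Theorem~\ref{thm:IHR}. First I would fix a good grading for $\oo{N,M}$ coming from a pyramid generalizing Figure~\ref{fig:pyramidA}: the $M$ height‑two columns on the right, labelled $(1,2),(3,4),\dots,(2M-1,2M)$ top‑over‑bottom from right to left, followed on the left by the $N-M$ additional height‑one boxes $2M+1,\dots,N+M$ of the long row. Computing $\rho(e_{\alpha_i})$ as in the proof of Proposition~\ref{Wakimoto realization: type A}, Theorem~\ref{thm: Wakimoto general form} then reads
\[
\W^\kk(\sll_{N+M},\oo{N,M})\simeq\bigcap_{i=1}^{N+M-1}\ker S_i^\OO\ \subset\ \bg^\bigstar\otimes\heis,\qquad \bigstar=\{1,3,\dots,2M-1\},
\]
with $P_{2j-1}^\OO=\beta_{2j-1}$ for $1\leq j\leq M$, $P_{2j}^\OO=-\gamma_{2j-1}+\gamma_{2j+1}$ for $1\leq j\leq M-1$, and $P_i^\OO=\wun$ for $2M\leq i\leq N+M-1$; the generator $G^+$ spanning $\g^f\cap\n_+$ is realized as $G^+=\beta_1+\beta_3+\dots+\beta_{2M-1}$, and $G^+(z)G^+(w)\sim0$ by the weight argument of \S\ref{sec: main results}.

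Next I would apply $\fHdeg{2}{\bullet}$ to the Wakimoto resolution \eqref{Wakimoto resolution of W algebras}. After the isomorphism of $\bg$-systems $\ff{2,4,\dots,2M}\xrightarrow{\sim}\ff{1,3,\dots,2M-1}$ of the form \eqref{isom_BG_A}, which carries $G^+$ to the single coordinate $\gamma_{2M}$, the differential $\fd{2}$ becomes $\int Y((\gamma_{2M}+\wun)\varphi^*,z)\,\dz$ and the $P_i^\OO$ pass to the classes $[P_i^\OO]$ obtained by substituting $\gamma_{2M}\mapsto-\wun$. A direct computation gives $\fHdeg{2}{p}(\bg^\bigstar\otimes\Fock{\lambda})\simeq\delta_{p,0}\,(\bg^{\bigstar_c}\otimes\Fock{\lambda})$ with $\bigstar_c=\{2,4,\dots,2M-2\}$, so the Wakimoto modules are $\fHdeg{2}{\bullet}$-acyclic, the induced reduced complex is exact, and
\[
\fHdeg{2}{0}\bigl(\W^\kk(\sll_{N+M},\oo{N,M})\bigr)\simeq\bigcap_{i=1}^{N+M-1}\ker[S_i^\OO]\ \subset\ \fHdeg{2}{0}(\bg^\bigstar\otimes\heis),
\]
exactly as in \eqref{new screenings type A}--\eqref{reduced Wakimoto for type A}.

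To finish (1) I would identify the $[P_i^\OO]$ with the screening polynomials that Theorem~\ref{thm: Wakimoto general form} produces for a pair $(f_c,\Gamma_c)$ read off a generalized pyramid obtained from the one above by sliding one box from the short row onto the left end of the long row, so that the shape becomes $[N+1,M-1]$; one checks $f_c\in\oo{N+1,M-1}$, and (although $\Gamma_c$ need not be good for $f_c$) the pair $(f_c,\Gamma_c)$ is conjugate, by an explicit grading‑preserving $g\in\mathrm{SL}_{N+M}$ built from $2\times2$ unipotent blocks $\left(\begin{smallmatrix}1&0\\1&1\end{smallmatrix}\right)$ as in the proof of Theorem~\ref{thm: A} (compare Figures~\ref{fig:generalizedpyramidA}--\ref{new pyramid in type A}), to the good pair of the closest honest pyramid of shape $[N+1,M-1]$; when $M=1$ the latter is the one‑row pyramid and one recovers the principal $\W$-algebra $\W^\kk(\sll_{N+1})$. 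Comparing the two Wakimoto realizations gives the isomorphism of (1) for generic $\kk$. There is no all‑levels version of (1) when $N>M$: in that case $\W^\kk(\sll_{N+M},\oo{N,M})$ contains no affine $\sll_2$, so the exactness of $\fHdeg{2}{\bullet}\colon\widehat{\mathcal{O}}^\kk_{\sll_2}\to\W^\kk(\sll_2)\mod$ used in the rectangular case cannot be invoked, analogously to the situation in Theorem~\ref{thm: BCD}.

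For (2), at generic $\kk$ I would localize the reduced $\bg$-pair $(\beta_{2M},\gamma_{2M})$ into the half‑lattice algebra $\Pi$ via FMS bosonization \eqref{FMS bosonization}, and then exhibit an automorphism of $\bg^{\bigstar_c}\otimes\Pi\otimes\heis$ — of the same shape as in the proof of Theorem~\ref{thm:IHR}, dressing the surviving $\beta_{2j},\gamma_{2j}$ by $\ee^{\pm(x+y)}$, shifting $x,y$ by $\sum_j\beta_{2j}\gamma_{2j}$ together with an alternating sum of fundamental weights, and shifting the Heisenberg generators by $\pm(\kk+\hv)(x+y)$ — carrying $\{S_i^\OO\}$ onto $\{[S_i^\OO]\}$ acting only on the factor $\bg^{\bigstar_c}\otimes\heis$. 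This gives $\W^\kk(\sll_{N+M},\oo{N,M})\hookrightarrow\Pi\otimes\W^\kk(\sll_{N+M},\oo{N+1,M-1})$ at generic $\kk$; passing to the integral form of the Wakimoto realization over $\C[\mathbf{k}]$ and specializing, the embedding extends to all levels by the continuity argument of Theorem~\ref{thm:IHR}. I expect the main obstacle to be the combinatorial bookkeeping when $M<N$: one must choose the initial pyramid so that $G^+$ is a partial sum of $\beta$'s over $\bigstar$ whose complementary screenings close, after the reduction, into the truncated Wakimoto pattern for $[N+1,M-1]$, and then write down the grading‑preserving conjugation $g$ realizing $f_c\in\oo{N+1,M-1}$ explicitly; the cohomology, exactness and continuity steps are then routine transcriptions of the arguments in \S\ref{sec: proof of main results} and of Theorem~\ref{thm:IHR}.
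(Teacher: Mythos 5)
Your proposal is correct and follows essentially the same route as the paper: Wakimoto realization from a pyramid for $[N,M]$, a $\beta\gamma$-coordinate change carrying $G^+$ to a single $\gamma$, acyclicity of the Wakimoto modules under $\fHdeg{2}{\bullet}$, identification of the reduced screenings with a (generalized, then conjugated) pyramid for $[N+1,M-1]$, and FMS localization plus the continuity argument for the inverse reduction at all levels. The only deviation is cosmetic: you align the short row flush right (so the $N-M$ constant screenings sit at indices $2M,\dots,N+M-1$ and the column indices match the rectangular case verbatim), whereas the paper's Proposition \ref{Wakimoto_typeAmore} uses the flush-left pyramid of Figure \ref{more pyramid in type A} (constant screenings at indices $1,\dots,N-M$); both good gradings are admissible and yield the same computation on the column part.
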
    

The proof is similar to the one of Theorem \ref{thm: A} and relies on the following Wakimoto realization of the $\W$-algebra $\W^\kk(\sll_{N+M},\OO{[N,M]})$.
\begin{proposition}\label{Wakimoto_typeAmore}
For $(\g,\OO)=(\sll_{N+M},\OO_{[N,M]})$ with $N\geq M$ and $\kk$ a generic level, there is an isomorphism
\begin{align*}
    \W^\kk(\g,\OO)\simeq \bigcap_{i=1}^{N+M-1} \ker  S_i^\OO \subset \bg^\bigstar \otimes \heis
\end{align*}
with $\bigstar=\{N-M+1,\ N-M+3,\dots,\ N+M-1\}$ and
\begin{align*}
&P_{i}^\OO=1,&&i=1,\dots,N-M,\\
&P_{N-M+2i-1}^\OO=\beta_{N-M+2i-1},&&i=1,\dots,M,\\
&P_{N-M+2i}^\OO= -\gamma_{N-M+2i-1}+\gamma_{N-M+2i+1},&&i=1,\dots,M-1.
\end{align*}    
\end{proposition}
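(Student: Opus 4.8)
The plan is to run the argument of Proposition~\ref{Wakimoto realization: type A} — of which this proposition is the case $N=M$, with the rank $2N$ there playing the role of $N+M$ here — in the slightly more general situation $N\ge M$. Since $\W^\kk(\g,\OO)$ does not depend on the choice of good pair \cite{AKM15}, I would first fix one convenient good pair $(f_\OO,\Gamma_\OO)$ for $\OO_{[N,M]}$, introduce the induced coordinate system on $N_+$ through the factorization $\n_+=\g_0^+\ltimes\g_+$, and then read off the screening operators $S_i^\OO$ from Theorem~\ref{thm: Wakimoto general form}.

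For the good pair I would take the pyramid of $[N,M]$ with its short (length-$M$) row placed over the leftmost $M$ columns of the long (length-$N$) row, labelled column by column from the right and reading in each two-box column the bottom box before the top one. This produces an even good grading whose weighted Dynkin diagram is $(1^{N-M},0,1,0,\dots,1,0)$ together with the nilpotent element
\[
 f_\OO=\sum_{i=1}^{N-M}\E_{i+1,i}\;+\;\sum_{j=1}^{M-1}\bigl(\E_{N-M+2j+1,\,N-M+2j-1}+\E_{N-M+2j+2,\,N-M+2j}\bigr),
\]
whose two Jordan chains have lengths $N$ and $M$. As the grading is even, $\Phi(\g_{1/2})=\C$; moreover the degree-zero simple roots $\alpha_{N-M+1},\alpha_{N-M+3},\dots,\alpha_{N+M-1}$ are pairwise non-adjacent, so $\Delta_0^+$ is exactly this set of $M$ simple roots and $\bigstar=\{N-M+1,N-M+3,\dots,N+M-1\}$, which places the realization inside $\bg^\bigstar\otimes\heis$ as claimed.

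The screening operators would then be computed exactly as in Proposition~\ref{Wakimoto realization: type A}: for each simple root $\alpha_i=\E_{i,i+1}$ one extracts $\rho(e_{\alpha_i})$ from $\e{\epsilon e_{\alpha_i}}\e{\sum_0 z_\alpha e_\alpha}\e{\sum_+ z_\alpha e_\alpha}=\e{\sum_0 z_\alpha(\epsilon)e_\alpha}\e{\sum_+ z_\alpha(\epsilon)e_\alpha}$ (dual number $\epsilon^2=0$), and specializes $\beta_\alpha$ to $\beta_\alpha$, $\Phi_\alpha$ or $(f_\OO,e_\alpha)$ according as $\Gamma_\OO(\alpha)=0,\tfrac12,1$, recalling that $f_\OO\in\g_{-1}$ pairs nontrivially only with degree-one roots. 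Three cases arise:
\begin{enumerate}[label=(\alph*)]
\item $i\le N-M$: here $\Gamma_\OO(\alpha_i)=1$ and $(f_\OO,e_{\alpha_i})=1$, while the only possible $\bg$-correction — from the degree-zero neighbour $\alpha_{N-M+1}$, when $i=N-M$ — drops out because the boxes $N-M$ and $N-M+2$ sit in different rows, so $(f_\OO,e_{\alpha_{N-M}+\alpha_{N-M+1}})=0$; hence $P_i^\OO$ is the nonzero constant $(f_\OO,e_{\alpha_i})$, rescaled to $\wun$;
\item $i\in\bigstar$: $e_{\alpha_i}$ is an isolated degree-zero node, $\rho(e_{\alpha_i})=\partial_{z_{\alpha_i}}$, so $P_i^\OO=\beta_i$;
\item $i=N-M+2j$ with $1\le j\le M-1$: $\Gamma_\OO(\alpha_i)=1$ but $(f_\OO,e_{\alpha_i})=0$, and $\rho(e_{\alpha_i})=\partial_{z_{\alpha_i}}-z_{\alpha_{i-1}}\partial_{z_{\alpha_{i-1}+\alpha_i}}+z_{\alpha_{i+1}}\partial_{z_{\alpha_i+\alpha_{i+1}}}+(\text{higher})$ with $\alpha_{i\pm1}\in\bigstar$; since $(f_\OO,e_{\alpha_{i-1}+\alpha_i})=(f_\OO,e_{\alpha_i+\alpha_{i+1}})=1$, the specialization gives $P_i^\OO=-\gamma_{i-1}+\gamma_{i+1}$.
\end{enumerate}
Feeding these into Theorem~\ref{thm: Wakimoto general form} delivers the stated realization at generic $\kk$.

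The main obstacle is the combinatorial accounting underlying case~(a). The $\beta$- and $\gamma$-type screenings mimic Proposition~\ref{Wakimoto realization: type A} verbatim, but for the constant screenings $P_i^\OO=\wun$ ($i\le N-M$) one must check that no $\gamma_{N-M+1}$ tail survives in $P_{N-M}^\OO$ — which is exactly what the particular pyramid normalization above is engineered to guarantee, via the horizontal-adjacency pattern of its boxes. As the rectangular case and the $[N+1,N-1]$ realization already illustrate, other equally admissible good pairs produce genuinely different-looking screening systems, so the real work is in committing to this one choice and bookkeeping the pairings $(f_\OO,e_\beta)$, $\beta\in\Delta_1$, consistently against it.
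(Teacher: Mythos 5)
Your proposal is correct and follows essentially the same route as the paper: the same flush-left pyramid for $[N,M]$ with the same labelling, hence the same good pair \eqref{nilpotentgradingAmore}, and the same computation of $\rho(e_{\alpha_i})$ via the dual-number trick with the identical three-case analysis (in particular, the observation that $(f_\OO,e_{\alpha_{N-M}+\alpha_{N-M+1}})=0$ kills the only potential $\gamma$-tail in $P_{N-M}^\OO$, and that $(f_\OO,\cdot)$ annihilates roots of height $\geq 3$ in the remaining weight-one cases). No gaps.
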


\begin{figure}
    \begin{center}
	\begin{tikzpicture}[every node/.style={draw,regular polygon sides=4,minimum size=1.1cm,line width=0.04em},scale=1.1]
        \node at (0,1) {$\dots$};
        \node at (1,1) {\tiny N-M+2};
        \node at (2,1) {\tiny N-M+2};
        \node at (-1,1) {\tiny N+M-2};
        \node at (-2,1) {\tiny N+M};
        \node at (0,0) {$\dots$};
        \node at (1,0) {\tiny N-M+3};
        \node at (2,0) {\tiny N-M+1};
        \node at (3,0) {\tiny N-M};
        \node at (4,0) {$\dots$};
        \node at (5,0) {\tiny 2};
        \node at (6,0) {\tiny 1};
        \node at (-1,0) {\tiny N+M-3};
        \node at (-2,0) {\tiny N+M-1};
    \end{tikzpicture}
\captionsetup{font=small }
    \captionof{figure}{Pyramid for $[N,M]$ in type $A$}\label{more pyramid in type A}
    \end{center}
\end{figure}
\begin{proof}
The pyramid in Figure \ref{more pyramid in type A} gives the good pair $(f_{\OO},\Gamma_{\OO})$ such that 
\begin{align}\label{nilpotentgradingAmore}
    \begin{tikzpicture}
{\dynkin[root
radius=.1cm,labels={\alpha_1,\alpha_2,\alpha_{N-M}, {}, {},{},\alpha_{N+M-1}},labels*={1,1,1,0,1,1,0},edge length=0.8cm]A{oo.ooo.oo}};
\node at (-4,0) (4) {$f_{\OO}=\displaystyle{\sum_{i=1}^{N-M}\E_{i+1,i}+\sum_{i=N-M+1}^{N+M-2}\E_{i+2,i}},$};
\end{tikzpicture}
\end{align}
that is
\begin{equation}
    \Gamma_{\OO}(\alpha_{i})=\left\{\begin{aligned}
       &0, \quad \text{if } i=N-M+1,\ N-M+3,\dots,\ N+M-1,\\
       &1, \quad \text{otherwise}.\end{aligned}\right.
\end{equation}
Thus $\bigstar=\{N-M+1,\ N-M+3,\dots,\ N+M-1\}$ follows. 

We define a coordinate system on $N_+=G_0^+ \ltimes G_+$ as in the proof of Proposition \ref{Wakimoto realization: type A}.
For $e_{\alpha_i}=\E_{i,i+1}$ with $i=N-M-1+2j$, $j=1,\dots M$, we have $\Gamma_\OO(e_{\alpha_i})=0$ and the computation of $P_i^\alpha(z)$ is exactly the same as in the proof of Proposition \ref{Wakimoto realization: type A}. Thus $P_i^\OO=\beta_i$.
For $e_{\alpha_i}=\E_{i,i+1}$ with $i=1,2,\dots, N-M$, $\Gamma_\OO(e_{\alpha_i})=1$ and
\begin{align}
     \e{\epsilon e_{\alpha_i}}\e{\sum_0 z_\alpha e_\alpha}\e{\sum_+ z_\alpha e_\alpha}
     =\e{\sum_0 z_\alpha e_\alpha} \e{\epsilon(e_{\alpha_{i}}+z_{\alpha_{N-M+1}}\indic{i=N-M}e_{\alpha_{N-M}+\alpha_{N-M+1}})} \e{\sum_+ z_\alpha e_\alpha}
\end{align}
Since $(f_\OO, e_{\alpha})\neq0$ if and only if $\alpha=\alpha_i$ with $i=1,\dots, N-M$ or $\alpha=\alpha_i+\alpha_{i+1}$ with $i=N-M+1,\dots,N+M-2$, $P_i^\OO=\wun$.
Finally, for $e_{\alpha_i}=\E_{i,i+1}$ with $i=N-M+2j$, $j=1,\dots M-1$, we have $\Gamma_\OO(e_{\alpha_i})=1$. 
The result is the same as for the even case in the proof of Proposition \ref{Wakimoto realization: type A}.
The only subtlety is in the intermediate computation where an additional term appears in the middle exponential
\begin{align*}
     &\e{\epsilon e_{\alpha_i}}\e{\sum_0 z_\alpha e_\alpha}\e{\sum_+ z_\alpha e_\alpha}\\
     &=\e{\sum_0 z_\alpha e_\alpha} \e{\epsilon(e_{\alpha_{i}}-z_{\alpha_{i-1}}e_{\alpha_{i-1}+\alpha_i}+z_{\alpha_{i+1}}e_{\alpha_{i}+\alpha_{i+1}}-z_{\alpha_{i-1}}z_{\alpha_{i+1}}e_{\alpha_{i-1}+\alpha_{i}+\alpha_{i+1}})} \e{\sum_+ z_\alpha e_\alpha}.
\end{align*}
However this extra term does not play any role as $(f_{\OO},\cdot):\Delta_+\to\R$ annihilates (in particular) all positive roots of length higher or equal to $3$.
Thus $P_i^\OO=-\gamma_{i-1}+ \gamma_{i+1}$ as previously.     
\end{proof}

\begin{proof}[Proof of Theorem \ref{thm:A_general}]
We are in a position to prove Theorem \ref{thm:A_general}. 
For \textit{(1)}, the idea of the proof is similar to the one of Theorem \ref{thm: A}. 
Assuming $\kk$ generic, set $(\g,\OO)=(\sll_{N+M},\OO_{[N,M]})$ and consider the nilpotent element $f_\OO$ and the grading defined in \eqref{nilpotentgradingAmore}.
The intersection $\g^{f_\OO}\cap\n_+$ is generated by the vector
\begin{align}
    \E_{N-M+1,N-M+2}+\E_{N-M+3,N-M+4}+\dots+\E_{N+M-1,N+M}
\end{align}
and the corresponding strong generator $G^+$ realizes as
\begin{equation}
    G^+=\beta_{N-M+1}+\beta_{N-M+3}+\dots+\beta_{N+M-1}
\end{equation}
in the Wakimoto realization given in Proposition \ref{Wakimoto_typeAmore}.

In order to proceed the Virasoro-type reduction $\fHdeg{2}{0}$ explicitly on $\bg^\bigstar\otimes \Fock{\lambda}$, we first apply the isomorphism of vertex algebras inspired from \eqref{isom_BG_A}
\begin{equation}
    \begin{gathered}
    \ff{N-M+2,\ N-M+2,\dots,\ N+M}\xrightarrow{\sim}\ff{N-M+1,\ N-M+3,\dots,\ N+M-1}\\
    \beta_{N-M+2i}\mapsto \begin{cases}
        -\gamma_{N-M+2i-1}+\gamma_{N-M+2i+1}\, &(i<M)\\
        -\gamma_{N+M-1},\ &(i=M)
    \end{cases},\\
    \gamma_{N-M+2i}\mapsto\beta_{N-M+1}+\beta_{N-M+3}+\dots,\beta_{N-M+2i-1}.
    \end{gathered}
\end{equation}
It transforms the differential $\fd{2}$ from \eqref{Virasoro reduction diffential} into
\begin{equation}
    \fd{2}=\int Y((\gamma_{N+M}+1)\varphi^*,z)\,\mathrm{d}z
\end{equation}
and the coefficient of the screening operators $S_i^\OO$ in Proposition \ref{Wakimoto_typeAmore} into 
\begin{equation}\label{new_screening_Amore}
    \begin{aligned}
&P_{N-M+2i-1}^\OO=\begin{cases}
        \gamma_{N-M+2}\,&(i=1)\\
        -\gamma_{N-M+2i-2}+\gamma_{N-M+2i}\, &(i>1)
        \end{cases},
&P_{N-M+2i}^\OO= \beta_{N-M+2i}
    \end{aligned}
\end{equation}
and the first $N-M$ screenings remain unchanged.

Hence the Virasoro-type reduction $\fHdeg{2}{0}$ applied to the Wakimoto realization of $\W^\kk(\g,\OO)$ satisfies 
\begin{align*}
  \fHdeg{2}{p}(\bg^\bigstar\otimes \Phi_{1/2} \otimes \heis)
  &\simeq (\beta\gamma^{\bigstar_{c}}\otimes \heis)\otimes \fHdeg{2}{p}(\bg^{\{1\}})\\
  &\simeq \delta_{p,0}(\beta\gamma^{\bigstar_{c}}\otimes \heis)
\end{align*}
where $\bigstar_{c}=\{N-M+2,\ N-M+4,\dots,\ N+M-2\}$ and we obtain 
\begin{align}\label{reduced screenings Amore}
    \fHdeg{2}{0}(\W^\kk(\g,\OO))\simeq \bigcap_{i=1}^{N}\ker [S^\OO_i]\subset \bg^{\bigstar_c}\otimes \heis,\quad [S^\OO_i]=\scr{i}{[P_i^\OO]}.
\end{align}
where
\begin{equation}
\begin{aligned}
    &[P_i^\OO]=1\,(i=1,\dots,N-M),\\
    &[P_{N-M+2i-1}^\OO]=\begin{cases}
        \gamma_{N-M+2}\,&(i=1)\\
        -\gamma_{N-M+2i-2}+\gamma_{N-M+2i}\, &(1<i<M)\\
        -(\wun+\gamma_{N+M-2})\,&(i=M)
    \end{cases},\\
    \qquad 
    &[P_{N-M+2i}^\OO]=\beta_{N-M+2i}.
\end{aligned} 
\end{equation}

We show with the same argument as for Theorem \ref{thm: A} that this set of screenings can be obtained by Theorem \ref{Wakimoto for Walg} considering a pyramid similar to the one on Figure \ref{fig:generalizedpyramidA}. The associated nilpotent element and grading are conjugate to a good pair that can be obtained by reading a pyramid similar to the one on Figure \ref{new pyramid in type A}.
We omit the details.

For \textit{(2)}, we proceed as in the proof of Theorem \ref{thm:IHR}, localizing $\beta\gamma^{\{N+M\}}$ in the Wakimoto realization whose coefficients of the screening operators are given in \eqref{new_screening_Amore} and use the automorphism of 
$\bg^{\bigstar_c} \otimes \Pi \otimes \heis$ given by
\begin{equation}
\begin{aligned}
    &\beta_{N-M+2i}\mapsto \beta_{N-M+2i}\,\ee^{(x+y)},
    \quad \gamma_{N-M+2i}\mapsto \gamma_{N-M+2i}\,\ee^{-(x+y)},
    \\
    &x\mapsto x+\sum_{j=1}^{N-1}\beta_{N-M+2j}\,\gamma_{N-M+2j}-\Lambda+{\tfrac{N-M-1}{2(N+M)}(x+y)},
    \quad\\
    &y\mapsto y-\sum_{j=1}^{N-1}\beta_{N-M+2j}\,\gamma_{N-M+2j}+\Lambda-{\tfrac{N-M-1}{2(N+M)}(x+y)},
    \\
    &h_{a}\mapsto \begin{cases}
        h_{a},&(a=1,\dots,N-M)\\
        h_{a}+{(-1)^a}(\kk+\hv)(x+y), &(a=N-M+1,\dots,N+M-1)
    \end{cases}, 
\end{aligned}
\end{equation}
where $\Lambda=\sum(-1)^a\varpi_a$ is the alternating sum of the fundamental weights $\varpi_a$ for $N-M<a<N+M$,
in order to identify with the screening operators $[S^\OO_i]$ from \eqref{reduced screenings Amore}. 
The continuity argument generalizes the embedding to all levels and concludes the proof.
\end{proof}

\section{Virasoro-type reduction for modules} \label{sec: Virasoro reduction for modules} 
Let $\KL^\kk(\g)$ denote the Kazhdan--Lusztig category, i.e., the category of $\V^\kk(\g)$-modules which are bounded from below by the conformal grading and with finite-dimensional graded spaces.
In this section, we generalize the isomorphisms in Theorem \ref{thm: A} and \ref{thm: BCD} for modules in the Kazhdan--Lusztig category. 

We assume $\kk$ is generic through this section. Then $\KL^\kk(\g)$ is semisimple and the simple modules are the Weyl modules 
\begin{equation}
    \weyl_\lambda^\kk=U(\widehat{\g})\otimes_{U(\g[t]\oplus\C K)}L_\lambda,
\end{equation}
which are induced from the simple highest weight $\g$-modules $L_\lambda$ with dominant integral highest weights $\lambda \in P_+$.
By \cite{ACL19}, the Weyl modules $\weyl_\lambda^\kk$ admit resolutions by Wakimoto modules, generalizing the case $\lambda=0$ in \eqref{Wakimoto resolution of affine}, of the form
\begin{align}\label{Wakimoto resolution of Weyl modules}
    0\rightarrow  \weyl_{\lambda}^\kk \rightarrow \affWak{\lambda}\overset{\bigoplus S_{i,\lambda}}{\longrightarrow} \bigoplus_{i=1,\ldots,l} \affWak{s_i\circ \lambda}\rightarrow \mathrm{C}_2^\lambda \rightarrow \cdots \rightarrow \mathrm{C}_{n_\circ}^\lambda \rightarrow 0
\end{align}
with 
\begin{align}
    \mathrm{C}_i^\lambda=\bigoplus_{\begin{subarray}c w\in W\\ \ell(w)=i\end{subarray}} \affWak{w\circ \lambda}.
\end{align}
where $s_i\in W$ is the $i$-th simple Weyl reflection. 
They realize as an intersection of screening operators $S_{i,\lambda}$ generalizing \eqref{affine screenings}. 
They are defined as $S_{i,\lambda}=S_{i}[\lambda(h_i)+1]$ where we set
\begin{align}\label{product of screening operators}
    S_{i}[n]=\int_{\Upsilon} S_i(z_1)\dots S_{i}(z_{n})\ \dd z_1\dots \dd z_{n}\colon \affWak{\lambda}\rightarrow \affWak{\lambda-n \alpha_i}
\end{align}
for some local system $\Upsilon$ on the configuration space $Y_{n}=\{(z_1,\dots,z_{n})\mid z_p\neq z_q\}$.

The Wakimoto modules over the $\W$-algebra $\W^\kk(\g, \OO)$ are obtained by applying the BRST reduction to the Wakimoto modules $\affWak{\lambda}$ over $\V^\kk(\g)$
\begin{align}
    \affWak{\OO,\lambda}:=\bg^{\bigstar}\otimes \Phi(\g_{1/2}) \otimes \heis\simeq \HH^0_{\OO}(\affWak{\lambda})
\end{align}
 and define a linear map
\begin{align}
S^\OO_{i}[n]=\int_{\Upsilon} S_i^\OO(z_1)\dots S_{i}^\OO(z_{n})\ \dd z_1\dots \dd z_{n}\colon \affWak{\OO, \lambda}\rightarrow \affWak{\OO, \lambda-n \alpha_i}.
\end{align}
Applying the reduction $\HH^0_{\OO}$ to \eqref{product of screening operators} give the induced homomorphisms $[S_{i}[n]]$ that satisfy the following property.
\begin{proposition}\label{prop: screening for modules}
The following diagram commutes
\begin{center}
\begin{tikzcd}
\HH^0_{\OO}(\affWak{\lambda}) \arrow{rr}{[S_{i}[n]]} \arrow{d}{\simeq} && \HH^0_{\OO}(\affWak{\lambda-n \alpha_i}) \arrow{d}{\simeq}\\
\affWak{\OO,\lambda} \arrow{rr}{S^\OO_{i}[n]} && \affWak{\OO,\lambda-n \alpha_i}.
\end{tikzcd}
\end{center}
\end{proposition}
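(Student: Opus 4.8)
The plan is to reduce everything to the single-screening statement, which is essentially contained in the work of Genra \cite{Gen20} recalled in \S\ref{sec: Wakimoto}, and then to propagate that identification through the iterated contour integral defining $S_i[n]$.

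First I would recall that the affine screening current $S_i(z)=Y(P_i\fockIO{i},z)$, acting on the BRST complex $C_f^\bullet(\affWak{\lambda})=\affWak{\lambda}\otimes\Phi(\g_{1/2})\otimes\chfermion$ by the identity on the last two tensor factors, commutes with the differential $d_{\mathrm{st}}=\int Y(Q_{\mathrm{st}},z)\,\dd z$ up to a total derivative $\partial_z(\cdots)$. This is the computation underlying the Wakimoto resolution \eqref{Wakimoto resolution of W algebras}; it only involves the action of $\g_+(\!(t)\!)$ together with the free-field ($\bg$ and $bc$) data, so it is insensitive to the Fock weight $\lambda$ and holds verbatim for every $\lambda\in\h^*$, not only for $\lambda=0$. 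Hence $\int S_i(z)\,\dd z$ descends to cohomology, and under the isomorphism $\HH^0_\OO(\affWak{\lambda})\simeq\affWak{\OO,\lambda}$ of \eqref{Wakimoto for Walg} it becomes $S_i^\OO=\int S_i^\OO(z)\,\dd z$, where $S_i^\OO(z)=Y(P_i^\OO\fockIO{i},z)$ and $P_i^\OO$ is obtained from $P_i$ by the substitution rule \eqref{screeinings for Walg}. Moreover this identification is compatible at the level of currents, $[S_i(z)]=S_i^\OO(z)$; this is precisely the content of the proposition when $n=1$.

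Next I would treat the iterated integral. The operator $S_i[n]\colon\affWak{\lambda}\to\affWak{\lambda-n\alpha_i}$ is the integral over a cycle $\Upsilon$ in the configuration space $Y_n$ of the multivalued product $S_i(z_1)\cdots S_i(z_n)$, where the multivaluedness --- and hence the homology class $\Upsilon$ --- is governed by the exponents $(z_p-z_q)^{(\alpha_i,\alpha_i)/(\kk+\hv)}$ coming from the Heisenberg factors $\fockIO{i}(z_p)\fockIO{i}(z_q)$, the $\bg$-factors $P_i(z_p)P_i(z_q)$ contributing only rational functions. Since the reduction functor does not alter the Heisenberg sector, the monodromy data, and therefore the admissible cycle $\Upsilon$, are literally the same for $S_i(z_1)\cdots S_i(z_n)$ and for $S_i^\OO(z_1)\cdots S_i^\OO(z_n)$. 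Because passing to $\HH^0_\OO$ respects normally ordered products and operator product expansions of $d_{\mathrm{st}}$-closed fields, the $n=1$ statement upgrades to $[S_i(z_1)\cdots S_i(z_n)]=S_i^\OO(z_1)\cdots S_i^\OO(z_n)$. Each factor $S_i(z_p)$ commutes with $d_{\mathrm{st}}$ modulo $\partial_{z_p}(\cdots)$, so $S_i[n]=\int_\Upsilon S_i(z_1)\cdots S_i(z_n)\,\dd z_1\cdots\dd z_n$ is a chain map modulo $d_{\mathrm{st}}$-exact terms and modulo total derivatives in each variable; the latter integrate to zero over the cycle $\Upsilon$ by Stokes' theorem. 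Passing to cohomology, $S_i[n]$ therefore descends to $\int_\Upsilon S_i^\OO(z_1)\cdots S_i^\OO(z_n)\,\dd z_1\cdots\dd z_n=S_i^\OO[n]$, which is exactly the asserted commutativity.

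The delicate point I expect to be the main obstacle is this last interchange: one has to make rigorous that applying $\HH^0_\OO$ commutes with the iterated contour integral --- i.e.\ that $S_i[n]$ is genuinely a morphism of complexes up to homotopy, that the $\partial_{z_p}$-exact corrections drop out on the cycle $\Upsilon$, and that the resulting cohomology class is computed current by current as above. This is the module-level analogue of \cite[Theorem 4.8]{Gen20} combined with the treatment of products of screening currents in the affine Wakimoto resolution of \cite{ACL19}, and I would carry it out working inside the explicit chain-level contraction of $C_f^\bullet(\affWak{\lambda})$ used there, where all the necessary cancellations are manifest.
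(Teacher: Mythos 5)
Your proposal is correct and follows essentially the same route as the paper: both arguments come down to identifying the screening current $S_i(z)$ with $S_i^\OO(z)=Y(F^\chi(P_i)\fockIO{i},z)$ after reduction and then observing that the product over $z_1,\dots,z_n$ and the cycle $\Upsilon$ (whose local system is governed solely by the untouched Heisenberg sector) carry over verbatim. The ``delicate point'' you defer to a chain-level contraction is handled in the paper by rewriting $\HH^\bullet_\OO$ as semi-infinite cohomology and invoking the semi-regular bimodule isomorphism $\xi$ of \cite{Ara14, ACL19} with the intertwining property \eqref{switching property}, which converts $\rho(u)\otimes 1$ into $\rho(u)\otimes 1+1\otimes F^\chi(u)$ and hence identifies all the currents $S_i(z_j)$ with $S_i^\OO(z_j)$ simultaneously at the level of the isomorphism, so no separate Stokes/homotopy argument for the iterated integral is needed.
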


\begin{proof}
Recall from \eqref{BRST is semi-infinite} that the quantum Hamiltonian reduction $\HH^\bullet_{\OO}(M)$ agrees with the semi-infinite cohomology
\begin{align}
    \HH^\bullet_{\OO}(M)\simeq \HH^{\frac{\infty}{2}+\bullet}(\g_+(\!(z)\!);M\otimes \Phi^\chi(\g)).
\end{align}
Taking $M=\affWak{\lambda}$ and the good coordinates using the decomposition $N_+\simeq G_{0,+} \ltimes G_+$ as in \S\ref{sec: Wakimoto general}, we have 
\begin{align}
    \HH^\bullet_{\OO}(\affWak{\lambda})\simeq \HH^{\frac{\infty}{2}+\bullet}(\g_+(\!(z)\!); \bg^{\Delta_{>0}} \otimes \Phi^\chi(\g))\otimes (\bg^\bigstar\otimes \Fock{\lambda})
\end{align}
where $\bigstar=\Delta_0^+$.
The free field algebra $\bg^{\Delta_{>0}}$ is a $\g_+(\!(z)\!)$-module called a \emph{semi-regular bimodule} \cite{Ara14, Voro} whose bimodule structure is given by the homomorphism $\Psi\colon \V^0(\g_+) \rightarrow \bg^{\Delta_{>0}}$ together with the anti-homomorphism 
$\rho\colon \V^0(\g_+) \rightarrow \bg^{\Delta_{>0}}$
induced from the left and right multiplications of $G_+$ on itself, see \eqref{eq:rho}-\eqref{Wakimoto realization for affine}. 
By \cite{Ara14, ACL19}, there exists a vector space isomorphism 
\begin{align}
   \xi\colon \bg^{\Delta_{>0}} \otimes \Phi^\chi(\g) \xrightarrow{\sim} \bg^{\Delta_{>0}}\otimes \Phi^\chi(\g)
\end{align}
which intertwines the actions
\begin{align}\label{switching property}
\begin{split}
 &(\xi \circ \sigma)(u)= (u\otimes 1) \circ \xi,\\
 &\xi \circ \left(\rho(u)\otimes 1\right)=\left(\rho(u)\otimes 1+1\otimes F^\chi(u)\right)\circ \xi
\end{split}
\end{align}
for $u\in \g_+(\!(t)\!)$ where $\sigma$ is the coproduct $\sigma(u)=u \otimes 1+1\otimes u$.
Then $\xi$ induces an isomorphism  
\begin{equation}\label{isom: semi-infinite cohomology}
    \begin{aligned}
   \HH^{\frac{\infty}{2}}(\g_+(\!(z)\!);\bg^{\Delta_{>0}} \otimes \Phi^\chi(\g) )\xrightarrow{\sim} 
   \HH^{\frac{\infty}{2}}(\g_+(\!(z)\!);\bg^{\Delta_{>0}})\otimes \Phi^\chi(\g) \simeq \Phi^\chi(\g)\simeq \Phi(\g_{1/2}).
\end{aligned}
\end{equation}
The isomorphism \eqref{isom: semi-infinite cohomology} is indeed an isomorphism of vertex algebras 
since by \cite[Proposition 4.3]{Gen20}
\begin{align}\label{the second isom in the computation}
    \Phi(\g_{1/2})\xrightarrow{\sim}\HH^{\frac{\infty}{2}}(\g_+(\!(z)\!);\bg^{\Delta_{>0}} \otimes \Phi^\chi(\g)) ,\quad \Phi_\alpha \mapsto \Phi_\alpha+\sum_{\alpha'\in \Delta_{1/2}} \langle e_\alpha,e_{\alpha'} \rangle \gamma_{\alpha'}.
\end{align}

Moreover, we have an isomorphism of graded vector spaces
\begin{align}
    \C \xrightarrow{\sim}\HH^{\frac{\infty}{2}}(\g_+(\!(z)\!);\bg^{\Delta_{>0}})
\end{align}
where we take a trivial bigrading on the left-hand side, whereas on the right-hand side we set
\begin{equation}
    \begin{aligned}
    &\mathrm{bideg}(\beta_{\alpha})=(1,\alpha),\qquad \mathrm{bideg}(\gamma_{\alpha})=(0,-\alpha),\\
    &\mathrm{bideg}(\varphi_{\alpha})=(1,\alpha),\qquad \mathrm{bideg}(\varphi^*_{\alpha})=(0,-\alpha).
\end{aligned}
\end{equation}
Then, under the isomorphism 
\begin{equation}
    \HH_{\OO}(\affWak{\lambda})\simeq \HH^{\frac{\infty}{2}}(\g_+(\!(z)\!);\bg^{\Delta_{>0}}\otimes \Phi^\chi(\g)) \otimes (\bg^\bigstar\otimes \Fock{\lambda})
\end{equation}
we identify
\begin{equation}
    \begin{aligned}
    S_{i}[n]&=\int_{\Upsilon} \prod_{j=1}^n Y(F^\chi(P_i)\fockIO{i},z_j)\ \dd z_1\dots \dd z_{n}\\
    &=\int_{\Upsilon} \prod_{j=1}^n Y(P_i^\OO\fockIO{i},z_j)\ \dd z_1\dots \dd z_{n}\\
    &=\int_{\Upsilon} S^\OO_i(z_1)\dots S^\OO_{i}(z_{n})\ \dd z_1\dots\dd z_{n}
    =S^\OO_{i}[n]
    \end{aligned}
\end{equation}
through the isomorphism \eqref{the second isom in the computation}. This completes the proof. 
\end{proof}

Denote $S^\OO_{i,\lambda}=S^\OO_{i}[h_i(\lambda)+1]$ in the following.
Applying $\fHdeg{2}{0}$ to the exact sequence \eqref{Wakimoto resolution of Weyl modules}, we obtain 
\begin{align}\label{Wakimoto resolution of Wakimoto Weyl modules}
\begin{split}
     0\rightarrow  \fHdeg{2}{0}(\weyl_{\lambda}^\kk) \rightarrow \affWak{\OO,\lambda}\overset{\bigoplus S^\OO_{i,\lambda}}{\longrightarrow} \bigoplus_{i=1,\ldots,l} \affWak{\OO,s_i\circ \lambda}
    &\rightarrow \fHdeg{2}{0}(\mathrm{C}_2^\lambda)\\
    &\rightarrow \cdots \rightarrow \fHdeg{2}{0}(\mathrm{C}_{n_\circ}^\lambda) \rightarrow 0,
\end{split}
\end{align}
with 
\begin{align}
    \mathrm{C}_n^{\OO,\lambda}=\bigoplus_{\begin{subarray}c w\in W\\ \ell(w)=i\end{subarray}} \affWak{\OO,w\circ \lambda}.
\end{align}
This complex is again exact as $\fHdeg{2}{\neq0}(\weyl_{\lambda}^\kk)=0$ by \cite{Ara15a} and the following is clear.
\begin{corollary}\label{Free field realization of W-modules}
    For $\lambda\in P_+$ and $\kk$ a generic level, there is an isomorphism of $\W^\kk(\g,\OO)$-modules
    \begin{align*}
        \HH^n_{\OO}(\weyl_\lambda^\kk)\simeq \delta_{n,0}\bigcap_{i=1,\dots,l} \ker S^\OO_{i,\lambda} \subset \affWak{\OO,\lambda}.
    \end{align*}
\end{corollary}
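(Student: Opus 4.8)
The plan is to derive the corollary from the Wakimoto-type resolution \eqref{Wakimoto resolution of Weyl modules} of the Weyl module in exactly the same way the Wakimoto realization of $\W^\kk(\g,\OO)$ itself was obtained in \S\ref{sec: Wakimoto general} from \eqref{Wakimoto resolution of affine}, now using Proposition \ref{prop: screening for modules} to control the screening operators. First I would apply the quantum Hamiltonian reduction functor $\HH^\bullet_\OO$ to \eqref{Wakimoto resolution of Weyl modules}. Every term $\mathrm{C}_n^\lambda$ is a finite direct sum of Wakimoto modules $\affWak{w\circ\lambda}$, and these are $\HH_\OO$-acyclic by \cite[Proposition~4.5]{Gen20}, i.e.\ $\HH^{\neq0}_\OO(\affWak{\mu})=0$ for all $\mu$. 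Since \eqref{Wakimoto resolution of Weyl modules} has finite length $n_\circ$, the hypercohomology spectral sequence degenerates and $\HH^\bullet_\OO(\weyl_\lambda^\kk)$ is computed by the cohomology of the complex obtained by applying $\HH^0_\OO$ term by term.

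Next I would identify the resulting complex with \eqref{Wakimoto resolution of Wakimoto Weyl modules}. By \cite[Theorem~4.8]{Gen20} (cf.\ \eqref{Wakimoto for Walg}) one has $\HH^0_\OO(\affWak{\mu})\simeq\affWak{\OO,\mu}$, and by Proposition \ref{prop: screening for modules} the homomorphism $[S_i[n]]$ induced on cohomology by $S_i[n]$ corresponds, under these isomorphisms, to $S^\OO_i[n]$; taking $n=\lambda(h_i)+1$ shows that the first differential $\bigoplus_i[S_{i,\lambda}]$ becomes $\bigoplus_i S^\OO_{i,\lambda}$, and the higher differentials are matched in the same way. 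Thus $\HH^\bullet_\OO(\weyl_\lambda^\kk)$ is the cohomology of \eqref{Wakimoto resolution of Wakimoto Weyl modules}.

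Finally I would invoke the cohomology vanishing $\HH^{\neq0}_\OO(\weyl_\lambda^\kk)=0$ of \cite{Ara15a}, valid since $\weyl_\lambda^\kk\in\KL^\kk(\g)$. This forces \eqref{Wakimoto resolution of Wakimoto Weyl modules} to be exact, so exactness at its first two terms gives $\HH^0_\OO(\weyl_\lambda^\kk)=\ker\bigl(\bigoplus_i S^\OO_{i,\lambda}\bigr)=\bigcap_{i=1}^l\ker S^\OO_{i,\lambda}\subset\affWak{\OO,\lambda}$, while $\HH^n_\OO(\weyl_\lambda^\kk)=0$ for $n\neq0$; together these are precisely the claimed isomorphism. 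There is no real obstacle beyond Proposition \ref{prop: screening for modules}: the genuine subtlety — that $\HH^0_\OO$ does not obviously commute with the intersection of kernels of the complex-power screening operators $S_i[n]$ — is exactly what the passage through the full resolution and the acyclicity of Wakimoto modules circumvents.
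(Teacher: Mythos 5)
Your proposal is correct and follows essentially the same route as the paper: apply $\HH^\bullet_\OO$ to the Wakimoto resolution \eqref{Wakimoto resolution of Weyl modules}, use the acyclicity of the Wakimoto modules together with Proposition \ref{prop: screening for modules} to identify the reduced complex and its first differential $\bigoplus_i S^\OO_{i,\lambda}$, and then invoke the vanishing $\HH^{\neq 0}_\OO(\weyl_\lambda^\kk)=0$ of \cite{Ara15a} to conclude exactness and read off $\HH^0_\OO(\weyl_\lambda^\kk)$ as the intersection of kernels. The only difference is presentational: you spell out the hypercohomology spectral sequence degeneration that the paper leaves implicit.
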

\noindent
This is a generalization of Theorem \ref{thm: Wakimoto general form} proven in \cite{Gen20} and also some results in \cite{AF19, FFFN} for $\W$-algebras associated with regular nilpotent orbits or $\g=\sll_4$.

We have an analogue to Theorems \ref{thm: A}, \ref{thm: BCD} and \ref{thm:A_general} for modules in the Kazhdan--Lusztig category.
\begin{theorem}\label{Isom for the modules 1}
Let $\W^\kk(\g, \OO)$ be the $\W$-algebras in Theorems \ref{thm: A}, \ref{thm: BCD} and \ref{thm:A_general}.
For $\lambda\in P_+$ and $\kk$ a generic level, there is an isomorphism of $\W^\kk(\g, \widehat{\OO})$-modules
\begin{align*}
\fHdeg{2}{0}\left(\HH^0_\OO(\weyl_\lambda^\kk) \right)\simeq \HH^0_{\widehat{\OO}}(\weyl_\lambda^\kk).
\end{align*}
\end{theorem}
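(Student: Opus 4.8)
The plan is to run the proofs of Theorems~\ref{thm: A}, \ref{thm: BCD} and \ref{thm:A_general} verbatim at the level of modules, replacing the BGG-type resolutions of the vacuum by the Wakimoto resolutions of Weyl modules recalled in this section and the single screening operators by the iterated ones $S^\OO_{i,\lambda}=S^\OO_i[h_i(\lambda)+1]$. Fix $\lambda\in P_+$ and a generic level $\kk$. First I would resolve $\HH^0_\OO(\weyl_\lambda^\kk)$ by reduced Wakimoto modules: applying $\HH^0_\OO$ to the resolution \eqref{Wakimoto resolution of Weyl modules} and using that the $\affWak{\mu}$ and $\weyl_\lambda^\kk$ are $\HH^0_\OO$-acyclic yields the resolution \eqref{Wakimoto resolution of Wakimoto Weyl modules} of $\HH^0_\OO(\weyl_\lambda^\kk)$ by the modules $\affWak{\OO,w\circ\lambda}$, whose connecting maps are assembled from $S^\OO_{i,\lambda}$; by Proposition~\ref{prop: screening for modules} these coincide with the operators induced on the reductions by the screenings $S_{i,\lambda}$ of $\V^\kk(\g)$.

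Next I would apply $\fHdeg{2}{\bullet}$ termwise. In the proofs of Theorems~\ref{thm: A}, \ref{thm: BCD} and \ref{thm:A_general} the cohomology $\fHdeg{2}{p}(\affWak{\OO,\mu})$ was in fact computed for an arbitrary Fock weight $\mu$: after the explicit free-field isomorphism used there, the differential $\fd{2}$ acts only on one $\bg$-factor, so $\fHdeg{2}{p}(\affWak{\OO,\mu})\simeq\delta_{p,0}\,(\bg^{\bigstar_c}\otimes\Fock{\mu})$ uniformly in $\mu$; in particular the reduced Wakimoto modules are $\fHdeg{2}{\bullet}$-acyclic. Hence $\fHdeg{2}{0}(\HH^0_\OO(\weyl_\lambda^\kk))$ is the $0$-th cohomology of the complex obtained by applying $\fHdeg{2}{0}$ to \eqref{Wakimoto resolution of Wakimoto Weyl modules}, that is,
\[ \fHdeg{2}{0}(\HH^0_\OO(\weyl_\lambda^\kk))\simeq\bigcap_{i=1,\dots,l}\ker\,[S^\OO_{i,\lambda}]\subset \fHdeg{2}{0}(\affWak{\OO,\lambda})\simeq\bg^{\bigstar_c}\otimes\Fock{\lambda}, \]
where $[S^\OO_{i,\lambda}]=[S^\OO_i[h_i(\lambda)+1]]$ is the reduced iterated screening and (in types $B$ and $D$) one keeps track of the auxiliary $bc$/$\beta\gamma$ factor $\Phi(\g_{1/2})$ exactly as in the vacuum case.

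Finally I would identify the right-hand side with $\HH^0_{\widehat\OO}(\weyl_\lambda^\kk)$ through Corollary~\ref{Free field realization of W-modules} applied to $(\g,\widehat\OO)$. The proofs of Theorems~\ref{thm: A}, \ref{thm: BCD}, \ref{thm:A_general} already establish that, after the rescaling and coordinate automorphisms used there, the reduced screening currents $[S^\OO_i](z)=Y([P^\OO_i]\fockIO{i},z)$ are precisely the screening currents $S^{\widehat\OO}_i(z)$ of a Wakimoto realization of $\W^\kk(\g,\widehat\OO)$ attached to an honest good pyramid for $\widehat\OO$, with $\fHdeg{2}{0}(\affWak{\OO,\lambda})\simeq\affWak{\widehat\OO,\lambda}$. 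Since $[S^\OO_i[n]]$ is obtained from $[S^\OO_i](z)$ by the same local-system integral \eqref{product of screening operators} that produces $S^{\widehat\OO}_i[n]$ from $S^{\widehat\OO}_i(z)$ — the relevant local systems agreeing because the exponential factors $\fockIO{i}$, and hence the mutual braidings of the currents, are literally unchanged — one gets $[S^\OO_{i,\lambda}]=S^{\widehat\OO}_{i,\lambda}$. Therefore $\fHdeg{2}{0}(\HH^0_\OO(\weyl_\lambda^\kk))\simeq\bigcap_i\ker S^{\widehat\OO}_{i,\lambda}\simeq\HH^0_{\widehat\OO}(\weyl_\lambda^\kk)$, and this is an isomorphism of $\W^\kk(\g,\widehat\OO)$-modules because on both sides the action is the one restricted from the ambient free-field algebra $\bg^{\bigstar_c}\otimes\heis$, in which $\W^\kk(\g,\widehat\OO)\simeq\bigcap_i\ker[S^\OO_i]=\bigcap_i\ker S^{\widehat\OO}_i$ by Theorems~\ref{thm: A}, \ref{thm: BCD}, \ref{thm:A_general}.

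I expect the main obstacle to be the uniformity just invoked: one must verify that both the cohomology computations and, more delicately, the screening matching from the proofs of Theorems~\ref{thm: A}, \ref{thm: BCD} and \ref{thm:A_general} persist for all Fock weights $w\circ\lambda$ appearing in the resolution and survive the passage from the single screenings to the iterated screenings $S^\OO_i[h_i(\lambda)+1]$. The cohomological part is essentially free, since $\fd{2}$ and the free-field isomorphisms used there do not touch the Heisenberg factor; the delicate part is the compatibility of the reductions with the braided integration \eqref{product of screening operators}, which should follow from the observation that the braiding is governed entirely by the Fock exponentials $\fockIO{i}$ that are common to the $\OO$- and $\widehat\OO$-realizations, but which I would want to state and check carefully, together with the type-dependent bookkeeping of the $\Phi(\g_{1/2})$ factors in types $B$ and $D$.
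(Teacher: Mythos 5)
Your proposal is correct and follows essentially the same route as the paper: apply $\fHdeg{2}{0}$ to the Wakimoto resolution \eqref{Wakimoto resolution of Wakimoto Weyl modules}, use the acyclicity $\fHdeg{2}{\neq0}(\affWak{\OO,\mu})=0$ for all $\mu$ to realize $\fHdeg{2}{0}(\HH^0_\OO(\weyl_\lambda^\kk))$ as the joint kernel of the reduced iterated screenings $[S^\OO_{i,\lambda}]$, and then identify these with $S^{\widehat\OO}_{i,\lambda}$ via the screening matching from Theorems \ref{thm: A}, \ref{thm: BCD}, \ref{thm:A_general} together with the mechanism of Proposition \ref{prop: screening for modules}, concluding by Corollary \ref{Free field realization of W-modules}. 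Your closing remarks on the uniformity in the Fock weight and the compatibility with the braided integration \eqref{product of screening operators} are exactly the points the paper's (terser) proof relies on.
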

\begin{proof}
Thanks to Corollary \ref{Free field realization of W-modules}, the isomorphism follows from the identification of the screening operators appearing in the proofs of Theorem \ref{thm: A}, \ref{thm: BCD} and \ref{thm:A_general}.
Indeed, we apply $\fHdeg{2}{0}$ to \eqref{Wakimoto resolution of Wakimoto Weyl modules}, we obtain 
\begin{equation}
\fHdeg{2}{0}(\HH^0_\OO(\weyl_\lambda^\kk)) \simeq \bigcap_{i=1,\dots,l} \ker [S^\OO_{i,\lambda}] \subset \fHdeg{2}{0}(\affWak{\OO,\lambda}).
\end{equation}
as $\fHdeg{2}{\neq0}(\affWak{\OO,\mu})=0$ holds for all $\mu \in \h^*$.
As in the proof of Proposition \ref{prop: screening for modules}, we identify the screening operators after applying $\fHdeg{2}{0}$
so that 
\begin{equation}
    \begin{split}
        \bigcap_{i=1,\dots,l} \ker \longstick{[S^\OO_{i,\lambda}]}{\fHdeg{2}{0}(\affWak{\OO,\lambda})}\simeq \bigcap_{i=1,\dots,l} \ker \longstick{S^{\widehat{\OO}}_{i,\lambda}}{\affWak{\widehat{\OO},\lambda}}\simeq \HH^0_{\widehat{\OO}}(\weyl_\lambda^\kk).
    \end{split}
\end{equation}
This completes the proof.
\end{proof}

\section{Virasoro-type reduction for the universal 2-parameter $\W_\infty$-algebra $\Wsp(c,\kk)$} 
\label{sec: Wsp}

Part of the $\W$-algebras $\W^\kk(\g,\OO)$ appearing in the Theorem \ref{thm: BCD}
can be obtained as quotients of the universal $\W_\infty$-algebra $\Wsp(c,\kk)$ constructed in \cite{CKL24}. This vertex algebra is of free strong generating type
\begin{equation}\W(1^3, 2, 3^3, 4, 5^3, 6,\dots)\end{equation}\label{sp2starting} 
defined and free over a localization $R$ of the polynomial ring $\C[c,\kk]$ (see \cite[\S 6]{CKL24}).
In the following, we denote the strong generators of the prescribed conformal weights $1^3, 2, 3^3, 4, 5^3,\dots$ by 
\begin{align}
    W_2, W_4, W_6, W_8,\dots,\qquad e_1, h_1, f_1, E_3, H_3, F_3, E_5, H_5, F_5,\dots
\end{align}
where weights are given by the sub-index. They satisfy the following properties:
\begin{itemize}
    \item[(P1)] $e_1, h_1, f_1$ generate the affine vertex subalgebra $\V^\kk(\sll_2)$;
    \item[(P2)] $W_2$ is a Virasoro element of central charge $c$  commuting with $e_1, h_1, f_1$ such that the sum 
    \begin{align}
        \mathbf{L}=W_2+L,\quad L_{\sll_2}:=\frac{1}{2(\kk+2)}(e_1f_1+f_1e_1+\tfrac{1}{2}h_1h_1)
    \end{align}
    is the conformal vector of $\Wsp(c,\kk)$;
    \item[(P3)] $W_i$ commutes with $e_1, h_1, f_1$ for all $i$;
    \item[(P4)] $E_3, H_3, F_3$ are the basis of the lowest weight subspace of the Weyl module over $\V^\kk(\sll_2)$ associated with the adjoint representation $L(\alpha)$ of $\sll_2$ and are primary vectors for the Virasoro element $W_2$;
    \item[(P5)] the elements $E_i, H_i, F_i$ ($i\geq 3$) satisfy the recursive relations
    \[E_{i+2}=W_{4}{}_{(1)}E_{i},\quad H_{i+2}=W_{4}{}_{(1)}H_{i},\quad F_{i+2}=W_{4}{}_{(1)}F_{i}.\]
\end{itemize}

By using the affine subalgebra $\V^\kk(\sll_2)\subset \Wsp(c,\kk)$, one may perform the Virsoro-type reduction. Namely, one considers the following complex:
\begin{equation}
    C_{\ydiagram{2}}^\bullet(\Wsp(c,\kk))=\Wsp(c,\kk) \otimes \bigwedge{}^{\bullet}_{\varphi,\varphi^*}, \qquad
    \fd{2}=\int Y((e_1+\wun)\varphi^*,z)\,\mathrm{d}z.
\end{equation}
In this section, we study this reduction and prove the following result:
\begin{theorem}\label{thm: reduction_Wsp}
    The Virasoro-type reduction $\fHdeg{2}{0}(\Wsp(c,\kk))$ of the universal $\W_\infty$- algebra $\Wsp(c,\kk)$ is a simple freely generated vertex algebra, free over $R$, of generating type $$\W(2^3,3,4^3,5,6^3,\dots).$$ 
\end{theorem}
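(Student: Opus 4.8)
The plan is to compute $\fHdeg{2}{0}(\Wsp(c,\kk))$ using the Wakimoto-type free field realization of $\Wsp(c,\kk)$, paralleling exactly the proof of Theorem \ref{thm: BCD}. Since $\Wsp(c,\kk)$ contains the affine subalgebra $\V^\kk(\sll_2)$ by property (P1), and $e_1$ satisfies $e_1(z)e_1(w)\sim 0$, the Virasoro-type reduction $\fd{2}=\int Y((e_1+\wun)\varphi^*,z)\,\mathrm dz$ is well-defined. First I would use the half-lattice/$\beta\gamma$ realization of $\V^\kk(\sll_2)\subset\Wsp(c,\kk)$: by the inverse Hamiltonian reduction \eqref{iHR for sl2}, $\V^\kk(\sll_2)\hookrightarrow \W^\kk(\sll_2,\oo 2)\otimes\Pi$, so one can present $\Wsp(c,\kk)$ inside $\widetilde{\W}\otimes\bg$ where $\widetilde{\W}$ is the commutant-type subalgebra and the $\bg$-system carries the nilpotent $e_1$; equivalently, gauge the realization so that $e_1$ becomes a single $\gamma$-type field (as done in type $A$ via \eqref{isom_BG_A}). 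Then $\fd{2}$ reduces to $\int Y((\gamma+\wun)\varphi^*,z)\,\mathrm dz$ acting on one $\bg$-factor, and $\fHdeg{2}{p}(\bg)\simeq\delta_{p,0}\C$ gives the cohomology vanishing in nonzero degree together with an explicit description of $\fHdeg{2}{0}(\Wsp(c,\kk))$ inside the remaining free field algebra.

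\textbf{Key steps.} (i) Establish a free field realization of $\Wsp(c,\kk)$ over $R$ in which the $\sll_2$-triple $e_1,h_1,f_1$ has the standard Wakimoto form (one $\bg$-pair plus a rank-one Heisenberg), with the remaining generators $W_{2i}, E_{2i-1}, H_{2i-1}, F_{2i-1}$ expressed in the complementary factors; this should follow from (P1)--(P5) and the structure of the Weyl module $L(\alpha)$ in (P4) together with the recursion in (P5). (ii) Apply the isomorphism of free field algebras that turns $e_1$ into $\gamma$ and $\fd{2}$ into $\int Y((\gamma+\wun)\varphi^*,z)\,\mathrm dz$, exactly as in the proof of Theorem \ref{thm: A}. (iii) Compute $\fHdeg{2}{\bullet}$ of the free field algebra: the $\bg$-factor carrying $\gamma$ is killed (contributing $\C$ in degree $0$ only), so $\fHdeg{2}{\neq 0}(\Wsp(c,\kk))=0$ and $\fHdeg{2}{0}(\Wsp(c,\kk))$ is the image of the complementary factors. (iv) Identify the surviving generators and their conformal weights: $W_2$ acquires weight $2$ (it was already weight $2$), the three weight-one $\sll_2$ generators are eaten/shifted so that the lowest-weight multiplet $E_3,H_3,F_3$ of property (P4) now has conformal weight $3$ with respect to the reduced conformal vector $\mathbf L - L_{\sll_2}$ composed with the Virasoro reduction shift, and the recursion (P5) propagates this, yielding the generating type $\W(2^3,3,4^3,5,6^3,\dots)$. (v) Freeness over $R$: since each step is an explicit free-field computation over $R$ and the cohomology of a $\bg$-system is computed by an explicit contracting homotopy over $R$, freeness and flatness over $R$ are preserved; a character/Hilbert-series count of the reduced complex confirms the stated generating type with no additional generators or relations. (vi) Simplicity: $\Wsp(c,\kk)$ is simple over $R$ (this is part of its construction in \cite{CKL24}), and one argues as in \cite{Ara05, FFFN} that the Virasoro-type reduction of a simple freely-generated vertex algebra with the required properties is again simple — alternatively, check that the reduced algebra has no proper graded ideals by comparing with the $1$-parameter quotients $\W^\kk(\spp_{4n+2},\oo{2n,2n+2})$ and $\W^\kk(\so_{4n},\oo{2n-1,2n+1})$ from Theorem \ref{intro: theorem C}, which are simple at generic specializations, and using that simplicity is generic-dense over $\mathrm{Spec}\,R$.

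\textbf{Main obstacle.} The genuine difficulty is Step (i): producing a free field realization of the \emph{universal} object $\Wsp(c,\kk)$ over the ring $R$ in which $e_1,h_1,f_1$ are in Wakimoto form. Unlike the $\W$-algebras $\W^\kk(\g,\OO)$, which come with the Gen\'eral Wakimoto realization of Theorem \ref{thm: Wakimoto general form} built in, $\Wsp(c,\kk)$ is defined abstractly by generators, OPEs and a freeness property, so one must either (a) transport a realization from a faithful $1$-parameter quotient and argue by flatness/density over $R$ that the construction lifts, or (b) build the realization directly from (P1)--(P5) by repeatedly applying the $\sll_2$ inverse Hamiltonian reduction and bootstrapping the higher $W_{2i}$ and $E_{2i-1},H_{2i-1},F_{2i-1}$ via the recursion. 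Approach (a) is cleanest: take the specialization realizing $\W^\kk(\spp_{4n+2},\oo{2n+1,2n+1})$ (whose Wakimoto realization is the $\spp$ analogue of Proposition \ref{Wakimoto realization: type C}), observe the $e_1$-gauge transformation is defined over $R$, and invoke that $\Wsp(c,\kk)\to\W^\kk(\spp_{4n+2},\oo{2n+1,2n+1})$ is injective on a Zariski-dense set of $(c,\kk)$ so that the identity of screening-characterized subalgebras propagates to $R$ by continuity, exactly the argument already used to pass from generic to all levels in the proofs of Theorems \ref{thm: A} and \ref{thm:IHR}. Everything downstream — the cohomology vanishing, the generating type, freeness, and simplicity — is then routine in the same style as the earlier sections.
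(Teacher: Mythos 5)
Your strategy hinges entirely on Step (i), a Wakimoto-type free field realization of the \emph{universal} algebra $\Wsp(c,\kk)$ over $R$ in which $e_1,h_1,f_1$ sit in Wakimoto form, and this step is a genuine gap. Your proposed fix (a) rests on a false premise: the map $\Wsp(c,\kk)\to\W^\kk(\spp_{4n+2},\oo{2n+1,2n+1})$ is a quotient by a nonzero truncation ideal $I_n$ for every $n$, so it is never injective; what is Zariski-dense is the \emph{union} of the truncation curves, and the ambient free field algebras $\bg^{\bigstar}\otimes\heis$ of the quotients have rank growing with $n$, so there is no single target in which a realization could be transported or a ``limit'' taken. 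Fix (b) is not a proof sketch but a restatement of the problem. The paper avoids this entirely: it never realizes $\Wsp(c,\kk)$ by free fields. Instead it decomposes $\Wsp(c,\kk)_{Q(R)}$ as a $\V^\kk(\sll_2)$-module into Weyl modules $\bigoplus_n C^\kk(n\alpha)\otimes\mathbb{V}^\kk_{n\alpha}$, applies the Kac--Wakimoto computation of $\fHdeg{2}{\bullet}$ of Weyl modules for the vanishing and the generator count, and then — to get freeness over $R$ rather than $Q(R)$ — exhibits an explicit tensor decomposition of the BRST complex $C^\bullet_{\ydiagram{2}}(\Wsp(c,\kk))=C_0\otimes_R C^\bullet_{\sll_2}$, where $C_0$ is spanned by differential monomials in the corrected generators $\tilde E_{i-1},\tilde H_i,\tilde F_{i+1}$ (explicit $\mathbf{h}_1$-dressings of $E_i,H_i,F_i$, needed to make them $\fd{2}$-closed and defined over $R$) and $C^\bullet_{\sll_2}$ is the rank-one BRST complex; Künneth then gives both the vanishing and the strong generation over $R$. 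Your weight bookkeeping in (iv) is right in spirit, but without the explicit corrections you cannot certify that the classes survive in cohomology over $R$.

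Your simplicity argument (vi) is also inadequate. There is no general theorem that a Virasoro-type reduction of a simple algebra is simple, and ``simplicity is generic-dense over $\mathrm{Spec}\,R$'' does not work here because the relevant specializations are quotients by the nonzero ideals $I_n$, so a putative proper ideal of the universal object could die in every such quotient. The paper's argument is categorical: $\fHdeg{2}{0}$ is a fully faithful braided tensor functor $\mathrm{KL}_\kk(\sll_2)\to\W^\kk(\sll_2,\oo{2})\mod$, so a proper graded ideal of $\fHdeg{2}{0}(\Wsp(c,\kk))_{Q(R)}$ pulls back to a proper graded ideal of $\Wsp(c,\kk)_{Q(R)}$, contradicting the simplicity established in \cite{CKL24}. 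You would need to supply an argument of this kind to close (vi).
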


\begin{proof}
As $\Wsp(c,\kk)$ is free over $R$ lying in the Kazhdan-Lusztig category as a $\V^\kk(\sll_2)$-module, one has the cohomology vanishing $\fHdeg{2}{\neq0}(\Wsp(c,\kk))=0$ and $\fHdeg{2}{0}(\Wsp(c,\kk))$ is free over $R$. 
Indeed, one can show these properties and determine a set of strong generators following \cite{FB04, KW04}.

We first take the base change to the fraction fields $Q(R)$ so that we can apply the arguments in \cite{FB04, KW04} directly.
After the base change $\otimes_RQ(R)$, $\Wsp(c,\kk)$ becomes completely reducible as a $\V^\kk(\sll_2)$-module and thus admits a decomposition 
\begin{align}\label{decomposition for Wsp}
   \Wsp(c,\kk)_{Q(R)}:= \Wsp(c,\kk)\otimes_RQ(R)\simeq \bigoplus_{n\geq0}C^\kk(n\alpha)\underset{Q(R)}{\otimes} \mathbb{V}^\kk_{n\alpha}
\end{align}
where $\mathbb{V}^\kk_{n\alpha}$ is the Weyl module over $\V^\kk(\sll_2)$ (and thus over $Q(R)$) of highest weight $n\alpha$ and $C^\kk(n)$ is the corresponding multiplicity space consisting of highest weight vectors. 
Thanks to the base change, the results in \cite[\S 6]{KW04} (for the base field $\C$) hold; we have $\fHdeg{2}{\neq0}(\Wsp(c,\kk)_{Q(R)})=0$ and
\begin{align}\label{decomposition at generic level}
    \fHdeg{2}{0}(\Wsp(c,\kk)_{Q(R)})\simeq \bigoplus_{n\geq0}C^\kk(n\alpha)\underset{Q(R)}{\otimes} \fHdeg{2}{0}(\mathbb{V}^\kk_{n\alpha}).
\end{align}
In the component
\begin{align}
    C^\kk(0)\underset{Q(R)}{\otimes} \fHdeg{2}{0}(\mathbb{V}^\kk_{0})=C^\kk(0)\underset{Q(R)}{\otimes}\W^\kk(\sll_2,\oo{2})
\end{align}
we have the obvious strong generators $W_2, W_4, W_6, W_8,\dots$ (thanks to (P3)) together with a new Virasoro element in $\W^\kk(\sll_2,\oo{2})$, that is,
\begin{equation}
    \tilde{L}=L+\frac{1}{2}\partial h_1+(\partial\varphi)\varphi^*
\end{equation}
of central charge $-\frac{(2 \kk+1) (3 \kk+4)}{(\kk+2)}$. We set the conformal vector of $\fHdeg{2}{0}(\Wsp(c,\kk)_{Q(R)})$ to be 
\[\tilde{\mathbf{L}}=W_2+\tilde{L}.\]

Next, $E_3, H_3, F_3$ lie in $C^\kk(\alpha)\otimes L(\alpha)$ by (P4) and so are all $E_i, H_i, F_i$ thanks to (P3) and (P5).
We introduce the following corrections of $E_i, H_i, F_i$
\begin{equation}\label{eq:deformed fields}
\begin{split}
\tilde E_{i-1} = E_{i},\quad 
\tilde H_{i} =H_{i}+\mathbf{h}_1E_{i},\quad 
\tilde F_{i+1}=F_{i}-\frac{1}{2}\mathbf{h}_1H_{i}-\frac{1}{4} (\mathbf{h}_1^2-2 \partial \mathbf{h}_1) E_{i}
\end{split}
\end{equation}
where we set $\mathbf{h}_1=h_1+2 \varphi \varphi^*$.
Then by \cite[\S 4, 6]{KW04}, they define non-zero cohomology classes in $\fHdeg{2}{0}(\Wsp(c,\kk))$.
These generators have conformal weights $i-1$, $i$, $i+1$ with respect to $\tilde{\mathbf{L}}$, as indicated by the sub-index.

Now, notice that the above-constructed generators
\begin{align}\label{list of strong generators}
    \tilde{L}, W_2, \tilde{E}_2,\quad \tilde{H}_3,\quad \tilde{F}_4, W_4, \tilde{E}_4,\quad \tilde{H}_5,\quad \tilde{F}_6, W_6, \tilde{E}_6,\dots.   
\end{align}
are well-defined on $C_{\ydiagram{2}}^\bullet(\Wsp(c,\kk))$, i.e. over $R$. We decompose the complex into
\begin{align}\label{decomposition of the compelx}
   C_{\ydiagram{2}}^\bullet(\Wsp(c,\kk))= C_0 \underset{R}{\otimes} C_{\sll_2}^\bullet
\end{align}
where $C_0$ is the free $R$-submodule spanned by (non-commutative) differential monomials with variables
\begin{align}
    W_{2}, \tilde E_{2}, \tilde H_{3}, \tilde F_{4}, W_4, \tilde E_{4}, \tilde H_{5}, \tilde F_{6}, \dots
\end{align}
and $C_{\sll_2}^\bullet$ is the vertex subalgebra 
\begin{align}
    C_{\sll_2}^\bullet=V^\kk(\sll_2)\otimes \bigwedge{}^{\bullet}_{\varphi,\varphi^*}.
\end{align}
By construction, the differential $\fd{2}$ acts by 0 on $C_0$ and $(C_{\sll_2}^\bullet, \fd{2})$ is the BRST complex which computes the $\W$-algebra $\W^\kk(\sll_2,\oo{2})$ over $R$. In particular, \eqref{decomposition of the compelx} is a decomposition as a complex. 
One has the cohomology vanishing on the second factor:
\begin{align}
    \HH^{n}(C_{\sll_2}^\bullet)=\fHdeg{2}{n}\left(\V^\kk(\sll_2)\right)\simeq \delta_{n,0}\  \W^\kk(\sll_2,\oo{2})
\end{align}
and $\W^\kk(\sll_2,\oo{2})$ is free lover $R$, see \cite{ACL19}.
Hence, the Künneth formula shows
\begin{align}\label{cohomology}
    \fHdeg{2}{n}(\Wsp(c,\kk))\simeq \delta_{n,0} C_0\underset{R}{\otimes} \W^\kk(\sll_2,\oo{2}).
\end{align}
In particular, we have the cohomology vanishing $\fHdeg{2}{\neq0}(\Wsp(c,\kk))$ and $\fHdeg{2}{0}(\Wsp(c,\kk))$ is free over $R$.
As $\W^\kk(\sll_2,\oo{2})$ is freely strongly generated by $\tilde{L}$, $\fHdeg{2}{0}(\Wsp(c,\kk))$ is freely strongly generated by elements in \eqref{list of strong generators}. Note that \eqref{cohomology} implies that the BRST reduction commutes with the base change:
\begin{align}\label{BRST vs localization}
\fHdeg{2}{0}(\Wsp(c,\kk)_{Q(R)}) \simeq  \fHdeg{2}{0}(\Wsp(c,\kk))\otimes_R Q(R). 
\end{align}

Finally, we show the simplicity of $\fHdeg{2}{0}(\Wsp(c,\kk))$, that is, any proper ideal $\mathscr{I}=\bigoplus_{\Delta\in \Z_{\geq0}}\mathscr{I}_\Delta \subsetneq \fHdeg{2}{0}(\Wsp(c,\kk))$ graded by conformal weights satisfies
$\mathscr{I}_0=0$.
By definition, the conformal grading is compatible with the base change $\otimes_R Q(R)$,
\begin{align}
    \mathscr{I}_{Q(R)}:=\mathscr{I}\otimes_R Q(R)=\bigoplus_{\Delta\in \Z_{\geq0}}(\mathscr{I}_{\Delta}\otimes_R Q(R)) \subset \fHdeg{2}{0}(\Wsp(c,\kk))_{Q(R)}
\end{align}
is a proper ideal of $\fHdeg{2}{0}(\Wsp(c,\kk))_{Q(R)}$ and so of 
 $\fHdeg{2}{0}(\Wsp(c,\kk)_{Q(R)})$ thanks to \eqref{BRST vs localization}. 
Now, the decomposition \eqref{decomposition at generic level} implies that $\mathscr{I}_{Q(R)}$ induces a non-trivial graded ideal, say, $\widehat{\mathscr{I}}_{Q(R)}=\bigoplus_\Delta \widehat{\mathscr{I}}_{Q(R)}{}_{\Delta} $, on $\Wsp(c,\kk)_{Q(R)}$ since the BRST reduction $\fHdeg{2}{0}$ is a (fully faithful) braided tensor functor from the Kazhdan-Luztig category of $\V^\kk(\sll_2)$-modules to the category of $C_1$-cofinite $\W^\kk(\sll_2,\oo{2})$-modules:
\begin{align}\label{BTC equiv}
  \fHdeg{2}{0}\colon \mathrm{KL}_\kk(\sll_2) \overset{\simeq}{\rightarrow}  \mathrm{KL}_\kk(\sll_2,\oo{2})\subset \W^\kk(\sll_2,\oo{2})\mod  
\end{align}
by \cite{CC+21, FZ92}. 
By the simplicity of $\Wsp(c,\kk)_{Q(R)}$ \cite{CKL24}, $\widehat{\mathscr{I}}_{Q(R)}{}_0=0$ and thus $\mathscr{I}_0=0$. This completes the proof. 
\end{proof}

Theorem \ref{thm: reduction_Wsp} can be regarded as a universal version of the  Theorem \ref{thm: BCD}.
Moreover, the $\W$-algebras 
$$\W^\kk(\spp_{4n+2},\oo{2n+1,2n+1}),\quad \W^\kk(\so_{4n},\oo{2n, 2n})$$
can be obtained as quotients of 
the new $\W_\infty$-type vertex algebra $\fHdeg{2}{0}(\Wsp(c,\kk))$ over the field of rational functions $\C(\kk)$. 
\begin{corollary}\label{new universality}
    The $\W$-algebras $\W^\kk(\spp_{4n+2},\oo{2n,2n+2})$ and $\W^\kk(\so_{4n},\oo{2n-1, 2n+1})$ arise as 1-parameter quotients of $\fHdeg{2}{0}(\Wsp(c,\kk))$ over $\C(\kk)$.
\end{corollary}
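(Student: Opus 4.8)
The plan is to apply the Virasoro-type reduction $\fHdeg{2}{0}$ to the $1$-parameter quotient maps that, by \cite{CKL24}, already exhibit $\W^\kk(\spp_{4n+2},\oo{(2n+1)^2})$ and $\W^\kk(\so_{4n},\oo{(2n)^2})$ as specialisations of $\Wsp(c,\kk)$, and then to recognise the target using Theorem~\ref{thm: BCD}. Fix $(\g,\OO)$ to be $(\spp_{4n+2},\oo{(2n+1)^2})$ or $(\so_{4n},\oo{(2n)^2})$; in either case $\W^\kk(\g,\OO)$ has generating type $\W(1^3,2,3^3,4,\dots)$, and by \cite{CKL24} (recalled in the introduction) there is a surjective homomorphism of vertex operator algebras $\pi\colon\Wsp(c,\kk)_{\C(\kk)}\twoheadrightarrow\W^\kk(\g,\OO)_{\C(\kk)}$ for an appropriate specialisation $c=c(\kk)$. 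Since $\pi$ respects the vertex operator algebra structures, property~(P1) implies that it carries the affine subalgebra $\V^\kk(\sll_2)$ generated by $e_1,h_1,f_1$ isomorphically onto the affine subalgebra of $\W^\kk(\g,\OO)_{\C(\kk)}$ generated by $G^+,J,G^-$; after a rescaling we may assume $\pi(e_1)=G^+$.

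First I would tensor $\pi$ with the $bc$-system $\bigwedge{}^{\bullet}_{\varphi,\varphi^*}$ to obtain a surjection of complexes $C_{\ydiagram{2}}^\bullet(\Wsp(c,\kk)_{\C(\kk)})\twoheadrightarrow C_{\ydiagram{2}}^\bullet(\W^\kk(\g,\OO)_{\C(\kk)})$ which, because $\pi(e_1)=G^+$, intertwines the two copies of the differential $\fd{2}$; on cohomology it induces a homomorphism of vertex algebras on $\fHdeg{2}{0}$. To see that this induced map is surjective, note that $\Wsp(c,\kk)_{\C(\kk)}$ lies in $\KL_\kk(\sll_2)$ as a $\V^\kk(\sll_2)$-module (this is recorded in the proof of Theorem~\ref{thm: reduction_Wsp}), hence so do its quotient $\W^\kk(\g,\OO)_{\C(\kk)}$ and the ideal $\ker\pi$. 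Since the reduction functor $\fHdeg{2}{\bullet}\colon\widehat{\mathcal{O}}^\kk_{\sll_2}\to\W^\kk(\sll_2)\mod$ is exact \cite{Ara05}, applying it to $0\to\ker\pi\to\Wsp(c,\kk)_{\C(\kk)}\to\W^\kk(\g,\OO)_{\C(\kk)}\to0$ yields the short exact sequence
\[
0\to\fHdeg{2}{0}(\ker\pi)\to\fHdeg{2}{0}\big(\Wsp(c,\kk)_{\C(\kk)}\big)\xrightarrow{\fHdeg{2}{0}(\pi)}\fHdeg{2}{0}\big(\W^\kk(\g,\OO)_{\C(\kk)}\big)\to0 .
\]
Compatibility of $\fHdeg{2}{0}$ with the base change from $R$ to $\C(\kk)$ is guaranteed by the freeness over $R$ and the cohomology vanishing established in Theorem~\ref{thm: reduction_Wsp}, cf.\ \eqref{cohomology}--\eqref{BRST vs localization}.

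It then remains to identify the target. By Theorem~\ref{thm: BCD}, with $\widehat{\OO}$ read off from Table~\ref{table: nilpotent orbits}, one has $\fHdeg{2}{0}(\W^\kk(\g,\OO))\simeq\W^\kk(\g,\widehat{\OO})$; for $\spp_{2N}$ with $N=2n+1$ this is $\W^\kk(\spp_{4n+2},\oo{2n,2n+2})$ and for $\so_{2N}$ with $N=2n$ it is $\W^\kk(\so_{4n},\oo{2n-1,2n+1})$, and in both of these cases the isomorphism holds for all levels---in particular over $\C(\kk)$---by the remark following Theorem~\ref{thm: BCD}. Composing with $\fHdeg{2}{0}(\pi)$ exhibits $\W^\kk(\spp_{4n+2},\oo{2n,2n+2})_{\C(\kk)}$ and $\W^\kk(\so_{4n},\oo{2n-1,2n+1})_{\C(\kk)}$ as $1$-parameter quotients of $\fHdeg{2}{0}(\Wsp(c,\kk))_{\C(\kk)}$, which is the assertion.

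The step I expect to be the main obstacle is the surjectivity of $\fHdeg{2}{0}(\pi)$: one must ensure both that $\ker\pi$ stays inside $\KL_\kk(\sll_2)$ so that Arakawa's exactness \cite{Ara05} applies (equivalently, that $\fHdeg{2}{\neq0}(\ker\pi)=0$), and that the two BRST differentials genuinely correspond under $\pi$, which hinges on $\pi(e_1)=G^+$ and on the matching of the levels of the inner $\sll_2$'s implicit in \cite{CKL24}. The remaining points---tracking the nilpotent-orbit labels through Table~\ref{table: nilpotent orbits} and checking that the reduction commutes with the base change---are routine.
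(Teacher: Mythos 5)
Your proposal is correct and follows essentially the same route as the paper: apply $\fHdeg{2}{0}$ to the short exact sequence $0\to\ker\pi\to\Wsp(c,\kk)_{\C(\kk)}\to\W^\kk(\g,\OO)_{\C(\kk)}\to0$ coming from \cite{CKL24}, use exactness of the reduction on the Kazhdan--Lusztig category over $\C(\kk)$ to get surjectivity, and identify the target via Theorem \ref{thm: BCD} applied over $\C(\kk)$. The only cosmetic difference is that the paper invokes exactness on the semisimple category $\KL_\kk(\sll_2)$ following \cite{KW04} rather than \cite{Ara05}, and it does not need to separately discuss $\pi(e_1)=G^+$ since the quotient maps of \cite{CKL24} are maps of $\V^\kk(\sll_2)$-module vertex algebras by construction.
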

\begin{proof}
Set
\begin{align}
    \W^\kk_{D,\C(\kk)}=\W^\kk(\so_{4n},\oo{2n, 2n})_{\C(\kk)},\quad \W^\kk_{C,\C(\kk)}=\W^\kk(\spp_{4n+2},\oo{2n+1, 2n+1})_{\C(\kk)}
\end{align}
for the $\W$-algebras over $\C(\kk)$.
Let us consider the localization $\W^{\mathfrak{sp}}_{\infty}(c,\kk)_{\C(\kk)}$, i.e., the base change $R\mapsto R\otimes_{\C[\kk]}\C(\kk)$.
By \cite{CKL24}, they are obtained as simple, 1-parameter quotients of $\W^{\mathfrak{sp}}_{\infty}(c,\kk)_{\C(\kk)}$, by ideals $I^D_n$ and $I^C_n$ associated with the specializations
\begin{align}
    &c_D=-\frac{\kk (2 \kk+1) (2 \kk n-\kk+2 n) (2 \kk n+\kk+4 n)}{(\kk+2) \kk (\kk+2 n)},\\
    &c_C=-\frac{\kk (2 \kk+1) (2 \kk n+2 n) (2 \kk n+2 \kk+4 n+3)}{(\kk+2) (\kk+1) (\kk+2 n+2)}.
\end{align}
Namely, we have the following short exact sequences 
\begin{align}\label{eq:sequence}
    \begin{split}
        0\rightarrow I^{X}_{n} \rightarrow \W^{\mathfrak{sp}}_{\infty}(c,\kk)_{\C(\kk)} \rightarrow \W^\kk_{X,\C(\kk)} \rightarrow 0
    \end{split}
\end{align}
for $X=C,D$. We note that the ideals $I^X_n$ are graded by conformal weight with the lowest weight conformal weight $2n+2$ and $2n+3$, respectively. Moreover, $I^X_n$ are weakly generated by a singular vector in the lowest conformal weight and contain all strong generators of larger conformal weights.

Now, we apply the BRST reduction functor $\fHdeg{2}{0}$ to \eqref{eq:sequence}.
Since $\fHdeg{2}{0}$ is exact on the (semisimple) Kazhdan-Lustzig category of $\V^\kk(\sll_2)$-modules over the field $\C(\kk)$ (see, e.g., \cite{KW04}), we obtain short exact sequences 
\begin{equation}
\begin{split}
0\rightarrow \fHdeg{2}{0}(I^{X}_{n}) \rightarrow \fHdeg{2}{0}(\W^{\mathfrak{sp}}_{\infty}(c,\kk))_{\C(\kk)} \rightarrow \fHdeg{2}{0}(\W^\kk_{X,\C(\kk)}) \rightarrow 0.
\end{split}
\end{equation}
Since Theorem \ref{thm: BCD} also holds over $\C(\kk)$ (by the same argument as in the proof), one has 
\begin{align}
\begin{split}
    &\fHdeg{2}{0}(\W^\kk_{C,\C(\kk)})\simeq \W^\kk(\spp_{4n+2},\oo{2n,2n+2})_{\C(\kk)},\\
    &\fHdeg{2}{0}(\W^\kk_{D,\C(\kk)})\simeq \W^\kk(\so_{4n},\oo{2n-1, 2n+1})_{\C(\kk)}.
\end{split}
\end{align}
Therefore, we obtain the desired conclusion. 
\end{proof}

Corollary \ref{new universality} can be summarized by the commutative diagram below  (left) where 
\begin{equation}\label{local notation}
    \begin{aligned}
\W^\kk(\g,\OO)&=\W^\kk(\spp_{4n+2},\oo{2n+1,2n+1}) \qquad &&(\text{respect. }\W^\kk(\so_{4n},\oo{2n, 2n})),\\
\W^\kk(\g,\widehat{\OO})&=\W^\kk(\spp_{4n+2},\oo{2n,2n+2}) \qquad &&(\text{respect. }\W^\kk(\so_{4n},\oo{2n-1, 2n+1})).
    \end{aligned}
\end{equation}
Together with Theorem \ref{thm:IHR}, this suggests the existence of the universal version of the inverse Hamiltonian reduction over appropriate base rings (or fields) such that the (right) diagram below commutes: 
\begin{center}
\begin{tikzcd}
 \Wsp(c,\kk) \arrow[d, twoheadrightarrow] \arrow[r,dashrightarrow, "\fHdeg{2}{0}"] & \fHdeg{2}{0}(\Wsp(c,\kk)) \arrow[d, twoheadrightarrow] &  \Wsp(c,\kk) \arrow[d, twoheadrightarrow] \arrow[r,hookrightarrow,] & \fHdeg{2}{0}(\Wsp(c,\kk))\otimes \Pi \arrow[d, twoheadrightarrow] \\
\W^\kk(\g,\OO) \arrow[r, dashrightarrow,  "\fHdeg{2}{0}"] & \W^\kk(\g,\widehat{\OO})  &\W^\kk(\g,\OO) \arrow[r,hookrightarrow] & \W^\kk(\g,\widehat{\OO}) \otimes \Pi.
\end{tikzcd}
\end{center}
Such a universal embedding exists over the field $\C(\kk)$ at least. 
\begin{theorem}\label{thm: iHR_Wsp}
There is an embedding of vertex algebras over $\C(\kk)$
\begin{align}
    \Wsp(c,\kk)_{\C(\kk)} \hookrightarrow \fHdeg{2}{0}(\Wsp(c,\kk))_{\C(\kk)}\otimes \Pi.
\end{align}
\end{theorem}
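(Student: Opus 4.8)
The plan is to lift the Semikhatov inverse Hamiltonian reduction \eqref{iHR for sl2} through the decomposition of $\Wsp(c,\kk)$ over its affine subalgebra $\V^{\kk}(\sll_2)$, in the same spirit as the passage from Theorem~\ref{thm: reduction_Wsp} to Corollary~\ref{new universality}. One works throughout over $\C(\kk)$ (i.e.\ after the base change $R\mapsto R\otimes_{\C[\kk]}\C(\kk)$), so that $\kk$ is generic and $\KL_\kk(\sll_2)$ is semisimple. By (P1) we have $\V^{\kk}(\sll_2)\subset\Wsp(c,\kk)$, and, exactly as in \eqref{decomposition for Wsp} with $Q(R)$ replaced by $\C(\kk)$, one has the $\V^{\kk}(\sll_2)$-module decomposition
\[
\Wsp(c,\kk)_{\C(\kk)}\;\simeq\;\bigoplus_{n\ge 0}C^{\kk}(n\alpha)\underset{\C(\kk)}{\otimes}\mathbb{V}^{\kk}_{n\alpha},
\]
while \eqref{decomposition at generic level} from the proof of Theorem~\ref{thm: reduction_Wsp} gives
\[
\fHdeg{2}{0}(\Wsp(c,\kk))_{\C(\kk)}\;\simeq\;\bigoplus_{n\ge 0}C^{\kk}(n\alpha)\underset{\C(\kk)}{\otimes}\HH^0_{\oo{2}}(\mathbb{V}^{\kk}_{n\alpha}),
\]
with the \emph{same} multiplicity spaces $C^{\kk}(n\alpha)$; here $\HH^0_{\oo{2}}(\mathbb{V}^{\kk}_{0})=\W^{\kk}(\sll_2,\oo{2})$, and the $n=1$ summand carries the corrected generators $\tilde E_2,\tilde H_3,\tilde F_4$ of \eqref{eq:deformed fields}.

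First one would record the module version of \eqref{iHR for sl2}: for every $M\in\KL_\kk(\sll_2)$ there is an embedding $\eta_M\colon M\hookrightarrow \HH^0_{\oo{2}}(M)\otimes\Pi$ of $\V^{\kk}(\sll_2)$-modules, where $\V^{\kk}(\sll_2)$ acts on the target through \eqref{iHR for sl2}, with $\eta_{\V^{\kk}(\sll_2)}$ equal to \eqref{iHR for sl2} itself and with the $\eta_M$ natural and compatible with the intertwining operators carried by the $\mathbb{V}^{\kk}_{n\alpha}$. For a Weyl module $M$, $\eta_M$ is produced exactly as in \S\ref{sec: iHR}: one realizes $M$ as the intersection of the kernels of the screening operators $S^{\oo{1^2}}_{\bullet}$ inside $\bg\otimes\heis$ by Corollary~\ref{Free field realization of W-modules} for $\g=\sll_2$, localizes $\bg\hookrightarrow\Pi$ through \eqref{FMS bosonization}, and applies the automorphism of $\Pi\otimes\heis$ used in the proof of Theorem~\ref{thm:IHR} that matches $S^{\oo{1^2}}_{\bullet}$ with $S^{\oo{2}}_{\bullet}$ acting on the $\heis$-factor, so that $M$ lands in $\Pi\otimes\bigl(\bigcap\ker S^{\oo{2}}_{\bullet}\bigr)=\Pi\otimes\HH^0_{\oo{2}}(M)$. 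Alternatively one may invoke the inverse-reduction functor of \cite{Ad19,FRR} together with the braided tensor equivalence \eqref{BTC equiv}, which also yields the compatibility with intertwining operators.

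Next one would assemble these maps fiberwise: tensoring $\eta_{\mathbb{V}^{\kk}_{n\alpha}}$ with the identity on $C^{\kk}(n\alpha)$ and summing over $n$ produces an injective linear map
\[
\eta\colon\;\Wsp(c,\kk)_{\C(\kk)}\;\longrightarrow\;\fHdeg{2}{0}(\Wsp(c,\kk))_{\C(\kk)}\otimes\Pi ,
\]
and the content of the theorem is that $\eta$ is a homomorphism of vertex algebras. Since $\Wsp(c,\kk)$ is freely strongly generated by $e_1,h_1,f_1$ together with $W_2$ and the primaries $E_3,H_3,F_3$ (the remaining $W_{2i}$ and $E_{2i+1},H_{2i+1},F_{2i+1}$ arising from (P5) and normally ordered products), it suffices to check $\eta$ on these: on $\V^{\kk}(\sll_2)$ it restricts to \eqref{iHR for sl2}; $W_2$ lies in the coset $C^{\kk}(0)$ and is sent to $W_2\otimes\wun$, which is still a Virasoro element commuting with $\eta(\V^{\kk}(\sll_2))$ because the conformal vector $\tilde{\mathbf L}=W_2+\tilde L$ of $\fHdeg{2}{0}(\Wsp(c,\kk))$ splits compatibly with \eqref{iHR for sl2} (matching (P2)--(P3)); and $E_3,H_3,F_3$, which span the adjoint piece of the $n=1$ summand, are sent to the $\Pi$-dressings of $\tilde E_2,\tilde H_3,\tilde F_4$ by the appropriate powers of $\ee^{(x+y)}$. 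That these images close into the OPE relations (P4)--(P5) over $\eta(\V^{\kk}(\sll_2))$ follows from the corresponding relations inside $\fHdeg{2}{0}(\Wsp(c,\kk))\otimes\Pi$ and the explicit form of \eqref{iHR for sl2} on $\mathbb{V}^{\kk}_{\alpha}$. Having matched all defining OPEs of $\Wsp(c,\kk)$, its universality (freeness, (P1)--(P5), and simplicity from Theorem~\ref{thm: reduction_Wsp} and \cite{CKL24}) forces $\eta$ to be the unique vertex algebra homomorphism with these values, and injectivity was already observed.

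The hardest step will be this last one: showing that the fiberwise maps $\eta_{\mathbb{V}^{\kk}_{n\alpha}}$ glue to a vertex algebra homomorphism, equivalently that the module-level inverse reductions form a lax monoidal natural transformation, so that the vertex-algebra structure of $\Wsp(c,\kk)$ --- viewed as a commutative algebra object in $\KL_\kk(\sll_2)$ --- is transported to $\fHdeg{2}{0}(\Wsp(c,\kk))\otimes\Pi$. Concretely this amounts to controlling the images of $E_3,H_3,F_3$ and their OPEs with $\V^{\kk}(\sll_2)$, with $W_2$, and with each other, which requires the explicit realization of \eqref{iHR for sl2} on the adjoint Weyl module $\mathbb{V}^{\kk}_{\alpha}$ inside $\W^{\kk}(\sll_2,\oo{2})\otimes\Pi$ alongside the corrected generators \eqref{eq:deformed fields}. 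Finally, since the decomposition \eqref{decomposition for Wsp} requires $\kk$ generic, the embedding is only obtained over $\C(\kk)$; upgrading it to the full ring $R$, as was possible for the $\W$-algebras of Theorem~\ref{thm:IHR} via a continuity argument, would require an integral Wakimoto-type realization of $\Wsp(c,\kk)$ that is not presently available.
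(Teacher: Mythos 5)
Your strategy is essentially the paper's, and the route you mention only as an ``alternatively'' is in fact the paper's entire proof. The paper never attempts the generator-by-generator OPE verification that you single out as the hardest step. Instead it observes that the quasi-inverse of the braided tensor equivalence \eqref{BTC equiv} is the inverse-reduction functor $\mathrm{iH}\colon M\mapsto (M\otimes\Pi)^{\sll_2}$ on $\mathrm{KL}_\kk(\sll_2,\oo{2})$ (the object-level statement is taken from \cite{Ad19} and upgraded to a braided tensor functor, because intertwining operators among $\W^\kk(\sll_2,\oo{2})$-modules, tensored with the structure map of $\Pi$, restrict to the maximal integrable $\sll_2$-submodules). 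Applying $\mathrm{iH}$ to $\fHdeg{2}{0}(\Wsp(c,\kk))$ viewed as a vertex algebra object of $\mathrm{KL}_\kk(\sll_2,\oo{2})$ yields the $\V^\kk(\sll_2)$-module isomorphism $\Wsp(c,\kk)_{\C(\kk)}\simeq\bigl(\fHdeg{2}{0}(\Wsp(c,\kk))\otimes\Pi\bigr)^{\sll_2}$, and --- this is the key point --- since the structure map of $\fHdeg{2}{0}(\Wsp(c,\kk))$ is, through \eqref{decomposition at generic level}, a family of intertwining operators among $C^\kk(0)\otimes\W^\kk(\sll_2,\oo{2})$-modules, the monoidality of $\mathrm{iH}$ transports it automatically. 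So the ``lax monoidal natural transformation'' you correctly identify as the crux is exactly what the equivalence supplies for free; no explicit realization of \eqref{iHR for sl2} on $\mathbb{V}^\kk_\alpha$, no tracking of $\Pi$-dressings of the fields \eqref{eq:deformed fields}, and no appeal to simplicity or freeness of $\Wsp(c,\kk)$ is needed. If you did want to complete your primary computational route, two points would need care: checking the homomorphism property only on $e_1,h_1,f_1,W_2,E_3,H_3,F_3$ presupposes a weak-generation statement (the $W_{2i}$ for $i\geq 2$ are \emph{strong} generators, and (P5) produces the higher $E,H,F$ \emph{from} $W_4$, not conversely, so one must first exhibit $W_4$ inside the OPEs of the weight-$\leq 3$ fields); and the closing appeal to ``universality forcing $\eta$ to be a homomorphism'' is circular, since verifying the OPE relations among the proposed images is precisely the content of the claim.
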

\proof
Note that the quasi-inverse of the equivalence in \eqref{BTC equiv} (over $\C(\kk)$) is given by the inverse Hamiltonian reduction
\begin{align*}
    \mathrm{iH} \colon \mathrm{KL}_\kk(\sll_2,\oo{2}) \rightarrow \mathrm{KL}_\kk(\sll_2),\quad M \mapsto (M\otimes \Pi)^{\sll_2}
\end{align*}
where $(M\otimes \Pi)^{\sll_2}\subset M\otimes \Pi$ denotes the maximal integrable $\sll_2$-submodule whose action is induced by the inverse Hamiltonian reduction for $\V^\kk(\sll_2)$ in \eqref{iHR for sl2}. Indeed, this quasi-inverse is stated in \cite{Ad19} for objects, which induces a functor as both categories are semisimple. Then, it is straightforwardly upgraded to a braided tensor functor as intertwining operators in the category $\mathrm{KL}_\kk(\sll_2,\oo{2})$ induce intertwining operators among $\W^\kk(\sll_2,\oo{2})\otimes \Pi$-modules after tensoring with the structure map of $\Pi$, which restrict to the maximal integrable $\sll_2$-submodules, i.e. $(- \otimes \Pi)^{\sll_2}$ as desired. 
One observes that the functorial isomorphisms on the spaces of intertwining operators given by $\mathrm{iH}$ and $\fHdeg{2}{0}$ differ by the structure map of $\Pi$, which does not change the braided tensor structure on the categories. 

We apply $\mathrm{iH}$ to $\fHdeg{2}{0}(\Wsp(c,\kk)_{Q(R)})$ viewed as a vertex algebra object in $\mathrm{KL}_\kk(\sll_2,\oo{2})$, which gives 
an isomorphism 
\begin{align}\label{eq:iota_iso}
\begin{split}
    \Wsp(c,\kk)_{\C(\kk)} \simeq \big(\fHdeg{2}{0}(&\Wsp(c,\kk)_{Q(R)}) \otimes \Pi \big)^{\sll_2}\\
    & \subset \fHdeg{2}{0}(\Wsp(c,\kk)_{Q(R)}) \otimes \Pi 
\end{split}
\end{align}
as $\V^\kk(\sll_2)$-modules.
By construction, the structure map $Y(\cdot,z)$ on $\fHdeg{2}{0}(\Wsp(c,\kk)_{Q(R)})$ is induced by the structure map of the BRST complex $C_{\ydiagram{2}}^\bullet(\Wsp(c,\kk))$. 
By regarding the structure map as a family of intertwining operators among $C^\kk(0)\otimes \W^\kk(\sll_2,\oo{2})$-modules through the decomposition \eqref{decomposition at generic level}, the braided tensor equivalence given by $\mathrm{iH}$ implies that the isomorphism \eqref{eq:iota_iso} preserves the structure map, and thus it is a homomorphism of vertex algebras. This completes the proof. 
\endproof

According to Conjecture \ref{iterated conjecture for type BCD} for $\W^\kk(\g,\widehat{\OO})=\W^\kk(\spp_{4n+2},\oo{2n,2n+2})$ as in \eqref{local notation}, one has 
\begin{align}
    \W^\kk(\g,\widehat{\OO})_{\C(\kk)} \simeq \HH_{\oo{2n}}^0\HH_{\oo{2n+2,1^{2n}}}^0 (\V^\kk(\g))_{\C(\kk)}.
\end{align}
This implies that there is a (conformal) embedding 
\begin{align}
   C^\kk_1(\g,\widehat{\OO})_{\C(\kk)}\underset{_{\C(\kk)}}{\otimes} C^\kk_2(\g,\widehat{\OO})_{\C(\kk)}\hookrightarrow \W^\kk(\g,\widehat{\OO})_{\C(\kk)}
\end{align}
where 
\begin{align}\label{eq:cosetC}
\begin{split}
    C^\kk_1(\g,\widehat{\OO})_{\C(\kk)}&=\W^{\kk_\sharp}(\spp_{2n},\oo{2n})_{\C(\kk)},\\
    C^\kk_2(\g,\widehat{\OO})_{\C(\kk)}&=\mathrm{Com}\left(\V^{\kk_\sharp}(\spp_{2n})_{\C(\kk)},\W^\kk(\g,\oo{2n+2,1^{2n}})_{\C(\kk)}  \right)
\end{split}
\end{align}
with $\kk_\sharp=\kk+2n+\frac{1}{2}$.
Similar considerations apply to $\W^\kk(\g,\widehat{\OO})=\W^\kk(\so_{4n},\oo{2n-1, 2n+1})$.
Since the factors \eqref{eq:cosetC} both arise as 1-parameter quotients of the even spin $\W_\infty$-algebra $\W^{\mathrm{ev}}_{\infty}(c,\lambda)$ \cite{CL21, KL19}, it is natural to expect the existence of the commutative diagram
\begin{center}
\begin{tikzcd}
\W^{\mathrm{ev}}_{\infty}(c_1,\lambda_1)\otimes \W^{\mathrm{ev}}_{\infty}(c_2,\lambda_2) \arrow[d, twoheadrightarrow] \arrow[r, hook]& \fHdeg{2}{0}(\Wsp(c,\kk)) \arrow[d, twoheadrightarrow] \\
C^\kk_1(\g,\widehat{\OO})\otimes C^\kk_2(\g,\widehat{\OO}) \arrow[r, hook]& \W^\kk(\g,\widehat{\OO}) 
\end{tikzcd}
\end{center}
over appropriate base rings (or fields).

In the proof of Theorem \ref{thm: reduction_Wsp}, we obtained the conformal vector of $\fHdeg{2}{0}(\Wsp(c,\kk))$ as the sum of two commuting Virasoro elements, namely $\tilde {\mathbf{L}}=\tilde{L}+W_2.$
The explicit description of the OPEs of $\fHdeg{2}{0}(\Wsp(c,\kk))$ for lower conformal weight generators\footnote{A Mathematica notebook presenting the OPEs can be found on the webpages of the authors.} shows that there is yet another decomposition  into two commuting Virasoro elements 
\begin{align}
 \tilde{\mathbf{L}}=\Lev_1+\Lev_2
\end{align}
where 
\begin{align}\label{new commuting virasoro}
    \Lev_1=\frac{1}{2}(\tilde{L}+W_2)+\frac{1}{2(3\kk+4)\gamma(c,\kk)}\mathscr{L}, \quad \Lev_2=\frac{1}{2}(\tilde{L}+W_2)-\frac{1}{2(3\kk+4)\gamma(c,\kk)}\mathscr{L}
\end{align}
after the extension of the base ring $\widehat{R}=R(\gamma(c,\kk))$ with 
\begin{equation}
    \gamma(c,\kk)=\sqrt{\big(c (\kk+2)-\kk(2\kk+1)\big)\big(c (\kk+2)-\kk(2\kk+1)(2\kk+3)^2\big)}.
\end{equation}
Here, $\mathscr{L}$ is given by the formula
\begin{align}
\begin{split}
    \mathscr{L}=-2(\kk+2)(3\kk+4) \tilde{E}_2
    &+\big(3\kk+4)((2\kk+1)(4\kk^2+11\kk+8)+c(\kk+2)\big)W_2\\
    &+\big(\kk(2\kk+1)(2\kk+3)(3\kk+4)-c(\kk+2)(2\kk^2+5\kk+4)\big)\tilde{L}.
\end{split}
\end{align}
Their central charges $c_1$ and $c_2$ are given by
\begin{equation}\label{eq:central_charges}
    \begin{aligned}
c_1=\frac{1}{2 (2 + \kk) \gamma(c,\kk)}
    &\left(c^2 (2 + \kk)^2+ c (2 + \kk) (12 + 40 \kk + 38 \kk^2 + 12 \kk^3 + \gamma(c,\kk))\right.\\
    &\left.\quad - (4 + 11 \kk + 6 \kk^2) (3 \kk + 8 \kk^2 + 4 \kk^3 +\gamma(c,\kk))\right)\\
c_2=\frac{1}{2 (2 + \kk) \gamma(c,\kk)}
    &\left(c^2 (2 + \kk)^2+ c (2 + \kk) (12 + 40 \kk + 38 \kk^2 + 12 \kk^3 - \gamma(c,\kk))\right.\\
    &\left.\quad- (4 + 11 \kk + 6 \kk^2) (3 \kk + 8 \kk^2 + 4 \kk^3 - \gamma(c,\kk))\right).
    \end{aligned}
\end{equation}
In addition, there exist two generators which are primary vectors of conformal weight $4$, $\gWev{1}$ and $\gWev{2}$:
\begin{equation}
    \Lev_i(z)\gWev{j}(w)\sim\indic{i,j}\left(\frac{4\gWev{i}(w)}{(z-w)^{2}}+\frac{\partial\gWev{i}(w)}{(z-w)}\right).
\end{equation}
Recalling that the universal even spin $\W_\infty$-algebra $\W^{\mathrm{ev}}_{\infty}(c,\lambda)$ introduced in \cite{KL19} is weakly generated by a conformal vector and a primary weight $4$ generator, 
the previous considerations provide strong evidence for the following conjecture suggested first in \cite{CKL24}.
\begin{conjecture}
The Virasoro-type reduction $\fHdeg{2}{0}(\Wsp(c,\kk))$ of the universal 2-parameter vertex algebra $\Wsp(c,\kk)$ is an extension of the tensor product of two even spin $\W_\infty$-algebras 
$$\W^{\mathrm{ev}}_{\infty}(c_1,\lambda_1)\underset{\widehat{R}}{\otimes}\W^{\mathrm{ev}}_{\infty}(c_2,\lambda_2) \hookrightarrow \widehat{R}\underset{R}{\otimes}\fHdeg{2}{0}(\Wsp(c,\kk)).$$
Here the parameters $c_1,c_2$ are defined in \eqref{eq:central_charges}.
\end{conjecture}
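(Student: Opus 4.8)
The plan is to deduce the embedding from the universal property of the even spin $\W_\infty$-algebra established in \cite{KL19, CL21}: $\W^{\mathrm{ev}}_{\infty}(c,\lambda)$ is the universal $2$-parameter vertex algebra that is freely generated of type $\W(2,4,6,8,\dots)$ with the weight-two generator being a Virasoro field and the weight-four generator primary, and any vertex algebra of type $\W(2,4,6,\dots,2N)$ (with $N$ large enough) generated by a Virasoro vector together with a primary weight-four vector is a quotient of $\W^{\mathrm{ev}}_{\infty}$ at the corresponding point of the parameter space. Working over $\widehat{R}$, I would take the two commuting Virasoro vectors $\Lev_1,\Lev_2$ from \eqref{new commuting virasoro} (with central charges $c_1,c_2$ as in \eqref{eq:central_charges}) and the two $\Lev_i$-primary weight-four vectors $\gWev{1},\gWev{2}$ constructed in \S\ref{sec: Wsp}, and let $\mathcal{A}_i\subset \widehat{R}\otimes_R\fHdeg{2}{0}(\Wsp(c,\kk))$ be the vertex subalgebra strongly generated by $\Lev_i$ and $\gWev{i}$. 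The target is to prove that each $\mathcal{A}_i$ is a homomorphic image of $\W^{\mathrm{ev}}_{\infty}(c_i,\lambda_i)$ for an explicit $\lambda_i=\lambda_i(c,\kk)$, that $\mathcal{A}_1$ and $\mathcal{A}_2$ mutually commute, and that the induced map $\W^{\mathrm{ev}}_{\infty}(c_1,\lambda_1)\otimes_{\widehat R}\W^{\mathrm{ev}}_{\infty}(c_2,\lambda_2)\to \widehat{R}\otimes_R\fHdeg{2}{0}(\Wsp(c,\kk))$ is injective. By construction the tensor product of the two conformal vectors is sent to $\tilde{\mathbf L}=\Lev_1+\Lev_2$, so the image is automatically a conformal subalgebra, which is what makes the statement an ``extension''.

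First I would verify commutativity. Since $\Lev_1(z)\Lev_2(w)\sim0$ and $\Lev_i(z)\gWev{j}(w)\sim0$ for $i\neq j$, only the single OPE $\gWev{1}(z)\gWev{2}(w)\sim0$ remains to be checked, and this is a finite computation in low conformal weights carried out from the explicit presentation of $\fHdeg{2}{0}(\Wsp(c,\kk))$ recalled in \S\ref{sec: Wsp}. Once this holds, every field of $\mathcal{A}_1$ is mutually local with every field of $\mathcal{A}_2$, so $\mathcal{A}_1\cdot\mathcal{A}_2$ is a quotient of $\mathcal{A}_1\otimes_{\widehat R}\mathcal{A}_2$; moreover, since $\mathcal{A}_1$ then lies in the $\Lev_2$-weight-zero subspace and vice versa, $\mathcal{A}_1\cap\mathcal{A}_2$ is concentrated in $\tilde{\mathbf L}$-weight zero, hence equals $\widehat{R}\wun$ because $\fHdeg{2}{0}(\Wsp(c,\kk))$ is positively graded with one-dimensional weight-zero part over $\widehat R$ by Theorem \ref{thm: reduction_Wsp}. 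Using that $\fHdeg{2}{0}(\Wsp(c,\kk))$ is \emph{freely} generated, a PBW/monomial-basis argument in the bigrading by $(\Lev_1,\Lev_2)$-weights then gives that $\mathcal{A}_1\otimes_{\widehat R}\mathcal{A}_2\to\mathcal{A}_1\cdot\mathcal{A}_2$ is in fact an isomorphism.

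The heart of the proof is to identify $\mathcal{A}_i$. One must show it is of generating type $\W(2,4,6,\dots)$, i.e.\ strongly generated by fields of even $\Lev_i$-conformal weight only, which is not obvious because $\fHdeg{2}{0}(\Wsp(c,\kk))$ does carry the odd-weight generators of \eqref{eq:gen types reduced}-type (conformal weights $3,5,7,\dots$). Since $\fHdeg{2}{0}(\Wsp(c,\kk))$ is freely generated (Theorem \ref{thm: reduction_Wsp}), it has a PBW basis in the generators $W_2,\tilde E_2,\tilde H_3,\tilde F_4,W_4,\tilde E_4,\dots$; $\mathcal{A}_i$ is the span of iterated products of the modes of $\Lev_i$ and $\gWev{i}$, and one has to show that no odd-weight generator ever occurs in such a product. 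I would establish this by combining an explicit check in low conformal weights --- that $\gWev{i}$, the nonzero products $\gWev{i}{}_{(n)}\gWev{i}$, and the first few descendants produce $\Lev_i$-primary fields of weights $6$ and $8$ but no field of weight $3$, $5$ or $7$ --- with the rigidity of the even-spin deformation from \cite{KL19}: the structure constants of a type-$\W(2,4,6,8)$ vertex algebra are uniquely determined by the central charge and one further invariant, so matching a finite amount of data forces $\mathcal{A}_i$ to be a quotient of $\W^{\mathrm{ev}}_{\infty}(c_i,\lambda_i)$, with $\lambda_i$ read off from, say, the self-coupling coefficient of $\gWev{i}$ (this also supplies the second parameter, which the conjecture statement leaves implicit).

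Finally I would promote the surjections $\W^{\mathrm{ev}}_{\infty}(c_i,\lambda_i)\twoheadrightarrow\mathcal{A}_i$ to isomorphisms. After base change to the fraction field $Q(\widehat R)$, the pair $(c_i,\lambda_i)$ lies outside the proper locus where $\W^{\mathrm{ev}}_{\infty}$ fails to be simple (known from \cite{KL19, CL21}), so $\W^{\mathrm{ev}}_{\infty}(c_i,\lambda_i)_{Q(\widehat R)}$ is simple and the nonzero homomorphism to $\mathcal{A}_i{}_{Q(\widehat R)}$ is injective; since $\fHdeg{2}{0}(\Wsp(c,\kk))$ is free over $R$, hence flat over $\widehat R$, injectivity descends to $\widehat R$. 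Combined with the commutativity step, this yields the conformal extension $\W^{\mathrm{ev}}_{\infty}(c_1,\lambda_1)\otimes_{\widehat R}\W^{\mathrm{ev}}_{\infty}(c_2,\lambda_2)\hookrightarrow\widehat R\otimes_R\fHdeg{2}{0}(\Wsp(c,\kk))$. The hard part will be the even-spin closure in the third step: controlling that the subalgebra generated by $\Lev_i$ and $\gWev{i}$ never produces an odd conformal-weight field; everything else reduces to finite OPE checks, facts already established in \S\ref{sec: Wsp}, and standard flatness/simplicity arguments.
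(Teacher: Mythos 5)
This statement is stated in the paper as a \emph{conjecture} (attributed to \cite{CKL24}); the paper does not prove it. What the paper supplies is evidence: the explicit decomposition $\tilde{\mathbf{L}}=\Lev_1+\Lev_2$ into two commuting Virasoro vectors with central charges $c_1,c_2$ as in \eqref{eq:central_charges}, obtained from low-weight OPE computations after the base change to $\widehat{R}$, together with the existence of the two associated primary weight-four vectors $\gWev{1},\gWev{2}$. So there is no proof in the paper against which to compare your argument, and your text should not be read as one either: it is an outline of a strategy.

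As an outline it identifies the right ingredients, but it has a genuine gap exactly where you flag ``the hard part''. To invoke the universality/rigidity of $\W^{\mathrm{ev}}_{\infty}(c,\lambda)$ from \cite{KL19,CL21} for the subalgebra $\mathcal{A}_i$ generated by $\Lev_i$ and $\gWev{i}$, you must first know that $\mathcal{A}_i$ is of type $\W(2,4,6,\dots)$, i.e.\ that the iterated OPEs of $\gWev{i}$ with itself never force in the odd-weight generators $\tilde H_3,\tilde H_5,\dots$ of $\fHdeg{2}{0}(\Wsp(c,\kk))$ at any conformal weight. A ``finite check in low conformal weights'' cannot establish this for all weights, and the rigidity theorem cannot be applied before this closure property is known --- it presupposes it. This even-spin closure (equivalently, the existence of the claimed $\W^{\mathrm{ev}}_{\infty}$-module/subalgebra structure in all weights) is precisely the content of the open conjecture, so your proposal in effect defers the conjecture to its own key step rather than resolving it. The remaining steps (mutual commutativity beyond $\gWev{1}(z)\gWev{2}(w)\sim 0$, injectivity via simplicity over $Q(\widehat R)$ and flatness) are plausible but are conditional on that unresolved core.
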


\printbibliography

@article{AF19,
  author = {Arakawa, T. and Frenkel, E.},
  date = {2019},
  journaltitle = {Compos. Math.},
  volume = {155},
  pages = {2235--2262},
  title = {Quantum {{Langlands}} Duality of Representations of {$\mathcal{W}$}-Algebras}
}

@article{Ad19,
  title = {Realizations of Simple Affine Vertex Algebras and Their Modules: The Cases ${\widehat{sl(2)}}$ and ${\widehat{osp(1,2)}}$},
  author = {Adamovi\'c, D.},
  date = {2019},
  journaltitle = {Commun. Math. Phys.},
  volume = {366},
  pages = {1025-–1067}
}

@article{ACG23,
  title = {Relaxed and logarithmic modules of $\widehat{\mathfrak{sl}_3}$},
  author = {Adamovi\'c, D. and Creutzig, T. and Genra, N.},
  date = {2023},
  journaltitle = {Math. Ann.},
  volume = {389},
  pages = {281--324}
}

@article{AKM15,
  author = {Arakawa, T. and Kuwabara, T. and Malikov, F.},
  date = {2015},
  journaltitle = {Commun. Math. Phys.},
  volume = {335},
  pages = {143--182},
  title = {Localization of Affine W-Algebras}
}

@article{AKR21,
  title = {A Realisation of the {{Bershadsky}}–{{Polyakov}} Algebras and Their Relaxed Modules},
  author = {Adamovi\'c, D. and Kawasetsu, K. and Ridout, D.},
  date = {2021},
  journaltitle = {Lett. Math. Phys.},
  volume = {111},
  number = {38}
}

@article{AKR23,
  AUTHOR = {Adamovi\'c, D. and Kawasetsu, K. and Ridout, D.},
  TITLE = {Weight module classifications for Bershadsky--Polyakov algebras},
  JOURNAL = {Commun. Contemp. Math.},
  VOLUME = {26},
  YEAR = {2024},
  NUMBER = {10},
  PAGES = {Paper No. 2350063},
}

@misc{ACK23,
      title={Weight representations of affine Kac-Moody algebras and small quantum groups}, 
      author={Arakawa, T. and Creutzig, T. and Kawasetsu, K.},
      year={2023},
      eprint={2311.10233},
      archivePrefix={arXiv},
      primaryClass={math.RT},
}

@article {ACL19,
    AUTHOR = {Arakawa, T. and Creutzig, T. and Linshaw, A. R.},
     TITLE = {{$W$}-algebras as coset vertex algebras},
   JOURNAL = {Invent. Math.},
  FJOURNAL = {Inventiones Mathematicae},
    VOLUME = {218},
      YEAR = {2019},
    NUMBER = {1},
     PAGES = {145--195}
}

@article{Ara05,
  title = {Representation Theory of Superconformal Algebras and the {{Kac-Roan-Wakimoto}} Conjecture},
  author = {Arakawa, T.},
  date = {2005},
  journaltitle = {Duke Math. J.},
  volume = {130},
  pages = {435--478}
}

@article{Ara07,
  author = {Arakawa, T.},
  date = {2007},
  journaltitle = {Invent. math.},
  volume = {169},
  pages = {219--320},
  title = {Representation Theory of {$\mathcal{W}$}-Algebras}
}

@article{Ara14,
  title = {Two-sided BGG resolutions of admissible representations},
  author = {Arakawa, T.},
  date = {2014},
  journaltitle = {Represent. Theory},
  volume = {18},
  pages = {183--222}
}

@article{Ara15a,
  author = {Arakawa, T.},
  date = {2015},
  journaltitle = {Int. Math. Res. Not.},
  volume = {22},
  pages = {11605--11666},
  title = {Associated Varieties of Modules over {{Kac-Moody}} Algebras and {$C_2$}-{{Cofiniteness}} of {$\mathcal{W}$}-{{Algebras}}}
}

@article{AvE19,
    author = {Arakawa, T and van Ekeren, J},
     title = {Rationality and fusion rules of exceptional {$\mathcal{W}$}-algebras},
   JOURNAL = {J. Eur. Math. Soc. (JEMS)},
  FJOURNAL = {Journal of the European Mathematical Society (JEMS)},
    VOLUME = {25},
      YEAR = {2023},
    NUMBER = {7},
     PAGES = {2763--2813},
}

@misc{Butson23,
      title={Vertex algebras from divisors on Calabi-Yau threefolds}, 
      author={Butson, D.},
      year={2023},
      eprint={2312.03648},
      archivePrefix={arXiv},
      primaryClass={math.RT}
}

@book{BD04,
  title = {Chiral {{Algebras}}},
  author = {Beilinson, A. and Drinfeld, V.},
  date = {2004},
  series = {Colloquium {{Publications}}},
  volume = {51},
  publisher = {{Amer. Math. Soc.}}
}

@article{BG05,
  title = {Good Grading Polytopes},
  author = {Brundan, J. and Goodwin, S.},
  date = {2005},
  journaltitle = {Proc. Lond. Math. Soc.},
  volume = {94},
  pages = {155--180}
}

@book {Carter05,
    AUTHOR = {Carter, R. W.},
     TITLE = {Lie algebras of finite and affine type},
    SERIES = {Cambridge Studies in Advanced Mathematics},
    VOLUME = {96},
 PUBLISHER = {Cambridge University Press, Cambridge},
      YEAR = {2005},
     PAGES = {xviii+632},
}

@article {CC+21,
    AUTHOR = {Creutzig, T. and Jiang, C. and Orosz Hunziker,
              F. and Ridout, D. and Yang, J.},
     TITLE = {Tensor categories arising from the {V}irasoro algebra},
   JOURNAL = {Adv. Math.},
    VOLUME = {380},
      YEAR = {2021},
     PAGES = {Paper No. 107601, 35}
}

@misc{CFLN,
  title = {On the structure of $\mathcal{W}$-algebras in type $A$},
  author = {Creutzig, T. and Fasquel, J. and Linshaw, A. R. and Nakatsuka, S.},
  date = {2024},
  eprint = {2403.08212},
  eprinttype = {arxiv},
  primaryclass = {math.RT},
  archiveprefix = {arXiv},
}

@misc{CKL24,
      title={Building blocks for $\mathcal{W}$-algebras of classical types}, 
      author={Creutzig, T. and Kovalchuk, V. and R. Linshaw, A.},
      year={2024},
      eprint={2409.03465},
      archivePrefix={arXiv},
      primaryClass={math.RT}
}

@article{CL21,
  author = {Creutzig, T. and Linshaw, A. R.},
  date = {2022},
  journaltitle = {Adv. Math.},
  volume = {409},
  number = {108678},
  keywords = {Vertex algebra, Nonlinear Lie conformal algebra, Coset construction},
  title = {Trialities of Orthosymplectic {$\mathcal{W}$}-{{Algebras}}}
}

@article{CL22,
  author = {Creutzig, T. and Linshaw, A. R.},
  date = {2022},
  journaltitle = {Cambridge J. Math.},
  volume = {10},
  number = {1},
  pages = {69--194},
  keywords = {High Energy Physics - Theory,Mathematics - Quantum Algebra,Mathematics - Representation Theory},
  title = {Trialities of {$\mathcal{W}$}-Algebras}
}

@book{CM93,
  title = {Nilpotent Orbits in Semisimple {{Lie}} Algebra: {{An}} Introduction},
  author = {Collingwood, D. and McGovern, W.},
  date = {1993},
  volume = {65},
  publisher = {{Van Nostrand Reinhold Co.}},
  location = {{New York}},
  keywords = {Mathematics / Algebra / General}
}

@article{DK06,
  author = {De Sole, A. and Kac, V.},
  date = {2006},
  journaltitle = {Japan. J. Math.},
  volume = {1},
  pages = {137--261},
  title = {Finite vs Affine {$\mathcal{W}$}-Algebras}
}

@incollection{EK05,
  title = {Classification of Good Gradings of Simple {{Lie}} Algebras},
  booktitle = {Lie Groups and Invariant Theory},
  author = {Elashvili, A. and Kac, V.},
  date = {2005},
  series = {Amer. {{Math}}. {{Soc}}. {{Transl}}. {{Ser}}. 2},
  edition = {Ernest Vinberg Editor},
  volume = {213},
  pages = {85--104},
  publisher = {{American Mathematical Society}}
}

@misc{FN,
      title={Orthosymplectic Feigin-Semikhatov duality},
      author={Fasquel, J. and Nakatsuka, S.},
      year={2023},
      eprint={2307.14574},
      archivePrefix={arXiv},
      primaryClass={math.RT}
}

@misc{FFFN,
      title={Connecting affine $\W$-algebras: A case study on $\sll_4$},
      author={Fasquel, J. and Fehily, Z. and Fursman, E. and Nakatsuka, S.},
      year={2024},
      eprint={2408.13785 },
      archivePrefix={arXiv},
      primaryClass={math.QA}
}

@misc{FRR,
      title={Modularity of admissible-level $\mathfrak{sl}_3$ minimal models with denominator $2$},
      author={Fasquel, J. and Raymond, C. and Ridout, D.},
      year={2024},
      eprint={2406.10646},
      archivePrefix={arXiv},
      primaryClass={math.QA}
}

@book{FB04,
  title = {Vertex Algebras and Algebraic Curves},
  author = {Frenkel, E. and Ben-Zvi, D.},
  date = {2004},
  series = {Mathematical {{Surveys}} and {{Monographs}}},
  edition = {Second},
  volume = {88},
  publisher = {{Amer. Math. Soc.}}
}

@article{FF90,
  title = {Quantization of the {{Drinfeld-Sokolov}} Reduction},
  author = {Feigin, B. and Frenkel, E.},
  date = {1990},
  journaltitle = {Phys. Lett. B},
  volume = {246},
  pages = {75--81}
}

@article {Fehily23,
    AUTHOR = {Fehily, Z.},
     TITLE = {Subregular {W}-algebras of type {$A$}},
   JOURNAL = {Commun. Contemp. Math.},
  FJOURNAL = {Communications in Contemporary Mathematics},
    VOLUME = {25},
      YEAR = {2023},
    NUMBER = {9},
     PAGES = {no. 2250049.44}
}

@article {Fehily24,
    AUTHOR = {Fehily, Z.},
     TITLE = {Inverse reduction for hook-type {W}-algebras},
   JOURNAL = {Comm. Math. Phys.},
  FJOURNAL = {Communications in Mathematical Physics},
    VOLUME = {405},
      YEAR = {2024},
    NUMBER = {9},
     PAGES = {Paper No. 214, 38},
}

@article{FR22,
  title = {Modularity of Bershadsky–Polyakov minimal models},
  author = {Fehily, Z. and Ridout, D.},
  date = {2022},
  journaltitle = {Lett. Math. Phys.},
  volume = {122(46)}
}

@article {FZ92,
    AUTHOR = {Frenkel, I. and Zhu, Y.},
     TITLE = {Vertex operator algebras associated to representations of
              affine and {V}irasoro algebras},
   JOURNAL = {Duke Math. J.},
    VOLUME = {66},
      YEAR = {1992},
    NUMBER = {1},
     PAGES = {123--168}
}

@article{FMS86,
    title =     {Conformal invariance, supersymmetry and string theory},
    author =    {Friedan, D. and Martinec, E. and Shenker, S.},
    journal =   {Nucl. Phys. B.},
    volume =    {271},
    number =    {1},
    pages =     {93--165},
    year =      {1986}
}

@book{Fre07,
  title = {Langlands Correspondence for Loop Groups},
  author = {Frenkel, E.},
  date = {2007},
  series = {Cambridge {{Studies}} in {{Advanced Mathematics}}},
  volume = {103},
  publisher = {{Cambridge University Press}},
  location = {{Cambridge}}
}

@article{Gen20,
title = {Screening operators and parabolic inductions for affine W-algebras},
journal = {Adv. Math.},
volume = {369},
pages = {no. 107179},
date = {2020},
author = {Genra, N.}
}

@article{GGS17,
title = {Generalized and degenerate Whittaker models},
journal = {Compos. Math.},
volume = {153(2)},
pages = {223–256},
date = {2017},
author = {Gomez, R. and Gourevitch, D. and Sahi, S.}
}

@article{GJ23,
      title={Reduction by stages for finite W-algebras}, 
      author={Genra, N. and Juillard, T.},
      journal={Math. Z.},
      year={2024},
      volume={308},
      number={15}
}

@misc{GJ25,
      title={Reduction by stages for affine W-algebras}, 
      author={Genra, N. and Juillard, T.},
      date={2025},
      eprint = {2501.04501},
      eprinttype = {arxiv},
      primaryclass = {math.RT},
      archiveprefix = {arXiv}
}

@book{Hum72,
  title={Introduction to Lie Algebras and Representation Theory},
  author={Humphreys, J.},
  series={Graduate texts in mathematics},
  year={1972},
  publisher={Springer}
}

@article{KL19,
  author = {Kanade, S. and Linshaw, A. R.},
  date = {2019},
  journaltitle = {Adv. Math.},
  volume = {355},
  number = {106774},
  keywords = {W-algebra,Coset construction,Nonlinear Lie conformal algebra,Vertex algebra},
  title = {Universal two-parameter even spin {$\mathcal{W}_\infty$}-{{Algebra}}}
}

@article{KRW03,
  title = {Quantum Reduction for Affine Superalgebras},
  author = {Kac, V. and Roan, S. and Wakimoto, M.},
  date = {2003},
  journaltitle = {Comm. Math. Phys.},
  volume = {241},
  pages = {307--342}
}

@article{KW04,
  title = {Quantum Reduction and Representation Theory of Superconformal Algebras},
  author = {Kac, V. and Wakimoto, M.},
  date = {2004},
  journaltitle = {Adv. Math.},
  volume = {185},
  pages = {400--458}
}

@article {Kostant78,
    AUTHOR = {Kostant, Bertram},
     TITLE = {On {W}hittaker vectors and representation theory},
   JOURNAL = {Invent. Math.},
  FJOURNAL = {Inventiones Mathematicae},
    VOLUME = {48},
      YEAR = {1978},
    NUMBER = {2},
     PAGES = {101--184},
}

@article {Losev10,
    AUTHOR = {Losev, I.},
     TITLE = {Quantized symplectic actions and {$W$}-algebras},
   JOURNAL = {J. Amer. Math. Soc.},
  FJOURNAL = {Journal of the American Mathematical Society},
    VOLUME = {23},
      YEAR = {2010},
    NUMBER = {1},
     PAGES = {35--59},
}

@article {MR97,
    AUTHOR = {Madsen, J. and Ragoucy, E.},
     TITLE = {Secondary quantum {H}amiltonian reductions},
   JOURNAL = {Comm. Math. Phys.},
    VOLUME = {185},
      YEAR = {1997},
    NUMBER = {3},
     PAGES = {509--541}
}

@article {MSV99,
    AUTHOR = {Malikov, F. and Schechtman, V. and Vaintrob, A.},
     TITLE = {Chiral de {R}ham complex},
   JOURNAL = {Comm. Math. Phys.},
  FJOURNAL = {Communications in Mathematical Physics},
    VOLUME = {204},
      YEAR = {1999},
    NUMBER = {2},
     PAGES = {439--473},
}

@misc{McR21,
  author = {McRae, R.},
  date = {2021},
  eprint = {2108.01898},
  eprinttype = {arxiv},
  primaryclass = {math.QA},
  archiveprefix = {arXiv},
  title = {On Rationality for {$C_2$}-{{Cofinite}} Vertex Operator Algebras}
}

@book{Morgan14,
    AUTHOR = {Morgan, S.},
     TITLE = {Quantum {H}amiltonian reduction of {W}-algebras and category
              {O}},
      NOTE = {Thesis (Ph.D.)--University of Toronto (Canada)},
 PUBLISHER = {ProQuest LLC, Ann Arbor, MI},
      YEAR = {2014},
      ISBN = {978-1339-37342-3},
    PAGETOTAL = {66}
}

@article{Pre02,
  title = {Special Transverse Slices and Their Enveloping Algebras},
  author = {Premet, A.},
  date = {2002},
  journaltitle = {Adv. Math.},
  volume = {170},
  pages = {1--55}
}

@article {Raskin20,
    AUTHOR = {Raskin, S.},
     TITLE = {{$\mathcal{W}$}-algebras and {W}hittaker categories},
   JOURNAL = {Selecta Math. (N.S.)},
    VOLUME = {27},
      YEAR = {2021},
    NUMBER = {3},
     PAGES = {Paper No. 46, 114}
}

@inproceedings{Sem94,
  booktitle = {28th International Symposium on Particle
Theory},
  author = {Semikhatov, A.}, 
  date = {1994},
  location = {Wendisch-Rietz},
  pages = {156--167},
  title = {Inverting the Hamiltonian reduction in string theory}
}

@article{Voro,
  title = {Semi-infinite induction and Wakimoto modules},
  author = {Voronov, A.},
  date = {1999},
  journaltitle = {Amer. J. Math.},
  volume = {121},
  pages = {1079–-1094}
}

\end{document}